\documentclass[a4paper]{article} %

\usepackage[margin=3cm]{geometry}

\usepackage{auslander-packages}

\title{Simplicial structures in\\ higher Auslander--Reiten theory}

\author{Tobias Dyckerhoff\footnote{Universit\"{a}t Hamburg, Fachbereich
    Mathematik, Bundesstra{\ss}e 55, 20146 Hamburg, Germany, email: {\tt
      tobias.dyckerhoff@uni-hamburg.de} }, Gustavo Jasso\footnote{Rheinische
    Friedrich-Wilhelms-Universit\"{a}t Bonn, Mathematisches Institut, Endenicher
    Allee 60, 53115 Bonn, Germany, email: {\tt gjasso@math.uni-bonn.de}} and
  Tashi Walde\footnote{Rheinische Friedrich-Wilhelms-Universit\"{a}t Bonn,
    Mathematisches Institut, Endenicher Allee 60, 53115 Bonn, Germany, email:
    {\tt twalde@math.uni-bonn.de} }}


\begin{document}

\maketitle
\vspace{-2em}
\begin{center}
  \emph{Dedicated to the memory of Thomas Poguntke}
\end{center}
\vspace{2em}

\begin{abstract}
  We develop a novel combinatorial perspective on the higher Auslander algebras
  of type $\AA$, a family of algebras arising in the context of Iyama's higher
  Auslander--Reiten theory. This approach reveals interesting simplicial
  structures hidden within the representation theory of these algebras and
  establishes direct connections to Eilenberg--MacLane spaces and
  higher-dimensional versions of Waldhausen's $\eS_\bullet$-construction in
  algebraic $K$-theory. As an application of our techniques we provide a
  generalisation of the higher reflection functors of Iyama and Oppermann to
  representations with values in stable $\infty$-categories. The resulting
  combinatorial framework of slice mutation can be regarded as a
  higher-dimensional variant of the abstract representation theory of type $\AA$
  quivers developed by Groth and \v{S}\v{t}ov\'\i\v{c}ek. Our simplicial point
  of view then naturally leads to an interplay between slice mutation, horn
  filling conditions, and the higher Segal conditions of Dyckerhoff and
  Kapranov. In this context, we provide a classification of higher Segal objects
  with values in any abelian category or stable $\infty$-category.
\end{abstract}

\setcounter{tocdepth}{2}
\tableofcontents

\section*{Introduction}

In the context of higher Auslander--Reiten theory \cite{Iya07}, Iyama introduced a
remarkable family of finite-dimensional algebras $A_\ell^{(m)}$, parameterised by natural numbers
$\ell$ and $m$, called \emph{higher Auslander algebras of type $\AA$} \cite{Iya11}. For
a fixed positive integer $\ell$, these algebras are obtained by a recursive construction that starts
with the hereditary algebra $A_\ell^{(1)}$ given by the path algebra of the quiver
\[
	1 \to 2 \to \cdots \to \ell
\]
and proceeds inductively by setting
\[
  A_\ell^{(m+1)}\coloneqq \End_{A_\ell^{(m)}}(M)^{\op}
\]
where $M$ is a so-called $m$-cluster tilting $A_\ell^{(m)}$-module. The module $M$ is the direct sum
of a skeleton of indecomposable objects in the $m$-cluster tilting subcategory
\[
	\M_{\ell}^{(m)} \subseteq \mmod A_{\ell}^{(m)},
\]
which is an $m$-abelian category in the sense of \cite{Jas16}.

The following results, proven in \S \ref{sec:1}, are the starting point of this work:
\begin{enumerate}
	\item Let $B$ be an abelian group. Then, for every $n\geq m\geq 1$, there is a canonical isomorphism
		\begin{equation}\label{eq:EMI}
				\Hom(K_0(\M_{n-m+1}^{(m)}), B) \cong \K(B,m)_n
		\end{equation}
		where $\K(B,m)_{\bullet}$ denotes an Eilenberg-MacLane space given in the form of its standard
		model as a simplicial abelian group. The relevance of the simplicial object
		$\K(B,m)_{\bullet}$ in topology is, of course, its role as a classifying space for
		cohomology: For every CW complex $Y$, there is a bijection
		\[
			[Y, |\K(B,m)_{\bullet}|] \cong H^m(Y,B).
		\]
		between the set of homotopy classes of maps from $Y$ to $|\K(B,m)_{\bullet}|$ and the
		$m$th cohomology group of $Y$ with coefficients in $B$.
	\item Let $\B$ be an abelian category. Then, for every $n\geq m\geq 1$, there is a canonical
		equivalence of categories
		\begin{equation}\label{eq:SMI}
				\Fun^{\ex}(\M_{n-m+1}^{(m)}, \B) \simeq \Sm_{n}(\B)
		\end{equation}
		where $\Sm_{\bullet}(\B)$ denotes the $m$-dimensional Waldhausen
		$\eS_{\bullet}$-construction of $\B$ as studied for $m=1$ in \cite{Wal85}, for
		$m=2$ in \cite{HM15}, and for all $m \ge 0$ in \cite{Pog17}. The relevance of these
		simplicial objects in algebraic $K$-theory is their role in describing the
		deloopings of the $K$-theory spectrum of $\B$: We have homotopy equivalences
		\[
			\Omega^m |\Sm_{\bullet}(\B)^{\simeq}| \simeq K(\B)
		\]
		exhibiting the $K$-theory space $K(\B)$ of $\B$ as an $m$-fold loop space, and,
		varying $m \ge 1$, as an $\Omega$-spectrum (see \cite{Gre07} for a survey on
    spectra).
\end{enumerate}

The identifications \eqref{eq:EMI} and \eqref{eq:SMI} suggest that it is possible and natural to
organise the various categories $\M_{\bullet-m+1}$ into a simplicial object. The goal of this
article is to introduce a suitable combinatorial framework to achieve this and to study the resulting
interplay between representation theoretic concepts from higher Auslander--Reiten
theory and combinatorial concepts from the theory of simplicial structures.

We provide an overview of the contents of this work:

\subsubsection*{\ref{sec:1}\quad\nameref{sec:1}}

In this section, we develop a combinatorial approach to the higher Auslander algebras of
type $\AA$ in terms of simplicial combinatorics: We provide a description of these algebras in terms
of the posets $\Delta(m,n)$ of monotone functions between the standard ordinals $[m]$ and $[n]$.
This approach allows us to give a precise relationship between these algebras and fundamental
objects in algebraic topology and algebraic $K$-theory: Eilenberg--MacLane spaces and
higher versions of the Waldhausen $\eS_\bullet$-construction, respectively.

\subsubsection*{\ref{sec:reflection}\quad\nameref{sec:reflection}}

In this section, we develop a higher categorical approach to higher reflection
functors based on the combinatorial description of the higher Auslander algebras
of type $\AA$ from \th\ref{prop:comaus}. This generalises the more traditional
approach via tilting modules. A profitable side effect of our theory is that it
allows us to study representations with values in arbitrary stable
$\infty$-categories, not necessarily linear over a field, much in the spirit of
the abstract representation theory of Groth and \v{S}\v{t}ov\'\i\v{c}ek. Indeed,
these results can be seen as higher-dimensional generalisations of results in
\cite{GS16} and \cite{GS16b}.

Our perspective on higher reflection functors brings a certain geometric appeal: Iyama's
homological condition of ``regularity in dimension $m+1$'' is reflected geometrically by the
requirement that representations of the poset $\Delta(m,n)$ map all rectilinear $(m+1)$-dimensional
cubes to biCartesian cubes. The reflection functors are then obtained by puncturing these cubes at
their initial and final vertices, respectively.

We provide another instance of this phenomenon: We show that the derived category of coherent
sheaves on projective $m$-space admits a combinatorial description in terms of representations of
the Beilinson category $B_{\ZZ}^{(m)}$, with relations formulated again in terms of
$(m+1)$-dimensional biCartesian cubes.

These results seem to be precursors of an axiomatic framework for investigating $m$-cluster tilting
subcategories based on higher-dimensional biCartesian cubes in $\infty$-categories.

In the related framework of stable derivators, further examples of tilting
equivalences described in terms of posets of paracyclic simplices were recently
obtained by Beckert in his doctoral thesis \cite{Bec18}.
The abstract representation theory of cubical diagrams with various exactness
conditions has been thoroughly investigated by Beckert and Groth in
\cite{BG18} also in the related framework of stable derivators.

\subsubsection*{\ref{sec:horn}\quad\nameref{sec:horn}}

In this section, we explain how to interpret higher reflection functors
of type $\AA$ in terms of horn filling conditions on simplicial objects. As a
by-product of this observation we relate the notions of
\begin{enumerate}[label=(\roman*)]
\item slice mutation,
\item outer horn filling conditions, and
\item higher Segal conditions.
\end{enumerate}
The main ingredient in the above comparison is an approach to objects of membranes in the sense of
\cite{DK12} by means of a Cartesian fibration
\[
  \bigcup\colon\CovD\lra\Delta
\]
whose fibres parameterise the simplicial subsets of the standard simplices.

\subsubsection*{\ref{sec:dk}\quad\nameref{sec:dk}}

In this section, we provide a simple characterisation of so-called outer $m$-Kan complexes with
values in either an abelian category or a stable $\infty$-category. Our characterisation is given in
terms of appropriate versions of the Dold--Kan correspondence. As a consequence, we show that the
classes of outer $m$-Kan complexes and $2m$-Segal objects coincide in this context.

\subsubsection*{\ref{sec:n-cubes}\quad\nameref{sec:n-cubes}}

In this appendix we collect elementary results concerning $n$-cubes in stable
$\infty$-categories which are needed at various stages throughout the article.

\subsection*{Conventions}

Throughout the article we use freely the language of $\infty$-categories as
developed in \cite{Lur09,Lur17}. When there is no risk of confusion we identify
an ordinary category with its nerve. In particular, if $D$ is a small
category and $\C$ is an $\infty$-category, we denote the $\infty$-category of
functors $\NNN(D)\to\C$ by $\Fun(D,\C)$.

\subsection*{Acknowledgements}
The authors would like to thank Thomas Poguntke, Mikhail Kapranov and Julian
K\"ulshammer for interesting discussions surrounding the various topics of this
article. The authors further thank Julian K\"ulshammer for comments on a first
draft of the article. T.D.\ acknowledges the support by the VolkswagenStiftung
for his Lichtenberg Professorship at the University of Hamburg and by the
Hausdorff Center for Mathematics for his Bonn Junior Fellowship during which
parts of this work were carried out.

\section{The higher Auslander algebras of type $\AA$}
\label{sec:1}

\subsection{Higher Auslander--Reiten theory}
\label{ss:higher_ar_theory}
Let $\KK$ be a field. An {\em Auslander algebra} is a finite-dimensional
$\KK$-algebra $\Gamma$ that satisfies the homological constraints
\begin{equation}\label{eq:hom}
  \on{gl.dim}(\Gamma) \le 2 \le \on{dom.dim}(\Gamma),
\end{equation}
where $\on{gl.dim}(\Gamma)$ denotes the global dimension of $\Gamma$ and
$\on{dom.dim}(\Gamma)$ denotes its dominant dimension: the largest natural
number $d$ such that the terms $I^0$, $I^1$, \dots, $I^{d-1}$ in every minimal
injective co-resolution of $\Gamma$ are projective.

Let $A$ be a finite-dimensional $\KK$-algebra of finite representation type and
denote by $M$ the direct sum of a complete set of representatives of the
indecomposable $A$-modules. The endomorphism algebra $\Gamma=\End_A(M)^{\op}$ is
an Auslander algebra and, up to Morita equivalence, every Auslander algebra is
of this form \cite{Aus71}. The functorial approach to Auslander--Reiten theory
consists of relating the representation theory of $A$ to that of $\Gamma$ by
means of the fully faithful left exact functor
\begin{equation}\label{eq:auslander}
  G\colon	\mod A \lra \mod\Gamma,\quad X \mapsto \Hom_A(M,X).
\end{equation}
Notably, under the functor $G$, the almost split exact sequences of $A$-modules
correspond precisely to the minimal projective resolutions of simple
$\Gamma$-modules of projective dimension $2$. This is the seminal observation in
Auslander--Reiten theory, see \cite{ARS97} for further details.

More recently, Iyama proposed a higher-dimensional generalisation of
Auslander--Reiten theory based on the notion of an $(m+1)$-dimensional Auslander
algebra, that is a finite-dimensional $\KK$-algebra $\Gamma$ satisfying the
homological constraint
\begin{equation}\label{eq:homn}
  \on{gl.dim}(\Gamma) \le m+1 \le \on{dom.dim}(\Gamma).
\end{equation}
For $m>1$, given a finite-dimensional $\KK$-algebra $A$, the existence of a
corresponding $(m+1)$-dimensional Auslander algebra hinges on the existence of
an $(m+1)$-cluster tilting module---an $A$-module $M$ satisfying the following
conditions:
\begin{itemize}
\item For every $0<i<m$ we have $\Ext^i_{A}(M,M) = 0$.
\item Every indecomposable $A$-module $X$ such that, for every $0<i<m$, we have
  $\Ext^i_{A}(X,M)=0$, is isomorphic to a direct summand of $M$. In particular
  each indecomposable projective $A$-module is a direct summand of $M$.
\item Every indecomposable $A$-module $Y$ such that, for every $0<i<m$, we have
  $\Ext^i_{A}(M,Y) = 0$, is isomorphic to a direct summand of $M$. In particular
  each indecomposable injective $A$-module is a direct summand of $M$.
\end{itemize}
The algebra $\Gamma = \End_{A}(M)^{\op}$, where $M$ is an $m$-cluster tilting
$A$-module, is an $(m+1)$-dimensional Auslander algebra and a central result of
Iyama says that, up to Morita equivalence, every such algebra is of this form
\cite{Iya07a}. In analogy with the classical situation, a higher-dimensional
variant of Auslander--Reiten theory, based on the obvious analogue of the
functor \eqref{eq:auslander}, can be formulated in this context \cite{Iya07}.

It is a nontrivial task to construct finite-dimensional $\KK$-algebras where
higher Aus\-lan\-der--Reiten theory can be applied. In \cite{Iya11}
Iyama provides a remarkable recursive construction of a family $\{A^{(m)}\}_{m
  \ge 1}$ of (relative) higher Auslander algebras, where $A^{(1)}$
is an arbitrary finite-dimensional representation-finite $\KK$-algebra of global
dimension $1$. The recursion is defined by the formula
\[
	A^{(m+1)} \coloneqq \End_{A^{(m)}}(M)^{\op},
\]
where $M$ is a canonical (relative) $m$-cluster tilting $A^{(m)}$-module, the
existence of which is guaranteed by one of the main results in \cite{Iya11}.

Of particular interest for this work is the case when $A^{(1)}=A_\ell^{(1)}$ is
the $\KK$-linear path algebra of the quiver $1 \to 2 \to \dots \to \ell$ of
Dynkin type $\AA_\ell$. In this case the various higher Auslander
algebras are related by fully faithful left exact functors
\begin{equation}\label{eq:typea}
  G\colon \mod {A_\ell^{(m)}} \lra \mod {A_\ell^{(m+1)}},\quad X \mapsto
  \Hom_{A_\ell^{(m)}}(M,X).
\end{equation}
We refer to this family of algebras as the \emph{higher Auslander
  algebras of type $\AA$}.

We illustrate the first steps of Iyama's construction for the quiver $1 \to 2
\to 3$. The Auslander--Reiten quiver of the module category $\mmod A_3^{(1)}$ is
given by
\begin{equation}
  \label{eq:A3_AR_quiver}
  \includegraphics[width=0.4\textwidth]{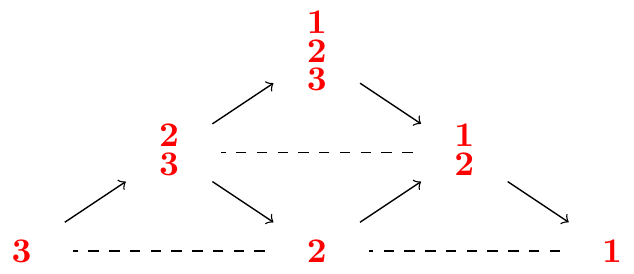}
\end{equation}
where, following standard procedure, we label the various indecomposable modules
by the simple modules appearing in their composition series. The Auslander
algebra $A_3^{(2)}$ is the quotient of the path algebra associated to this
quiver modulo the relations given by the bottom dashed lines, which signify zero
relations, and the middle dashed line, which expresses the commutativity of the
square. As a matter of convenience, we relabel the vertices of this quiver as
\begin{equation*}
  \includegraphics[width=0.4\textwidth]{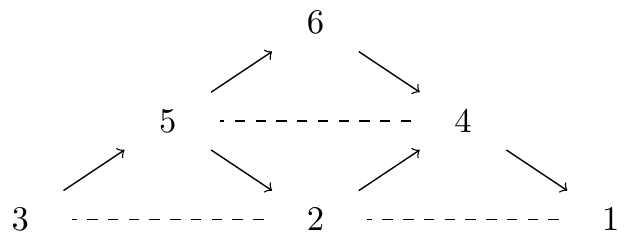}
\end{equation*}
We use this new labelling to describe the Auslander--Reiten quiver of $\mmod
A_3^{(2)}$ which is given by
\begin{equation}
  \label{eq:A3-2_AR_quiver}
  \includegraphics[width=0.85\textwidth]{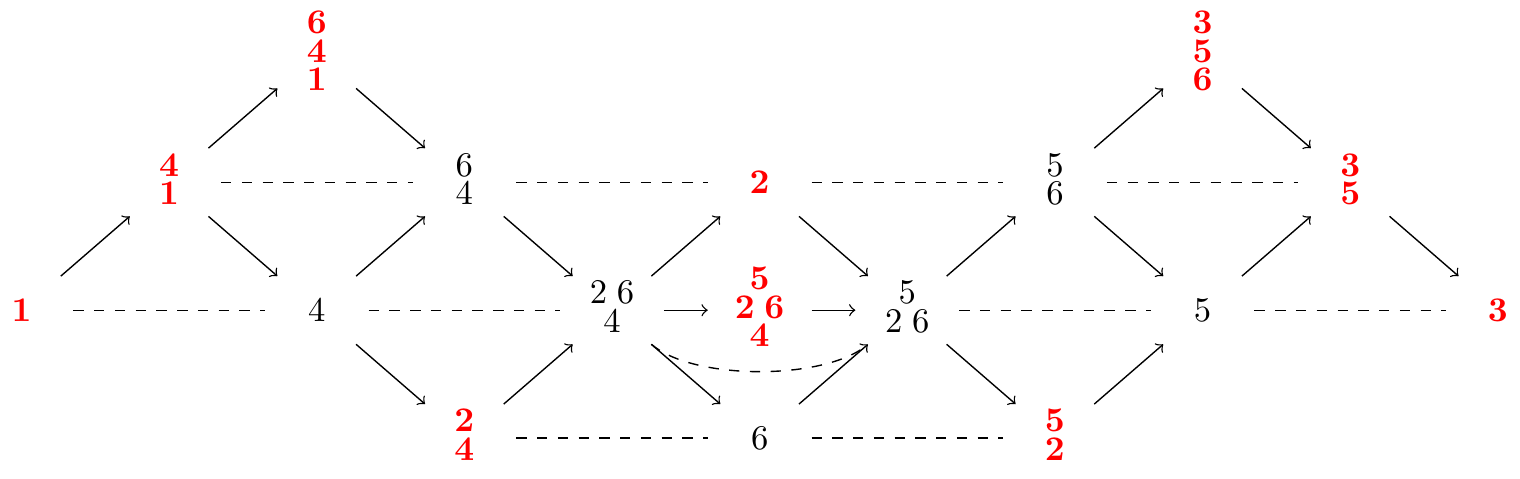}
\end{equation}
In order to define the $3$-dimensional Auslander algebra $A_3^{(3)}$ we need to
find a $2$-cluster tilting $A_3^{(2)}$-module $M$ and set
$A_3^{(3)}=\End_{A_3^{(2)}}(M)$. Iyama shows that, up to multiplicity of its
direct summands, there is a unique such $A_3^{(2)}$-module $M$ and that it is
given by the direct sum of the $10$ indecomposable $A_3^{(2)}$-modules
highlighted in the Auslander--Reiten quiver of $\mmod A_3^{(2)}$. The procedure
continues \emph{ad infinitum} by finding a $3$-cluster tilting
$A_3^{(3)}$-module\ldots

\subsection{The simplicial combinatorics of higher Auslander algebras of type $\AA$}
\label{sec:simplicial}

The starting point for the perspective we develop in this work is the
observation that the structure of the higher Auslander algebras of
type $\AA$ is governed by the combinatorics of simplices in a way we now
explain.

Given a poset $P$, we may construct a category whose objects are the elements of
$P$ with a unique morphism from $i$ to $j$ whenever $i \le j$. Below, we leave
the distinction between a poset and its associated category implicit.

\begin{definition}\label{defi:delta}
  Let $n$ be a natural number. Recall that the \emph{$n$-th standard ordinal} is
  the poset $[n]\coloneqq \set{0<1<\cdots<n}$. Given two natural numbers $m$ and $n$, we
  denote the set of monotone maps $f\colon[m] \to [n]$ by $\Delta(m,n)$. We endow
  $\Delta(m,n)$ with the structure of a poset by declaring $f \le f'$ if, for
  all $i \in [m]$, we have $f(i) \le f'(i)$. We further introduce the partition
	\begin{equation}\label{eq:dpartition}
    \Delta(m,n) = \Delta(m,n)^{\sharp} \cup \Delta(m,n)^{\flat}
	\end{equation}
	into the subset $\Delta(m,n)^{\sharp}$ of strictly monotone maps and its
  complement $\Delta(m,n)^{\flat}$.
\end{definition}

\begin{remark}
  The elements of the poset $\Delta(m,n)$ can be interpreted as $m$-simplices in
  the $n$-simplex $\Delta^n$. The partition \eqref{eq:dpartition} then
  corresponds to the partition into non-degenerate and degenerate $m$-simplices.
  Further note that $\Delta(m,n)$ is a subposet of $\ZZ^{m+1}$ with respect to
  the product order.
\end{remark}

For natural numbers $m$ and $n$, let $\KK \Delta(m,n)$ denote the $\KK$-category
obtained from the category associated to the poset $\Delta(m,n)$ by passing to
$\KK$-linear envelopes of the morphism sets. We further form the $\KK$-category
$\underline{\KK \Delta}(m,n)$ by replacing all morphism spaces in $\KK
\Delta(m,n)$ by the quotient modulo the subspace of morphisms that factor
through an object in $\Delta(m,n)^{\flat}$. Finally, for each $\KK$-category $\A$
with finitely many objects, we introduce the $\KK$-algebra
\[
	\End(\A) = \bigoplus_{(x,y) \in (\ob \A)^2} \A(x,y)
\]
with multiplication obtained from the composition law of $\A$.

Let us illustrate how the simplicial combinatorics relate to the
higher Auslander algebras of type $\AA$ in the special case
$\ell=3$. The $\KK$-category $\KK\Delta(0,2)$ is the free $\KK$-category on the
quiver $0 \to 1 \to 2$. In other words there are algebra isomorphisms
\begin{equation*}
  A_3^{(1)}\cong\End(\KK\Delta(0,2))\cong\End(\underline{\KK\Delta}(0,2)),
\end{equation*}
where the rightmost isomorphism is a consequence of the fact that there are no
degenerate elements in $\Delta(0,2)$.

We now wish to describe the Auslander algebra $A_3^{(2)}$ in a similar way. For
this, consider the poset $\Delta(1,3)$ whose Hasse quiver is given by
\begin{center}
  \includegraphics[width=0.33\textwidth]{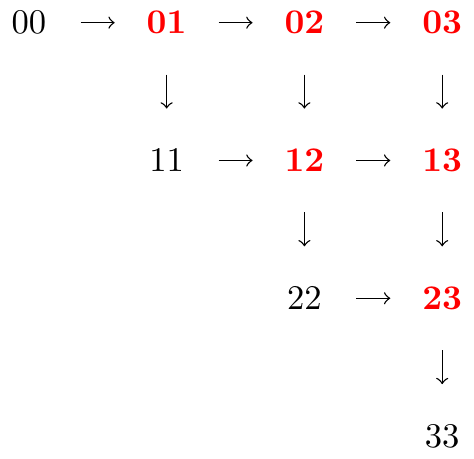}
\end{center}
The elements of the set $\Delta(1,3)^\sharp$ of non-degenerate simplices are
highlighted. Observe that the Hasse quiver of $\Delta(1,3)^\sharp$ agrees with
the Auslander--Reiten quiver of $\mmod A_3^{(1)}$, see \eqref{eq:A3_AR_quiver}.
Note, however, that neither of the algebras
\begin{equation*}
  \End(\KK\Delta(1,3))\qquad\text{and}\qquad\End(\KK\Delta(1,3)^\sharp)
\end{equation*}
is isomorphic to $A_3^{(2)}$: the algebra $\End(\KK\Delta(1,3))$ has too many
simple modules while $\End(\KK\Delta(1,3)^\sharp)$ is missing the zero
relations. As it turns out, the appropriate way to account for the presence of
non-degenerate simplices is to pass to the $\KK$-category
$\underline{\KK\Delta}(1,3)$. Indeed, there is an isomorphism of algebras
\begin{equation*}
  A_3^{(2)}\cong\End(\underline{\KK\Delta}(1,3)).
\end{equation*}

As a final example, let us describe the algebra $A_3^{(3)}$ in terms of posets
of simplices. Consider the poset $\Delta(2,4)$ whose Hasse quiver is given by
\begin{center}
  \includegraphics[width=0.4\textwidth]{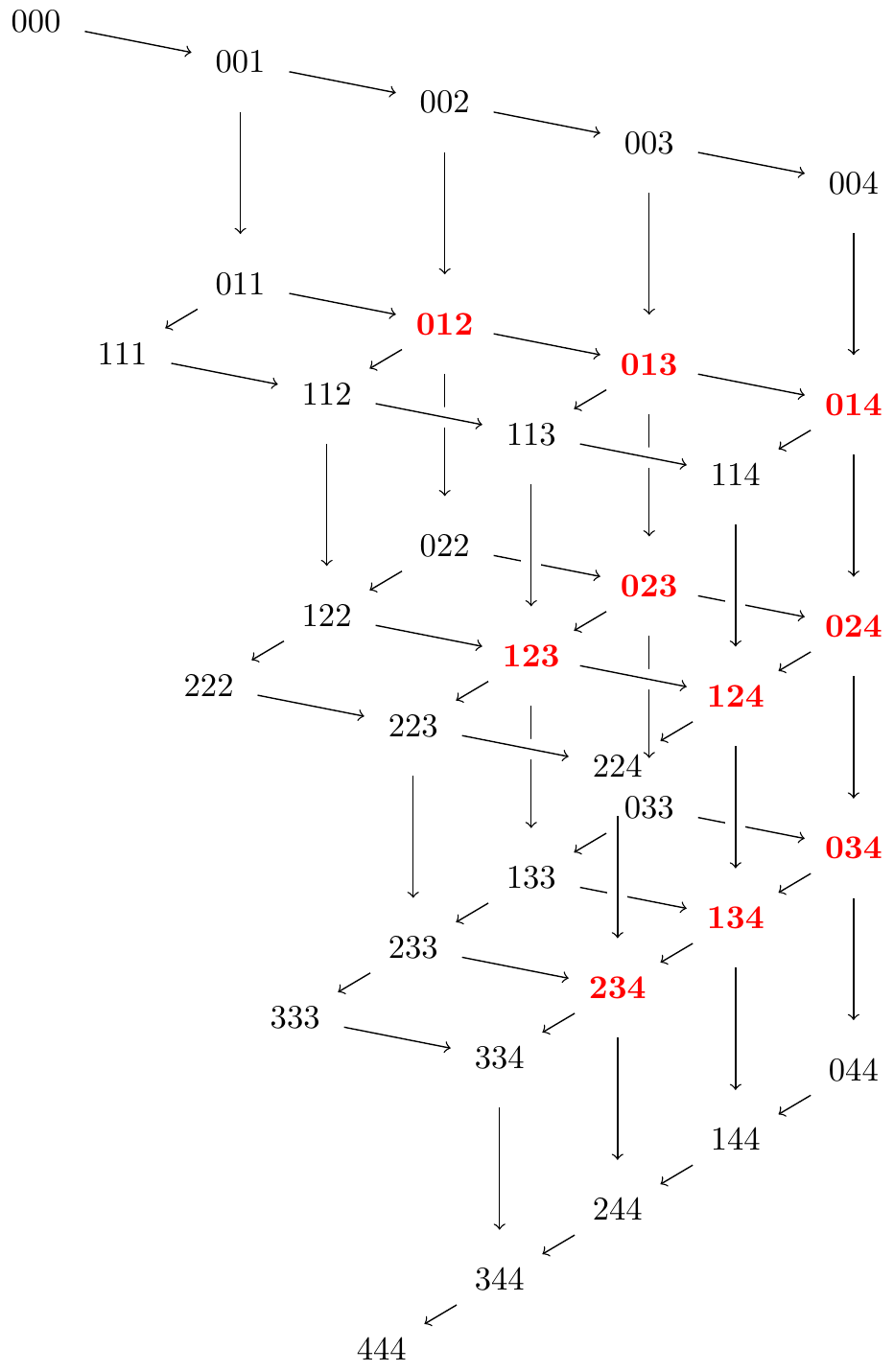}
\end{center}
Again, the elements of the set $\Delta(2,4)^\sharp$ of non-degenerate simplices
are highlighted. Note that the Hasse quiver of $\Delta(2,4)^\sharp$ agrees with
the full subquiver of the Auslander--Reiten quiver of $\mmod A_3^{(2)}$ spanned
by the summands of the unique $2$-cluster tilting $A_3^{(2)}$-module, see
\eqref{eq:A3-2_AR_quiver}. In this case there is an isomorphism of algebras
\begin{equation*}
  A_3^{(3)}\cong\End(\underline{\KK\Delta}(2,4)).
\end{equation*}
More generally, we have the following description of the higher
Auslander algebras of type $\AA$ in terms of posets of simplices.

\begin{proposition}\th\label{prop:comaus}
	Let $n\geq m\geq 1$ and set $\ell=n-m+1$. There is an isomorphism of
  $\KK$-algebras
	\[
		A_\ell^{(m+1)}\cong\End(\underline{\KK \Delta}(m,n))
	\]
	where $A_\ell^{(m+1)}$ denotes the $(m+1)$--dimensional Auslander algebra of
  type $\AA_\ell$.
\end{proposition}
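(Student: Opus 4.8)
The first observation is that the right-hand side only sees the non-degenerate simplices: if $g\in\Delta(m,n)^{\flat}$ then the identity of $g$ factors through $g$ itself, so it is annihilated in the passage to $\underline{\KK\Delta}(m,n)$, and every degenerate simplex thereby becomes a zero object. Hence $\End(\underline{\KK\Delta}(m,n))$ is the endomorphism algebra of the full $\KK$-subcategory of $\underline{\KK\Delta}(m,n)$ supported on the objects of $\Delta(m,n)^{\sharp}$. On the other side, writing $M=\bigoplus_{f}M_{f}$ for the direct sum of the indecomposable objects of the $m$-cluster tilting subcategory $\M_{\ell}^{(m)}$, the algebra $A_{\ell}^{(m+1)}=\End_{A_{\ell}^{(m)}}(M)^{\op}$ is by definition the endomorphism algebra of the $\KK$-category $(\M_{\ell}^{(m)})^{\op}$. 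The plan is therefore to produce an equivalence of $\KK$-categories between $(\M_{\ell}^{(m)})^{\op}$ and this full subcategory of $\underline{\KK\Delta}(m,n)$. As an anchor for the induction below, the case $m=1$ can be checked by hand: the indecomposable $A_{\ell}^{(1)}$-modules are the interval modules $M_{[i+1,j]}$ indexed by $0\le i<j\le\ell=n$, and a direct computation shows $\Hom(M_{[i'+1,j']},M_{[i+1,j]})\neq 0$ exactly when $i\le i'$, $j\le j'$ and $i'<j$. Writing $f=(i,j)$ and $f'=(i',j')$, this says precisely that $f\le f'$ in $\Delta(1,n)$ and that no degenerate simplex lies between $f$ and $f'$, which is exactly the condition for a non-zero morphism $f\to f'$ in $\underline{\KK\Delta}(1,n)$; note that the opposite category reverses the direction so that the two descriptions agree.

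To treat general $m$ I would argue by induction on $m$, using Iyama's recursive construction together with the fully faithful functor $G$ of \eqref{eq:typea}. The first task is the bijection on objects: I would show that the indecomposable objects of $\M_{\ell}^{(m)}$ are naturally indexed by $\Delta(m,n)^{\sharp}$, equivalently by the $(m+1)$-element subsets of $\{0,\dots,n\}$, and that under this indexing the $m$-Auslander--Reiten quiver of $\M_{\ell}^{(m)}$ is identified with the Hasse quiver of the poset $\Delta(m,n)^{\sharp}$; more precisely, the irreducible maps should correspond to the covering relations of the ambient poset $\Delta(m,n)$ whose two endpoints are both non-degenerate. The inductive engine is that the indecomposable projective $A_{\ell}^{(m+1)}$-modules are the objects $G(M_{f})=\Hom_{A_{\ell}^{(m)}}(M,M_{f})$, and that the $(m+1)$-cluster tilting subcategory of $\mmod A_{\ell}^{(m+1)}$ is obtained from $\M_{\ell}^{(m)}$ by the standard higher Auslander--Reiten recipe. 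On the combinatorial side this must correspond to the passage from $\Delta(m,n)$ to $\Delta(m+1,n+1)$, carried out at fixed $\ell=n-m+1=(n+1)-(m+1)+1$.

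It then remains to match morphisms and relations. Since between any two objects of $\underline{\KK\Delta}(m,n)$ there is at most a one-dimensional space of morphisms, I would prove the corresponding rigidity statement on the module side, namely that $\dim_{\KK}\Hom_{A_{\ell}^{(m)}}(M_{f'},M_{f})\le 1$ for cluster-tilting indecomposables, with this Hom-space non-zero precisely when $f\le f'$ and no degenerate simplex lies in the interval $[f,f']$. Granting this, the commutativity relations of the Auslander algebra are forced by the uniqueness of morphisms in a poset, while the zero relations become the assertion that a path whose composite factors through a degenerate simplex vanishes; together these two families reproduce the defining relations of $\underline{\KK\Delta}(m,n)$, and a choice of normalisation of the one-dimensional Hom-spaces promotes the resulting bijection of quivers-with-relations to an algebra isomorphism. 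I expect the main obstacle to be exactly this Hom-vanishing criterion together with its compatibility with composition: establishing that, in dimension $m>1$, the higher mesh relations governing $\M_{\ell}^{(m)}$ collapse to the single combinatorial condition ``factors through a degenerate simplex'' requires genuine input from higher Auslander--Reiten theory, and verifying that the composite of two non-zero cluster-tilting morphisms is non-zero exactly when the corresponding poset composite avoids all degenerate simplices is the delicate point that does not follow formally from the bijection on objects alone.
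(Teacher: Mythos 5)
Your reductions are correct as far as they go: the identification of $\End(\underline{\KK \Delta}(m,n))$ with the endomorphism algebra of the full subcategory on non-degenerate simplices, the $m=1$ computation with interval modules, and the unwinding of ``no degenerate simplex lies in $[f,f']$'' into the interlacing condition $f_0\le f'_0<f_1\le f'_1<\cdots<f_m\le f'_m$ are all accurate. But the proposal is a roadmap, not a proof: the two statements carrying the entire representation-theoretic weight are deferred, and you say so yourself. Concretely, you never establish (a) that for $m>1$ the indecomposable objects of $\M_\ell^{(m)}$ are indexed by $\Delta(m,n)^\sharp$ with irreducible maps given by covering relations, nor (b) that the Hom spaces between these objects are at most one-dimensional, vanish exactly when a degenerate simplex intervenes, and have non-vanishing composites precisely when the poset combinatorics allows. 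Neither follows formally from Iyama's existence theorem or from the fully faithful functor $G$ of \eqref{eq:typea}: $G$ identifies $\M_\ell^{(m)}$ only with the \emph{projective} $A_\ell^{(m+1)}$-modules, whereas the $(m+1)$-cluster tilting subcategory $\M_\ell^{(m+1)}$ contains many non-projective objects, produced by iterating the higher Auslander--Reiten translate, and computing Hom spaces into and out of those is exactly the hard content. Your induction therefore never gets off the ground; its inductive step \emph{is} the proposition.

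For comparison, the paper does not reprove these facts either: its proof of \th\ref{prop:comaus} is a citation to \cite{JK16}, where the translation into the language of $\underline{\KK \Delta}(m,n)$ is carried out on the basis of the explicit description, in Section 3 of \cite{OT12}, of the indecomposable summands $M_A$ of the cluster-tilting module (indexed by $(m+1)$-element subsets of $\set{0,\dots,n}$) and of the Hom spaces between them (one-dimensional exactly in the interlacing case, with the expected behaviour under composition). So what you have reconstructed is, in effect, the reduction performed in \cite{JK16}; to complete your argument you must either quote those results of \cite{OT12} or prove them, which is done there by direct computation with the modules $M_A$ rather than by the formal induction on $m$ that you sketch.
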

\begin{proof}
	Up to a minor modification of the object labels, a proof of this fact is
  explained in \cite{JK16} based on the description given in \cite{OT12}.
\end{proof}

Let $\ell$ be positive integer. Recall that $A_\ell^{(1)}$ denotes the $\KK$-linear
path algebra of the quiver $1\to2\to\cdots\to\ell$. As explained in
\S\ref{ss:higher_ar_theory}, the higher Auslander algebras of type
$\AA_\ell$ were originally constructed by Iyama by means of the recursion
\begin{equation}
  \label{eq:recursion_A}
  A_\ell^{(m+1)}=\End_{A_\ell^{(m)}}(M)
\end{equation}
where $M$ is, up to multiplicity of its direct summands, the unique
$m$-cluster-tilting $A_\ell^{(m)}$-module. We now explain how this distinguished
$A_\ell^{(m)}$-module can be described in terms of the simplicial combinatorics.

\begin{notation}
  For linearly ordered sets
  \[
    I=\set{i_0<i_1<\dots<i_m}\qquad\text{and}\qquad J=\set{j_0<j_1<\cdots<j_n}
  \]
  let
  \begin{equation*}
    I*J\coloneqq \set{i_0<i_1<\dots<i_m<j_0<j_1<\cdots<j_n}.
  \end{equation*}
  In particular for each $n\geq0$ there is are (unique) isomorphisms of posets
  $[0]*[n]\cong[n+1]$ and $[n]*[0]\cong[n+1]$. These identifications allow us to
  consider the faithful endofunctors
  \begin{equation*}
    ([0]*-)\colon\Delta\lra\Delta\qquad\text{and}\qquad(-*[0])\colon\Delta\lra\Delta.
  \end{equation*}
  Concretely, for a simplex $\sigma\colon[m]\to[n]$ we have
  \begin{align*}
    ([0]*\sigma)_i&=\begin{cases}
      0&i=0\\
      \sigma_{i-1}+1&1\leq i\leq m+1;
    \end{cases}
                      \intertext{and}
                      (\sigma*[0])_i&=\begin{cases}
                        \sigma_i&0\leq i\leq m\\
                        n+1&i=m+1.
                      \end{cases}
  \end{align*}
\end{notation}

\begin{remark}
  The endofunctor $-*[0]$ is the basic ingredient in the
  definition of the \emph{d\'ecalage} of a simplicial set.
\end{remark}

The algebra isomorphism from \th\ref{prop:comaus} implies the existence
of an equivalence of categories
\begin{equation}
  \label{eq:comaus_mit_der_sendung}
  \mod A_\ell^{(m)} \simeq \Fun_{\KK}(\underline{\KK \Delta}(m-1,n-1)^{\op}, \mod{\KK})
\end{equation}
between the category of finitely generated right $A_\ell^{(m)}$-modules and the
category of contravariant $\KK$-linear functors $\underline{\KK \Delta}(m-1,n-1)\to\mmod\KK$.
The poset monomorphism
\begin{equation*}
  ([0]*-)\colon\Delta(m-1,n-1)\hookrightarrow\Delta(m,n)
\end{equation*}
preserves the partition of $\Delta(m-1,n-1)$ into non-degenerate and degenerate
simplices whence it induces a fully faithful $\KK$-linear functor
\begin{equation}
  \label{eq:gamma}
	\gamma\colon \underline{\KK \Delta}(m-1,n-1)^{\op}\lra\underline{\KK \Delta}(m,n)^{\op}.
\end{equation}
The pullback functor $\gamma^*$ admits a fully faithful right adjoint
\begin{equation*}
  \gamma_*\colon\Fun_\KK(\underline{\KK \Delta}(m-1,n-1)^{\op},\mmod\KK)\to\Fun_\KK(\underline{\KK \Delta}(m,n)^{\op},\mmod\KK)
\end{equation*}
given by $\mmod\KK$-enriched right Kan extension along $\gamma$. Under the
equivalence \eqref{eq:comaus_mit_der_sendung}, the functor $\gamma_*$ is
identified with the fully faithful left exact functor
\[
  G\colon \mod A_\ell^{(m)} \lra \mod A_\ell^{(m+1)}
\]
of \eqref{eq:typea}. Finally, consider the Yoneda functor
\[
  \iota\colon \Delta(m,n)\lra\Fun_{\KK}(\underline{\KK \Delta}(m,n)^{\op},
  \mod{\KK}),\quad \sigma \mapsto \Hom_{\underline{\KK \Delta}(m,n)}(-,\sigma)
\]
and form the composite functor
\begin{equation}\label{eq:fundamental}
  \varepsilon = \gamma^* \circ \iota: \Delta(m,n)\lra \mod A_\ell^{(m)},
\end{equation}
where the use of equivalence \eqref{eq:comaus_mit_der_sendung} is left implicit.

\begin{proposition}
  Let $n\geq m\geq 1$ and set $\ell=n-m+1$. Up to multiplicity of its direct
  summands, the unique $m$-cluster-tilting $A_\ell^{(m)}$-module is given by
  \[
    M=\bigoplus_{\sigma\in\Delta(m,n)}\varepsilon(\sigma)=\bigoplus_{\sigma\in\Delta(m,n)}\Hom_{\underline{\KK\Delta}(m,n)}(\gamma(-),\sigma).\qedhere
  \]
\end{proposition}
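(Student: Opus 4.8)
The plan is to show that the module $M = \bigoplus_\sigma \varepsilon(\sigma)$ agrees with Iyama's $m$-cluster-tilting module by identifying it via the functorial chain of equivalences already set up in the excerpt, rather than by verifying the cluster-tilting axioms directly. The key observation is that \th\ref{prop:comaus} gives an isomorphism $A_\ell^{(m+1)} \cong \End(\underline{\KK\Delta}(m,n))$, and by definition of the (higher) Auslander algebra via the recursion \eqref{eq:recursion_A}, we have $A_\ell^{(m+1)} = \End_{A_\ell^{(m)}}(M')$ where $M'$ is the (essentially unique) $m$-cluster-tilting $A_\ell^{(m)}$-module. Since $A_\ell^{(m)}$ has finite representation type with the module category as in \eqref{eq:comaus_mit_der_sendung}, the module $M'$ is recovered up to multiplicity from its endomorphism algebra as the sum of indecomposable summands indexed by the primitive idempotents, i.e.\ by the objects of the category $\underline{\KK\Delta}(m,n)$, which are precisely the simplices $\sigma \in \Delta(m,n)$.

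The main steps I would carry out are as follows. First, I would recall that under the equivalence \eqref{eq:comaus_mit_der_sendung}, the indecomposable projective $A_\ell^{(m+1)}$-modules correspond to the representable functors on $\underline{\KK\Delta}(m,n)$, and that the functor $G$ of \eqref{eq:typea} is identified with the right Kan extension $\gamma_*$ along $\gamma$. Second, since $G = \Hom_{A_\ell^{(m)}}(M', -)$ is fully faithful, the image $G(X)$ of an indecomposable $A_\ell^{(m)}$-module $X$ has endomorphism ring a local ring, so $G$ sends indecomposables to indecomposables; applying this to the summands of $M'$ shows that the indecomposable summands of $M'$ correspond exactly to those projective $A_\ell^{(m+1)}$-modules $P_\sigma = \Hom_{\underline{\KK\Delta}(m,n)}(\gamma(-),\sigma)$ lying in the essential image of $\gamma_*$. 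Third, I would argue that $\gamma^* P_\sigma = \varepsilon(\sigma)$ by unwinding the definition of $\varepsilon = \gamma^* \circ \iota$ in \eqref{eq:fundamental}, so that $\gamma^* G(X) \cong X$ (unit of the adjunction, using full faithfulness of $\gamma_*$) translates the summands of $M'$ into the family $\set{\varepsilon(\sigma)}_{\sigma \in \Delta(m,n)}$.

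The heart of the matter is a counting-and-indexing argument: I must verify that \emph{every} simplex $\sigma$, both degenerate and non-degenerate, contributes exactly one indecomposable summand, and that the map $\sigma \mapsto \varepsilon(\sigma)$ is injective up to isomorphism on summands. Here the point is that the objects of $\underline{\KK\Delta}(m,n)$ are \emph{all} of $\Delta(m,n)$ — we do not quotient out the degenerate simplices as objects, only the morphisms factoring through $\Delta(m,n)^\flat$ — so the primitive idempotents of $A_\ell^{(m+1)} = \End(\underline{\KK\Delta}(m,n))$ are indexed by all of $\Delta(m,n)$, matching the indexing set in the claimed formula. The essential uniqueness (``up to multiplicity'') is then inherited directly from Iyama's uniqueness result for the $m$-cluster-tilting module underlying the recursion \eqref{eq:recursion_A}.

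I expect the main obstacle to be checking that the identification is compatible with the \emph{op}-variance and the $\mmod\KK$-enriched right Kan extension $\gamma_*$ appearing in \eqref{eq:gamma}: one must confirm that right Kan extension along the fully faithful $\gamma$ genuinely corresponds to the Auslander functor $G$ on the nose (not merely up to a twist), so that the counit/unit computation correctly reproduces $\varepsilon(\sigma)$ from the representable $\iota(\sigma)$. Once this bookkeeping is in place, the identity $M \cong M'$ up to multiplicity follows formally, since both are characterised as the sum over a complete set of representatives of indecomposables detected by $G$, and these representatives are precisely the $\varepsilon(\sigma)$.
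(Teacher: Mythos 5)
Your proposal is correct and takes essentially the same route as the paper: both arguments rest on identifying the adjoint pair $F\dashv G$ with $\gamma^*\dashv\gamma_*$ under the equivalence \eqref{eq:comaus_mit_der_sendung}, together with the observation that the regular representation of $A_\ell^{(m+1)}$ corresponds there to the sum of representables $\bigoplus_{\sigma\in\Delta(m,n)}\iota(\sigma)$. The paper runs this in one step---the left adjoint $F=-\otimes_{A_\ell^{(m+1)}}M$ sends the regular representation to $M$, hence $M\cong\gamma^*\bigl(\bigoplus_{\sigma}\iota(\sigma)\bigr)=\bigoplus_{\sigma}\varepsilon(\sigma)$---whereas you apply $G$ to $M$ and return via the counit isomorphism $\gamma^*\gamma_*\simeq\mathrm{id}$; the extra indecomposability and idempotent-counting bookkeeping you anticipate is not needed once that identification is in place.
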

\begin{proof}
  Indeed, the functor
  \[
    G\colon \mod A_\ell^{(m)} \lra \mod A_\ell^{(m+1)},\quad
    X\mapsto\Hom_{A_{\ell}^{(m)}}(M,X)
  \]
  has a left adjoint
  \[
    F\colon \mod A_\ell^{(m+1)} \lra \mod A_\ell^{(m)},\quad Y\mapsto
    Y\otimes_{A_{\ell}^{(m+1)}}M,
  \]
  where we use the recursion \eqref{eq:recursion_A} to view $M$ as a (left)
  module over $A_\ell^{(m+1)}$. Clearly, the functor $F$ identifies the regular
  representation $A_\ell^{(m+1)}$ with the $A_{\ell}^{(m)}$-module $M$. Finally,
  under the equivalence \eqref{eq:comaus_mit_der_sendung} the adjoint pair
  $F\dashv G$ is identified with the adjoint pair $\gamma^*\dashv\gamma_*$. The
  claim follows.
\end{proof}

As we explain below, the functor $\varepsilon$ reveals interesting simplicial
structures hidden within the representation theory of the higher
Auslander algebras of type $\AA$.

\subsection{Relation to Eilenberg--MacLane spaces}
\label{sec:em}

Let $B$ be an abelian group and $m$ a positive integer. An Eilenberg--MacLane
space of type $\K(B,m)$ is a (path connected) topological space characterised up
to weak homotopy equivalence by the property
\begin{equation}\label{eq:EM}
  \pi_k(\K(B,m)) \simeq \begin{cases} B & \text{if $k = m$,}\\ 0 & \text{if $k \ne m$.}
  \end{cases}
\end{equation}
One important aspect of Eilenberg--MacLane spaces is that they are classifying
spaces for singular cohomology with coefficients in $B$: for every CW-complex
$X$, there is a canonical bijection between $H^n(X,B)$ and the set of homotopy
classes of maps $X\to \K(B,m)$, see Corollary III.2.7 in \cite{GJ99}.

An explicit combinatorial model for an Eilenberg-MacLane space of type $\K(B,m)$
can be obtained via the Dold--Kan correspondence \cite{Dol58,Kan58}. Under this
correspondence, the aforementioned model is the simplicial abelian group that
corresponds to the chain complex $B[m]$ consisting only of the abelian group $B$
placed in homological degree $m$. With some abuse of notation, we denote this
simplicial abelian group by $\K(B,m)$. The characterising property \eqref{eq:EM}
is an immediate consequence of the fact that, under the Dold--Kan
correspondence, the cohomology groups of a given chain complex are identified
with the homotopy groups of the corresponding simplicial abelian group, see
Corollary III.2.5 in \cite{GJ99}. We provide an explicit description of the
abelian group of $n$-simplices of $\K(B,m)$.

\begin{definition}\label{defi:em} Let $m,n \ge 0$. Recall that $\Delta(m,n)$ denotes
  the set of $m$-simplices in $\Delta^n$. Then
  \[
    \K(B,m)_n \subset \Map(\Delta(m,n), B)
  \]
  is the subgroup of those (set-theoretic) functions $f\colon \Delta(m,n) \to B$
  which have the following properties:
  \begin{enumerate}
	\item The function $f$ maps all degenerate $m$-simplices to $0$.
	\item For every non-degenerate $(m+1)$-simplex $\sigma\colon [m+1] \to [n]$ the
    equation
		\[
			\sum_{i=0}^{m+1} f( \sigma \circ d^i ) = 0
		\]
    is satisfied, where $d^i\colon[m]\to[m+1]$ is the unique strictly monotone
    map whose image does not contain $i$.
  \end{enumerate}
  The simplicial structure of $\K(B,m)$ is obtained via the functoriality of
  $\Delta(m,n)$ in $[n] \in \Delta$.
\end{definition}

\begin{theorem}\th\label{thm:em} Let $n\geq m\geq 1$ and set $\ell = n-m+1$. We denote
  the Grothendieck group of the abelian category $\mmod A_\ell^{(m)}$ by
  $K_0(A_\ell^{(m)})$. The following statements hold:
	\begin{enumerate}
  \item The map
    \[
      e\colon \Delta(m,n) \lra K_0(A_{\ell}^{(m)})
    \]
    induced by the functor $\varepsilon\colon\Delta(m,n)\to\mmod A_{\ell}^{(m)}$
    described in \eqref{eq:fundamental} defines an $n$-simplex in
    $\K(K_0(A_\ell^{(m)}),m)$.
  \item The $n$-simplex $e$ is universal in the following sense: For every
    abelian group $B$ the map
    \[
      \Hom_{\Ab}(K_0(A_\ell^{(m)}), B) \lra\K(B,m)_n
    \]
    induced by pullback along $\varepsilon$ is an isomorphism.
  \item There is a co-simplicial abelian group
    \[
      \K_0(A_{\bullet-m+1}^{(m)})\colon\Delta\lra\Ab
    \]
    where, by convention, $K_0(A_{n-m+1}^{(m)})=0$ for $n<m$. Moreover, the
    $n$-simplices $e$ from statement (1) exhibit $K_0(A_{\bullet-m+1}^{(m)})$ as
    a quotient
    \[
      e_\bullet\colon\ZZ\Delta(m,-)\twoheadrightarrow K_0(A_{\bullet-m+1}^{(m)})
    \]
    of the free co-simplicial abelian group $\ZZ\Delta(m,-)$.
  \end{enumerate}
  In particular, for every abelian group $B$ the map
  \[
    e_\bullet^*\colon\Hom_{\Ab}(K_0(A_{\bullet-m+1}^{(m)}), B) \lra \K(B,m)
  \]
  is an isomorphism of simplicial abelian groups.
\end{theorem}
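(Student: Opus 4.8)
The plan is to deduce the final assertion formally from statements (1), (2), and (3). The key observation is that $\Hom_{\Ab}(-,B)$ is a contravariant additive functor, so applying it to the cosimplicial abelian group $K_0(A_{\bullet-m+1}^{(m)})$ of statement (3) yields a simplicial abelian group, and applying it to the cosimplicial quotient map $e_\bullet\colon\ZZ\Delta(m,-)\twoheadrightarrow K_0(A_{\bullet-m+1}^{(m)})$ yields a morphism of simplicial abelian groups
\[
  e_\bullet^*\colon\Hom_{\Ab}(K_0(A_{\bullet-m+1}^{(m)}),B)\lra\Hom_{\Ab}(\ZZ\Delta(m,-),B).
\]
Since $\ZZ\Delta(m,n)$ is the free abelian group on the set $\Delta(m,n)$, the universal property of free abelian groups identifies the target levelwise with $\Map(\Delta(m,n),B)$, and hence identifies the target with the simplicial abelian group $\Map(\Delta(m,-),B)$ whose simplicial structure is precisely the one used to define $\K(B,m)$ in \th\ref{defi:em}. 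Because $e_\bullet$ is a levelwise surjection, $e_\bullet^*$ is a levelwise injection, hence a monomorphism of simplicial abelian groups.

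Next I would identify the image of $e_\bullet^*$ with the simplicial subgroup $\K(B,m)\subseteq\Map(\Delta(m,-),B)$. In degree $n$ the map $e_\bullet^*$ sends a homomorphism $\phi\colon K_0(A_{n-m+1}^{(m)})\to B$ to the composite $\phi\circ e$; by statement (1) the function $e$ lies in $\K(K_0(A_{n-m+1}^{(m)}),m)_n$, so $\phi\circ e$ inherits both the vanishing on degenerate $m$-simplices and the cocycle condition, and therefore lands in $\K(B,m)_n$. Statement (2) asserts that this assignment is a bijection onto $\K(B,m)_n$. Combining the two, $e_\bullet^*$ is a levelwise isomorphism onto the sub-simplicial abelian group $\K(B,m)$, and being a morphism of simplicial abelian groups it is thereby an isomorphism of simplicial abelian groups onto $\K(B,m)$.

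I expect the genuine content to reside entirely in statements (1)--(3), so that the final assertion is essentially a bookkeeping exercise. The one point demanding care is the compatibility of the two simplicial structures: one must check that the simplicial structure on $\K(B,m)$ coming from the covariant functoriality of $\Delta(m,-)$ in $[n]\in\Delta$ agrees with the one obtained by applying $\Hom_{\Ab}(-,B)$ to the cosimplicial structure on $K_0(A_{\bullet-m+1}^{(m)})$. This is exactly guaranteed by statement (3), which exhibits $e_\bullet$ as a morphism of cosimplicial abelian groups presenting $K_0(A_{\bullet-m+1}^{(m)})$ as a quotient of the free cosimplicial abelian group $\ZZ\Delta(m,-)$, whose associated simplicial structure on $\Hom_{\Ab}(\ZZ\Delta(m,-),B)\cong\Map(\Delta(m,-),B)$ is by construction the defining one of $\K(B,m)$. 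Once this compatibility is recorded, no further calculation is needed.
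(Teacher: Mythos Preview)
Your proposal addresses only the final ``In particular'' assertion, deducing it formally from statements (1)--(3). That deduction is correct and is indeed straightforward bookkeeping, as you say. However, the theorem requires you to establish (1), (2), and (3) themselves, and you have not attempted this; you explicitly defer ``the genuine content'' to those statements without supplying it.

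The paper's proof proceeds by observing that, in view of the explicit model for $\K(B,m)_n$ given in Definition~\ref{defi:em}, all three statements are equivalent to the single algebraic claim that the map $\ZZ\Delta(m,n)\to K_0(A_\ell^{(m)})$ induced by $e$ is surjective with kernel $R(m,n)$ generated by the degenerate $m$-simplices together with the elements $\sum_{i=0}^{m+1}\tau\circ d^i$ for non-degenerate $(m+1)$-simplices $\tau$. Surjectivity holds because every indecomposable projective lies in the image of $\varepsilon$ (namely as $\varepsilon(\gamma(\sigma))$ for $\sigma\in\Delta(m-1,n-1)^\sharp$) and these freely generate $K_0$ since $A_\ell^{(m)}$ has finite global dimension. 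For the kernel, degenerate simplices visibly map to zero, and the alternating-sum relations come from the exact sequences
\[
0\to\varepsilon(\tau\circ d^{m+1})\to\cdots\to\varepsilon(\tau\circ d^0)\to 0
\]
in $\mmod A_\ell^{(m)}$ established in Proposition~3.19 of \cite{OT12}. This representation-theoretic input is the heart of the argument, and it is precisely what your proposal omits.
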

\begin{proof}
  In view of the explicit model for $\K(B,m)$ given in Definition \ref{defi:em},
  the statements are equivalent to saying that the map of abelian groups
  \begin{equation}
    \label{eq:the_map}
    \ZZ \Delta(m,n) \lra K_0(A_{\ell}^{(m)})
  \end{equation}
  induced by $e$ is surjective with kernel $R(m,n)$ generated by:
  \begin{itemize}
  \item the elements of $\Delta(m,n)$ corresponding to degenerate $m$-simplices and
  \item the elements of $\Delta(m,n)$ of the form
    \[
      \sum_{i=0}^{m+1} \tau \circ d^i
    \]
    where $\tau\colon [m+1] \to [n]$ is a non-degenerate $(m+1)$-simplex.
  \end{itemize}
  It is straightforward to verify that $R(m,-)$ is a co-simplicial subgroup of
  $\ZZ\Delta(m,-)$. The map \eqref{eq:the_map} is surjective since all
  indecomposable projectives in
  \[
    \mod A_\ell^{(m)}\simeq\Fun_{\KK}(\underline{\KK \Delta}(m-1,n-1)^{\op}, \mod
    \KK)
  \]
  are in the image of $\varepsilon$ and, since the algebra $A_\ell^{(m)}$ has
  finite global dimension, the indecomposable projectives freely generate
  $K_0(A_\ell^{(m)})$. Indeed, an indecomposable projective module represented
  by an $(m-1)$-simplex $\sigma$ is the image of $\gamma(\sigma)$:
  \[
    \Hom_{\underline{\KK\Delta}(m-1,n-1)}(-,\sigma)\cong\Hom_{\underline{\KK\Delta}(m,n)}(\gamma(-),\gamma(\sigma))=\varepsilon(\gamma(\sigma)).
  \]
  It is obvious that all degenerate $m$-simplices map to $0$. We now show that
  the kernel of the map \eqref{eq:the_map} is given by $R(m,n)$. Each
  non-degenerate simplex $\tau\colon[m+1]\to[n]$ induces a sequence
  \[
    \tau\circ d^{m+1}\lra\tau\circ d^{m}\lra\dots\lra\tau\circ d^{1}\lra\tau\circ
    d^{0}
  \]
  in the poset $\Delta(m,n)$. It is straightforward to verify that this sequence
  is mapped, under the functor $\varepsilon$ of \eqref{eq:fundamental}, to an
  exact sequence
  \[
    0\lra\varepsilon(\tau\circ d^{m+1}) \lra\varepsilon(\tau\circ
    d^{m})\lra\dots\lra\varepsilon(\tau\circ d^{1})\lra\varepsilon(\tau\circ
    d^{0})\lra0
  \]
  in $\mod A_\ell^{(m)}$. Indeed, this exact sequence is described in
  Proposition 3.19 in \cite{OT12}, albeit in a different combinatorial
  framework. This implies the claimed relations.
\end{proof}

\begin{remark}
  \th\label{rmk:em}
  The short exact sequence
  \[
    0\lra R(m,n)\lra\ZZ\Delta(m,n)\lra K_0(A_\ell^{(m)})\lra 0
  \]
  constructed in the proof of \th\ref{thm:em} corresponds to the
  presentation of the Grothendieck group associated to the $m$-cluster tilting
  $A_\ell^{(m)}$-module $M$, which can be defined using the ideas of \cite{BT14}.
\end{remark}

\subsection{Relation to the higher Waldhausen
  $\eS_{\bullet}$-constructions}
\label{sec:sm}

Segal's $\Gamma$-space delooping framework \cite{Seg74} provides a beautiful
means to understand the algebraic $K$-theory spectrum of a ring $R$ as the group
completion of the direct sum operation on the classifying space of finitely
generated projective $R$-modules. In \cite{Wal85} Waldhausen introduced the
$\eS_{\bullet}$-construction as a variant of Segal's construction that also
accounts for the presence of possibly non-split short exact sequences. This
construction works in the much larger generality of what are now called
Waldhausen categories, where Waldhausen himself used it to define his algebraic
$K$-theory of spaces.

We recall a higher-dimensional generalisation of the
$\eS_{\bullet}$-construction introduced by Hesselholt and Madsen in dimension
$2$ \cite{HM15} and by Poguntke in arbitrary positive dimensions \cite{Pog17}.

\begin{definition}\label{defi:sm} For $m,n \ge 0$, let $\Delta(m,n)$ be the poset of $m$-simplices in
	$\Delta^n$. For an abelian category $\B$, we define
	\[
		\eS_n^{\langle m\rangle}(\B) \subset \Fun(\Delta(m,n), \B)
	\]
	to be the full subcategory of functors $X\colon \Delta(m,n) \to \B$ satisfying the
  following conditions:
	\begin{enumerate}
  \item The functor $X$ maps degenerate simplices to zero objects in $\B$.
  \item For every non-degenerate $(m+1)$-simplex $\sigma$, the sequence
    \[
      0 \lra X(\sigma \circ d^{m+1}) \lra X(\sigma \circ d^{m}) \lra \dots \lra
      X(\sigma \circ d^{0}) \lra 0
    \]
    is exact.
	\end{enumerate}
	Via the functoriality of $\Delta(m,n)$ in $[n] \in \Delta$ we obtain a
  simplicial category $\eS_{\bullet}^{\langle m\rangle}(\B)$ that we call the {\em
    $m$-dimensional Waldhausen $\eS_{\bullet}$-construction of $\B$}.
\end{definition}

\begin{remark} Note that Definition \ref{defi:sm} is a categorical version of
  Definition \ref{defi:em}. In fact, there is a direct relation between the two:
  Treating $\eS_{\bullet}^{\langle m\rangle}(\B)$ as a simplicial exact
  category, it follows from additivity that we have an isomorphism of simplicial
  abelian groups
	\[
		K_0(\eS_{\bullet}^{\langle m\rangle}(\B)) \cong K(K_0(\B), m).\qedhere
	\]
\end{remark}

\begin{remark} The simplicial category $\eS_{\bullet}^{\langle 1\rangle}(\B)$ is
  Waldhausen's original $\eS_{\bullet}$-construction. The construction
  $\eS_{\bullet}^{\langle 2\rangle}(\B)$ was introduced by Hesselholt and Madsen
  in \cite{HM15} as a model for real algebraic $K$-theory. Finally, the
  higher versions $\eS_{\bullet}^{\langle m\rangle}(\B)$ were
  introduced and studied by Poguntke in \cite{Pog17} from the point of view of
  higher Segal spaces.
\end{remark}

\begin{remark}
  The relevance of these constructions for algebraic $K$-theory is the
  following: Given a category $\C$, we denote by $\C^{\simeq}$ the groupoid
  obtained by discarding all noninvertible morphisms in $\C$. Then, for an
  abelian category $\B$, the higher algebraic $K$-groups of $\B$ are given by
  the formula
  \[
    K_i(\B) = \pi_{i+m}(|S_{\bullet}^{\langle m\rangle}(\B)^{\simeq}|)
  \]
  where $|-|$ denotes geometric realization. Further, the sequence of spaces
  \[
    |S_{\bullet}^{\langle 1\rangle}(\B)^{\simeq}|,\; |S_{\bullet}^{\langle
      2\rangle}(\B)^{\simeq}|,\; |S_{\bullet}^{\langle 3\rangle}(\B)^{\simeq}|,\;
    \dots
  \]
  forms an $\Omega$-spectrum which is a model for the connective algebraic
  $K$-theory spectrum of $\B$. This can be seen, for example, by using
  straightforward higher-dimensional generalisations of the additivity results
  of Hesselholt and Madsen, or a totalization construction introduced by
  Poguntke.
\end{remark}

We now come to the relation between the higher Waldhausen
$\eS_\bullet$-con\-struct\-ions and the higher Auslander algebras of
type $\AA$. This is captured in the following result which can be considered a
categorified version of \th\ref{thm:em}, see also \th\ref{rmk:em}.

\begin{theorem}\th\label{thm:sm} Let $n\geq m\geq 1$ and set $\ell=n-m+1$. Define the
  subcategory
  \[
    \M_\ell^{(m)}\coloneqq \setP{\varepsilon(\sigma)\in\mmod
      A_\ell^{(m)}}{\sigma\in\Delta(m,n)}.
  \]
  The following statements hold:
	\begin{enumerate}
  \item The functor
    \[
      \varepsilon\colon \Delta(m,n) \lra \M_\ell^{(m)}\hookrightarrow\mod
      A_{\ell}^{(m)}
    \]
    from \eqref{eq:fundamental} defines an $n$-simplex in
    $\eS_{\bullet}^{\langle m\rangle}(\mod A_{\ell}^{(m)})$.
  \item The $n$-simplex $\varepsilon$ is universal in the following sense: For
    every $\KK$-linear abelian category $\B$, the functor
    \[
      \Fun_{\KK}^{\on{ex}}(\M_\ell^{(m)}, \B) \lra \eS_{n}^{\langle m\rangle}(\B)
    \]
    induced by pullback along $\varepsilon$ is an equivalence, where the
    leftmost category is the category of \emph{exact} $\KK$-linear functors
    $\M_\ell^{(m)}\to\B$, that is functors which send sequences
    \begin{equation}
      \label{eq:m-exact_sequence}
      0\lra M_{m+1}\lra M_m\lra\cdots\lra M_1\lra M_0\lra0
    \end{equation}
    in $\M_\ell^{(m)}$ which are exact in $\mmod A_\ell^{(m)}$ to exact
    sequences in $\B$.\qedhere
	\end{enumerate}
\end{theorem}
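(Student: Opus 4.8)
The plan is to deduce Part (1) directly from the computations already carried out in the proof of \th\ref{thm:em}, and then to reduce Part (2) to a question of additive linear algebra together with a single structural input about the $m$-exact sequences in $\M_\ell^{(m)}$. For Part (1), observe that the two conditions defining $\eS_n^{\langle m\rangle}(\mmod A_\ell^{(m)})$ in Definition \ref{defi:sm} are exactly the two facts about $\varepsilon$ established while proving \th\ref{thm:em}: condition (1) is the observation that $\varepsilon$ sends degenerate simplices to $0$ (their identities factor through degenerate simplices in $\underline{\KK\Delta}(m,n)$), and condition (2) is the exactness in $\mmod A_\ell^{(m)}$ of the \emph{standard sequence}
\[
  0\lra\varepsilon(\sigma\circ d^{m+1})\lra\cdots\lra\varepsilon(\sigma\circ d^0)\lra0
\]
attached to a non-degenerate $(m+1)$-simplex $\sigma$, which is the sequence of \cite{OT12} recalled there. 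In particular the pullback functor $\Fun^{\ex}_{\KK}(\M_\ell^{(m)},\B)\to\eS_n^{\langle m\rangle}(\B)$ of Part (2) is well defined, since an exact functor sends these exact sequences to exact sequences and zero objects to zero objects.

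For Part (2), I would first strip away the exactness conditions and identify the underlying additive data. By \th\ref{prop:comaus} we have $A_\ell^{(m+1)}\cong\End(\underline{\KK\Delta}(m,n))$; combined with the full faithfulness of $G=\Hom_{A_\ell^{(m)}}(M,-)$, this shows that $\varepsilon$ induces a fully faithful functor $\underline{\KK\Delta}(m,n)\to\M_\ell^{(m)}$ whose image generates $\M_\ell^{(m)}$ under finite direct sums and summands. Since $\B$ is abelian, hence additive and idempotent complete, restriction along $\varepsilon$ therefore yields equivalences
\[
  \Fun_{\KK}(\M_\ell^{(m)},\B)\simeq\Fun_{\KK}(\underline{\KK\Delta}(m,n),\B)\simeq\setP{X\colon\Delta(m,n)\to\B}{X\text{ sends degenerate simplices to }0},
\]
the second one because a functor killing the degenerate \emph{objects} automatically kills every morphism factoring through them, which is precisely what it means to factor through $\underline{\KK\Delta}(m,n)$. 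Note that the right-hand condition is exactly condition (1) of Definition \ref{defi:sm}.

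It then remains to match the exactness conditions under this equivalence: writing $X=F\circ\varepsilon$, I must show that $F$ is exact if and only if $X$ satisfies condition (2) of Definition \ref{defi:sm}. The forward implication is immediate, since the standard sequences are exact in $\mmod A_\ell^{(m)}$. The converse is the heart of the matter and the step I expect to be the main obstacle: one must show that an additive functor preserving the standard sequences automatically preserves \emph{every} sequence of the form \eqref{eq:m-exact_sequence} that is exact in $\mmod A_\ell^{(m)}$. I would establish this by proving that the standard sequences generate all such exact sequences under the operations preserved by any additive functor---direct sums, isomorphism, and addition of split sequences---concretely, that every exact sequence \eqref{eq:m-exact_sequence} in $\M_\ell^{(m)}$ is isomorphic to a finite direct sum of standard sequences and split sequences. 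For $m=1$ this is the classification of extensions in $\mmod A_\ell^{(1)}$, where every non-split extension of interval modules is a contiguous-interval sequence, hence standard; for general $m$ it is the corresponding statement for the $m$-cluster tilting subcategory $\M_\ell^{(m)}$, which I would extract from the structure theory of these categories in \cite{Iya11,OT12} (see also the $m$-abelian formalism of \cite{Jas16}). Granting this decomposition, condition (2) forces $F$ to be exact, so the full subcategories $\Fun^{\ex}_{\KK}(\M_\ell^{(m)},\B)$ and $\eS_n^{\langle m\rangle}(\B)$ correspond to one another under the equivalence of the previous paragraph, completing the proof.
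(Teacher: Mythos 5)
Your treatment of Part (1) and your overall reduction in Part (2) --- identifying reduced diagrams $\Delta(m,n)\to\B$ with $\KK$-linear functors out of $\underline{\KK\Delta}(m,n)\simeq\M_\ell^{(m)}$ via \th\ref{prop:comaus}, then matching exactness conditions --- coincide with the paper's proof, and the forward implication is indeed immediate. The gap is in the converse, exactly where you predicted trouble, and the lemma you propose there is false: it is \emph{not} true that every exact sequence \eqref{eq:m-exact_sequence} in $\M_\ell^{(m)}$ is a finite direct sum of standard sequences and split sequences, nor (for $m=1$) that every non-split extension between indecomposables has indecomposable middle term. Concretely, take $m=1$, $n=3$, $\ell=3$ and the rectilinear square $q$ in $\Delta(1,3)$ with $q_{(0,0)}=02$, $q_{(0,1)}=03$, $q_{(1,0)}=12$, $q_{(1,1)}=13$. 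By \th\ref{lemma:OT_m-ex} the associated sequence
\[
0 \to \varepsilon(02) \to \varepsilon(03)\oplus\varepsilon(12) \to \varepsilon(13) \to 0
\]
is exact in $\mmod A_3^{(1)}$; it is one of the meshes (Auslander--Reiten sequences) visible in \eqref{eq:A3_AR_quiver}. By Krull--Schmidt it is not split, since $\varepsilon(02)\oplus\varepsilon(13)\not\cong\varepsilon(03)\oplus\varepsilon(12)$, so any decomposition of the kind you propose would have to contain exactly one standard summand; and since the end terms are indecomposable, every split summand would have zero end terms and hence vanish. The sequence would therefore itself be standard, with middle term $\varepsilon(\tau\circ d^1)$ for a non-degenerate $2$-simplex $\tau$, hence indecomposable --- contradicting the fact that its middle term is a direct sum of two nonzero modules. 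Such mesh sequences exist whenever $\ell\geq 3$, and they are the generic case rather than a boundary phenomenon: non-split sequences with decomposable middle terms are the very objects Auslander--Reiten theory is built around.

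What replaces your decomposition lemma in the paper is a statement of a different kind. The paper combines \th\ref{lemma:OT_m-ex} (Proposition 3.19 in \cite{OT12}), which identifies the exact sequences \eqref{eq:m-exact_sequence} as precisely the totalisations of rectilinear $(m+1)$-cubes in $\Delta(m,n)$, with Remark 5.4 in \cite{Pog17}, which shows that conditions (1) and (2) of Definition \ref{defi:sm} already force a reduced diagram $X\colon\Delta(m,n)\to\B$ to send every rectilinear cube to an exact sequence. The propagation from simplex sequences to cube sequences is not closure under direct sums but a pasting argument: for $m=1$, the two standard sequences $0\to X(aa')\to X(ab)\to X(a'b)\to 0$ and $0\to X(aa')\to X(ab')\to X(a'b')\to 0$ share the kernel term $X(aa')$, and the nine lemma then shows that the square on $X(ab)$, $X(ab')$, $X(a'b)$, $X(a'b')$ is biCartesian, which is precisely the exactness of the cube sequence; the general case is an induction of this type. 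Since direct sums, isomorphisms and split sequences --- the operations preserved by an arbitrary additive functor --- do not generate the cube sequences from the standard ones, your lemma cannot be repaired within the class of operations you allow; the cube characterisation together with a pasting argument (which is what Poguntke's remark supplies) is genuinely needed.
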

\begin{proof}
  The first statement is established as part of the proof of
  \th\ref{thm:em}. The second claim follows immediately from
  \th\ref{lemma:OT_m-ex} below and Remark 5.4 in \cite{Pog17} which shows that the
  exactness conditions imposed on the $n$-simplices $F\colon\Delta(m,n)\to\B$ in
  $\eS_n^{\langle m\rangle}(\B)$ are equivalent to the induced functor
  $\M_\ell^{(m)}\cong\underline{\KK\Delta}(m,n)\to\B$ being exact.
\end{proof}

\begin{remark}
  \th\label{rmk:m-abelian}
  After passing to the additive envelope, the resulting subcategory $\M_\ell^{(m)}$ of
  \th\ref{thm:sm} is an $m$-abelian category in the sense of \cite{Jas16}.
\end{remark}

For the convenience of the reader, we translate Proposition 3.19 in \cite{OT12}
to our combinatorial framework.

\begin{lemma}
  \th\label{lemma:OT_m-ex}
  In the setting of \th\ref{thm:sm}, the exact sequences in $\M_\ell^{(m)}$
  of the form \eqref{eq:m-exact_sequence} are precisely the sequences of the
  form
  \begin{equation*}
    0\lra \varepsilon(q_{(0,\dots,0)})\lra\bigoplus_{|v|=1}\varepsilon(q_v)\lra\cdots\lra\bigoplus_{|v|=m}\varepsilon(q_v)\lra\varepsilon(q_{(1,\dots,1)})\lra0
  \end{equation*}
  where $q\colon\set{0,1}^{m+1}\to\Delta(m,n)$ is a rectilinear $(m+1)$-cube in
  the sense of Definition \ref{def:rectilinear_cube}, and $|v|=\sum v_i$.
\end{lemma}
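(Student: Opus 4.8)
The plan is to prove the two inclusions separately. That the totalisation of a rectilinear $(m+1)$-cube is an exact sequence of the form \eqref{eq:m-exact_sequence} I would establish directly, by a pointwise computation; for the converse---that every such exact sequence arises from a rectilinear cube---I would transport the classification given in Proposition~3.19 of \cite{OT12} along the identification of \th\ref{prop:comaus} (in the object labels of \cite{JK16}).

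For the forward inclusion, recall that $\varepsilon(\tau)=\Hom_{\underline{\KK\Delta}(m,n)}(\gamma(-),\tau)$ and that, under the equivalence $\mod A_\ell^{(m)}\simeq\Fun_\KK(\underline{\KK\Delta}(m-1,n-1)^{\op},\mmod\KK)$, exactness is tested pointwise. Evaluating at a simplex $\rho\in\Delta(m-1,n-1)$ with $\gamma\rho=[0]*\rho$, a short calculation shows that $\Hom_{\underline{\KK\Delta}(m,n)}(\gamma\rho,\tau)$ is one-dimensional exactly when the coordinates of $\gamma\rho$ and $\tau$ strictly interlace, $(\gamma\rho)_0\le\tau_0<(\gamma\rho)_1\le\tau_1<\dots<(\gamma\rho)_m\le\tau_m$, and vanishes otherwise; this is the combinatorial trace of having quotiented out morphisms factoring through degenerate simplices. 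Writing the cube, after relabelling coordinate directions, as $q_v=(\sigma_0+v_0a_0,\dots,\sigma_m+v_ma_m)$ with widths $a_i\ge1$, the interlacing criterion decouples across coordinates, so the support $S_\rho=\setP{v}{\Hom(\gamma\rho,q_v)\ne0}$ is a subcube $\prod_i T_i$ of $\set{0,1}^{m+1}$ with $T_i\subseteq\set{0,1}$. The key point is that the constraints $\sigma_i+a_i\le\sigma_{i+1}$---which hold automatically because each $q_v$ lies in $\Delta(m,n)$---force $S_\rho$, whenever it is non-empty, to have at least one free coordinate. Since the differentials of the pointwise total complex are signed identity maps between the surviving one-dimensional summands, this complex is the totalisation of a constant $\KK$-valued cube of positive dimension, hence acyclic over $\KK$; pointwise acyclicity yields the exactness of \eqref{eq:m-exact_sequence}. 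As a sanity check, when all gaps are tight ($\sigma_i+a_i=\sigma_{i+1}$) the vertices $q_v$ indexed by non-monotone $v$ become degenerate and drop out, and the totalisation collapses to the single-simplex exact sequence used in the proof of \th\ref{thm:em}.

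For the converse inclusion I would appeal to the classification of the sequences \eqref{eq:m-exact_sequence}---the $m$-exact sequences of the $m$-abelian category $\M_\ell^{(m)}$ of \th\ref{rmk:m-abelian}---recorded in Proposition~3.19 of \cite{OT12}. Under \th\ref{prop:comaus} the indecomposables $\varepsilon(\sigma)$ correspond to the non-degenerate $m$-simplices, equivalently to the $(m+1)$-subsets indexing the combinatorial model of \cite{OT12}, and it then remains to check that the exact sequences described there match, object for object and map for map, the totalisations of rectilinear cubes, including the direct-sum multiplicities $\bigoplus_{|v|=k}\varepsilon(q_v)$ at each position. I expect this dictionary to be the main obstacle: reconciling the two combinatorial frameworks on the nose is the substantive content of the lemma. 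A self-contained alternative would run the forward computation in reverse, reconstructing the cube from the supports $S_\rho$ as $\rho$ varies, but making that reconstruction canonical---and verifying that the resulting vertex set is genuinely a rectilinear cube rather than an ad hoc collection of simplices---is appreciably more delicate, which is why routing the converse through \cite{OT12} is preferable.
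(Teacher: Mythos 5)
Your proposal is correct, but it does not follow the paper's route, because the paper gives this lemma no argument of its own: it is introduced by the sentence ``For the convenience of the reader, we translate Proposition 3.19 in \cite{OT12} to our combinatorial framework'', so \emph{both} inclusions are obtained by transporting the Oppermann--Thomas classification of the exact sequences in the cluster-tilting subcategory through the dictionary of \th\ref{prop:comaus} and \cite{JK16}; the lemma \emph{is} the translation, and its proof is the citation. Your handling of the converse inclusion therefore coincides with the paper's, while your direct pointwise proof of the forward inclusion is new relative to the paper. It buys a self-contained verification that totalisations of rectilinear cubes are exact---something the paper only gets from \cite{OT12}, and which it also needs, for the special cubes attached to non-degenerate $(m+1)$-simplices, in the proof of \th\ref{thm:em}---and your description of the evaluated complex as a tensor product of edge complexes, acyclic as soon as one coordinate direction is free, makes the mechanism behind exactness transparent. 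What the paper's route buys is brevity, and it sidesteps exactly the issue you correctly identify as the main obstacle: matching the two combinatorial frameworks object-for-object and map-for-map.

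One step of your forward argument needs sharpening. The constraints $\sigma_i+a_i\le\sigma_{i+1}$ alone do \emph{not} force a non-empty support $S_\rho$ to contain a free coordinate: for $m=1$, $\sigma=(0,5)$, $a=(1,1)$ and an evaluation point $r=(1,6)$, all constraints hold, yet the support is the single vertex $(1,1)$. What rescues the claim is that you only ever evaluate at points of the form $r=\gamma\rho$ with $\gamma=([0]*-)$, so that $r_0=0$. A downward induction then closes the argument: if $T_m$ is a non-empty singleton it must be $\{1\}$ with $\sigma_m<r_m$; if $T_{i+1}=\{1\}$ with $\sigma_{i+1}<r_{i+1}$, then the upper constraint at index $i$ holds for both values of $v_i$ (because $\sigma_i+a_i\le\sigma_{i+1}<r_{i+1}$), so a non-empty singleton $T_i$ is again forced to be $\{1\}$ with $\sigma_i<r_i$; at $i=0$ this yields $\sigma_0<r_0=0$, contradicting $\sigma_0\ge0$. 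Hence a non-empty $S_\rho$ has a free coordinate, and your tensor-product argument gives pointwise acyclicity as claimed.
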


\section{Higher reflection functors in type $\AA$}
\label{sec:reflection}

\subsection{The derived representation theory of higher Auslander algebras of type $\AA$}
\label{subsec:derivedauslander}

Let $\KK$ be a field. We begin this section with a description of the derived
representation theory of the higher Auslander algebras of type $\AA$
in terms of the simplicial combinatorics. Given a differential graded (=dg)
$\KK$-algebra $A$ or, more generally, a small dg $\KK$-category, we denote its
(unbounded) derived $\infty$-category by $\D A$, see Section 1.3 in \cite{Lur17}
for details.

\subsubsection{Derived representation theory of the higher Auslander
  algebras of type $\AA$ via the simplicial combinatorics}
\label{subsec:clustertilting}

Let $\KK$ be a field and let $m$ and $\ell$ be positive integers. Consider the
right derived functor
\begin{equation}\label{eq:rgamma}
  \RRR G\colon \D A_\ell^{(m)} \lra \D A_\ell^{(m+1)},\quad X \mapsto \RHom_{A_\ell^{(m)}}(M, X)
\end{equation}
associated to the left exact functor from \eqref{eq:typea}. The $m$-cluster
tilting $A_\ell^{(m)}$-module $M$ is a compact generator of the derived category
$\D A_\ell^{(m)}$. Indeed, since $A_\ell^{(m)}$ has finite global dimension, $M$
is certainly compact; the fact that $M$ is a generator is also clear since each
indecomposable projective $A_\ell^{(m)}$-module is a direct summand of $M$. In
particular, we could have considered the endomorphism dg algebra
\[
	\RRR A_\ell^{(m+1)} = \RHom_{A_\ell^{(m)}}(M,M)^{\op},
\]
instead of its underived variant
\begin{equation*}
  A_\ell^{(m+1)}=H^0(\RRR A_\ell^{(m+1)})=\Hom_{A_\ell^{(m)}}(M,M),
\end{equation*}
to obtain an exact equivalence
\begin{equation}\label{eq:exequiv}
	\D A_\ell^{(m)}\xra{\simeq}\D \RRR A_\ell^{(m+1)},\quad X \mapsto
	\RHom_{A_\ell^{(m)}}(M, X).
\end{equation}
The functor $\RRR G$ is then obtained as the composite
\begin{equation}\label{eq:composite}
  \D A_\ell^{(m)} \xra{\simeq} \D \RRR A_\ell^{(m+1)} \xra{\iota^*}\D A_\ell^{(m+1)}
\end{equation}
of the eqiuvalence \eqref{eq:exequiv} and the restriction along the canonical
morphism $\iota\colon A_\ell^{(m+1)} \to \RRR A_\ell^{(m+1)}$. The morphism $\iota^*$
should be regarded as a forgetful functor that forgets the action of the higher
structure present in the dg algebra $\RRR A_\ell^{(m+1)}$.

Using the combinatorial description
\begin{equation}\label{eq:comaus}
  A_\ell^{(m)} \cong \End(\underline{\KK \Delta}(m-1,n-1))
\end{equation}
of the higher Auslander algebra from \th\ref{prop:comaus},
our first goal will be to describe the functor \eqref{eq:rgamma} as well as the
factorization \eqref{eq:composite} in terms of representations of the posets of
simplices $\Delta(m,n)$. To this end we work more generally with representations
in an arbitrary stable $\infty$-category $\C$, recovering the classical
situation as the case $\C = \D\KK$.

\begin{definition} Let $m,n \ge 0$, let $S \subset \Delta(m,n)$, and let $\C$ be a
  stable $\infty$-category. A functor $X\colon S \to \C$ is called {\em reduced} if
  it sends all objects in the subset $S \cap \Delta(m,n)^{\flat}$ of degenerate
  simplices to zero objects in $\C$. We further denote by
	\[
		\Fun_*(S, \C) \subset \Fun(S, \C)
	\]
	the full subcategory spanned by the reduced functors.
\end{definition}

Using this terminology, the following result provides a combinatorial
description of the derived category $\D A_\ell^{(m)}$.

\begin{proposition}\th\label{prop:comausd}
	Let $n\geq m\geq 1$ and set $\ell = n-m+1$. There is a canonical equivalence
	\[
		\D A_\ell^{(m)}\simeq \Fun_*(\Delta(m-1,n-1)^{\op},\D\KK)
	\]
	of stable $\infty$-categories.
\end{proposition}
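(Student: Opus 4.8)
The plan is to reduce the statement, via the combinatorial identification of $A_\ell^{(m)}$ from \th\ref{prop:comaus}, to a derived enhancement of the module-category equivalence \eqref{eq:comaus_mit_der_sendung}, and then to account for the degenerate simplices by means of the reducedness condition. Throughout I write $\KK P$ and $\underline{\KK P}$ for the $\KK$-categories $\KK\Delta(m-1,n-1)$ and $\underline{\KK\Delta}(m-1,n-1)$, so that $A_\ell^{(m)}\cong\End(\underline{\KK P})$ by \eqref{eq:comaus}.

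First I would establish the clean identification
\[
  \D A_\ell^{(m)}\;\simeq\;\D(\underline{\KK P})\;\simeq\;\Fun_\KK(\underline{\KK P}^{\op},\D\KK).
\]
For the first equivalence, observe that the direct sum $T=\bigoplus_{\sigma}\Hom_{\underline{\KK P}}(-,\sigma)$ of the representable $\underline{\KK P}$-modules is a compact generator of $\D(\underline{\KK P})$, and that derived Yoneda computes $\RRR\End(T)\simeq\End(\underline{\KK P})\cong A_\ell^{(m)}$ concentrated in degree $0$, since $\underline{\KK P}$ is an ordinary $\KK$-category; Morita theory for compact generators then yields $\D(\underline{\KK P})\simeq\D A_\ell^{(m)}$. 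The second equivalence is the standard identification of the derived $\infty$-category of a small $\KK$-linear category with its $\infty$-category of $\D\KK$-valued presheaves, and is precisely the derived enhancement of \eqref{eq:comaus_mit_der_sendung}, the passage from $\mmod\KK$-valued to $\D\KK$-valued functors being harmless because the domain is an ordinary category.

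It remains to prove the comparison $\Fun_\KK(\underline{\KK P}^{\op},\D\KK)\simeq\Fun_*(\Delta(m-1,n-1)^{\op},\D\KK)$. Here I would first use the free-forgetful adjunction to identify $\KK$-linear functors out of the free $\KK$-linearisation $\KK P$ with ordinary $\infty$-functors out of the poset, giving $\Fun_\KK(\KK P^{\op},\D\KK)\simeq\Fun(\Delta(m-1,n-1)^{\op},\D\KK)\simeq\D(\KK P)$. Restriction along the quotient $\pi\colon\KK P\to\underline{\KK P}$ then produces a functor into $\Fun(\Delta(m-1,n-1)^{\op},\D\KK)$ whose essential image lands in the reduced functors: every morphism factoring through a degenerate simplex is killed in $\underline{\KK P}$, so in particular each identity $\operatorname{id}_\tau$ with $\tau$ degenerate becomes zero, whence every degenerate simplex is a zero object of $\underline{\KK P}$ and is sent to $0$ by any $\KK$-linear functor; conversely the reducedness condition forces exactly these relations.

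The main obstacle is to upgrade this to an equivalence, that is, to show that restriction along $\pi$ is fully faithful with essential image precisely $\Fun_*(\Delta(m-1,n-1)^{\op},\D\KK)$. Equivalently, writing $A'=\End(\KK P)$ for the incidence algebra of the poset, one must show that the surjection $A'\twoheadrightarrow A_\ell^{(m)}$ is a homological epimorphism, so that $\D A_\ell^{(m)}\hookrightarrow\D A'=\Fun(\Delta(m-1,n-1)^{\op},\D\KK)$ is fully faithful, and then identify its image with the reduced objects. This is the genuinely homological point: the naive attempt to extend a functor on the non-degenerate simplices by zero is not a Kan extension and fails to be coherent, because the non-degenerate simplices form neither a sieve nor a cosieve in $\Delta(m-1,n-1)$, so the equivalence cannot be seen purely formally. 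I expect it to be controlled by the explicit resolutions appearing in the proof of \th\ref{thm:em}: each non-degenerate $(m+1)$-simplex $\tau$ yields an exact sequence of representables $0\to\varepsilon(\tau\circ d^{m+1})\to\cdots\to\varepsilon(\tau\circ d^{0})\to 0$ (see \th\ref{lemma:OT_m-ex}), exhibiting the defining relations of $\underline{\KK P}$ as resolved with no higher $\operatorname{Tor}$; together with the finiteness of the global dimension of $A_\ell^{(m)}$ this should furnish the homological epimorphism property and pin down the reduced image. An alternative packaging is to produce a semiorthogonal decomposition of $\Fun(\Delta(m-1,n-1)^{\op},\D\KK)$ into the subcategory supported on the degenerate simplices and its orthogonal complement $\Fun_*(\Delta(m-1,n-1)^{\op},\D\KK)$, and to identify the latter with $\D A_\ell^{(m)}$.
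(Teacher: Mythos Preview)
Your approach is the paper's: reduce via \th\ref{prop:comaus} and then invoke strictification (Proposition~4.2.4.4 of \cite{Lur09}) to pass from modules to $\D\KK$-valued functors. The paper's proof is literally one sentence and does not isolate the step you flag---that restriction along $\KK P\to\underline{\KK P}$ must induce a fully faithful functor on derived categories, equivalently that the quotient $\End(\KK P)\twoheadrightarrow A_\ell^{(m)}$ is a homological epimorphism. Your diagnosis that this is the non-formal content is correct (the degenerate simplices form neither a sieve nor a cosieve in $\Delta(m-1,n-1)$, so no naive Kan-extension argument applies), and your proposed resolutions---via the exact sequences underlying \th\ref{lemma:OT_m-ex}, or via a semiorthogonal decomposition of $\Fun(\Delta(m-1,n-1)^{\op},\D\KK)$ separating off the localizing subcategory generated by the representables at degenerate simplices---are both viable. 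In short, you have followed the same route but unpacked it more carefully than the paper does, correctly locating where the genuine work lies.
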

\begin{proof} The existence of the required equivalence is a consequence of
  \th\ref{prop:comaus} and the strictification result Proposition
  4.2.4.4 from \cite{Lur09} applied to the category of (unbounded) chain
  complexes of $\KK$-modules endowed with the projective model structure.
\end{proof}

\begin{notation}
  \label{not:Delta-Delta-op}
  Let $m$ and $n$ be natural numbers. Note that there is a poset isomorphism
  $(-)^*\colon\Delta(m,n)\to\Delta(m,n)^{\op}$ given by sending an $m$-simplex
  $\sigma$ to the simplex
  \begin{equation*}
    \sigma_i^*=n-\sigma_{m-i},\quad i\in[m].
  \end{equation*}
  To simplify the exposition, in what follows we work with functors on
  $\Delta(m,n)$, rather on $\Delta(m,n)^{\op}$. In view of the above
  identification, this is immaterial as it amounts to a relabelling of the
  objects.
\end{notation}

In what follows we use freely the terminology and results about cubical diagrams
in stable $\infty$-categories from Appendix \ref{sec:n-cubes}. Further, given
vectors $v$ and $w$ in $\ZZ^d$, we define the \emph{Hadamard product}
\[
	v \circ w = (v_1w_1,\dots,v_dw_d)
\]
given by their coordinate-wise product.

\begin{definition}
  \th\label{def:rectilinear_cube} Let $d \ge 1$ be an integer. We say that a
  $d$-cube $q\colon I^d \to \ZZ^d$ is \emph{rectilinear} if all of its edges are
  parallel to the standard coordinate vectors in $\ZZ^d$. This condition can be
  expressed in terms of the Hadamard product by saying that, for each $v\in
  I^d$, the equality
  \begin{equation*}
    q_v=q_{0\cdots0}+v\circ(q_{1\cdots1}-q_{0\cdots0})
  \end{equation*}
  is satisfied.
\end{definition}

\begin{definition} Let $m,n \ge 0$, let $S \subset \Delta(m,n)$, and let $\C$ be a
  stable $\infty$-category. A reduced functor $X\colon S \to \C$ is called {\em
    exact} if the restriction of $X$ along every rectilinear $(m+1)$-cube in $S$
  is biCartesian. We further denote by
	\[
		\Fun_*^{\on{ex}}(S, \C) \subset \Fun_*(S, \C)
	\]
	the full subcategory spanned by the exact functors.
\end{definition}

The following result should be regarded as a combinatorial version of
\eqref{eq:exequiv}, see also Corollary \ref{cor:derivedcomb}. It can also be
regarded as a higher-dimensional version of Theorem 4.6 in \cite{GS16}, which
deals with the case $m=1$.

\begin{proposition} \th\label{prop:sliceres} Let $m$ and $n$ be positive integers and
  \[
    \gamma=([0]*-)\colon\Delta(m-1,n-1)\hookrightarrow\Delta(m,n).
  \]
  For a stable $\infty$-category $\C$, the restriction functor
	\[
		\gamma^*\colon \Fun_*^{\ex}(\Delta(m,n), \C) \lra \Fun_*(\Delta(m-1,n-1), \C)
	\]
	is an equivalence of $\infty$-categories.
\end{proposition}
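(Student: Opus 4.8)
The plan is to exhibit $\gamma^*$ as an equivalence by reconstructing an exact reduced functor on $\Delta(m,n)$ from its restriction to the initial slice $\gamma(\Delta(m-1,n-1))$ via iterated total cofibres; this realises the ``puncturing at the final vertex'' reflection alluded to in the introduction and generalises \cite{GS16}, where the case $m=1$ is treated. As a preliminary reduction I would cut the exactness condition down to \emph{unit} cubes. Given a rectilinear $(m+1)$-cube $q$ with $q_{0\cdots0}=a$ and $q_{1\cdots1}=b$, the requirement that the corner with $v_i=1,v_{i+1}=0$ be monotone reads $a_i+(b_i-a_i)\le a_{i+1}$, and exactly this inequality guarantees that \emph{every} intermediate lattice point $a\le w\le b$ is again a monotone simplex. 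Hence $q$ subdivides into edge-length-one cubes, each based at a non-degenerate simplex, and by the pasting lemma for biCartesian cubes (Appendix~\ref{sec:n-cubes}) a reduced functor lies in $\Fun_*^{\ex}(\Delta(m,n),\C)$ if and only if it sends every such unit cube to a biCartesian cube.

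Next I would isolate the distinguished cubes that drive the reconstruction. For a non-degenerate simplex $\tau=(\tau_0<\dots<\tau_m)$ with $\tau_0\ge 1$, let $q^\tau$ be the rectilinear $(m+1)$-cube with base vertex $(0,\tau_0,\dots,\tau_{m-1})$ and final vertex $\tau$, so that its increments are $(\tau_0,\tau_1-\tau_0,\dots,\tau_m-\tau_{m-1})$. A direct inspection of the corners $q^\tau_v$ shows that $\tau$ is the unique non-degenerate vertex with first coordinate $\tau_0$: any $v$ with a descent $v_{i-1}=1,v_i=0$ yields a degenerate simplex, while any $v$ without a descent and with $v_0=0$ has first coordinate $0$, hence lies in the image of $\gamma$. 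Consequently, for an exact reduced $X$ the cube $q^\tau$ is biCartesian and exhibits $X(\tau)$ as the total cofibre of its remaining vertices, all of which are determined by $Y=\gamma^*X$ (the degenerate ones contributing zero by reducedness). This shows that an exact reduced functor is determined by its restriction to the slice and furnishes the formula for a candidate inverse $R$.

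To upgrade this into an equivalence of $\infty$-categories I would filter $\Delta(m,n)$: first impose reducedness (sending all degenerate simplices to $0$), and then adjoin the non-degenerate simplices in order of increasing first coordinate $\tau_0$. Since the non-final vertices of $q^\tau$ all have first coordinate $0<\tau_0$ or are degenerate, at each stage the biCartesianness of $q^\tau$ is equivalent to the functor being left Kan extended at $\tau$ from the previously adjoined simplices. The theory of Kan extensions (\cite{Lur09}) then identifies the restriction away from $\tau$ with a trivial fibration onto the full subcategory of such Kan-extended functors, and composing these equivalences along the whole filtration identifies the full subcategory $\mathcal{F}\subseteq\Fun_*(\Delta(m,n),\C)$ of functors for which all distinguished cubes $q^\tau$ are coCartesian with $\Fun_*(\Delta(m-1,n-1),\C)$ via $\gamma^*$.

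The main obstacle is reconciling $\mathcal{F}$ with $\Fun_*^{\ex}$. The inclusion $\Fun_*^{\ex}\subseteq\mathcal{F}$ is immediate, since each $q^\tau$ is a pasting of unit cubes; the reverse inclusion requires showing that the reflected functor is biCartesian on \emph{every} unit cube, not merely on the distinguished ones used to build it. I would establish this by a second induction, expressing an arbitrary unit cube as a pasting or iterated ``difference'' of distinguished cubes and applying the two-out-of-three and total-(co)fibre results of Appendix~\ref{sec:n-cubes}; the combinatorial heart is verifying that the punctured cube $q^\tau|_{v\neq\mathbf 1}$ is cofinal in the comma category of previously adjoined simplices below $\tau$, so that the pointwise left Kan extension genuinely computes the total cofibre. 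This cofinality, together with the degeneracy pattern identified above, is exactly where the asymmetry of $\gamma=([0]*-)$ enters and produces cofibres rather than fibres, matching the exact sequences of \th\ref{lemma:OT_m-ex}.
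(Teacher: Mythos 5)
Your overall strategy---reduce exactness to unit cubes, single out for each non-degenerate $\tau$ with $\tau_0\ge 1$ the rectilinear cube $q^\tau$ whose only non-degenerate vertex outside the image of $\gamma$ is $\tau$ itself, and realise the inverse of $\gamma^*$ as an iterated left Kan extension whose essential image is then identified with the exact functors---is the same Kan-extension strategy as the paper's proof, which factors $\gamma$ through $\Delta(m-1,n-1)\cup\Delta(m,n)^{\flat}$ and takes $f_!g_*h_!$; your vertex analysis of $q^\tau$ and the unit-cube reduction are correct. However, there is a genuine error in the step you yourself call the combinatorial heart: the punctured cube $q^\tau|_{v\neq\mathbf 1}$ is \emph{not} cofinal in the comma category of previously adjoined simplices below $\tau$ once your filtration has passed through an intermediate layer. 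Take $m=1$, $n=3$, $\tau=(2,3)$: the punctured cube has vertices $(0,2),(2,2),(0,3)$, but the simplex $(1,3)$, adjoined at the previous stage, satisfies $(1,3)\le(2,3)$ and admits no map to any of these three vertices, so the relevant undercategory is empty. And the failure is not merely of the argument but of the asserted equivalence itself: a reduced functor on the previously adjoined poset which is zero everywhere except for a value $Z\neq 0$ at $(1,3)$ has left Kan extension $Z$ at $(2,3)$, whereas the colimit of the punctured cube is $0$; so ``left Kan extended from the previously adjoined simplices'' and ``$q^\tau$ is coCartesian'' are genuinely different conditions at that stage.

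The repair is to take all comma categories over the fixed subposet $\Delta(m-1,n-1)\cup\Delta(m,n)^{\flat}$ (degenerate simplices together with the image of $\gamma$) rather than over everything previously adjoined: there the punctured cube \emph{is} cofinal (in the example above the comma category is $\{(0,0),(1,1),(2,2),(0,1),(0,2),(0,3)\}$, and every undercategory is a point or a contractible ``V''), and this is exactly the single left Kan extension $f_!$ in the paper's proof, performed in one step after the degenerate simplices have been filled with zeros by $h_!$ and $g_*$. Alternatively you can keep the layer-by-layer filtration, but you must then invoke transitivity of left Kan extensions to replace ``Kan extended from the previous stage'' by ``Kan extended from the zeroth layer'' before applying the pointwise formula; the two conditions coincide only for functors already Kan extended at the earlier stages, so the trivial-fibration argument has to be run relative to this restricted class. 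With that correction, the remaining ingredients of your proposal---the unit-cube reduction via the pasting lemma and the final reconciliation of your $\mathcal{F}$ with $\Fun_*^{\ex}(\Delta(m,n),\C)$---are precisely the ``straightforward combinatorial details'' that the paper's proof of Proposition \ref{prop:sliceres} leaves to the reader.
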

\begin{proof}
  We factor the morphism $\gamma$ as the composite
  \[
    \Delta(m-1,n-1) \xhra{h} \Delta(m-1,n-1)' \xhra{g} \Delta(m-1,n-1) \cup
    \Delta(m,n)^{\flat} \xhra{f} \Delta(m,n)
  \]
  where $\Delta(m-1,n-1)'$ is the union of $\Delta(m-1,n-1)$ with the degenerate
  $m$-simplices of the form $00\dots$. Let $\C$ be a stable $\infty$-category
  and consider the functor
  \[
    f_!g_*h_!\colon \Fun_*(\Delta(m-1,n-1), \C) \lra \Fun_*(\Delta(m,n), \C)
  \]
  given by
  \begin{enumerate}[label=\arabic*.]
	\item left Kan extension along $h$, followed by
	\item right Kan extension along $g$, followed by
	\item left Kan extension along $f$.
  \end{enumerate}
  Since Kan extensions along fully faithful functors are fully faithful
  (Corollary 4.3.2.16 in \cite{Lur09}), this composite functor is fully
  faithful. We now claim that a diagram
  \[
		X\colon \Delta(m,n) \lra \C
  \]
  is in the essential image of the functor $f_!g_*h_!$ if and only if $X$ is
  exact. This claim is verified by means of the pointwise criterion for the Kan
  extensions: the Kan extensions along $h$ and $g$ have the effect of extending
  a given diagram $Y\colon \Delta(m-1,n-1) \to \C$ by assigning zero objects to all
  degenerate edges. The remaining left Kan extension along $f$ completes the
  various punctured rectilinear cubes in $\Delta(m-1,n-1)\cup
  \Delta(m,n)^{\flat}$ to biCartesian cubes in $\Delta(m,n)$ so that the
  required exactness conditions are satisfied. We leave the straightforward
  combinatorial details to the reader.
\end{proof}

\begin{remark}
  The auxiliary rectilinear cubes introduced in the proof of
  \th\ref{prop:sliceres} should be compared the minimal projective resolutions
  described in Proposition 3.17 in \cite{OT12}.
\end{remark}

\begin{remark}
  The proof of Theorem 5.27 in \cite{IO11} can be used to give a constructive
  proof of \th\ref{prop:sliceres} along the lines of the proof of Theorem 4.6 in
  \cite{GS16}, see also \th\ref{prop:paracyclic} below.
\end{remark}

\begin{corollary}\th\label{cor:derivedcomb} Let $m\geq n\geq 1$ and set $\ell = n-m+1$.
  There is a canonical equivalence
	\begin{equation}
    \label{eq:DK_Funex}
		\D A_\ell^{(m)}\simeq \Fun_*^{\ex}(\Delta(m,n)^{\op},\D\KK)
  \end{equation}
	of stable $\infty$-categories.
\end{corollary}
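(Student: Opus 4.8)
The plan is to obtain the desired equivalence by chaining together the two previously established equivalences: \th\ref{prop:comausd}, which identifies $\D A_\ell^{(m)}$ with the reduced functors on $\Delta(m-1,n-1)^{\op}$, and \th\ref{prop:sliceres}, which identifies the reduced functors on $\Delta(m-1,n-1)$ with the exact functors on $\Delta(m,n)$ via restriction along $\gamma = ([0]*-)$. First I would invoke \th\ref{prop:comausd} to write
\[
  \D A_\ell^{(m)} \simeq \Fun_*(\Delta(m-1,n-1)^{\op}, \D\KK).
\]
Next I would apply \th\ref{prop:sliceres}, but to the opposite posets: the restriction functor $\gamma^*$ is an equivalence for any stable $\infty$-category, and since $\D\KK$ is stable, I may replace $\Delta(m,n)$ throughout by $\Delta(m,n)^{\op}$ (using the poset isomorphism $(-)^*$ of \th\ref{not:Delta-Delta-op}, which preserves the non-degenerate/degenerate partition and sends rectilinear cubes to rectilinear cubes, hence identifies reduced functors with reduced functors and exact functors with exact functors). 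This yields
\[
  \Fun_*(\Delta(m-1,n-1)^{\op}, \D\KK) \simeq \Fun_*^{\ex}(\Delta(m,n)^{\op}, \D\KK).
\]
Composing the two equivalences gives the claimed canonical equivalence \eqref{eq:DK_Funex}.

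The one point requiring genuine care is the bookkeeping of opposite categories. \th\ref{prop:sliceres} is stated for functors on $\Delta(m,n)$ and $\Delta(m-1,n-1)$, whereas \th\ref{prop:comausd} produces functors on the opposite posets. The cleanest way to reconcile these is to observe, following \th\ref{not:Delta-Delta-op}, that the duality $(-)^*$ furnishes isomorphisms of posets $\Delta(k,n) \cong \Delta(k,n)^{\op}$ compatible with the inclusion $\gamma$ in the sense that the square relating $([0]*-)$ on the one side to the corresponding inclusion on the opposite side commutes up to this relabelling. One must check that the embedding $\gamma$ on $\Delta(m-1,n-1)^{\op}$ induced by $([0]*-)$ still satisfies the hypotheses of \th\ref{prop:sliceres}, i.e.\ that the relevant Kan-extension argument is insensitive to passing to opposites. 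This is immediate because the notion of rectilinear cube and the biCartesian condition are both self-dual.

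The main obstacle I anticipate is therefore not conceptual but notational: ensuring that the ``$[0]*-$'' coface used in \th\ref{prop:sliceres} corresponds, after dualising, to a coface compatible with the functor $\gamma$ appearing in the definition \eqref{eq:fundamental} of $\varepsilon$ and in \th\ref{prop:comausd}. Concretely, under $(-)^*$ the operation of freely adjoining a new initial vertex $[0]*-$ is interchanged with adjoining a new final vertex $-*[0]$, so one should verify that the equivalence of \th\ref{prop:comausd} (built from the Yoneda-theoretic $\gamma$ of \eqref{eq:gamma}) matches the restriction functor of \th\ref{prop:sliceres} under this swap. Since both operations are faithful poset endofunctors of $\Delta$ that preserve the degenerate/non-degenerate partition, and since \th\ref{prop:comausd} is already phrased on the opposite poset, the match is forced; I would record this compatibility explicitly and then the corollary follows formally.
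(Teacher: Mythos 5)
Your proof is correct and is exactly the argument the paper intends: the corollary is stated without proof precisely because it follows by composing \th\ref{prop:comausd} with \th\ref{prop:sliceres}, using the relabelling convention of Notation \ref{not:Delta-Delta-op} to pass between $\Delta(m,n)$ and $\Delta(m,n)^{\op}$. Your explicit check that the duality $(-)^*$ preserves the degenerate/non-degenerate partition and rectilinear cubes (and swaps $[0]*-$ with $-*[0]$) is exactly the bookkeeping the paper leaves implicit.
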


\begin{remark} Consider equivalence \eqref{eq:DK_Funex}. The condition on
  $M$ to be an $m$-cluster tilting $A_{\ell}^{(m)}$-module includes that the only self-extensions
  of $M$ lie in degree $m$. This condition is nicely reflected by the fact that
  the exactness conditions for a functor $X\colon \Delta(m,n)^{\op}\to \D\KK$ only involve
  cubes of dimension $m+1$.
\end{remark}

In the situation of \th\ref{prop:sliceres}, we may choose a quasi-inverse of the
functor $\gamma^*$ thus obtaining the sequence of functors
\[
	\Fun_*(\Delta(m-1,n-1),\C) \xra{\simeq} \Fun_*^{\ex}(\Delta(m,n),\C) \hra
  \Fun_*(\Delta(m,n),\C).
\]
For $\C = \D\KK$, this yields the promised combinatorial description of the
sequence \eqref{eq:composite}.

\subsubsection{Cluster tilting in the derived category via paracyclic
  combinatorics}
\label{subsec:derivedtilting}

In \S \ref{subsec:clustertilting} we discussed a combinatorial counterpart of
the description of the derived category $\D A_\ell^{(m)}$ in terms of a compact
generator given by the $m$-cluster tilting $A_\ell^{(m)}$-module $M$. In this
section we develop an analogous combinatorial viewpoint on the description of
$\D A_\ell^{(m)}$ in terms of a natural (infinite) subcategory associated to
$M$.

Let $\U$ be the subcategory of $\D A_\ell^{(m)}$ spanned by the complexes of the
form $\varepsilon(\sigma)[mi]$ where $\sigma$ is an $m$-simplex in $\Delta^n$
and $i\in\ZZ$. We view $\U$ as a dg $\kk$-category and set
\[
  \RRR A_{\ZZ\ell}^{(m+1)}=\U^{\op}\qquad\text{and}\qquad
  A_{\ZZ\ell}^{(m+1)}=H^0(\RRR A_{\ell}^{(m+1)}).
\]
Note that there is an equivalence of stable $\infty$-categories
\begin{equation}
  \label{eq:Zl-l}
  \D RA_{\ZZ\ell}^{(m)}\xra{\simeq}\D RA_{\ell}^{(m)}
\end{equation}
given by restricting along the morphism $RA_{\ell}^{(m)}\to RA_{\ZZ\ell}^{(m)}$
induced by the inclusion of $M$ into $\U$ as the complexes concentrated in
degree $0$.
\begin{remark}
  The subcategory $\U$ is an $m$-cluster tilting subcategory of the perfect
  derived category $\perf A_{\ell}^{(m)}$, see Theorem 1.21 in \cite{Iya11}.
  Moreover, $\U$ is clearly invariant under the action of the $m$-fold
  suspension functor of $\perf A_{\ell}^{(m)}$. This makes $H^0(\U)$ into an
  $(m+2)$-angulated category in the sense of \cite{GKO13}, see also
  \th\ref{rmk:m-abelian}.
\end{remark}

Arguing as in \eqref{eq:composite}, we obain a sequence of functors
\begin{equation}\label{eq:dcomposite}
	\D A_{\ell}^{(m)}\xra{\simeq} \D \RRR A_{\ZZ \ell}^{(m+1)}\lra \D A_{\ZZ\ell}^{(m+1)}\lra \D A_{\ell}^{(m+1)},
\end{equation}
where the rightmost functor given by restriction along the morphism
$A_\ell^{(m+1)}\to A_{\ZZ\ell}^{(m+1)}$ As we will demonstrate in this section,
the combinatorial counterpart of the passage from $A_\ell^{(m)}$ to $\RRR
A_{\ZZ\ell}^{(m)}$ is the passage from simplices to paracyclic simplices. In
particular, using this paracyclic perspective we will obtain a combinatorial
description of the sequence \eqref{eq:dcomposite}.

\begin{definition}\label{defi:para}
	We denote by $\Li(m,n)$ the set of those monotone maps $f\colon \ZZ \to \ZZ$ that
  satisfy, for all $i \in \ZZ$, the equivariance condition
  \[
    f(i+m+1) = f(i) + n+1.
  \]
  Again, we interpret $\Li(m,n)$ as a poset by declaring $f \le f'$ if, for all
  $i \in \ZZ$, we have $f(i) \le f(i')$. We also introduce the partition
	\begin{equation}\label{eq:lpartition}
		\Li(m,n) = \Li(m,n)^{\sharp} \cup \Li(m,n)^{\flat}
	\end{equation}
	into the subset $\Li(m,n)^{\sharp}$ of strictly monotone maps and its
  complement $\Li(m,n)^{\flat}$.
\end{definition}

\begin{remark} The underlying set of the poset $\Li(m,n)$ is the set of morphisms
  between objects $\widetilde{m}$ and $\widetilde{n}$ in the {\em paracyclic
    category} $\Li$, an enlargement of the simplex category introduced
  independently in \cite{Nis90}, \cite{FL91} and \cite{GJ93}. The poset
  structure on $\Li(m,n)$ that we consider makes $\Li$ a $2$-category.
\end{remark}

\begin{remark}
  The paracyclic category admits a presentation by generators and relations
  extending that of the simplex category. More precisely, $\Lambda$ is generated
  by coface and codegeneracy morphisms satisfying the usual relations in
  $\Delta$ together with an additional automorphism
  $t=t_{n+1}\colon\widetilde{n}\to\widetilde{n}$ of infinite order subject to
  the relations
  \begin{equation}
    \label{eq:paracyclic_category}
    \begin{split}
      t\circ d^i&=d^{i-1}\circ t\\
      t\circ s^i&=s^{i-1}\circ t
    \end{split}
    \begin{split}
      \qquad 1&\leq i\leq n,\qquad\text{and}\qquad\\
      \qquad 1&\leq i\leq n,\qquad\text{and}\qquad
    \end{split}
    \begin{split}
      t\circ d^0&=d^n,\\
      t\circ s^0&=s^n\circ t^2.
    \end{split}\qedhere
  \end{equation}
\end{remark}

\begin{remark}
  Let $m,n\geq0$. A paracyclic simplex $\sigma \in \Li(m,n)$ is determined by
  its restriction to $[m] \subset \ZZ$ so that we obtain an embedding $\Li(m,n)
  \subset \ZZ^{m+1}$. In particular, we may talk about rectilinear $(m+1)$-cubes
  in $\Li(m,n)$ in the sense of \th\ref{def:rectilinear_cube}.
\end{remark}

The following result is a combinatorial version of the equivalence
\eqref{eq:Zl-l}.

\begin{proposition}\th\label{prop:paracyclic} Let $m,n \ge 0$ and let $\C$ be a stable
  $\infty$-category. We identify $\Delta(m,n)$ with the subset of $\Li(m,n)$
  consisting of those maps $f\colon \ZZ \to \ZZ$ that map $[m] \subset \ZZ$ into $[n]
  \subset \ZZ$. Then, the induced restriction functor
	\[
		\Fun^{\ex}_*(\Li(m,n), \C) \lra \Fun^{\ex}_*(\Delta(m,n), \C)
	\]
	is an equivalence of stable $\infty$-categories.
\end{proposition}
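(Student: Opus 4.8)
The plan is to show the restriction functor is an equivalence by first isolating the combinatorial mechanism that forces an exact reduced functor on $\Li(m,n)$ to be determined by its restriction to $\Delta(m,n)$, and then organising this into an inductive argument in the style of the proof of \th\ref{prop:sliceres}. The structural input is the poset automorphism
\[
  t\colon\Li(m,n)\lra\Li(m,n),\qquad t(a_0,\dots,a_m)=(a_1,\dots,a_m,a_0+n+1),
\]
induced by the paracyclic rotation of \eqref{eq:paracyclic_category}, whose $(m+1)$-st power $t^{m+1}$ is translation by $(n+1,\dots,n+1)$ and which generates a free $\ZZ$-action on the non-degenerate simplices $\Li(m,n)^{\sharp}$. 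I would first record, as an elementary combinatorial lemma, that $\Delta(m,n)^{\sharp}$ is a fundamental domain for this action: every non-degenerate paracyclic simplex is uniquely of the form $t^{k}\sigma$ with $\sigma\in\Delta(m,n)^{\sharp}$ and $k\in\ZZ$.

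The conceptual heart is a shift relation. Given $\sigma\in\Li(m,n)^{\sharp}$, consider the $(m+1)$-simplex $\tau=(\sigma_0,\sigma_1,\dots,\sigma_m,\sigma_0+n+1)$, which is non-degenerate precisely because $\sigma_m<\sigma_0+n+1$. Its outer faces are $\tau\circ d^{m+1}=\sigma$ and $\tau\circ d^{0}=t\sigma$, whereas every interior face $\tau\circ d^{i}$ with $0<i\leq m$ has first entry $\sigma_0$ and last entry $\sigma_0+n+1$ and is therefore degenerate. The rectilinear $(m+1)$-cube of \th\ref{lemma:OT_m-ex} attached to $\tau$ has exactly the faces $\tau\circ d^{i}$ as its non-degenerate vertices (all remaining vertices carry a descent and hence map to zero), so for an exact reduced $X\colon\Li(m,n)\to\C$ its biCartesian condition collapses to the exact sequence
\[
  0\lra X(\sigma)\lra 0\lra\cdots\lra 0\lra X(t\sigma)\lra 0
\]
with exactly $m$ vanishing interior terms, yielding a natural equivalence $X\circ t\simeq\Sigma^{m}X$, where $\Sigma$ is the suspension of $\C$. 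This is the combinatorial shadow of the fact that $\U$ is spanned by the shifts $\varepsilon(\sigma)[mi]$, and it is what makes \th\ref{prop:paracyclic} the exact counterpart of \eqref{eq:Zl-l}.

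With these ingredients the equivalence follows by exhaustion. I would write $\Li(m,n)$ as an increasing union of finite subposets $\Delta(m,n)=P_0\subset P_1\subset\cdots$, so that $\Fun^{\ex}_*(\Li(m,n),\C)\simeq\lim_k\Fun^{\ex}_*(P_k,\C)$, where exactness on $P_k$ refers to the rectilinear $(m+1)$-cubes contained in $P_k$ (each such cube, being finite, lies in some $P_k$). I would arrange the filtration so that each $P_{k+1}$ is obtained from $P_k$ by adjoining one simplex $\sigma_k$ that is either (i) degenerate, or (ii) an extremal vertex of a rectilinear $(m+1)$-cube whose remaining $2^{m+1}-1$ vertices already lie in $P_k$. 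Then each restriction $\Fun^{\ex}_*(P_{k+1},\C)\to\Fun^{\ex}_*(P_k,\C)$ is an equivalence: in case (i) this is immediate, as a zero object is both initial and terminal and so carries no data or choices; in case (ii) the value at $\sigma_k$ is forced to be the biCartesian completion of the punctured cube, and completing a punctured rectilinear $(m+1)$-cube to a biCartesian one is an equivalence realised by a Kan extension, as recorded in Appendix \ref{sec:n-cubes}. Since a tower of equivalences has limit equivalent to its bottom term $\Fun^{\ex}_*(\Delta(m,n),\C)$, and this projection is the restriction functor, the claim follows.

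The hard part will be the combinatorial bookkeeping in case (ii): one must order the simplices of $\Li(m,n)\setminus\Delta(m,n)$ so that each newly adjoined non-degenerate simplex is genuinely an extremal vertex of a rectilinear $(m+1)$-cube whose other vertices precede it, and so that the punctured cube is cofinal (respectively coinitial) in the relevant slice of $P_k$---otherwise the forced value is not literally a Kan extension. The clean way to do this is to propagate outward one fundamental domain at a time, using the translates $t^{k}\Delta(m,n)$ together with the wrap-around cubes of the second paragraph, and to reduce every local verification to the single model computation establishing $X\circ t\simeq\Sigma^{m}X$. This is the paracyclic analogue of the three-step Kan-extension factorisation in the proof of \th\ref{prop:sliceres}, now iterated over $\ZZ$.
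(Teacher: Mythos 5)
Your proposal is correct and takes essentially the same route as the paper: the paper's proof of \ref{prop:paracyclic} is a two-line sketch that constructs a quasi-inverse ``by means of successive Kan extensions similar to the proof of Proposition \ref{prop:sliceres}'', which is precisely your exhaustion-by-Kan-extensions argument (organised one fundamental domain at a time), and it defers the same cofinality and cube-bookkeeping details that you flag as the hard part. Your added ingredients---the fundamental-domain lemma for the rotation $t$ and the wrap-around cube giving $X\circ t\simeq\Sigma^{m}X$---are consistent with the paper's own remarks (the Heller automorphism inducing the shift $[m]$), so your write-up is, if anything, more detailed than the paper's.
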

\begin{proof} This follows by exhibiting a quasi-inverse of the above functor by
  means of successive Kan extensions similar to the proof of
  \th\ref{prop:sliceres}. We leave the details to the reader.
\end{proof}

The poset isomorphism $\Delta(m,n)\to\Delta(m,n)^{\op}$ described in Notation
\ref{not:Delta-Delta-op} extends to a poset isomorphism
$\Li(m,n)\to\Li(m,n)^{\op}$. In particular, we have the following corollary of
\th\ref{cor:derivedcomb} and \th\ref{prop:paracyclic}.

\begin{corollary}\label{cor:derivedcombpara} Let $m\geq n\geq 1$ and set $\ell =
  n-m+1$. There is a canonical equivalence
	\[
		\D A_\ell^{(m)}\simeq \Fun_*^{\ex}(\Li(m,n)^{\op},\D\KK)
	\]
	of stable $\infty$-categories.
\end{corollary}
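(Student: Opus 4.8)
The plan is to deduce the statement directly from \th\ref{cor:derivedcomb} and \th\ref{prop:paracyclic} by passing to opposite posets via the order-reversing bijection $(-)^*$. First I would apply \th\ref{prop:paracyclic} with $\C=\D\KK$ to obtain the restriction equivalence
\[
\Fun_*^{\ex}(\Li(m,n),\D\KK)\xra{\simeq}\Fun_*^{\ex}(\Delta(m,n),\D\KK)
\]
induced by the inclusion $\Delta(m,n)\subset\Li(m,n)$. Combined with \th\ref{cor:derivedcomb}, which supplies
\[
\D A_\ell^{(m)}\simeq\Fun_*^{\ex}(\Delta(m,n)^{\op},\D\KK),
\]
it then remains only to replace each poset by its opposite in a way compatible with the reducedness and exactness conditions.

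For this I would invoke the poset anti-isomorphism $(-)^*\colon\Li(m,n)\to\Li(m,n)^{\op}$ extending the one from Notation \ref{not:Delta-Delta-op}. Precomposition with $(-)^*$ yields equivalences
\[
\Fun_*^{\ex}(\Li(m,n)^{\op},\D\KK)\xra{\simeq}\Fun_*^{\ex}(\Li(m,n),\D\KK)\quad\text{and}\quad\Fun_*^{\ex}(\Delta(m,n)^{\op},\D\KK)\xra{\simeq}\Fun_*^{\ex}(\Delta(m,n),\D\KK).
\]
Chaining these with the two displayed equivalences produces the desired identification $\D A_\ell^{(m)}\simeq\Fun_*^{\ex}(\Li(m,n)^{\op},\D\KK)$.

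The only point genuinely requiring verification — and where I expect the main, if modest, obstacle to lie — is that $(-)^*$ really restricts to an equivalence of the exact reduced subcategories, i.e.\ that precomposition with it preserves both conditions cutting out $\Fun_*^{\ex}$. Reducedness is immediate, since $(-)^*$ is a bijection carrying the subset $\Li(m,n)^{\flat}$ of degenerate simplices onto itself. Exactness requires observing that, by its explicit formula, $(-)^*$ acts on the ambient lattice $\ZZ^{m+1}$ by a coordinate permutation followed by a sign change; it therefore sends rectilinear $(m+1)$-cubes to rectilinear $(m+1)$-cubes, and since biCartesianness of a cube is insensitive to reversing its orientation (as recorded in Appendix \ref{sec:n-cubes}), $(-)^*$-precomposition preserves the exactness condition. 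Finally I would note that the inclusion $\Delta(m,n)\subset\Li(m,n)$ intertwines the two copies of $(-)^*$, so that the restriction equivalence of \th\ref{prop:paracyclic} is compatible with the relabelling and the whole chain of equivalences commutes.
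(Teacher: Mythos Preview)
Your proposal is correct and follows precisely the route the paper indicates: the corollary is stated immediately after the observation that the anti-isomorphism $(-)^*$ of Notation~\ref{not:Delta-Delta-op} extends to $\Li(m,n)$, and is presented simply as a consequence of \th\ref{cor:derivedcomb} and \th\ref{prop:paracyclic}. You have supplied exactly the details the paper leaves implicit, namely that $(-)^*$ preserves degeneracy and rectilinearity so that precomposition with it restricts to the reduced exact subcategories.
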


Combining the equivalences from
\th\ref{prop:sliceres} and \th\ref{prop:paracyclic} yields the sequence of
functors
\[
	\Fun_*(\Delta(m-1,n-1), \C) \xra{\simeq} \Fun^{\ex}_*(\Li(m,n), \C)
  \hra \Fun_*(\Li(m,n), \C)\lra\Fun_*(\Delta(m,n),\C)
\]
which, for $\C = \D\KK$, provides the promised combinatorial construction of
\eqref{eq:dcomposite}.

\subsection{Higher reflection functors in type $\AA$ and slice
  mutation}
\label{sec:mutation}

Let $m$ and $n$ be natural numbers and consider the composite inclusion of
posets
\[
	j\colon \Delta(m-1,n-1) \xhra{[0]*-} \Delta(m,n) \hra \Li(m,n).
\]
We denote the image of this inclusion by $\underline{S}_P$ and let
$S_P=\underline{S}_P\cap\Lambda(m,n)^\sharp$. From
\th\ref{prop:paracyclic} and \th\ref{prop:sliceres}, we deduce that, for a
stable $\infty$-category $\C$, the restriction functor
\begin{equation}\label{eq:sliceeq}
  j^*\colon \Fun_*^{\ex}(\Li(m,n), \C) \lra \Fun_*(\underline{S}_P, \C)
\end{equation}
is an equivalence of stable $\infty$-categories. We wish to explain the
following:
\begin{itemize}
\item The subset $S_P\subset \Li(m,n)^\sharp$ is an example of a {\em slice} in
  $\Li(m,n)^\sharp$.
\item An equivalence analogous to \eqref{eq:sliceeq} can be obtained more
  generally for each slice $S \subset \Li(m,n)^\sharp$.
\item We can mutate between different slices via higher-dimensional analogues of
  reflection functors.
\end{itemize}
To this end, we begin with the description of certain symmetries of the poset
paracyclic simplices.

\begin{notation}
  For a natural number $n$, we denote by $t_n\colon\ZZ\to\ZZ$ the monotone map
  given by $i\mapsto i+1$ and view it as an element of $\Li(n,n)$.
\end{notation}

\begin{definition}
  \th\label{def:three_amigos} Let $n\geq m\geq 1$ be integers.
  \begin{enumerate}
  \item The \emph{Heller automorphism of $\Li(m,n)$} is the poset automorphism
    \begin{equation*}
      \Sigma_m\coloneqq -\circ t_m\colon \Li(m,n)\lra \Li(m,n).
    \end{equation*}
    Thus, given $\sigma\in\Lambda(m,n)$ we have
    $\Sigma_m(\sigma)_i=\sigma_{i+1}$ for each $i\in\ZZ$.
  \item The \emph{Coxeter automorphism of $\Li(m,n)$} is the poset automorphism
    \begin{equation*}
      \Phi_m\coloneqq t_n^{-1}\circ -\colon \Li(m,n)\lra \Li(m,n).
    \end{equation*}
    Thus, given $\sigma\in\Lambda(m,n)$ we have $\Phi_m(\sigma)_i=\sigma_i-1$
    for each $i\in\ZZ$.
  \item The \emph{Nakayama automorphism} is the poset automorphism
    \begin{equation*}
      \SS\coloneqq \Phi_m\circ\Sigma_m\colon \Li(m,n)\lra \Li(m,n).
    \end{equation*}
    Thus, given $\sigma\in\Lambda(m,n)$ we have $\SS(\sigma)_i=\sigma_{i+1}-1$
    for each $i\in\ZZ$.\qedhere
  \end{enumerate}
\end{definition}

\begin{remark} Let $\KK$ be a field, let $m,n \ge 0$, and set $\ell = n-m+1$.
  Recall from Corollary \ref{cor:derivedcombpara} that there is an equivalence
  \[
		\D A_\ell{(m)}\simeq \Fun_*^{\ex}(\Li(m,n), \D\KK).
	\]
  Utilising methods similar to those employed in Section 5 in \cite{GS16}, one
  can show that the Heller, Coxeter, and Nakayama automorphisms of the poset
  $\Li(m,n)$ induce, via pullback, familiar autoequivalences on the derived
  category of $A_\ell^{(m)}$. More precisely
  \[
    \Sigma_m^*\cong[m]\qquad\text{and}\qquad\SS^*\cong-\otimes_{A_\ell^{(m)}}^{\mathbb{L}}\Hom_\KK(A_\ell^{(m)},\KK).
  \]
  In particular $\SS^*$ is a Serre functor on $\perf A_{\ell}^{(m)}$. Moreover,
  $\Phi_m^*=(\SS\circ\Sigma_m^{-})^*$ corresponds to the derived
  $m$-Auslander--Reiten translation functor introduced by Iyama in \cite{Iya11}.
  Moreover, by the very definition of the poset $\Li(m,n)$, there is an equality
  \[
    \SS_m^{n+1}=\Sigma_m^{n-m}
  \]
  which reflects the fractional Calabi--Yau dimension of $A_{\ell}^{(m)}$, see
  \cite{HI11a} for details.
\end{remark}

What follows is a translation of Definition 5.20 in \cite{IO11} to our
combinatorial framework. Recall that, for positive integers $m$ and $n$, the
subset $\Lambda_\infty(m,n)^\sharp$ consists of the strictly monotone maps in
$\Lambda_\infty(m,n)$.

\begin{definition}\label{defi:slice}
  Let $n\geq m\geq 1$ and let $S\subset\Li(m,n)^\sharp$ a finite subset. The
  subset $S$ is a \emph{slice} if the following conditions are satisfied:
  \begin{enumerate}
  \item The subset $S$ consists of a complete system of representatives of the
    $\Phi_m$-orbits of non-degenerate paracyclic $m$-simplices.
  \item For every pair of paracyclic $m$-simplices $\sigma$ and
    $\tau$ in $S$, the interval $[\sigma,\tau]\subset\Li(m,n)^{\sharp}$
    is contained in $S$.\qedhere
  \end{enumerate}
\end{definition}

In this context, slices should be thought of higher-dimensional analogues of
Dynkin quivers of type $\AA$ with an arbitrary orientation. More precisely, we
have the following observation.

\begin{remark}
	Let $n\geq 1$ and let $S \subset \Li(1,n)^\sharp$ be a slice. In this case the
  underlying graph of the Hasse quiver of $S$ is the Dynkin diagram $\AA_n$ and
  the Hasse quivers for the various slices exhaust all possible choices of
  orientation.
\end{remark}

\begin{example}
	Let $n\geq m\geq 1$ be integers. Recall the construction of the subset $S_P
  \subset \Li(m,n)$ as the image of the composite inclusion
	\[
		\Delta(m-1,n-1) \xhra{[0]*-} \Delta(m,n) \hra \Li(m,n).
	\]
	Then $S_P$ is a slice, called the {\em projective slice}. Dually, denote by
  $S_I \subset \Li(m,n)$ the image of the composite inclusion
	\[
		\Delta(m-1,n-1) \xhra{-*[0]} \Delta(m,n) \hra \Li(m,n).
	\]
	Then $S_I$ is a slice, called the {\em injective slice}. This terminology
  stems from the fact that the image of $S_P$ (resp.\ $S_I$) under the functor
	\[
		\varepsilon\colon \Delta(m,n) \lra \mod A_\ell^{(m)}
	\]
	from \eqref{eq:fundamental} is a complete set of representatives of the
  indecomposable projective (resp.\ injective) $A_\ell^{(m)}$-modules.
\end{example}

Our interest in the notion of a slice arises from the existence of the following
mutation operations which can be thought of as higher-dimensional versions of
sink-source reflections, \emph{c.f.} Definition 5.25 in \cite{IO11}.

\begin{proposition}
  Let $n\geq m\geq 1$ be integers and let $S \subset \Li(m,n)^\sharp$ be a
  slice.
  \begin{enumerate}
  \item Let $\sigma\in S$ be a minimal element. The subset
    \begin{equation*}
      \mu_\sigma^R(\sigma)= (S\setminus\set{\sigma})\cup\set{\Phi_m^{-1}(\sigma)}
      \subset \Li(m,n)^\sharp
    \end{equation*}
    is a slice, called the \emph{right mutation of $S$ at $\sigma$}, and
    $\Phi_m^{-1}(\sigma)$ is a maximal element therein.
  \item Let $\sigma\in S$ be a maximal element. The subset
    \begin{equation*}
      \mu_\sigma^L(\sigma)=(S\setminus\set{\sigma})\cup\set{\Phi_m(\sigma)} \subset
      \Li(m,n)^\sharp
    \end{equation*} is a slice, called the {\em left mutation of $S$ at $\sigma$}, and
    $\Phi_m(\sigma)$ is a minimal element therein.
  \item If $\sigma\in S$ is a minimal element, then
    $\mu_\sigma^L(\mu_\sigma^R(S))=S$ and, dually, if $\sigma\in S$ is a maximal
    element, then $\mu_\sigma^R(\mu_\sigma^L(S))=S$.\qedhere
  \end{enumerate}
\end{proposition}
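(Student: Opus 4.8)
The plan is to verify each of the three mutation statements by direct manipulation of the two defining conditions in \th\ref{defi:slice}, using the explicit coordinate descriptions of the Coxeter automorphism $\Phi_m$. Recall that $\Phi_m(\sigma)_i = \sigma_i - 1$ for all $i \in \ZZ$, so $\Phi_m$ is the global downward shift by one in $\ZZ^{m+1}$; in particular it is an order-preserving poset automorphism of $\Li(m,n)^\sharp$ that commutes with the equivariance condition defining $\Li(m,n)$. The first thing I would record is that, since $\Phi_m$ acts on $\ZZ^{m+1}$ by a strictly-order-preserving (indeed strictly decreasing coordinatewise) bijection, both $\Phi_m$ and $\Phi_m^{-1}$ send $\sigma < \tau$ to $\Phi_m^{\pm 1}(\sigma) < \Phi_m^{\pm 1}(\tau)$ and send non-degenerate simplices to non-degenerate simplices.

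First I would prove (1). Since $S$ is a slice, condition (i) says $S$ is a complete set of $\Phi_m$-orbit representatives; replacing $\sigma$ by $\Phi_m^{-1}(\sigma)$ keeps exactly one representative in the $\Phi_m$-orbit of $\sigma$, so condition (i) is preserved verbatim. The substance is condition (ii): I must show that $S' \coloneqq (S \setminus \{\sigma\}) \cup \{\Phi_m^{-1}(\sigma)\}$ is interval-closed and that $\Phi_m^{-1}(\sigma)$ is maximal in $S'$. The key geometric input is that $\sigma$ is minimal in $S$: I would argue that no element $\rho \in S \setminus \{\sigma\}$ satisfies $\rho \le \Phi_m^{-1}(\sigma)$ nontrivially except through the orbit structure, so that $\Phi_m^{-1}(\sigma)$ sits above everything else in $S'$ and creates no new intermediate simplices. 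Concretely, an element $\rho$ of $\Li(m,n)^\sharp$ lying in the interval $[\tau, \Phi_m^{-1}(\sigma)]$ for some $\tau \in S'$ would, upon applying $\Phi_m$, land in $[\Phi_m(\tau), \sigma]$; since $\sigma$ is minimal in $S$ and $\Phi_m(\tau)$ is the orbit-representative-shift of $\tau$, the minimality of $\sigma$ together with condition (ii) for the original slice $S$ forces $\rho$ to already lie in $S'$. This is the step I expect to require the most care, because one must check that the unique orbit representative of $\rho$ really does land inside $S$ rather than in an adjacent $\Phi_m$-translate, and this is exactly where the finiteness of $S$ and the precise order-theoretic meaning of ``minimal'' enter. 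Statement (2) is then proved by the same argument with all inequalities reversed and $\Phi_m$ replaced by $\Phi_m^{-1}$, exploiting the evident duality (the Nakayama-type symmetry, or simply the order-reversing isomorphism $(-)^*$ of Notation \ref{not:Delta-Delta-op} extended to $\Li(m,n)$).

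Finally, statement (3) is purely formal once (1) and (2) are in hand: if $\sigma$ is minimal in $S$, then $\Phi_m^{-1}(\sigma)$ is maximal in $\mu_\sigma^R(S)$ by part (1), so $\mu_{\Phi_m^{-1}(\sigma)}^L$ is defined and computes
\[
  \mu_{\Phi_m^{-1}(\sigma)}^L(\mu_\sigma^R(S)) = \bigl((S \setminus \{\sigma\}) \cup \{\Phi_m^{-1}(\sigma)\} \setminus \{\Phi_m^{-1}(\sigma)\}\bigr) \cup \{\Phi_m(\Phi_m^{-1}(\sigma))\} = S,
\]
using $\Phi_m \circ \Phi_m^{-1} = \id$; the dual identity follows symmetrically. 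I would remark that the notational clash (writing $\mu_\sigma^L(\mu_\sigma^R(S))$ with the same subscript $\sigma$) should be read as mutating back at the newly produced maximal element $\Phi_m^{-1}(\sigma)$. The main obstacle throughout is the interval-closure verification in (1): it is the only genuinely combinatorial point, and I would isolate it as the observation that applying the order-automorphism $\Phi_m$ translates intervals to intervals, reducing closure of $S'$ to closure of $S$ together with the minimality hypothesis.
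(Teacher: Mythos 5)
Your plan has the right architecture, and it is worth noting that the paper itself offers no argument here: its proof of this proposition is the single line ``See statements (3) and (4) in Proposition 5.26 in \cite{IO11}'', so a self-contained verification like the one you attempt is genuinely a different route. Your reductions are also correct as far as they go: condition (1) of Definition~\ref{defi:slice} is preserved verbatim because $\Phi_m^{-1}(\sigma)$ represents the same $\Phi_m$-orbit as $\sigma$; statement (2) does follow from statement (1) via the order-reversing isomorphism $(-)^*$ of Notation~\ref{not:Delta-Delta-op}, which satisfies $(\Phi_m\sigma)^*=\Phi_m^{-1}(\sigma^*)$ and hence exchanges minimal with maximal elements and right with left mutation; and statement (3) is purely formal once (1) and (2) are in hand, with the notational reading you give.

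The genuine gap is in the only substantive step, namely the convexity of $S'=(S\setminus\set{\sigma})\cup\set{\Phi_m^{-1}(\sigma)}$ together with the maximality of $\Phi_m^{-1}(\sigma)$, and the mechanism you propose for it fails as stated: for $\rho\in[\tau,\Phi_m^{-1}(\sigma)]$ with $\tau\in S\setminus\set{\sigma}$, applying $\Phi_m$ places $\Phi_m(\rho)$ in $[\Phi_m(\tau),\sigma]$, but $\Phi_m(\tau)$ is \emph{not} an element of $S$ (it is a different representative of the orbit of $\tau$), so condition (2) for the original slice $S$ says nothing about the interval $[\Phi_m(\tau),\sigma]$ and no conclusion can be drawn. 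You flag exactly this point (``one must check that the unique orbit representative of $\rho$ really does land inside $S$''), but that check is the entire content of the proposition and is never carried out. The missing idea is the orbit-exponent argument that the paper does spell out in its proof of \th\ref{lemma:slice_mutation:cube}: let $a\in\ZZ$ be the unique integer with $\Phi_m^a(\rho)\in S$. If $a>0$, then $\Phi_m^a(\rho)\leq\Phi_m^{a-1}(\sigma)\leq\sigma$, so minimality of $\sigma$ forces $\Phi_m^a(\rho)=\sigma$, i.e.\ $\rho=\Phi_m^{-a}(\sigma)$; the bound $\rho\leq\Phi_m^{-1}(\sigma)$ then gives $a=1$, so $\rho=\Phi_m^{-1}(\sigma)\in S'$. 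If $a<0$, then $\tau\leq\Phi_m^{-1}(\tau)\leq\Phi_m^{a}(\rho)$ with both endpoints in $S$, so convexity of $S$ would place $\Phi_m^{-1}(\tau)$ in $S$, contradicting the fact that $S$ contains exactly one representative of the orbit of $\tau$; hence $a=0$ and $\rho\in S\setminus\set{\sigma}\subset S'$. The same contradiction yields maximality: if $\Phi_m^{-1}(\sigma)\leq\tau$ for some $\tau\in S\setminus\set{\sigma}$, then $\Phi_m^{-1}(\sigma)\in[\sigma,\tau]\subset S$, which is absurd. Finally, you should also record the easy remaining case of an interval $[\tau_1,\tau_2]$ with both endpoints in $S\setminus\set{\sigma}$: it is contained in $S$ by convexity and avoids $\sigma$, since $\sigma\in[\tau_1,\tau_2]$ would give $\tau_1\leq\sigma$ and hence $\tau_1=\sigma$ by minimality. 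With these substitutions your outline becomes a complete proof.
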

\begin{proof}
  See statements (3) and (4) in Proposition 5.26 in \cite{IO11}.
\end{proof}

\begin{remark}
  Let $n\geq m\geq 1$ and let $\ell=n-m+1$. Slices in $\Li(m,n)^\sharp$ classify
  a distinguished class of tilting complexes in the perfect derived category
  $\perf(A_\ell^{(m)})$ which are higher-dimensional analogues of the APR
  tilting complexes of \cite{APR79} (cf. Section 5 in \cite{IO11}). For $m=1$,
  these tilting complexes and their mutations are closely related to the
  classical reflection functors of \cite{BGP73}.
\end{remark}

The following is the main theorem in the context of slice mutation, \emph{c.f.}
Theorem 5.27 in \cite{IO11}.

\begin{theorem}[Iyama and Oppermann]
  \th\label{thm:slice_mutation:transitivity} Let $n\geq m\geq 1$ be integers.
  Iterated slice mutation acts transitively on the set of slices in
  $\Li(m,n)^\sharp$.
\end{theorem}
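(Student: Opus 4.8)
The plan is to prove transitivity by showing that any slice can be connected to the distinguished \emph{projective slice} $S_P$ through a finite sequence of mutations, since transitivity of the mutation action then follows by composing paths through $S_P$. To make this an induction, I would first attach to each slice $S$ a numerical invariant measuring its ``distance'' from $S_P$, and then show that unless $S = S_P$, there is always a mutation strictly decreasing this invariant. The natural invariant comes from the description in \th\ref{defi:slice}(1): a slice is a complete system of representatives of the $\Phi_m$-orbits of non-degenerate paracyclic $m$-simplices, so each element $\sigma \in S$ is of the form $\Phi_m^{k_\sigma}(\sigma_P)$ for a unique representative $\sigma_P$ in $S_P$ and a unique integer $k_\sigma \in \ZZ$. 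I would set
\[
  d(S) = \sum_{\sigma \in S} |k_\sigma|,
\]
so that $d(S) = 0$ precisely when $S = S_P$.

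The key combinatorial step is then the following claim: if $S \ne S_P$, then $S$ admits either a minimal element $\sigma$ with $k_\sigma > 0$ or a maximal element $\sigma$ with $k_\sigma < 0$, and performing the corresponding mutation (right mutation $\mu_\sigma^R$ in the first case, replacing $\sigma$ by $\Phi_m^{-1}(\sigma)$ and thus lowering $k_\sigma$ by one; left mutation $\mu_\sigma^L$ in the second, raising $k_\sigma$ by one) strictly decreases $d(S)$. Establishing this claim is where the convexity condition \th\ref{defi:slice}(2)---that every interval $[\sigma,\tau]\subset\Li(m,n)^\sharp$ between two elements of $S$ lies in $S$---does the real work: convexity forces a compatibility between the partial order on $S$ and the values $k_\sigma$, so that an element sitting at an ``extremal'' index $k_\sigma$ must itself be extremal (minimal or maximal) in the poset $S$. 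Concretely, I would argue that an element realising $\max_\sigma k_\sigma > 0$ can be chosen to be minimal in $S$, because any element strictly below it and also having a large positive index would, by convexity, force intermediate simplices into $S$ and contradict the representative condition (1). The dual argument handles $\min_\sigma k_\sigma < 0$.

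With the claim in hand the proof concludes by strong induction on $d(S)$: the base case $d(S)=0$ gives $S=S_P$, and for $d(S)>0$ the claim produces a single mutation to a slice $S'$ with $d(S') < d(S)$, so by induction $S'$ is connected to $S_P$ by iterated mutations, hence so is $S$. Since part (3) of the preceding proposition guarantees that $\mu_\sigma^R$ and $\mu_\sigma^L$ are mutually inverse at minimal/maximal elements, every mutation step is reversible, and transitivity of the action on the whole set of slices follows immediately by travelling from any slice to $S_P$ and back out to any other.

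The main obstacle I anticipate is the extremality step inside the claim: proving that an index-extremal element is order-extremal. One must rule out the possibility that the element realising the largest $|k_\sigma|$ is ``trapped'' strictly between other elements of $S$ in the paracyclic order, and this is exactly the point where the interplay between the $\Phi_m$-orbit structure and the convexity of $S$ must be exploited carefully; a clean way to organise this is to observe that $\Phi_m$ (being $t_n^{-1}\circ -$) is strictly \emph{decreasing} on $\Li(m,n)^\sharp$, so that raising or lowering $k_\sigma$ genuinely moves $\sigma$ down or up in the partial order, which is what ultimately reconciles the numerical invariant with the geometry of slices. I would expect to import the relevant order-theoretic bookkeeping directly from the analysis underlying Proposition 5.26 and Theorem 5.27 in \cite{IO11}, of which this statement is the combinatorial translation.
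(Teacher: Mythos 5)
The paper does not actually prove this theorem: it is imported wholesale from Iyama and Oppermann (Theorem 5.27 in \cite{IO11}), with the neighbouring propositions likewise cited from Proposition 5.26 there. So your proposal cannot be compared against an internal argument and must stand on its own. Judged that way, your overall strategy is sound: descend from an arbitrary slice to the projective slice $S_P$ by mutations that strictly decrease $d(S)=\sum_{\sigma\in S}|k_\sigma|$, then obtain transitivity by composing paths through $S_P$, using part (3) of the preceding proposition to reverse mutation steps. This is, in all likelihood, essentially the Iyama--Oppermann induction.

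There is, however, one genuine flaw, located exactly at the step you flag as the main obstacle. Your justification of the extremality claim does not work: if $\tau<\sigma$ are both in $S$, convexity forces the interval $[\tau,\sigma]\cap\Li(m,n)^\sharp$ into $S$, but this yields no contradiction with the representative condition of \th\ref{defi:slice} --- slices routinely contain comparable pairs (the projective slice itself does), and intervals inside a slice are \emph{required} by the axioms, not forbidden. The correct mechanism is the monotonicity you mention in passing but never deploy. Identify $\sigma\in\Li(m,n)^\sharp$ with its restriction $(\sigma_0<\sigma_1<\cdots<\sigma_m)$ to $[m]\subset\ZZ$; then $S_P$ is precisely the set of non-degenerate paracyclic $m$-simplices with $\sigma_0=0$, each $\Phi_m$-orbit contains exactly one such element, and consequently $k_\sigma=-\sigma_0$. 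Since the order on $\Li(m,n)$ is the product order, $\tau\leq\sigma$ implies $\tau_0\leq\sigma_0$, i.e.\ $k_\tau\geq k_\sigma$. Hence the set of elements of $S$ realising $\max_{\sigma} k_\sigma$ is downward closed inside $S$, so any element minimal within that set is minimal in all of $S$; dually, the realisers of $\min_\sigma k_\sigma$ form an upward closed subset, one of whose maximal elements is maximal in $S$. If $S\neq S_P$ then some $k_\sigma\neq 0$ (note that $d(S)=0$ forces $S\subseteq S_P$, and then $S=S_P$ because both are complete systems of representatives of the $\Phi_m$-orbits), so either $\max k_\sigma>0$ or $\min k_\sigma<0$, and in either case the corresponding right or left mutation lowers $d$ by exactly $1$. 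With this replacement your induction closes: convexity is needed only through the cited proposition that mutations of slices are slices, and your concluding appeal to the ``order-theoretic bookkeeping'' of \cite{IO11} becomes unnecessary.
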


To apply \th\ref{thm:slice_mutation:transitivity} in our higher categorical
context, we prove a minor refinement of statements (1) and (2) in Proposition
5.26 in \cite{IO11}.

\begin{lemma}
  \th\label{lemma:slice_mutation:cube} Let $n\geq m\geq 1$ be integers. Let
  $S\subset\Li(m,n)^\sharp$ be a slice and let $\sigma\in S$ a minimal element.
  Then all the non-degenerate paracyclic simplices of the $(m+1)$-cube
$\sigma+I^{m+1}$ are contained in $S\cup\set{\Phi_m^{-1}(\sigma)}$.
\end{lemma}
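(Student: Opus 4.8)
The plan is to exploit the two defining properties of a slice---that it is convex in $\Li(m,n)^\sharp$ and that it is a complete set of $\Phi_m$-orbit representatives---together with the minimality of $\sigma$. The first thing to record is the geometry of the cube. Writing $\mathbf{1}=(1,\dots,1)$, the recollection that $\Phi_m(\sigma)_i=\sigma_i-1$ shows $\Phi_m^{-1}(\sigma)=\sigma+\mathbf{1}$, so the initial vertex of $q=\sigma+I^{m+1}$ is $\sigma$ itself and its final vertex is exactly $\Phi_m^{-1}(\sigma)$; the latter is non-degenerate since $\Phi_m$ preserves the partition $\Li(m,n)=\Li(m,n)^\sharp\cup\Li(m,n)^\flat$. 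A preliminary check, which I would dispatch quickly, is that every vertex $\sigma+v$ is a legitimate element of $\Li(m,n)$: strict monotonicity of $\sigma$ gives $\sigma_{i+1}-\sigma_i\ge 1\ge v_i-v_{i+1}$, and $\sigma_m\le\sigma_0+n$ yields $\sigma_m+v_m\le\sigma_0+v_0+n+1$. It therefore suffices to prove that every \emph{non-degenerate} vertex $\tau=\sigma+v$ with $v\neq\mathbf{1}$ lies in $S$.

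The heart of the argument uses orbit representatives. Given such a $\tau$, condition (1) in the definition of a slice produces a unique $\rho\in S$ and $k\in\ZZ$ with $\tau=\Phi_m^k(\rho)=\rho-k\mathbf{1}$, so that $\rho=\tau+k\mathbf{1}=\sigma+v+k\mathbf{1}$. I would then split on the sign of $k$. If $k\ge 0$, then $\rho=\tau+k\mathbf{1}\ge\tau\ge\sigma$ because $v\ge 0$; since $\sigma,\rho\in S$ and $\tau$ is non-degenerate, the chain $\sigma\le\tau\le\rho$ places $\tau$ in the interval $[\sigma,\rho]\subset\Li(m,n)^\sharp$, which is contained in $S$ by convexity, giving $\tau\in S$.

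If instead $k\le -1$, then $\rho_i=\sigma_i+v_i+k\le\sigma_i+1-1=\sigma_i$ for every $i$, so $\rho\le\sigma$ in $\Li(m,n)$; minimality of $\sigma$ in $S$ forces $\rho=\sigma$, whence $v+k\mathbf{1}=0$, i.e.\ $v_i=-k$ for all $i$. As $v_i\in\set{0,1}$ and $-k\ge 1$, this is only possible when $-k=1$ and $v=\mathbf{1}$, contradicting $v\neq\mathbf{1}$. Hence the case $k\le -1$ cannot occur, every non-degenerate vertex other than $\Phi_m^{-1}(\sigma)$ lies in $S$, and the lemma follows.

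The only genuinely delicate point is the bookkeeping in the $k\le -1$ case: one must be careful that componentwise inequality is exactly the poset order on $\Li(m,n)$ (which it is, since a paracyclic simplex is determined by its restriction to $[m]\subset\ZZ$) and that the excluded vertex $v=\mathbf{1}$ is precisely the one already absorbed into $\set{\Phi_m^{-1}(\sigma)}$. Everything else is a direct application of the convexity and orbit-representative axioms, so I expect no serious obstacle beyond this sign analysis.
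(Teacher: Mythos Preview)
Your proof is correct and follows essentially the same strategy as the paper's: pick the $\Phi_m$-orbit representative in $S$, split on the sign of the shift, and invoke minimality of $\sigma$ and convexity of $S$. Your treatment of the case $k\ge 0$ is in fact slightly more economical than the paper's---you conclude $\tau\in S$ directly from $\sigma\le\tau\le\rho$ and convexity, whereas the paper separately rules out the strict case (their $a<0$) by trapping $\Phi_m^{-1}(\sigma)$ inside $S$---but this is a cosmetic difference within the same argument.
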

\begin{proof}
  Let $v\in I^{m+1}$ be such that $\sigma+v$ is non-degenerate. We need to prove
  that $\sigma+v$ belongs to $S\cup\set{\Phi_m^{-1}(\sigma)}$. The claim is
  clear if $|v|=0$ or $|v|=m+1$ since, by definition,
  $\Phi_m^{-1}(\sigma)=\sigma+(1,\dots,1)$. Suppose that $0<|v|<m+1$. Given that
  $S$ is a slice in $\Li(m,n)$ there exists an integer $a$ such that
  $\Phi_m^a(\sigma+v)\in S$. The minimality of $\sigma$ in $S$ readily implies
  that $a\leq 0$, as otherwise we would have $\Phi_m^a(\sigma+v)<\sigma$.
  Suppose that $a<0$. If this is the case, then the inequalities
  \begin{equation*}
    \sigma<\Phi_m^{-1}(\sigma)\leq\Phi_m^a(\sigma+v)
  \end{equation*}
  are satisfied. But the convexity of $S$ implies that $\Phi_m^{-1}(\sigma)\in
  S$, which contradicts the fact that $S$ is a slice in $\Li(m,n)$ for $S$
  cannot contain both $\sigma$ and $\Phi_m^{-1}(\sigma)$. Whence $a=0$ and
  $\sigma+v$ belongs to $S$ as required. This finishes the proof.
\end{proof}

\begin{definition}
  \th\label{not:diamond} Let $n\geq m\geq 1$ be integers. Let $S \subset
  \Li(m,n)$ be a slice, let $\sigma\in S$ be a minimal element, and let
  $S'=\mu_\sigma^R(\sigma)$ be the right mutation of $S$ at $\sigma$. We define
  the poset
  \begin{equation*}
    S\diamond S'\coloneqq S\cup\set{\Phi_m(\sigma)}=\set{\sigma}\cup S'.\qedhere
  \end{equation*}
\end{definition}

\begin{remark}
  \th\label{rmk:diamond} Let $n\geq m\geq 1$ be integers. Let $S \subset
  \Li(m,n)$ be a slice, let $\sigma\in S$ be a minimal element, and let
  $S'=\mu_\sigma^R(\sigma)$ be the right mutation of $S$ at $\sigma$. The
  canonical inclusions
  \begin{equation*}
    S\subset S\diamond S'\supset S'
  \end{equation*}
  allow us to describe the passage from $S$ to $S'$ by means of the rectilinear
  $(m+1)$-cube
  \begin{equation*}
    \sigma+I^{m+1}=\Phi_m^{-1}(\sigma)-I^{m+1},
  \end{equation*}
  see \th\ref{lemma:slice_mutation:cube}.
\end{remark}

We introduce the following auxiliary definition which allows us to meaningfully
consider slices in the higher categorical context.

\begin{definition}
  Let $n\geq m\geq 1$ and $J\subset\Lambda_\infty(m,n)^\sharp$. We enlarge $J$
  to the poset
  \[
    \underline{J}\coloneqq \setP{\rho\in\Lambda_\infty(m,n)}{\exists\sigma,\tau\in
      J\colon\sigma\leq\rho\leq\tau}.
  \]
  Note that, if $S$ is a slice, then $\underline{S}\setminus S$ consists only of
  degenerate paracyclic simplices.
\end{definition}

We will now prove the main result of this section, which can be regarded as a
higher-dimensional generalisation of Theorem 4.15 in \cite{GS16}.

\begin{theorem}
  \th\label{thm:slices_Smn} Let $n \ge m \ge 1$, let $\C$ be a stable
  $\infty$-category, and let $S \subset \Li(m,n)^\sharp$ be a slice. The functor
  \begin{equation*}
	  \Fun_*^{\ex}(\Li(m,n),\C) \xra{\simeq} \Fun_*(\underline{S},\C),
  \end{equation*}
  induced by restriction to $\underline{S}$ is an equivalence of stable
  $\infty$-categories.
\end{theorem}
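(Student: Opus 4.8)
The plan is to reduce the statement for an arbitrary slice $S$ to the already-established case of the projective slice $S_P$, using the transitivity result \th\ref{thm:slice_mutation:transitivity} of Iyama and Oppermann. Recall that for the projective slice we have the equivalence \eqref{eq:sliceeq}, so the base case is in hand. The strategy is therefore inductive: since iterated mutation acts transitively on all slices in $\Li(m,n)^\sharp$, it suffices to show that \emph{if} the restriction functor is an equivalence for a slice $S$, \emph{then} it is an equivalence for any single mutation $S'=\mu_\sigma^R(S)$ (and symmetrically for left mutations). Chaining finitely many such mutation steps connects any slice to $S_P$.

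\medskip

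For the inductive step, I would argue through the intermediate poset $S\diamond S'$ of \th\ref{not:diamond}, which contains both $\underline{S}$ and $\underline{S'}$ via the canonical inclusions $\underline{S}\subset\underline{S\diamond S'}\supset\underline{S'}$. The idea is to factor the comparison through a single functor on $\underline{S\diamond S'}$ and show that \emph{both} restrictions
\[
  \Fun_*(\underline{S\diamond S'},\C)\lra\Fun_*(\underline{S},\C)
  \qquad\text{and}\qquad
  \Fun_*(\underline{S\diamond S'},\C)\lra\Fun_*(\underline{S'},\C)
\]
are equivalences, at least after imposing the appropriate exactness condition coming from the distinguished rectilinear cube. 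By \th\ref{rmk:diamond}, the passage from $S$ to $S'$ is governed entirely by the single rectilinear $(m+1)$-cube $\sigma+I^{m+1}=\Phi_m^{-1}(\sigma)-I^{m+1}$, and by \th\ref{lemma:slice_mutation:cube} all non-degenerate paracyclic simplices of this cube already lie in $S\cup\set{\Phi_m^{-1}(\sigma)}$. Concretely, a reduced functor on $\underline{S}$ admits an essentially unique exact extension across this cube: since $\sigma$ is an initial vertex of the cube, the value at the opposite vertex $\Phi_m^{-1}(\sigma)$ is computed as the total cofibre (equivalently, a Kan extension puncturing the cube at its initial vertex), and conversely. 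This is exactly the ``puncturing cubes at their initial and final vertices'' mechanism advertised in the introduction, and it is the same Kan-extension argument that drives the proof of \th\ref{prop:sliceres}.

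\medskip

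Thus the mutation equivalence is realised as a composite of two Kan-extension equivalences: right Kan extension from $\underline{S}$ to $\underline{S\diamond S'}$ (filling in the missing final vertex $\Phi_m(\sigma)$ so as to make the governing cube biCartesian, using that all other non-degenerate faces are present), followed by restriction to $\underline{S'}$, which is an equivalence by the dual puncturing at the minimal vertex $\sigma$. The reducedness bookkeeping---that $\underline{S}\setminus S$ and $\underline{S'}\setminus S'$ consist only of degenerate simplices, forced to zero---ensures these extensions land in the correct subcategories of \emph{reduced} functors and that the relevant cubes are honestly governed by a single exactness relation. The main obstacle I anticipate is purely combinatorial: verifying that the cube $\sigma+I^{m+1}$ is \emph{convex-compatible} with the slice condition so that the Kan extension is computed by exactly one cube and does not interact with the exactness constraints imposed by neighbouring cubes. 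This is precisely the content controlled by \th\ref{lemma:slice_mutation:cube}, which guarantees that no spurious non-degenerate simplices of the mutating cube escape $S\cup\set{\Phi_m^{-1}(\sigma)}$; with that in hand, the pointwise Kan-extension criterion applies cleanly and the two restrictions are both equivalences, completing the induction.
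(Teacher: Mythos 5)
Your proposal is correct and follows essentially the same route as the paper: the paper likewise takes the projective-slice equivalence \eqref{eq:sliceeq} as the base case, proves the single-mutation step (\th\ref{prop:slices_Smn}) by passing through $\Fun_*^{\ex}(\underline{S\diamond S'},\C)$ and using \th\ref{lemma:slice_mutation:cube} together with \th\ref{rmk:diamond} and the contractibility of the space of (co)limit fillings of the distinguished rectilinear $(m+1)$-cube, and then concludes by the transitivity of slice mutation (\th\ref{thm:slice_mutation:transitivity}) combined with two-out-of-three. Two cosmetic slips do not affect your argument: the missing vertex filled in when extending from $\underline{S}$ is $\Phi_m^{-1}(\sigma)$ (not $\Phi_m(\sigma)$), and completing the punctured cube at its \emph{final} vertex so as to make it biCartesian is a \emph{left} Kan extension rather than a right one.
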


\th\ref{thm:slices_Smn} will be deduced from
\th\ref{thm:slice_mutation:transitivity} and the following result.

\begin{proposition}
  \th\label{prop:slices_Smn}Let $n \ge m \ge 1$, let $\C$ be a stable
  $\infty$-category. Let $S$ be a slice in $\Li(m,n)$ and let
  $S'=\mu_\sigma^R(S)$ the right mutation of $S$ at a minimal element $\sigma\in
  S$. Then, the restriction functors comprising the zig-zag
  \begin{equation*}
	  \Fun_*(\underline{S},\C) \xra{\simeq} \Fun_*^{\Ex}(\underline{S\diamond S'},\C) \xra{\simeq} \Fun_*(\underline{S}',\C).
  \end{equation*}
  are equivalences of $\infty$-categories.
\end{proposition}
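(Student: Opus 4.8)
\textbf{Proof proposal for \th\ref{prop:slices_Smn}.}

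The plan is to reduce both restriction functors in the zig-zag to the exactness criterion governing $\Fun_*^{\ex}$, exploiting the fact that the passage from $S$ to $S'$ is controlled by the single rectilinear cube $\sigma + I^{m+1}$ of \th\ref{rmk:diamond}. The key geometric input is \th\ref{lemma:slice_mutation:cube}: all non-degenerate paracyclic simplices of this cube lie in $S \cup \set{\Phi_m^{-1}(\sigma)}$, so that $\underline{S \diamond S'}$ is obtained from $\underline{S}$ by adjoining (up to degenerate simplices) only the single vertex $\Phi_m^{-1}(\sigma) = \sigma + (1,\dots,1)$, and symmetrically $\underline{S \diamond S'}$ sits over $\underline{S}'$ by adjoining $\sigma$. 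First I would analyse the left-hand functor $\Fun_*(\underline{S},\C) \to \Fun_*^{\Ex}(\underline{S \diamond S'},\C)$. Here the source omits the top vertex $\Phi_m^{-1}(\sigma)$ of the cube, and the exactness condition on the target is exactly the requirement that the rectilinear $(m+1)$-cube $\sigma + I^{m+1}$ be biCartesian; by the theory of $n$-cubes in the appendix, a biCartesian cube is determined by its initial $(m+1)$-punctured subcube, so the missing terminal value $\Phi_m^{-1}(\sigma)$ is computed as the total cofibre (equivalently, by the pointwise left Kan extension criterion). I would therefore exhibit the quasi-inverse as a left Kan extension along the inclusion $\underline{S} \hookrightarrow \underline{S \diamond S'}$, checking via the pointwise formula that it lands in the exact functors and is reduced.

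Symmetrically, for the right-hand functor $\Fun_*^{\Ex}(\underline{S \diamond S'},\C) \to \Fun_*(\underline{S}',\C)$ the omitted vertex is the initial vertex $\sigma$ of the cube, which is recovered as the total fibre of the $(m+1)$-punctured cube missing its initial vertex; the quasi-inverse is the corresponding right Kan extension along $\underline{S}' \hookrightarrow \underline{S \diamond S'}$. In both cases the argument is structurally identical to the factorization used in the proof of \th\ref{prop:sliceres}: Kan extensions along the fully faithful poset inclusions are fully faithful by Corollary 4.3.2.16 in \cite{Lur09}, and the essential-image computation via the pointwise criterion completes the punctured cube to a biCartesian one. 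The only subtlety I must verify is that no \emph{other} exactness conditions are disturbed—that is, that adjoining the single vertex affects precisely the one distinguished cube and no rectilinear $(m+1)$-cube straddling $\underline{S}$ is left with an incompatible value. This is where \th\ref{lemma:slice_mutation:cube} does the essential work: it guarantees that $\Phi_m^{-1}(\sigma)$ participates only in the cube $\sigma + I^{m+1}$ among the non-degenerate simplices being added, and the convexity of $S$ (part (2) of \th\ref{defi:slice}) ensures no stray intervals are created.

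The main obstacle I anticipate is the bookkeeping around degenerate paracyclic simplices. The posets $\underline{S}$, $\underline{S}'$, and $\underline{S \diamond S'}$ differ not only in the single non-degenerate vertex but also in various degenerate simplices lying in the relevant intervals, and these must be sent to zero by the reducedness hypothesis. I would handle this exactly as in \th\ref{prop:sliceres}, by factoring each inclusion through an intermediate poset that first adjoins the degenerate simplices (assigning them zero via left or right Kan extension, which agree on zero-valued extensions) and only then adjoins the unique non-degenerate vertex. Verifying that the pointwise Kan extension formula genuinely produces the total (co)fibre of the cube—rather than some larger colimit polluted by degenerate corners—is the crux, but since every degenerate corner already carries a zero object, these contributions vanish and the cube computation reduces to the clean $(m+1)$-cube statement. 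I expect the combinatorial verification to be routine once this reduction is in place, so I would state the degenerate-simplex bookkeeping explicitly but leave the remaining cube-completion details to the reader, in keeping with the level of detail of \th\ref{prop:sliceres} and \th\ref{prop:paracyclic}.
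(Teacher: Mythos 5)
Your proposal is correct and takes essentially the same route as the paper: the paper's own proof consists precisely of citing \th\ref{lemma:slice_mutation:cube}, \th\ref{rmk:diamond}, and the contractibility of the space of colimit cones of a fixed finite shape, which is exactly the fact that your fully-faithful-Kan-extension-plus-essential-image argument (modelled on \th\ref{prop:sliceres}) repackages. Your explicit handling of the degenerate-simplex bookkeeping and of the uniqueness of the rectilinear cube through $\Phi_m^{-1}(\sigma)$ (which, strictly speaking, rests on minimality of $\sigma$ and convexity of $S$ rather than on \th\ref{lemma:slice_mutation:cube} alone) fills in details the paper leaves implicit, but the underlying argument is the same.
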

\begin{proof}
  The claim follows immediately from \th\ref{lemma:slice_mutation:cube},
  \th\ref{rmk:diamond} and the fact that the space of colimits of a given finite
  shape in $\C$ is contractible, see Proposition 1.2.12.9 and Remark 1.2.13.5 in
  \cite{Lur17}.
\end{proof}

\begin{proof}[Proof of \th\ref{thm:slices_Smn}]
  By \th\ref{thm:slice_mutation:transitivity,prop:slices_Smn}, for each slice $S$
  and its right mutation $S' \subset \Li(m,n)$ at a minimal element, we have a
  diagram
  \[
    \begin{tikzcd}
      &	& \Fun_*(\underline{S},\C)\\
			\Fun_*^{\ex}(\Li(m,n),\C) \ar{r} & \Fun_*^{\ex}(\underline{S\diamond S'},
			\C)\ar{ur}{\simeq}\ar{dr}{\simeq} & \\
      & & \Fun_*(\underline{S}',\C).
    \end{tikzcd}
  \]
  By two-out-of-three, we deduce that the slice restriction
  $\Fun_*^{\ex}(\Li(m,n),\C) \to \Fun_*(\underline{S},\C)$ is an equivalence if
  and only if the slice restriction $\Fun_*^{\ex}(\Li(m,n),\C) \to
  \Fun_*(\underline{S}',\C)$ is an equivalence. Therefore, to prove the claim, by
  the transitivity of slice mutation, it suffices to check that a particular
  slice restriction is an equivalence. To this end, we note that we have
  established, in \eqref{eq:sliceeq}, such an equivalence
  \[
	  \Fun_*^{\ex}(\Li(m,n),\C) \xra{\simeq} \Fun_*(\underline{S}_P,\C)
  \]
  given by restriction to the projective slice.
\end{proof}

\begin{remark}
  A proof of \th\ref{thm:slices_Smn} in the case $m=1$ which utilises a version
  of the `knitting algorithm' is given in Theorem 4.15 in \cite{GS16}, see also
  Section 6 in \cite{GS16b}. In contrast, an important feature of our proof of
  \th\ref{thm:slices_Smn} is the use of slices in the poset of paracyclic
  simplices and the description of their mutations by means of biCartesian
  cubes.
\end{remark}

\subsection{Relation to the higher Waldhausen
  $\eS_{\bullet}$-constructions}
\label{ssec:Sk}

Let $\KK$ be a field and $n\geq m\geq 1$. In Corollary \ref{cor:derivedcomb} and
Corollary \ref{cor:derivedcombpara} we established equivalences
\begin{equation}\label{eq:delta}
  \D A_\ell^{(m)}\simeq \Fun_*^{\ex}(\Delta(m,n), \D\KK)
\end{equation}
and
\begin{equation}\label{eq:para}
  \D A_\ell^{(m)}\simeq \Fun_*^{\ex}(\Li(m,n), \D\KK),
\end{equation}
respectively. In \S \ref{sec:mutation} we have seen that these equivalences
allow us to develop a combinatorial approach to higher reflection
functors in type $\AA$. However, there is more to be learnt: It follows from equivalence
\eqref{eq:delta} that the various derived categories $\D A_{\bullet-m+1}^{(m)}$
can be organised into a simplicial object and equivalence \eqref{eq:para}
implies that this simplicial object carries a canonical paracyclic structure. We
formulate this statement more generally as follows.

\begin{notation}
  Following Section 1.1.4 in \cite{Lur17}, we denote the $\infty$-category of
  (small) stable $\infty$-categories and exact functors between them by
  $\Cat_\infty^{\Ex}$.
\end{notation}

\begin{proposition}\label{prop:Sm_paracyclic}
	Let $\C$ be a (small) stable $\infty$-category and let $m \ge 0$.
	\begin{enumerate}
  \item The association
    \[
      [n] \mapsto \Fun_*^{\ex}(\Delta(m,n), \C)
    \]
    extends to define a simplicial object
    \[
      \eS_{\bullet}^{\langle m\rangle}(\C)\colon \Delta^{\op} \lra
      \Cat^{\Ex}_{\infty},
    \]
    called the {\em $m$-dimensional $\eS_{\bullet}$-construction of $\C$}.
  \item The association
    \[
      \widetilde{n} \mapsto \Fun_*^{\ex}(\Li(m,n), \C)
    \]
    extends to define a paracyclic object
    \[
      \widetilde{\eS}_{\bullet}^{(m)}(\C)\colon \Li^{\op} \lra \Cat^{\Ex}_{\infty},
    \]
    called the {\em $m$-dimensional paracyclic $\eS_{\bullet}$-construction of
      $\C$.}
  \item Restriction along the inclusion $\Delta\hookrightarrow\Li$ induces an
    equivalence
    \[
      \widetilde{\eS}_{\bullet}^{\langle m\rangle}(\C)|_\Delta\simeq
      \eS_{\bullet}^{\langle m\rangle}(\C).\qedhere
    \]
	\end{enumerate}
\end{proposition}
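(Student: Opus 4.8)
The plan is to assemble the three assertions from the combinatorial machinery already in place, treating the bulk of the work as the construction of suitable functors of $\infty$-categories whose values are the exact functor categories in question. For statement (1), the key observation is that the assignment $([m],[n]) \mapsto \Delta(m,n)$ is functorial: for fixed $m$, the poset $\Delta(m,n)$ depends covariantly on $[n] \in \Delta$ via post-composition of simplices with monotone maps $[n] \to [n']$. I would first note that this post-composition operation preserves the partition into degenerate and non-degenerate simplices, and more importantly that it preserves rectilinear $(m+1)$-cubes (in the sense of \th\ref{def:rectilinear_cube}): a monotone map $[n]\to[n']$ applied coordinatewise sends a rectilinear cube to a possibly-degenerate rectilinear cube, and the biCartesian condition is stable under the relevant restrictions. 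Thus restriction along these poset maps carries reduced functors to reduced functors and exact functors to exact functors, and pullback is contravariant in $[n]$. This produces the simplicial object $\eS_\bullet^{\langle m\rangle}(\C)\colon \Delta^{\op} \to \Cat_\infty^{\Ex}$. To make this rigorous at the level of $\infty$-categories rather than up to coherent homotopy, I would package the construction as a functor out of the total category via a (co)Cartesian fibration, i.e.\ exhibit a single functor $\Delta \to \Cat_\infty$ classifying the family $[n]\mapsto \Delta(m,n)$ and then apply $\Fun_*^{\ex}(-,\C)$ functorially; the exactness of the transition functors follows from the cube-preservation just described.

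For statement (2), I would run the same argument with $\Li$ in place of $\Delta$. The paracyclic category $\Li$ acts on the posets $\Li(m,n)$ by post-composition with the equivariant monotone maps representing morphisms $\widetilde{n} \to \widetilde{n'}$, and crucially the extra automorphisms $t = t_{n+1}$ also act. One must check that these paracyclic transition maps likewise preserve the partition \eqref{eq:lpartition} and send rectilinear $(m+1)$-cubes in $\Li(m,n)$ to rectilinear cubes in $\Li(m,n')$; since every element of $\Li(m,n)$ is determined by its restriction to $[m]\subset\ZZ$ and the action is again coordinatewise post-composition, the same elementary verification applies. This yields the paracyclic object $\widetilde{\eS}_\bullet^{(m)}(\C)\colon \Li^{\op} \to \Cat_\infty^{\Ex}$. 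The only genuinely new input compared with (1) is compatibility with the cyclic automorphisms and the relations \eqref{eq:paracyclic_category}, which I expect to follow formally from functoriality once the family $\widetilde{n}\mapsto \Li(m,n)$ is set up as a functor $\Li \to \Cat_\infty$.

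For statement (3), the inclusion $\Delta \hookrightarrow \Li$ induces, upon restriction of the paracyclic object, the assignment $[n] \mapsto \Fun_*^{\ex}(\Li(m,n),\C)$ regarded as a simplicial object; comparing with $\eS_\bullet^{\langle m\rangle}(\C)$ levelwise, the required equivalence is exactly the content of \th\ref{prop:paracyclic}, which supplies an equivalence $\Fun_*^{\ex}(\Li(m,n),\C) \xra{\simeq} \Fun_*^{\ex}(\Delta(m,n),\C)$ for each $n$. What remains is to upgrade this levelwise equivalence to an equivalence of simplicial objects, i.e.\ to check that the equivalences of \th\ref{prop:paracyclic} are natural in $[n] \in \Delta$. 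I expect this naturality to be the main obstacle: the quasi-inverse in \th\ref{prop:paracyclic} is built by successive Kan extensions, and one must verify that these extension procedures commute with restriction along the simplicial structure maps. The cleanest way to handle this is to observe that both sides are values of functors on the total category of a fibration over $\Delta$ and that the comparison is induced by a map of fibrations given fibrewise by the restriction $\Li(m,n) \supset \Delta(m,n)$; since restriction is strictly natural and the levelwise functors are equivalences, the induced map of simplicial objects is an equivalence. I would therefore phrase statement (3) as a corollary of a naturality-enhanced version of \th\ref{prop:paracyclic}, reducing the remaining work to confirming that the Kan-extension-based quasi-inverse is compatible with the transition functors—a routine but somewhat delicate coherence check that I would either carry out explicitly for the degeneracy and face maps or dispatch by the fibrational reformulation.
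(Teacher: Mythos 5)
Your proposal takes essentially the same route as the paper: statement (3) is obtained from Proposition \ref{prop:paracyclic}, using that the comparison map is the strictly natural restriction functor so that a levelwise equivalence suffices (your initial worry about naturality of the Kan-extension quasi-inverse is indeed dispensable, as you yourself conclude), and statements (1) and (2) come down to the observation that post-composition with (equivariant) monotone maps preserves degenerate simplices and rectilinear $(m+1)$-cubes, so that the pullback functors preserve reduced exact functors. Your coordinatewise verification of cube-preservation is correct, including the point that collapsed directions are harmless, since a cube with identity edges in some direction is automatically biCartesian.

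There is, however, one verification that you omit and that the paper's proof explicitly flags as half of what must be checked: for the construction to define a functor into $\Cat_\infty^{\Ex}$ you must show that the \emph{values} $\Fun_*^{\ex}(\Delta(m,n),\C)$ and $\Fun_*^{\ex}(\Li(m,n),\C)$ are themselves stable $\infty$-categories, and, relatedly, that the transition functors are exact in the sense of preserving finite (co)limits, not merely well defined on objects. Both points reduce to showing that the reduced exact functors form a stable subcategory of $\Fun(\Delta(m,n),\C)$ closed under fibres and cofibres: zero objects of $\C$ are closed under finite (co)limits, and biCartesian cubes are closed under fibres and cofibres by Corollary \ref{coro:coCartesian_stable_subcategory}; once this is known, (co)limits in $\Fun_*^{\ex}$ are computed pointwise, so the pullback functors restrict to exact functors. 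This is precisely where Appendix \ref{sec:n-cubes} enters the paper's proof; without it, the claim that your simplicial and paracyclic objects land in $\Cat_\infty^{\Ex}$ does not typecheck. With this supplement, your argument is complete.
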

\begin{proof} In view of Proposition \ref{prop:paracyclic}, we only need to
  verify that the values of the simplicial (resp.\ paracyclic) objects are really
  stable $\infty$-categories and that the functoriality is provided by exact
  functors. This can be verified using the results in Appendix
  \ref{sec:n-cubes}.
\end{proof}

\begin{remark}
  Let $\C$ be a stable $\infty$-category and $n\geq m\geq 1$ integers. The
  paracyclic autoequivalence $t_n$ on $\eS_n^{\langle m\rangle}(\C)$ is induced
  by the Coxeter automorphism $\Phi_m$ on $\Li(m,n)$.
\end{remark}

\subsection{Ladders of recollements}

We now wish to express the inductive nature of the higher-dimensional versions
of the Waldhausen $\eS_\bullet$-construction in terms of certain ladders of recollements
in the sense of \cite{BlGS88} and \cite{AHKLY17}.

\begin{proposition}
  \th\label{def:ds_adjunctions} Let $\C$ be a stable $\infty$-category and $m$ a
  natural number. For each $n\geq 0$ the functors
  \begin{equation*}
    \begin{tikzcd}[column sep=large]
      \eS_n^{\langle m\rangle}(\C)\ar[shift
      left=4.5]{r}[description]{s_0}\ar[shift
      right=4.5]{r}[description]{s_n}&\eS_{n+1}^{\langle
        m+1\rangle}(\C)\ar[shift
      right=8]{l}[description]{d_{0}}\lar[description,phantom,shift
      right=1]{\vdots}\ar[shift left=8]{l}[description]{d_{n+1}}
    \end{tikzcd}
  \end{equation*}
  are part of a string of adjunctions $d_0\dashv s_0\dashv d_1\dashv
  s_1\dashv\cdots\dashv d_n\dashv s_n\dashv d_{n+1}$.
\end{proposition}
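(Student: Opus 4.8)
The plan is to realise the face and degeneracy operators as restriction functors along explicit poset maps between the indexing posets $\Delta(m+1,n+1)$ and $\Delta(m,n)$, and then to produce the adjunctions systematically as Kan-extension/restriction pairs. Concretely, each simplicial operator $[k]\to[\ell]$ in $\Delta$ induces, by postcomposition $\sigma\mapsto(\text{operator})\circ\sigma$ (and the shift in the first argument dictated by the $\eS^{\langle m\rangle}$ versus $\eS^{\langle m+1\rangle}$ discrepancy), a monotone map of the underlying posets of simplices; pulling back reduced exact functors along such a map gives the operators $s_i$ and $d_j$ displayed in the statement. The first step is therefore to write down these poset maps explicitly and to check that each of the relevant restriction functors indeed lands in the subcategory of \emph{reduced exact} functors, so that all of $d_0,\dots,d_{n+1},s_0,\dots,s_n$ are well-defined functors between the stated stable $\infty$-categories.

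The core of the argument is the observation that each consecutive pair in the string is a restriction/Kan-extension adjunction. A restriction functor $u^*$ along a fully faithful poset inclusion $u$ always admits both a left adjoint $u_!$ (left Kan extension) and a right adjoint $u_*$ (right Kan extension); the content is to identify, for each $i$, the \emph{same} functor as simultaneously a right adjoint to the previous operator and a left adjoint to the next. The mechanism is standard for $\eS_\bullet$-type constructions: the face maps $d_j$ are restrictions, while the degeneracies $s_i$ are obtained as the appropriately corestricted Kan extensions, and the exactness conditions (biCartesianness of rectilinear $(m{+}1)$-cubes, by \th\ref{def:rectilinear_cube}) force these Kan extensions to preserve reducedness and exactness. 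I would establish the adjunctions one link at a time, verifying the triangle identities at the level of the underlying restriction/extension pairs and then transporting them through the equivalences of \th\ref{prop:sliceres} and \th\ref{prop:paracyclic} as needed.

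I expect the main obstacle to be the bookkeeping that identifies, for a fixed index $i$, one single functor as playing a dual role (right adjoint on one side, left adjoint on the other). This is the phenomenon that makes the string \emph{zig-zag} into a genuine recollement ladder rather than a disjoint list of adjunctions, and it rests on the compatibility between the coface/codegeneracy relations in $\Delta$ and the pointwise formulas for the Kan extensions. The key technical lemma will be that, because the exact functors are determined by their restriction to a slice (\th\ref{thm:slices_Smn}), the left and right Kan extensions along the fully faithful poset maps agree on the reduced exact subcategory up to the prescribed face/degeneracy shift; once this coincidence is verified, the chain of adjunctions $d_0\dashv s_0\dashv d_1\dashv\cdots\dashv s_n\dashv d_{n+1}$ follows formally. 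The remaining verifications—that the poset maps are the correct simplicial operators and that all functors are exact—are routine given the cube-completion arguments already developed in \S\ref{subsec:clustertilting}, so I would leave those combinatorial details to the reader.
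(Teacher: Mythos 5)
Your overall skeleton---all operators are restrictions along poset maps, and the adjunctions arise by identifying the degeneracies with Kan extensions along the face inclusions---is the right idea, and is in fact a pointwise unpacking of the paper's own $2$-categorical proof. But your central ``key technical lemma'' is the wrong statement, and the machinery you earmark for it cannot prove the correct one. What one actually needs is this: writing $d_i=(d^i\circ-)^*$ and $s_i=(s^i\circ-)^*$ for the restrictions along postcomposition, there are canonical equivalences $s_i\simeq(d^{i+1}\circ-)_!\simeq(d^i\circ-)_*$, so that $d_i\dashv s_i$ and $s_i\dashv d_{i+1}$ become literal restriction/Kan-extension adjunctions. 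This identification holds on the \emph{full} functor categories $\Fun(\Delta(m,n),\C)$ and $\Fun(\Delta(m,n+1),\C)$, with no input from reducedness, exactness, or slices: it is the order-theoretic fact that $d^{i+1}\dashv s^i\dashv d^i$ as monotone maps, so that after whiskering with $\Delta(m,-)$ the posets $\setP{\sigma}{d^{i+1}\circ\sigma\le\tau}$ and $\setP{\sigma}{\tau\le d^i\circ\sigma}$ have greatest, respectively least, element $s^i\circ\tau$, and the pointwise (co)limit formulas collapse to evaluation there. By contrast, your lemma---that the left and the right Kan extension along the \emph{same} inclusion agree on the reduced exact subcategory---is false: for $i\ge 1$ one has $(d^i\circ-)_!\simeq s_{i-1}$ whereas $(d^i\circ-)_*\simeq s_i$, and these differ even on exact diagrams. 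Moreover \th\ref{thm:slices_Smn}, \th\ref{prop:sliceres} and \th\ref{prop:paracyclic} concern the determinacy of exact diagrams inside a fixed $\Delta(m,n)$ or $\Li(m,n)$; they say nothing about the functoriality in $[n]$, so they can neither prove nor repair the coincidence you need. The only genuine checks beyond the formal transport are that each $d_i$ and $s_i$ preserves reduced exact functors, so that the adjunctions restrict from the functor categories to the $\eS$-categories; this much your first step correctly isolates.

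A second gap is your handling of the dimension shift. The operators in this proposition are the simplicial structure maps of $\eS^{\langle m+1\rangle}_\bullet(\C)$ from Proposition \ref{prop:Sm_paracyclic}: restriction along postcomposition in the \emph{second} argument of $\Delta(-,-)$ only, with the superscript fixed---this is how $d_i,s_i$ are used in \th\ref{prop:ker_di} and \th\ref{prop:recollements}, and the mismatch of superscripts in the displayed statement is a typo (both sides should read $\langle m+1\rangle$). Your plan to realise a genuine $\langle m\rangle$-to-$\langle m+1\rangle$ shift by poset maps that also move the first argument cannot yield the stated chain: any such map $\sigma\mapsto d^i\circ\sigma\circ s^i$ takes values in degenerate simplices (its values at $i$ and $i+1$ coincide), so on reduced diagrams the induced ``face'' functor is the zero functor, and no adjoint string of the stated shape arises this way. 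For comparison, the paper's proof is exactly the formal transport described above: the chain $d^{n+1}\dashv s^n\dashv\cdots\dashv s^0\dashv d^0$ of monotone maps in the $2$-category $\DDelta$ is carried by the representable $2$-functor $\Fun(-,\C)$, contravariant on $1$-cells, to the chain $d_0\dashv s_0\dashv\cdots\dashv s_n\dashv d_{n+1}$, leaving only the preservation of reduced exact functors to check.
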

\begin{proof}
  This is a straightforward consequence of the following observations:
    \begin{itemize}
    \item The poset structure on the sets of morphisms in $\Delta$ make it into
      a $2$-category $\DDelta$. Moreover, there is a chain of adjunctions
      $d_0\dashv s_0\dashv d_1\dashv s_1\dashv\cdots\dashv d_n\dashv s_n\dashv
      d_{n+1}$ in the $1$-cell dual $2$-category $\DDelta^{(\op,-)}$.
    \item The higher $\eS_\bullet$-constructions are defined in terms of the
      representable $2$-functor $\Fun(-,\C)$.
    \end{itemize}
    We leave the details to the reader.
\end{proof}

\begin{lemma}
  \th\label{prop:ker_di} Let $\C$ be a stable $\infty$-category and $m\geq1$ an
  integer. For each $n\geq 0$ and for each $i\in[n+1]$ there is an equivalence
  of $\infty$-categories
  \begin{equation*}
    \eS_n^{\langle m\rangle}(\C)\simeq \ker d_i\subseteq\eS^{\langle m+1\rangle}_{n+1}(\C).
  \end{equation*}
  In particular the stable $\infty$-category $\ker d_i$ is independent of
  $i\in[n]$.
\end{lemma}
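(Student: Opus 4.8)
The plan is to work entirely inside the combinatorial model $\eS_{n+1}^{\langle m+1\rangle}(\C)=\Fun_*^{\ex}(\Delta(m+1,n+1),\C)$ of Definition \ref{defi:sm} and to pin down $\ker d_i$ as an explicit category of \emph{supported} functors. Using the combinatorial description of the maps from \th\ref{def:ds_adjunctions}, the face map $d_i$ is restriction along the coface $\Delta(m+1,d^i)\colon\Delta(m+1,n)\hookrightarrow\Delta(m+1,n+1)$, whose image consists precisely of the $(m+1)$-simplices of $\Delta^{n+1}$ whose image does \emph{not} contain the vertex $i$. Hence $\ker d_i$ is the full subcategory of reduced exact functors $X$ that vanish on all simplices avoiding $i$; equivalently, $X$ is supported on the star of $i$, i.e.\ on those $\rho\in\Delta(m+1,n+1)$ whose image contains $i$. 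A quick rank computation (the number of simples of $A_{n-m+1}^{(m)}$ is $\binom{n}{m}$) is a useful consistency check: the star of $i$ carries exactly this much $K_0$-information.

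Next I would set up the equivalence itself. Deleting the vertex $i$ gives the canonical order isomorphism $\{0,\dots,n+1\}\setminus\{i\}\cong\{0,\dots,n\}$, under which the $(m+1)$-simplices through $i$ correspond bijectively to the $m$-simplices of $\Delta^{n}$; thus the star of $i$ is identified with $\Delta(m,n)$. Restriction to the star therefore yields a functor $\ker d_i\to\Fun_*^{\ex}(\Delta(m,n),\C)=\eS_n^{\langle m\rangle}(\C)$, and I would exhibit its quasi-inverse as extension by zero, verifying exactness by the Kan-extension argument already used in \th\ref{prop:sliceres}. Concretely, one checks via the pointwise criterion that the zero-extension of an exact functor on the link lands in $\Fun_*^{\ex}$ and that restriction recovers the original functor. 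The heart of this step is the exactness matching encoded by \th\ref{lemma:OT_m-ex}: for a rectilinear $(m+2)$-cube meeting the star, the faces lying off the star are forced to $0$, and I would invoke the cube calculus of Appendix \ref{sec:n-cubes} to show that such a punctured cube is biCartesian if and only if the induced $(m+1)$-cube in the link is biCartesian; cubes disjoint from the star are identically zero, hence biCartesian for free.

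The step I expect to be the genuine obstacle is precisely this last reduction for \emph{all} positions of $i$. When $i$ is an extremal vertex of the relevant simplex the biCartesian cube simply truncates and the link condition drops out immediately, as in the model case $m=1$; when $i$ sits in an interior position the vanishing face splits the exactness sequence, and one must argue that biCartesianness of the large cube, together with the zero face, is still equivalent to biCartesianness of the single link cube rather than to two separate conditions. This is where the appendix lemmas on (co)Cartesian faces and pasting of cubes do the work. Finally, independence of $i$ is essentially automatic: the link of every vertex of $\Delta^{n+1}$ is canonically $\Delta^{n}$, so the identification $\ker d_i\simeq\eS_n^{\langle m\rangle}(\C)$ is uniform in $i$; this is the combinatorial shadow of the recollement structure of \th\ref{def:ds_adjunctions}, in which the various $\ker d_i$ appear as the common complementary piece.
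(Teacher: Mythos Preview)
Your approach coincides with the paper's for the extremal vertices $i=0$ and $i=n+1$: the star of $0$ is precisely the projective slice $\underline{S}_P$, and your restriction/zero-extension argument is the content of the slice equivalence \eqref{eq:sliceeq} combined with \th\ref{lemma:t-cofib} and \th\ref{prop:tcofib_characterisation}. The paper then reduces a general $i$ to $i=0$ in one line using the paracyclic identities \eqref{eq:paracyclic_category}: the autoequivalence of $\eS_{n+1}^{\langle m+1\rangle}(\C)$ induced by $t_{n+1}$ carries $\ker d_i$ onto $\ker d_{i+1}$.

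For interior $i$, however, your direct argument has a genuine gap that the cube calculus cannot repair. Take $m=0$, $n+1=3$, $i=1$. An object $X\in\ker d_1\subset\eS_3^{\langle 1\rangle}(\C)$ corresponds to a filtration $0=A_0\to A_1\to A_2\to A_3$ with $A_2\simeq A_3\simeq 0$, so that $X_{(0,1)}\simeq A_1$ but $X_{(1,2)}\simeq\cofib(A_1\to 0)\simeq\Sigma A_1$. Under your identification of the star of $1$ with $\Delta(0,2)=[2]$ the restricted diagram is $A_1\xrightarrow{0}\Sigma A_1\xrightarrow{\sim}\Sigma A_1$, which is \emph{not} an object of $\Fun_*^{\ex}(\Delta(0,2),\C)$ unless $A_1\simeq 0$. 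Dually, zero-extending the constant diagram at some $c\in\C$ from the star to $\Delta(1,3)$ fails exactness on the rectilinear square with vertices $(0,1),(0,2),(1,1),(1,2)$: two opposite corners carry the value $c$ and the other two are zero, so biCartesianness would force $c\simeq\Sigma c$. The obstruction here is a suspension twist across the interior vertex, not a missing pasting identity, and this is exactly what the paracyclic reduction in the paper's proof is designed to absorb.
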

\begin{proof}
  The paracyclic identities \eqref{eq:paracyclic_category} imply the existence
  of equivalences of $\infty$-categories $\ker d_{i}\simeq\ker d_{i+1}$ for each
  $i\in[n]$. Hence it is enough to prove the claim for $i=0$. Firstly, recall
  from \eqref{eq:sliceeq} the equivalence
  \[
    \eS_{n+1}^{\langle
      m+1\rangle}(\C)=\Fun_*^{\ex}(\Lambda_\infty(m+1,n+1),\C)\xra{\simeq}\Fun_*(\underline{S}_P,\X),
  \]
  where $S_P\subset\Lambda_\infty(m+1,n+1)$. Secondly, Lemma \ref{lemma:t-cofib}
  and Proposition \ref{prop:tcofib_characterisation} imply that
  \[
    \ker d_0\simeq\Fun_*^{\ex}(\underline{S}_P,\C)\subseteq\Fun_*(\underline{S}_P,\C).
  \]
  Finally, the inclusion $([0]*-)\colon\Delta(m,n)\to\Delta(m+1,n+1)$ identifies
  $\Delta(m,n)$ with $S_P$ yielding equivalences
\[
  \ker
  d_0\simeq\Fun_*^{\ex}(\underline{S}_P,\C)\simeq\Fun_*^{\ex}(\Delta(m,n),\C)=\eS_n^{\langle
    m\rangle}(\C)
\]
of stable $\infty$-categories, which is what we needed to prove.
\end{proof}

As an easy consequence of \th\ref{def:ds_adjunctions,prop:ker_di} we obtain the
following result.

\begin{proposition}
  \th\label{prop:recollements} Let $\C$ be a stable $\infty$-category and
  $m\geq1$ an integer. Then, for each $n\geq0$ there is a ladder of recollements
  of stable $\infty$-categories
  \begin{equation*}
    \begin{tikzcd}[column sep=large]
      \eS_n^{\langle m+1\rangle}(\C)\ar[shift
      left=4.5]{r}[description]{s_0}\ar[shift
      right=4.5]{r}[description]{s_n}&\eS_{n+1}^{\langle m+1
        \rangle}(\C)\ar[shift
      right=8]{l}[description]{d_0}\lar[description,phantom,shift
      right=1]{\vdots}\ar[shift left=8]{l}[description]{d_{n+1}}\rar[shift
      right=4.5]\rar[shift left=4.5]\ar[phantom, shift
      left=0.5]{r}{\vdots}&\eS_n^{\langle m\rangle}(\C).\lar[shift
      right=8]\lar[shift left=8]
    \end{tikzcd}
  \end{equation*}
  in the sense of \cite{AHKLY17}.
\end{proposition}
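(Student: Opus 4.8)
The plan is to assemble the ladder of recollements directly from the two results just established, \th\ref{def:ds_adjunctions} and \th\ref{prop:ker_di}, by verifying that the string of adjunctions we already have fits the abstract axioms for a ladder in the sense of \cite{AHKLY17}. Recall that a recollement of stable $\infty$-categories consists of a localisation sequence $\A \to \B \to \C$ together with both adjoints to each of the two structure functors; a \emph{ladder} is an infinite (or, as here, finite) extension of this data by further mutually adjoint functors in both directions. So the entire content of the proposition is to identify the objects $\A = \eS_n^{\langle m+1\rangle}(\C)$, $\B = \eS_{n+1}^{\langle m+1\rangle}(\C)$, and $\C = \eS_n^{\langle m\rangle}(\C)$ and to match the functors $d_i$, $s_j$ with the arrows in a ladder.

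First I would set up the middle column. The functors on the left of the diagram are precisely the string of adjunctions furnished by \th\ref{def:ds_adjunctions}, namely
\[
  d_0\dashv s_0\dashv d_1\dashv s_1\dashv\cdots\dashv d_n\dashv s_n\dashv d_{n+1},
\]
relating $\eS_n^{\langle m+1\rangle}(\C)$ and $\eS_{n+1}^{\langle m+1\rangle}(\C)$. Next I would produce the right-hand column. By \th\ref{prop:ker_di}, each face functor $d_i\colon \eS_{n+1}^{\langle m+1\rangle}(\C)\to\eS_n^{\langle m+1\rangle}(\C)$ has kernel equivalent to $\eS_n^{\langle m\rangle}(\C)$, and this kernel is independent of $i$. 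Thus the inclusion $\ker d_i \hookrightarrow \eS_{n+1}^{\langle m+1\rangle}(\C)$, together with the identification $\ker d_i\simeq \eS_n^{\langle m\rangle}(\C)$, supplies the right-pointing functor to $\eS_n^{\langle m\rangle}(\C)$ in the diagram; its adjoints (the localisation functor and its sections) are then the quotient functors orthogonal to this kernel inside the stable $\infty$-category $\eS_{n+1}^{\langle m+1\rangle}(\C)$.

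The crux is then to check compatibility: that the degeneracies $s_0,\dots,s_n$ land in the kernel of the $d_i$ defining the localisation, so that the localisation sequence $\eS_n^{\langle m+1\rangle}(\C) \to \eS_{n+1}^{\langle m+1\rangle}(\C) \to \eS_n^{\langle m\rangle}(\C)$ is genuinely exact, and that each consecutive pair of adjoint functors in the string from \th\ref{def:ds_adjunctions} has the fully faithfulness property (one of the unit or counit being an equivalence) required of the recollement data. I expect this to follow formally from the simplicial identities $d_i s_j = \mathrm{id}$ for the appropriate indices—which force the relevant units/counits to be equivalences—combined with the kernel identification of \th\ref{prop:ker_di}. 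The main obstacle, and the one step I would treat carefully rather than leave to the reader, is verifying that the functor $\eS_{n+1}^{\langle m+1\rangle}(\C)\to\eS_n^{\langle m\rangle}(\C)$ realising the quotient is exact and that its fibre is precisely the essential image of the section functors coming from the adjunction string, so that the two exact sequences (localisation and colocalisation) assemble into a single bounded ladder of the finite height displayed. Once exactness of the localisation sequence is in hand, the ladder structure is simply a repackaging of the already-given adjunctions $d_0\dashv s_0\dashv\cdots\dashv s_n\dashv d_{n+1}$, and the result follows.
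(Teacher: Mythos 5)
Your overall architecture matches the paper's: both arguments take as input the adjunction string $d_0\dashv s_0\dashv\cdots\dashv s_n\dashv d_{n+1}$ of \th\ref{def:ds_adjunctions} and the kernel identification $\ker d_i\simeq\eS_n^{\langle m\rangle}(\C)$ of \th\ref{prop:ker_di}, and assemble the ladder from these. The paper spares itself your by-hand verification of the recollement axioms by citing Proposition A.8.20 of \cite{Lur17}: each adjoint triple $d_i\dashv s_i\dashv d_{i+1}$ (with $s_i$ fully faithful) is automatically part of a recollement whose third term $\D$ is the full subcategory of objects $X$ such that $\operatorname{Map}(s_i(Y),X)$ is contractible for every $Y$.

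However, the step you single out as the crux is wrong as stated, and it is precisely where the paper has to do work. You propose to check ``that the degeneracies $s_0,\dots,s_n$ land in the kernel of the $d_i$ defining the localisation.'' No degeneracy ever lands in the kernel of an adjacent face functor: the simplicial identities give $d_is_i\simeq\operatorname{id}\simeq d_{i+1}s_i$, so this check fails outright. Moreover, there is no single localisation sequence involving all the degeneracies at once; the ladder consists of $n+1$ separate recollements, one for each triple $d_i\dashv s_i\dashv d_{i+1}$, glued along the shared adjoints, and they have a common third term only because \th\ref{prop:ker_di} makes $\ker d_{i+1}$ independent of $i$. (Your later phrase ``the essential image of the section functors,'' in the plural, repeats the same conflation: the kernel of the $i$-th quotient is the image of the single matching $s_i$.) The correct compatibility, which must replace your crux and is proved separately for each $i$, is an orthogonality statement: $X$ satisfies $\operatorname{Map}(s_i(Y),X)\simeq *$ for all $Y$ if and only if $d_{i+1}(X)\simeq 0$. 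The paper establishes this from the unit of the adjunction $s_i\dashv d_{i+1}$, using the equivalence $\operatorname{Map}(s_i(Y),X)\simeq\operatorname{Map}(Y,d_{i+1}(X))$ for one inclusion and the special case $Y=d_{i+1}(X)$ for the other; it is this identification of the open part with $\ker d_{i+1}\simeq\eS_n^{\langle m\rangle}(\C)$---not any containment of images of degeneracies in kernels of faces---that makes the ladder close up.
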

\begin{proof}
  Let $i\in[n]$. According to Proposition A.8.20 in \cite{Lur17}, the sequence
  of adjunctions $d_i\dashv s_i\dashv d_{i+1}$ is part of a recollement
  \begin{equation*}
    \recollement{\eS^{\langle m+1\rangle}_n(\C)}{\eS^{\langle m+1\rangle}_{n+1}(\C)}{\D}{d_i}{s_i}{d_{i+1}}{j^*}{j}{j_*}
  \end{equation*}
  where the stable $\infty$-category $\D$ can be identified with the full
  subcategory of $\eS^{\langle m+1\rangle}_{n+1}$ spanned by the objects $X$
  such that for every object $Y$ of $\eS_n^{\langle m+1\rangle}(\C)$ the mapping
  space $\operatorname{Map}_{\eS^{\langle m+1\rangle}_{n+1}}(s_i(Y),X)$ is
  contractible. We claim that $\D$ can be equivalently identified with the
  kernel of the right adjoint of $s_i$, that is the kernel of $d_{i+1}$. Indeed,
  the unit transformation of the adjunction $s_i\dashv d_{i+1}$ induces an
  isomorphism
  \[
    \operatorname{Map}_{\eS_{n+1}^{\langle
        m+1\rangle}(\C)}(s_i(Y),X)\xra{\simeq}
    \operatorname{Map}_{\eS_n^{\langle m+1\rangle}(\C)}(Y,d_{i+1}(X))
  \]
  in the homotopy category of spaces, see Definition 5.2.2.7 and Proposition
  5.2.2.8 in \cite{Lur17}, which readily implies that $\D$ contains the kernel
  of $d_{i+1}$. Conversely, suppose that $X$ is an object of $\D$. Then, the
  unit transformation of the adjunction $s_i\dashv d_{i+1}$ induces an
  isomorphism
  \[
      \operatorname{Map}_{\eS_{n+1}^{\langle
          m+1\rangle}(\C)}(s_id_{i+1}(X),X)\xra{\simeq}\operatorname{Map}_{\eS_n^{\langle
          m+1\rangle}(\C)}(d_{i+1}(X),d_{i+1}(X)).
  \]
  in the homotopy category of spaces. Since the leftmost space is contractible
  by assumption, we conclude that $d_{i+1}(X)$ is a zero object of the stable
  $\infty$-category $\eS_n^{\langle m+1 \rangle}(\C)$. This shows that $\D$ is
  contained in the kernel of $d_{i+1}$. The statement of the proposition now
  follows from \th\ref{prop:ker_di}.
\end{proof}

\subsection{Tilting and cluster tilting for projective space}

In this section, we discuss another instance of the phenomenon that the presence
of an $m$-cluster tilting subcategory goes hand in hand with a diagrammatic
description of the derived category in terms of $(m+1)$-dimensional biCartesian
cubes.

Let $\KK$ be a field and $m$ a natural number. Let $\PP^m=\PP_{\KK}^m$ be the
projective $m$-space over $\KK$ and $\D\PP^m$ the derived $\infty$-category of
complexes of quasi-coherent sheaves on $\PP^m$, see Section 1.3 in \cite{Lur17}
for details. A famous theorem of Be\u{\i}linson \cite{Bei78} shows that the
object
\[
	T = \bigoplus_{0 \le i \le m} \O(i)
\]
is a compact generator of $\D\PP^m$; in fact, it is even a tilting bundle. As a
consequence, we obtain an equivalence of stable $\infty$-categories
\begin{equation}\label{eq:pntilting}
  \D\PP^m \simeq \D\End(T)^{\op}.
\end{equation}
Choosing homogeneous coordinates $x_0,x_1,\dots, x_m\in \Gamma(\O(1))$, the
algebra $\End(T)$, commonly referred to as the \emph{Beilinson algebra}, admits
the following combinatorial description. Denote by $B^{(m)}$ the category with
objects $0,1,\dots,m$, and morphisms from $i$ to $j$ given by monomials in
$\{x_0,x_1,\dots,x_m\}$ of degree $j-i$. The composition law in $B^{(m)}$ is
given by multiplication of monomials. It is then immediate that we have an
isomorphism
\[
	\End(T) \cong \End\KK B^{(m)}.
\]
In particular, we may reformulate the tilting equivalence \eqref{eq:pntilting}
as an equivalence
\begin{equation*}
  \label{eq:Beilinson}
	\D\PP^m\simeq \Fun(B^{(m)}, \D\KK).
\end{equation*}

\begin{remark}
  The algebra $\End(\KK B^{(m)})$ is the prototypical example of an
  $m$-representation infinite algebra in the sense of \cite{HIO14}, see Example
  2.15 therein. From the point of view of higher Auslander--Reiten
  theory, these algebras are higher-dimensional analogues of hereditary algebras
  of infinite representation type.
\end{remark}

Now, instead of the tilting bundle $T$, consider the subcategory
$\operatorname{Line}\PP^m$ of $\D\PP^m$ spanned by the line bundles $\O(i)$,
$i\in\ZZ$. The subcategory $\operatorname{Line}\PP^m$ admits a combinatorial
description in terms of the category $B_\ZZ^{(m)}$ with set of objects $\ZZ$ and
morphisms from $i$ to $j$ given by the set of monomials in
$\{x_0,x_1,\dots,x_m\}$ of degree $j-i$.

For every $i \in \ZZ$ and every sequence $a_0,\dots,a_m\geq 1$ of positive
integers, we exhibit a cube
\[
	q\colon I^{m+1} \lra B_\ZZ^{(m)}
\]
uniquely determined by the following properties:
\begin{enumerate}
\item We have $q_{(0,\dots,0)} = i$.
\item For $0 \le j \le m$, let $e_j$ denote the $j$th coordinate vector of
  $\ZZ^{m+1}$. Then every edge of $q$ that is parallel to $e_j$ gets mapped to
  multiplication by $x_j^{a_j}$ in $B_\ZZ^{(m)}$.
\end{enumerate}
We refer to the cubes of this form as the {\em rectilinear cubes} in
$B_\ZZ^{(m)}$. We call those cubes with $a_0=a_1=\dots=a_m=1$ {\em minimal}. For
example, for $m=2$ a typical minimal rectilinear cube in $B^{(m)}$ has the form
\begin{equation*}
  \begin{tikzcd}
    i\ar{rr}{x_0}\drar{x_1}\ar{dd}{x_2}&&i+1\drar{x_1}\ar{dd}[near end]{x_2}\\
    &i+1\ar[crossing over]{rr}[near start]{x_0}&&i+2\ar{dd}{x_2}\\
    i+1\ar{rr}[near end]{x_0}\drar{x_1}&&i+2\drar{x_1}\\
    &i+2\ar[leftarrow, crossing over]{uu}[near end]{x_2}\ar{rr}{x_0}&&i+3
  \end{tikzcd}
\end{equation*}

\begin{definition}
  Given a stable $\infty$-category $\C$, we call a functor $B_\ZZ^{(m)} \to \C$
  \emph{exact}, if it maps all rectilinear cubes in $B_\ZZ^{(m)}$ to biCartesian
  cubes in $\C$. We denote by
  \[
    \Fun^{\ex}(B_\ZZ^{(m)},\C) \subset \Fun(B_\ZZ^{(m)},\C)
  \]
  the full subcategory of exact functors.
\end{definition}

\begin{remark}
  Since every rectilinear cube can be decomposed into minimal rectilinear cubes,
  it is enough to require minimal rectilinear cubes to be sent to biCartesian
  cubes.
\end{remark}

In this context, we have the following analogue of \th\ref{prop:sliceres} and
\th\ref{prop:paracyclic}.

\begin{proposition}\th\label{prop:pncubes}
	Let $m \ge 1$, and let $\C$ be a stable $\infty$-category. The restriction
  functor
	\[
		\Fun^{\ex}(B_\ZZ^{(m)}, \C) \xra{\simeq} \Fun(B^{(m)}, \C)
	\]
	is an equivalence of $\infty$-categories.
\end{proposition}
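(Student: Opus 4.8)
The plan is to construct an explicit quasi-inverse to the restriction functor by a composite of left and right Kan extensions, in the same spirit as the proofs of \th\ref{prop:sliceres} and \th\ref{prop:paracyclic}. Observe first that $B^{(m)}$ carries no exactness conditions at all: a rectilinear $(m+1)$-cube always spans at least $m+1$ consecutive integers, whereas the objects $0,1,\dots,m$ of $B^{(m)}$ span a range of only $m$, so no rectilinear cube fits inside $B^{(m)}$. This is precisely why the target of the restriction is the full functor category $\Fun(B^{(m)},\C)$ and not a proper subcategory of exact functors. Throughout we use that, for an $(m+1)$-cube in a stable $\infty$-category, being biCartesian is equivalent to being coCartesian and to being Cartesian (Appendix \ref{sec:n-cubes}), so each exactness condition may be tested either as a colimit or as a limit condition.

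I would first extend to the right. Let $B_{\geq 0}^{(m)}$ denote the full subcategory of $B_\ZZ^{(m)}$ on the objects $0,1,2,\dots$, and write the inclusion $B^{(m)}\hookrightarrow B_{\geq 0}^{(m)}$ as a countable composite of one-object inclusions adjoining $m{+}1,m{+}2,\dots$ in turn. At the stage adjoining $N{+}1$ (with $N\geq m$) I perform a left Kan extension; by the pointwise criterion, a functor is a left Kan extension there exactly when $X(N{+}1)$ is the colimit of $X$ over the slice category of monomials $\psi\colon p\to N{+}1$ with $p\leq N$. The key combinatorial point is that the punctured minimal cube starting at $N-m$ is \emph{cofinal} in this slice: its non-top vertices are exactly the square-free monomials into $N{+}1$ from the objects $N-m,\dots,N$, and for any monomial $\psi$ with support $S$ the square-free monomial $\prod_{k\in S}x_k$ provides an initial object of the corresponding under-category, which is therefore weakly contractible. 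Consequently the left Kan extension condition at $N{+}1$ is equivalent to the minimal cube starting at $N-m$ being coCartesian, hence biCartesian. Iterating, the essential image of the composite of these left Kan extensions consists of precisely those functors on $B_{\geq 0}^{(m)}$ that send every minimal rectilinear cube with nonnegative initial vertex to a biCartesian cube.

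Symmetrically, I would extend from $B_{\geq 0}^{(m)}$ to all of $B_\ZZ^{(m)}$ by a countable composite of right Kan extensions, adjoining $-1,-2,\dots$ in turn; the dual cofinality computation shows that the resulting Cartesian conditions impose biCartesianness on exactly the minimal cubes with negative initial vertex. Combining the two directions, the essential image of the total extension functor is the class of functors sending every minimal rectilinear cube to a biCartesian cube; since every rectilinear cube decomposes into minimal ones and a pasting of biCartesian cubes is again biCartesian (Appendix \ref{sec:n-cubes}), this class is exactly $\Fun^{\ex}(B_\ZZ^{(m)},\C)$. Because Kan extension along a fully faithful functor is fully faithful (Corollary 4.3.2.16 in \cite{Lur09}), the total extension is fully faithful, and it is by construction a section of the restriction functor. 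Hence restriction and extension are mutually inverse equivalences between $\Fun^{\ex}(B_\ZZ^{(m)},\C)$ and $\Fun(B^{(m)},\C)$.

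The main obstacle is the cofinality computation for the slice categories indicated above: one must verify that, at each stage, the punctured minimal cube is cofinal in the full slice of monomials mapping to the newly adjoined object, together with its limit-theoretic dual. Once this is settled, the Kan-extension bookkeeping is routine, exactly as the analogous combinatorial details are left to the reader in \th\ref{prop:sliceres} and \th\ref{prop:paracyclic}.
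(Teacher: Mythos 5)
Your proof follows essentially the same route as the paper's: factor the inclusion through $B_{[0,\infty)}^{(m)}$, perform a left Kan extension in the positive direction and a right Kan extension in the negative direction, identify the essential image at each stage via the cofinality of punctured minimal rectilinear cubes in the relevant slice categories, and conclude using the fact that every minimal cube lies in one of the two half-lines. Your one-object-at-a-time formulation and your explicit verification of cofinality (the square-free radical $\prod_{k\in S}x_k$ as initial object of the under-category) merely spell out details the paper asserts or leaves to the reader, so the argument is correct and matches the paper's proof.
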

\begin{proof}
  We shall exhibit a quasi-inverse to the restriction functor
  \[
    \Fun^{\ex}(B_{\ZZ}^{(m)},\C)\lra\Fun(B^{(m)},\C).
  \]
  For a subset $J\subseteq\ZZ$, we define $B_J^{(m)}$ to be the full subcategory
  of $B_\ZZ^{(m)}$ spanned by the objects in $J$. Factorise the inclusion
  $\iota$ into the composite
  \[
    B^{(m)}\xra{f}B_{[0,\infty)}^{(m)}\xra{g} B_{\ZZ}^{(m)}.
  \]
  We claim that the essential image of the fully faithful functor
  \[
    g_*f_!\colon\Fun(B^{(m)},\C)\lra\Fun(B_{\ZZ}^{(m)},\C)
  \]
  given by
  \begin{enumerate}[label=\arabic*.]
  \item left Kan extension along $f$, followed by
  \item right Kan extension along $g$.
  \end{enumerate}
  is precisely $\Fun^{\ex}(B_{\ZZ}^{(m)},\C)$. We make the following
  observations:
	\begin{itemize}
  \item A functor $F\colon B_{[0,\infty)}^{(m)}\to \C$ is a left Kan extension
    of its restriction $\restr {F}{B^{(m)}}\colon B^{(m)}\to\C$ if and only if
    for each $i>0$ the restriction of $F$ to $B_{[0,i+m+1]}^{(m)}$ is a left Kan
    extension of $\restr{F}{B_{[0,i+m]}^{(m)}}$.
  \item Let $i>0$. We identify the punctured cube $I_{|v|<m+1}^{m+1}$ with the
    poset of non-empty subsets of $[m]$. There is a cofinal functor
    $I_{|v|<m+1}^{m+1}\to\overcat{\left(B_{[0,i+m]}^{(m)}\right)}{i+m+1}$ given
    by mapping a subset $v\subsetneq[m]$ to the morphism
    \[
      i+|v|\xra{\prod_{k\notin v}x_k} i+m+1
    \]
    and an inclusion $v\subseteq w$ to the morphism
    \[
      i+|v|\xra{\prod_{k\in w\setminus v}x_k} i+|w|.
    \]
    The composite
    \[
      I_{|v|<m+1}^{m+1}\lra\overcat{\left(B_{[0,i+m]}^{(m)}\right)}{i+m+1}\lra
      B_{\ZZ}^{(m)}
    \]
    is a punctured minimal rectilinear cube in $B_{[0,\infty)}^{(m)}$. All
    punctured minimal rectilinear cubes in $B_{[0,\infty)}^{(m)}$ are of this
    form.
  \end{itemize}
  It follows that the essential image of $f_!$ consists of those functors
  $B_{[0,\infty)}^{(m)}\to\C$ whose restriction to each minimal rectilinear cube
  in $B_{[0,\infty)}^{(m)}$ is biCartesian. A similar argument shows that the
  essential image of $g_*$ consists precisely of those functors whose
  restriction to each minimal rectilinear cube in $B_{(-\infty, m]}$ is
  biCartesian. This finishes the proof because every minimal rectilinear cube in
  $B_\ZZ$ lies either in $B_{[0,\infty)}$ or in $B_{(-\infty,m]}$.
\end{proof}

The connection to higher Auslander--Reiten theory is as follows: As
shown in \cite{HIMO14} (see also Example 6.7 in \cite{Jas16}), the subcategory
$\operatorname{Line}\PP^m$ is an $m$-cluster tilting subcategory of the exact
category $\Vect\PP^m$ of vector bundles on $\PP^m$. Again, in our combinatorial
context the condition of homological purity in degree $m$ is reflected
geometrically by the fact that only $(m+1)$-dimensional cubes appear in the
exactness constraints of \th\ref{prop:pncubes}.

Various features of the discussion from \S \ref{subsec:derivedauslander}
reappear in the current context of projective $m$-space, be it in a somewhat
elementary fashion:
\begin{itemize}
\item In analogy to the combinatorial description of the Coxeter functor for
  higher Auslander algebras of type $\AA$, the autoequivalence of $\D(\PP^m)$
  given by the tensor action of the line bundle $\O(1)$ is described by pulling
  back along the shift autoequivalence $i \mapsto i+1$ of $B_\ZZ^{(m)}$. In
  other words, the action of the Picard group $\on{Pic}(\PP^m) \cong \ZZ$ admits
  a purely combinatorial description in terms of $B_\ZZ^{(m)}$.
\item The analogues of the slices defined in \S \ref{subsec:derivedauslander}
  are the full subcategories of $B_\ZZ^{(m)}$ spanned by the intervals $[j,
  j+m]$ of length $m$. The restriction functors from $\Fun^{\ex}(B_\ZZ^{(m)},
  \C)$ to each of these subcategories are equivalences and the procedure of
  mediating between these different slices can be regarded as a version of slice
  mutation. Note, however, that all slices are isomorphic to $B^{(m)}$.
\end{itemize}

\section{Mutation, horn fillings, and higher Segal conditions}
\label{sec:horn}

Let $n\geq m\ge 1$ and $\C$ be a stable $\infty$-category. In
\S\ref{sec:mutation} we established a combinatorial approach to slice mutation,
based on the equivalences
\[
	\Fun^{\ex}_*(\Delta(m,n), \C) \xra{\simeq} \Fun(\underline{S}, \C)
\]
for the various slices $S \subset \Delta(m,n)$. In \S \ref{ssec:Sk} we then
observed that, for a fixed natural number $m$, the various stable
$\infty$-categories $\Fun^{\ex}_*(\Delta(m,n), \C)$ organise into a simplicial
object $\eS_{\bullet}^{\langle m\rangle}(\C)$, which we called the
$m$-dimensional $\eS_{\bullet}$-construction of the stable $\infty$-category
$\C$.

The goal of this section is to analyse the interplay between our combinatorial
framework for slice mutation and the simplicial object
$\eS_{\bullet}^{\langle m\rangle}(\C)$. Our main insight is that slice mutation is intimately
related to
\begin{itemize}
\item certain $2$-categorical outer horn filling conditions and
\item the higher Segal conditions introduced in \cite{DK12,Pog17}.
\end{itemize}
We illustrate these relations by means of an explicit example. Consider the
slice in $\Delta(1,3)$ given by the subset $S = \{12,13,03\}$. We depict $S$ as
a subquiver of the Auslander--Reiten quiver of $\mmod A^{(1)}_3$ as follows:
\begin{center}
  \includegraphics{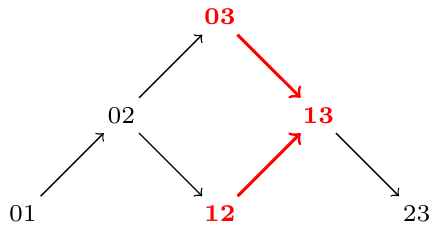}
\end{center}
Alternatively, we may emphasise the role of the elements of $\Delta(1,3)$ as
edges in the $3$-simplex and visualise the slice geometrically as
\begin{center}
  \includegraphics{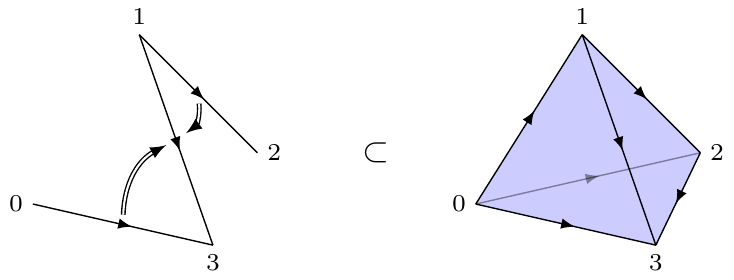}
\end{center}
where the notation for the arrows is chosen to signify their interpretation as
$2$-morphisms in the $2$-category $\DDelta$. In this presentation, it becomes
apparent that we may interpret $S$ as a $1$-dimensional subcomplex of the
$2$-dimensional subcomplex of $\Delta^3$ formed by the union of triangles
contained in the following triangulation of the square:
\begin{equation}
	\label{eq:slicesegal}
  \includegraphics{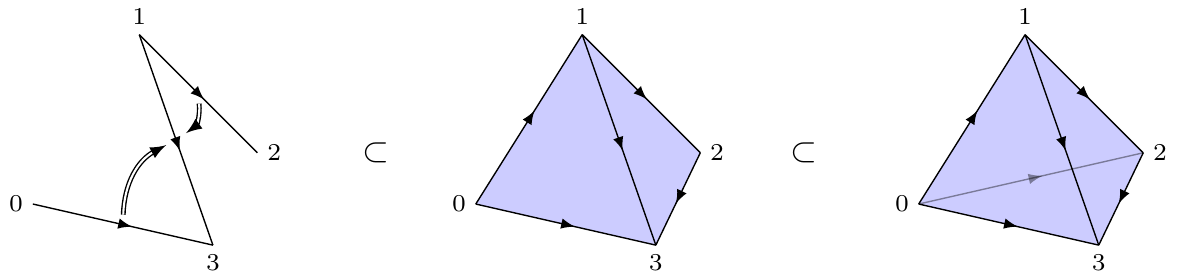}
\end{equation}

Let $\C$ be a stable $\infty$-category and denote the Waldhausen
$\eS_{\bullet}$-construction of $\C$ by $\X_{\bullet}$. The $2$-Segal property
of the $\eS_\bullet$-construction, as established in \cite{DK12}, provides an
equivalence of $\infty$-categories
\begin{equation}\label{eq:segal}
	\X_3 \xra{\simeq} \X_{ \{1,2,3\}} \times_{\X_{\{1,3\}}} \X_{ \{0,1,3\}}
\end{equation}
while restriction to the slice $S$ yields an equivalence
\begin{equation}\label{eq:slice}
  \X_3 \xra{\simeq} \Fun(12 \to 13 \leftarrow 03, \C).
\end{equation}
In comparing the equivalences \eqref{eq:segal} and \eqref{eq:slice}, we make the
following observations:
\begin{enumerate}
\item The description of the stable $\infty$-category $\X_3$ provided by the
  $2$-Segal equivalence \eqref{eq:segal} contains, in comparison with
  \eqref{eq:slice}, a certain amount of redundancy: the datum corresponding to
  each $2$-simplex amounts to biCartesian squares
  \[
    \begin{tikzcd}
      A_{01} \ar{r}\ar{d}\popb& A_{03}\ar{d}{f} \\
      0 \ar{r} & A_{13}
    \end{tikzcd}
    \qquad\text{and}\qquad
    \begin{tikzcd}
      A_{12} \ar{r}{g}\ar{d}\popb& A_{13}\ar{d} \\
      0 \ar{r} & A_{23}
    \end{tikzcd}
  \]
  in the stable $\infty$-category $\C$, while the data retained in the slice
  equivalence \eqref{eq:slice} are only the morphisms $f$ and $g$.
\item According to the pictorial presentation of \eqref{eq:slicesegal}, the
  discrepency between \eqref{eq:slice} and \eqref{eq:segal} can be accounted for
  by unique fillings of {\em outer} horns:
  \begin{equation}
    \label{eq:horns}
    \includegraphics{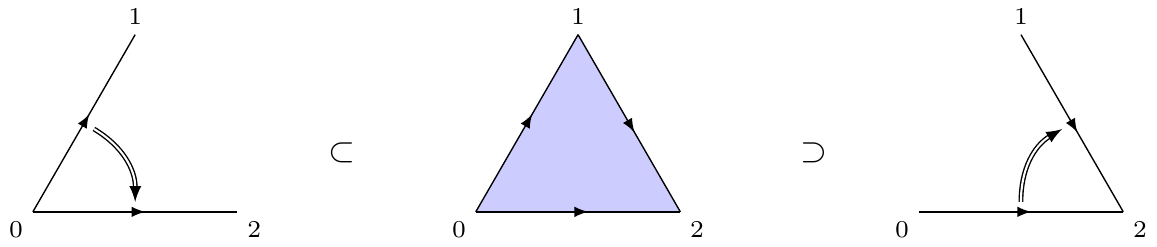}
  \end{equation}
\item\label{it:3} However, and this is a crucial point, the outer horns in
  \eqref{eq:horns} need to be considered in a suitable $2$-categorical
  framework: The data they capture is not just comprised of the two separate
  edges, but also includes a representation of the $2$-morphism that relates the
  edges.
\end{enumerate}

The starting point that enables us to realise the $2$-categorical horn filling
conditions of \ref{it:3} is the observation that, for every $m \ge 0$ and every
stable $\infty$-category $\C$, the simplicial stable $\infty$-category
$\eS_{\bullet}^{\langle m\rangle}(\C)$ admits a natural extension to a $2$-simplicial object
in the $\infty$-bicategory of stable $\infty$-categories (cf. \cite{Dyc17}).

However, in order to formulate the $2$-categorical horn filling conditions and
their relation to the Segal conditions one needs a systematic framework of
limits and Kan extensions in $\infty$-bicategories. Since, to our knowledge,
this theory has not been developed in the literature to the extent needed for
our purposes, we will postpone the realization of this program to a sequel
\cite{DJW} where we establish the necessary foundational results and establish
$2$-categorical counterparts of the results established in this section.

In this article, we allow ourselves to work in the $\infty$-categorical context
by means of the following modification of the above challenge: Instead of
studying $2$-simplicial stable $\infty$-categories $\X_\bullet$ themselves, we
focus on understanding the shadow of the interplay of slice mutation, horn
filling, and the higher Segal conditions after passing to
\begin{itemize}
\item Grothendieck groups $K_0(-)$ so that we obtain simplicial abelian groups,
\item $K$-theory spectra $K(-)$ so that we obtain simplicial objects in spectra.
\end{itemize}
To formulate our results in this context we introduce the following terminology,
{\em c.f.} \cite{DK12,Pog17}.

\begin{definition}
  Let $K$ be a (finite) simplicial set. Recall that the \emph{category of
    simplices of $K$} is the slice category $\overcat{\Delta}{K}$, where we view
  $\Delta$ as a full subcategory of the category of simplicial sets via the
  Yoneda embedding $[n]\mapsto\Delta^n$. Let $\C$ be an $\infty$-category and
  $X$ a simplicial object in $\C$. We obtain a diagram
  \[
    X|_{(\overcat{\Delta}{K})^{\op}}\colon(\overcat{\Delta}{K})^{\op}\lra\Delta^{\op}\xra{X}\C
  \]
  where $\overcat{\Delta}{K}\to\Delta$ is the forgetful functor. Assuming that
  $\C$ has (finite) limits, we further refer to the object
  \begin{equation*}
    X_K \coloneqq  \lim_{(\overcat{\Delta}{K})^{\op}} X|_{(\overcat{\Delta}{K})^{\op}}.
  \end{equation*}
  as the {\em object of $K$-membranes in $X$}.
\end{definition}

We are interested in the following descent conditions on simplicial sets, which
are made more precise in \S\ref{subsec:coverings} below.

\begin{definition}[Sketch] Let $\C$ be an $\infty$-category and let $X$ be a
  simplicial object in $\C$.
	\begin{enumerate}
  \item We say that $X$ {\em satisfies $m$-dimensional slice descent} if, for
    every $n > m$, and every $m$-dimensional slice $S \subset \Delta^n$, the
    restriction map
    \[
      X_n \lra X_S
    \]
    is an equivalence in $\C$.
  \item We say that $X$ is an {\em outer $m$-Kan complex}, or that it has {\em
      unique outer horn fillers above dimension $m$} if, for every $n > m$, the
    restriction maps
    \[
      X_n \lra X_{\Lambda_0^n} \quad \text{and} \quad X_n \lra X_{\Lambda_n^n}
    \]
    are equivalences in $\C$.
  \item We say that $X$ is a {\em $2m$-Segal object} if, for every $n > 2m$ and
    every triangulation $\T \subset \Delta^n$ of the $2m$-dimensional cyclic
    polytope $C([n],2m)$ on the vertices $[n]$, the restriction map
    \[
      X_n \lra X_\T
    \]
    is an equivalence in $\C$.\qedhere
	\end{enumerate}
\end{definition}

The interrelations between these notions are captured by the following summary
of results of this section:

\begin{theorem}\th\label{thm:relations}
	Let $\C$ be an $\infty$-category with finite limits, and let $X$ a simplicial
  object in $\C$. For a fixed $m > 0$, consider the following conditions on $X$:
	\begin{enumerate}[label=(\Roman*)]
  \item\label{it:x1} The simplicial object $X$ satisfies $m$-dimensional slice descent.
  \item\label{it:x2} The simplicial object $X$ satisfies outer horn descent above dimension $m$.
  \item\label{it:x3} The simplicial object $X$ is a $2m$-Segal object.
	\end{enumerate}
	Then the following hold:
	\begin{enumerate}
  \item We always have the implications \ref{it:x1} $\lRa$ \ref{it:x2}
    $\lRa$ \ref{it:x3}.
  \item Assume $m=1$. Then we have \ref{it:x1} $\lLRa$ \ref{it:x2}.\qedhere
  \end{enumerate}
\end{theorem}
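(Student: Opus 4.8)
The plan is to establish the two implications in part (1) by relating membrane objects over the relevant subcomplexes, and then to close the loop in part (2) for $m=1$ by a direct geometric argument. For the implication \ref{it:x1} $\lRa$ \ref{it:x2}, the key observation is that an outer horn $\Lambda_0^n$ (respectively $\Lambda_n^n$) contains, as a subcomplex, slices of $\Delta^n$: concretely, the initial-vertex (respectively final-vertex) stars give rise to the projective (respectively injective) slice of the previous section, reinterpreted in the $m$-dimensional setting. Thus I would factor the restriction map $X_n\to X_{\Lambda_0^n}$ through $X_S$ for an appropriate slice $S\subset\Lambda_0^n\subset\Delta^n$, and use $m$-dimensional slice descent to identify $X_n\simeq X_S$. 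The remaining step is to show that the inclusion $S\hookrightarrow\Lambda_0^n$ induces an equivalence $X_{\Lambda_0^n}\simeq X_S$ on membranes; this should follow because the additional simplices in the horn beyond the slice are filled by the (co)limit-completion mechanism encoded in the slice equivalences of \th\ref{prop:sliceres} and \th\ref{thm:slices_Smn}, exactly as in the puncturing of rectilinear cubes.

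For the implication \ref{it:x2} $\lRa$ \ref{it:x3}, I would argue that outer horn descent above dimension $m$ suffices to reconstruct $X_n$ from any triangulation of the cyclic polytope $C([n],2m)$. The strategy is to use that any two triangulations of a cyclic polytope are connected by a sequence of elementary bistellar flips, and that each such flip is governed locally by an outer horn filling condition in one lower interval. More precisely, I would induct on $n$: the base case $n=2m+1$ reduces to comparing the two triangulations of the $(2m+1)$-point cyclic polytope, which differ by a single flip expressible via the two outer horns $\Lambda_0$ and $\Lambda_{2m+1}$; the inductive step then propagates this comparison along a shelling of the polytope. The main technical input here is the combinatorial fact, established in \cite{DK12} and \cite{Pog17}, that the higher Segal condition is equivalent to descent along the outer coface maps, so that outer horn descent above dimension $m$ precisely matches the $2m$-Segal condition through the membrane comparison $X_\T\simeq X_n$.

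For part (2), where $m=1$, I would prove the reverse implication \ref{it:x2} $\lRa$ \ref{it:x1} by showing that every $1$-dimensional slice $S\subset\Delta^n$ can be reached from the projective slice $S_P$ by a sequence of slice mutations, each of which is governed by a single outer horn filling. This is where \th\ref{thm:slice_mutation:transitivity} enters decisively: iterated slice mutation acts transitively on slices, and by \th\ref{rmk:diamond} each mutation step is encoded by a rectilinear $2$-cube, which in the $m=1$ case degenerates to precisely an outer horn $\Lambda_0^2$ or $\Lambda_2^2$. Thus outer horn descent above dimension $1$ gives an equivalence $X_{\underline{S}}\simeq X_{\underline{S'}}$ across each mutation, and transitivity upgrades this to an equivalence $X_n\simeq X_S$ for every slice $S$, which is exactly $1$-dimensional slice descent.

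\textbf{Main obstacle.} I expect the hardest step to be the second implication \ref{it:x2} $\lRa$ \ref{it:x3}, specifically controlling the interaction between the membrane objects $X_\T$ for an arbitrary triangulation $\T$ and the outer horn membranes: one must verify that the local flip relations, which a priori only compare \emph{adjacent} triangulations, assemble coherently into a single equivalence $X_\T\simeq X_n$ independent of the chosen sequence of flips. The coherence of this assembly — ensuring the composite equivalence does not depend on the path through the flip graph — is the delicate point, and I would address it by invoking contractibility of the relevant space of triangulations (or equivalently of the associated diagram of Kan extensions), paralleling the use of contractibility of colimit diagrams in the proof of \th\ref{prop:slices_Smn}. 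The restriction to outer horns (rather than inner ones) is precisely what makes the $1$-dimensional converse fail to generalise, explaining why part (2) is stated only for $m=1$.
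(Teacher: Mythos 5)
Both implications in your part (1) rest on unjustified membrane comparisons, and the missing ingredient is the same in both cases: you never supply a mechanism, valid for an arbitrary simplicial object $X$ in an $\infty$-category with finite limits, for comparing membrane objects over two different subcomplexes of $\Delta^n$. For \ref{it:x1} $\Rightarrow$ \ref{it:x2} you correctly factor $X_n \to X_{\Lambda_0^n} \to X_{S_P}$ through the projective slice and use slice descent to get $X_n \simeq X_{S_P}$, but you justify the remaining equivalence $X_{\Lambda_0^n} \simeq X_{S_P}$ by appeal to \th\ref{prop:sliceres} and \th\ref{thm:slices_Smn}. Those results concern reduced \emph{exact} functors out of the posets $\Delta(m,n)$ with values in a \emph{stable} $\infty$-category; they say nothing about membranes of a general simplicial object, for which no Kan-extension or ``cube-filling'' mechanism exists---here descent is a hypothesis, not something one can arrange. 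What is actually needed is the paper's combinatorial induction: both the horn cover and the projective-slice cover lie in the class of covers all of whose elements contain $0$ and such that every $m$-element subset of $[n]$ lies in some element; this class is saturated in the sense of \th\ref{defi:saturatedcoverclass}, and the refinement principle \th\ref{lem:refinementprinciple} (an induction on $n$ via \th\ref{lem:descentsmallrefinement}, using descent on proper faces) yields \th\ref{prop:outerhornslices}, which is exactly the horn-versus-slice comparison you are missing. For \ref{it:x2} $\Rightarrow$ \ref{it:x3} the gap is worse: your declared ``main technical input''---that the higher Segal conditions are \emph{equivalent} to outer horn descent and that this is established in \cite{DK12} and \cite{Pog17}---is not a theorem in those references and is not available at this level of generality; it is precisely what \S\ref{sec:dk} of the paper proves, and only for abelian categories and stable $\infty$-categories (\th\ref{thm:abelianSegalch}, \th\ref{thm:stableSegalch}). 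Assuming it makes your argument circular. The paper's actual proof of \ref{it:x2} $\Rightarrow$ \ref{it:x3} is a short verification: by \th\ref{rem:Segalexplicit} every element of the lower (resp.\ upper) $2m$-Segal cover contains $0$ (resp.\ $n$), and every $m$-element subset of $[n]$ is contained in some element, so these covers lie in the same saturated classes as the outer horns, and \th\ref{prop:outerhornslices} applies. Your flip-connectivity scheme cannot substitute for this: even granting connectivity of triangulations under flips, you must still relate one triangulation to $X_n$ and show that each flip is governed by covers lying in the closure of the outer horns, and both points again require the refinement machinery. Moreover, the ``coherence'' obstacle you single out is a non-issue: descent is a property of the canonical restriction map $X_n \to X_\T$, so two-out-of-three along any chain of flips suffices; no path-independence is needed.

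For part (2) your route via slice mutation is genuinely different from the paper's, which instead proves the structure result \th\ref{lem:1dimslices} (a $1$-dimensional slice on $[n]$ is $S' \cup \{f\}$ with $f = [n-1]$ or $f = \{1,\dots,n\}$) and inducts on $n$ using \th\ref{lem:descentsmallrefinement} together with the fact that $\{\{0,n\},f\}$ lies in the closure of the right, resp.\ left, horn covers. Your approach is salvageable but has two concrete gaps as stated. First, descent conditions apply to full simplices $[n]$, not to a $2$-simplex sitting inside a cover, so the claim that a mutation square ``degenerates to precisely an outer horn $\Lambda_0^2$ or $\Lambda_2^2$'' only becomes an argument after a localization step: across a mutation at a minimal edge $\sigma$, both $S$ and $S' = \mu_\sigma^R(S)$ refine the cover obtained by adjoining the $2$-simplex spanned by the mutation square, and \th\ref{lem:descentsmallrefinement} reduces locality of these two refinements to the restricted covers on that $2$-simplex being its left and right horn covers---this step is absent from your sketch. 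Second, \th\ref{thm:slice_mutation:transitivity} gives transitivity on slices in the \emph{paracyclic} poset $\Li(1,n)^\sharp$, whereas the slices admissible as covers in \th\ref{def:slicesNhorns} are only those contained in $\Delta(1,n)^\sharp$; you must additionally verify that any two such slices are connected by mutations never leaving $\Delta(1,n)^\sharp$ (plausible for $m=1$, but not automatic and not argued). Finally, even with the mutation argument complete you still need one anchor equivalence $X_n \simeq X_{S_P}$ from outer horn descent, which is again \th\ref{prop:outerhornslices}; your proposal nowhere produces it, and your closing claim that the outer-versus-inner distinction ``explains'' the failure for $m>1$ is speculation that your argument does not substantiate.
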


In \cite{DJW}, we will show that \th\ref{thm:relations} remains true, {\em mutatis mutandis}, for
$2$-simplicial objects with values in $\infty$-bicategories. In particular, via the categorified
Dold-Kan correspondence of \cite{Dyc17}, we will obtain a classification of $2m$-Segal objects in
the $\infty$-bicategory of stable $\infty$-categories.

\subsection{Membranes, covers and refinements}
\label{subsec:coverings}

We introduce the following framework which allows us to work efficiently with
objects of membranes associated to simplicial subsets of the standard simplices.

\begin{definition}
  A \introduce{cover} of $[n]\in\Delta$ is a collection $\F$ of subsets of $\n$
  such that $\bigcup\F=\n$. We write $\F\covers\n$ to indicate that $\F$ is a
  cover of $[n]$. Given two covers $\F'$ and $\F$ of $n$, we say that $\F'$ is a
  \introduce{refinement} of $\F$ if for every $f'\in\F'$ there is an $f\in\F$
  with $f'\subseteq f$. In this case we write $\F'\refines\F$.
\end{definition}

Every cover $\F\covers \n$ refines the trivial cover $\{\n\}\covers\n$; if
the converse is true (i.e.\ $\n\in \F$) then we call $\F\covers\n$ a
\introduce{degenerate cover}. More generally, we call a refinement
$\F'\refines\F$ \introduce{degenerate}, if $\F$ also refines $\F'$. For each
$\n\in\Delta$, covers of $\n$ and refinements between them are naturally
assembled into a category $\Coverings\n$ which is equivalent to a poset; the
isomorphisms are the degenerate refinements. The categories $\Coverings\n$ can
be assembled into a Cartesian fibration
\begin{equation}
  \label{eq:U}
  \bigcup\colon\CovD\lra\Delta,
\end{equation}
where there is a unique morphism from $\F'\covers\n$ to $\F\covers\m$ over
$\alpha\colon \m\to\n$ provided that for each $f'\in\F'$ there is an $f\in\F$
such that $\alpha(f')\subseteq f$. A cover $\F\covers\n$ gets mapped to
$\n=\bigcup\F\in\Delta$. The assignment which sends $\n$ to the trivial cover
$\{\n\}\covers\n$ gives rise to a fully faithful right adjoint
\[
  \{\}\colon\Delta\hra \CovD
\]
to the functor \eqref{eq:U}. From now on we will identify $\Delta$ with its image in $\CovD$.\\

Given an $\infty$-category $\C$ with finite limits, right Kan extension along
$\{\}^{\op}$ yields a fully faithful embedding
\[\Fun(\Dop,\C)\hra \Fun(\CovDop,\C)\]
given by evaluating a functor $X$ on a cover $X\covers\n$
via the formula
\[\Xcover{\F}\simeq \lim\left(\left(\overcat{\Delta}{\F}\right)^{\op}\to
    \Dop\xra{X}\C\right).\] Every cover $\F\covers[n]$ gives rise to a
simplicial subset
\[
  K_\F\colon\n\mapsto \Hom_{\CovD}([n], \F)
\]
of $\Delta^n$. Two covers of $\n$ give rise to the same simplicial set if and
only if they are isomorphic in $\CovD$. It is immediate from the definition that
$\Xcover{\F}$ agrees with the membrane space $X_{K_{\F}}$ defined above.

Given a cover $\F\covers \n$, we denote by $\minsubsets\F\subset\powerset\n$ the subposet of those non-empty subsets of $\n$ which are of the form $\bigcap F$ for some $\emptyset\neq F\subseteq\F$.

\begin{remark}
The fully faithful inclusion $\minsubsets\F\hra \overcat\Delta\F$ has a left adjoint given on objects by
\[
  (\alpha\colon \m\to\F)\mapsto\bigcap\setP{f\in \F}{\alpha(\m)\subseteq f}.
\]
In particular, by cofinality, we can compute $\Xcover{\F}$ as
\[\Xcover{\F}\simeq\lim\left(\minsubsets\F^{\op}\hra\Delta^{\op}\xra X\C\right).\qedhere\]
\end{remark}

\subsubsection{Descent}

Let $\C$ be an $\infty$-category with all finite limits and
$X\colon\Delta^{\op}\to\C$ a simplicial object.

\begin{definition}
  We say that a cover $\F\covers\n$ is \introduce{$X$-local}, or that $X$
  \introduce{satisfies descent} with respect to $\F$, if $X$, viewed as a
  functor $\CovDop\to\C$ sends the structure map $\F\to\n$ to an equivalence
  \[
    X_n\lra \Xcover{\F}.
  \]
  More generally, we say that a refinement of covers $\F'\refines\F$ is
  \introduce{$X$-local} if it is sent by $X$ to an equivalence
  \[
    \Xcover{\F}\lra \Xcover{\F'}
  \]
  in $\C$.
\end{definition}

All of the conditions we impose on simplicial objects in this article are
descent-conditions with respect to certain specific covers. We will now develop
some basic techniques for comparing different such descent conditions to each
other.

\begin{remark}
  \th\label{rem:deglocals}
  Degenerate refinements $\F'\degrefines\F$ are precisely the isomorphisms
  in the category $\CovD$, hence they are $X$-local for every $X$. In
  particular, every degenerate cover is $X$-local for every $X$.
\end{remark}

\begin{lemma}\th\label{lem:descentrefinement}
  Let $\F'\refines\F\covers\n$ be a refinement of covers. Assume that for
  every $F\subseteq\F$ with $\emptyset\neq I\coloneqq \bigcap F$, the functor $X\colon\Dop\to\C$
  satisfies descent with respect to the cover
  \[\coverpb{F'} I\coloneqq \setP{f'\cap I}{ f'\in\F'}\covers I.\]
  Then $\F'\refines\F$ is $X$-local.
\end{lemma}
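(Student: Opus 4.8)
Here the plan is to realise both membrane objects as limits of a single fixed diagram, taken over two different subposets of $\overcat{\Delta}{\F}$, and to interpolate between them through a carefully chosen intermediate poset. Since $\F'\refines\F$, every $\F'$-small simplex is $\F$-small, so $\overcat{\Delta}{\F'}$ is a full subposet of $\overcat{\Delta}{\F}$; and by the preceding remark $\minsubsets\F$ also sits inside $\overcat{\Delta}{\F}$ via its (fully faithful) inclusion. I would fix once and for all the diagram $X|_{(\overcat{\Delta}{\F})^{\op}}$ and introduce the full subposet
\[
  \mathcal P\coloneqq \minsubsets\F\cup\overcat{\Delta}{\F'}\subseteq\overcat{\Delta}{\F}.
\]
By the cofinality statement of the remark, restriction identifies $\lim_{\minsubsets\F^{\op}}X$ with $\Xcover{\F}$ and $\lim_{(\overcat{\Delta}{\F'})^{\op}}X$ with $\Xcover{\F'}$, and the refinement map $\Xcover{\F}\to\Xcover{\F'}$ is exactly the induced restriction map. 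It therefore suffices to prove that both restriction maps out of $\lim_{\mathcal P^{\op}}X$ — to $\minsubsets\F$ and to $\overcat{\Delta}{\F'}$ — are equivalences.

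For the restriction to $\overcat{\Delta}{\F'}$ I would show that $X|_{\mathcal P^{\op}}$ is the right Kan extension of its restriction to $(\overcat{\Delta}{\F'})^{\op}$ along the fully faithful inclusion; since the limit of a right Kan extension agrees with the limit of the original diagram, this yields $\lim_{\mathcal P^{\op}}X\simeq\Xcover{\F'}$, the comparison being precisely the restriction map. The pointwise criterion is automatic at the objects already lying in $\overcat{\Delta}{\F'}$, so the only condition to verify is at an object $I\in\minsubsets\F$. There the relevant comma category is the poset of $\F'$-small simplices whose image is contained in $I$, which is exactly $\overcat{\Delta}{\coverpb{F'} I}$; hence the comparison map is the canonical map $X_I\to\Xcover{\coverpb{F'} I}$, and this is an equivalence by the standing descent hypothesis. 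This is the unique place where the assumption of the lemma is used, and it is used once for each element of $\minsubsets\F$.

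For the restriction to $\minsubsets\F$ I would instead invoke cofinality (in its limit-preserving form): it suffices that for every $P\in\mathcal P$ the comma poset $\setP{I\in\minsubsets\F}{\on{im}(P)\subseteq I}$ be weakly contractible. This poset is closed under intersection, because a finite intersection of elements of $\minsubsets\F$ again lies in $\minsubsets\F$ and still contains $\on{im}(P)$; and it is non-empty because $\on{im}(P)$ is $\F$-small, hence contained in a single member $f\in\F\subseteq\minsubsets\F$ — this is the one point at which the refinement hypothesis $\F'\refines\F$ re-enters. A non-empty, meet-closed poset has a least element and thus weakly contractible nerve, so the restriction $\lim_{\mathcal P^{\op}}X\to\lim_{\minsubsets\F^{\op}}X\simeq\Xcover{\F}$ is an equivalence. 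Combining the two steps gives equivalences $\Xcover{\F}\simeq\lim_{\mathcal P^{\op}}X\simeq\Xcover{\F'}$, all induced by restriction of the single diagram $X|_{(\overcat{\Delta}{\F})^{\op}}$; hence the induced equivalence $\Xcover{\F}\to\Xcover{\F'}$ is the canonical refinement map, proving that $\F'\refines\F$ is $X$-local.

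The hard part is bookkeeping rather than conceptual. One must track the variance carefully — the limits are taken over the opposite posets, so the orientation of each comma category has to be matched to the pointwise right-Kan-extension criterion and to the cofinality criterion, respectively — and one must confirm that the abstract equivalence produced is genuinely the morphism that $X$ assigns to $\F'\to\F$ in $\CovD$; the "single fixed diagram" framing is precisely what makes this transparent. The only content-bearing inputs are minimal: the descent hypothesis powers the Kan-extension step, while the refinement condition guarantees non-emptiness of the comma posets in the cofinality step.
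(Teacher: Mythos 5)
Your proof is correct, and its engine coincides with the paper's: in both arguments the descent hypothesis enters exactly once for each $I\in\minsubsets\F$, as the pointwise criterion for a right Kan extension, after identifying the relevant comma category with $\left(\overcat{\Delta}{\coverpb{\F'}{I}}\right)^{\op}$; and both conclude by invariance of limits under right Kan extension together with the Remark that $\minsubsets\F^{\op}$ computes $\Xcover{\F}$. The difference lies in the packaging. The paper performs a \emph{single} Kan extension along the composite $g\colon\left(\overcat{\Delta}{\F'}\right)^{\op}\to\left(\overcat{\Delta}{\F}\right)^{\op}\to\minsubsets\F^{\op}$, whose second leg is the (opposite of the) left adjoint retraction; this functor is not fully faithful, and identifying its comma categories requires the small observation that $g(\alpha)\subseteq I$ if and only if $\mathrm{im}(\alpha)\subseteq I$ whenever $I$ is an intersection of elements of $\F$. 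You instead interpolate through the auxiliary full subcategory $\mathcal{P}=\minsubsets\F\cup\overcat{\Delta}{\F'}$ of $\overcat{\Delta}{\F}$, Kan-extending only along the fully faithful inclusion $\left(\overcat{\Delta}{\F'}\right)^{\op}\subseteq\mathcal{P}^{\op}$ (where the comma category at $I$ is visibly the category of $\F'$-small simplices contained in $I$), at the cost of an extra cofinality check for $\minsubsets\F^{\op}\subseteq\mathcal{P}^{\op}$ --- which you dispatch correctly: the comma posets $\setP{I\in\minsubsets\F}{\mathrm{im}(P)\subseteq I}$ are non-empty (the one place where $\F'\refines\F$ is used) and closed under intersection, hence have a least element and contractible nerve. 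So the paper's route is more economical (no auxiliary category, no cofinality argument beyond the Remark already in the text), while yours trades that for using Kan extensions along full inclusions only and for keeping everything inside the single diagram $X|_{(\overcat{\Delta}{\F})^{\op}}$, which makes it transparent that the resulting equivalence is the canonical map that $X$ assigns to $\F'\to\F$ --- a compatibility the paper leaves implicit. One cosmetic remark: $\overcat{\Delta}{\F}$ is a category rather than a poset (degenerate simplices admit parallel morphisms), but this is harmless, since uniqueness of factorizations through the monomorphisms $I\hookrightarrow\n$ is all that your comma-category computations require.
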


\begin{proof}
  Consider the composite functor
  \[g\colon\left(\overcat{\Delta}{\F'}\right)^{\op}\lra\left(\overcat{\Delta}{\F}\right)^{\op}\lra
    \minsubsets\F^{\op}\] where the first map is induced by the refinement
  $\F'\refines\F$ and the second one is (the opposite of) the left adjoint to
  $\minsubsets\F\hra \overcat\Delta\F$.

  We claim that $\restr X{\minsubsets\F^{\op}}$ is a right Kan extension of
  $\restr X{\left(\overcat{\Delta}{\F'}\right)^{\op}}$ along $g$. For each
  $I=\bigcap F\in \minsubsets\F$, we have to prove that the natural map
  \[\restr X{\minsubsets\F^{\op}}(I)\lra g_*\left(\restr
      X{\left(\overcat\Delta{\F'}\right)^{\op}}\right)(I)\] (induced by unit of
  the adjunction $\overcat{\Delta}\F\leftrightarrow \minsubsets\F$) is an
  equivalence. Observe that for each $I\in\minsubsets\F$ we have
\[
  \undercat
  I{\left(\overcat\Delta{\F'}\right)^{\op}}=\left(\overcat\Delta{\coverpb{F'}
      I}\right)^{\op}
\] hence using the pointwise formula for Kan extensions we
  reduce to showing that the natural map
  \[X(I)\lra \lim\left(\left(\overcat\Delta{\coverpb{F'} I}\right)^{\op}
      \hra{\Delta^{\op}}\xra X\C\right)\simeq X(\coverpb{F'} I)\] is an equivalence;
  this is precisely the assumption on $X$.  The result follows because we have
  \[\Xcover{\F'}\simeq\lim(\restr
  X{\left(\overcat\Delta{\F'}\right)^{\op}})\simeq\lim(\restr X{\minsubsets\F})\simeq \Xcover{\F}.\qedhere\]
\end{proof}

\begin{remark}\th\label{rem:descentrefinementenough}
If there is an $f\in F$ which is contained in some $f'\in\F'$ then
$\coverpb{F'}{\left(\bigcap F\right)}\covers \bigcap F$ is a degenerate cover, hence the
assumption of \th\ref{lem:descentrefinement} is automatic in this case.
\end{remark}

\begin{notation}
  Given a cover $\F'\covers\n$ and a subset $f\subset \n$, we introduce a new cover
  \[\coveradj{\F'} f\coloneqq \{f\}\cup \setP{f'\in\F'}{f'\not\subseteq f}\covers\n\]
  by adding the element $f$ and then removing the redundant elements of $\F'$.
  We then have a refinement $\F'\refines\coveradj{\F'} f$.
\end{notation}

\begin{corollary}\th\label{lem:descentsmallrefinement}
 Let $\F'\covers\n$ be a cover and $f\subset\n$ a subset.
 If $\coverpb{\F'} f\covers f$ is an $X$-local cover then
 $\F'\refines\coveradj{\F'}f$ is an $X$-local refinement.
\end{corollary}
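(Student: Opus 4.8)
The plan is to deduce Corollary~\th\ref{lem:descentsmallrefinement} directly from \th\ref{lem:descentrefinement} by choosing the right pair of covers and verifying its hypothesis using the local assumption together with \th\ref{rem:descentrefinementenough}. First I would set $\F\coloneqq\coveradj{\F'}f$ and observe that, by construction, $\F=\{f\}\cup\setP{f'\in\F'}{f'\not\subseteq f}$, and that $\F'$ refines $\F$ (this refinement is recorded at the end of the \texttt{Notation} block preceding the statement). Thus it suffices to check that this refinement $\F'\refines\F$ is $X$-local, and \th\ref{lem:descentrefinement} reduces that to a descent condition on the covers $\coverpb{\F'}{I}\covers I$ for each intersection $I=\bigcap F$ arising from a nonempty subset $F\subseteq\F$.

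The main work is a case distinction on whether $f\in F$ or not. If $f\notin F$, then every element of $F$ lies in $\F'\cap\F$, and in particular each such element $f''$ satisfies $f''\not\subseteq f$; I would want to invoke \th\ref{rem:descentrefinementenough} here, which says that the hypothesis of \th\ref{lem:descentrefinement} is automatic as soon as some element of $F$ is contained in an element of $\F'$. Since the elements of $F$ other than possibly $f$ already belong to $\F'$, this containment holds trivially (each is contained in itself), so descent with respect to $\coverpb{\F'}{I}$ is automatic in every case \emph{except} possibly when $F=\{f\}$, i.e.\ $I=f$. That single remaining case is exactly where the hypothesis of the corollary is used: for $F=\{f\}$ we have $I=f$ and $\coverpb{\F'}{f}=\setP{f'\cap f}{f'\in\F'}$, which is precisely the cover of $f$ assumed to be $X$-local.

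Concretely, I would argue as follows. Fix a nonempty $F\subseteq\F$ and put $I=\bigcap F$. If $F$ contains some element of $\F'$ (equivalently, $F\neq\{f\}$, since every element of $\F$ other than $f$ lies in $\F'$), then that element witnesses the containment required by \th\ref{rem:descentrefinementenough}, making $\coverpb{\F'}{I}\covers I$ a degenerate cover and hence $X$-local by \th\ref{rem:deglocals}. The only surviving case is $F=\{f\}$, giving $I=f$; here $\coverpb{\F'}{f}\covers f$ is $X$-local by the standing assumption of the corollary. In all cases the hypothesis of \th\ref{lem:descentrefinement} is met, so that lemma yields that $\F'\refines\coveradj{\F'}f$ is an $X$-local refinement, which is the claim.

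I expect the only real subtlety to be bookkeeping: one must be careful that the \emph{only} intersection $I$ over which the nontrivial descent condition survives is $I=f$ itself, and that for every larger $F$ (or any $F$ meeting $\F'$) the relevant cover degenerates. Once this is pinned down, the proof is a one-line application of \th\ref{lem:descentrefinement}; the hard part is simply confirming the combinatorial claim that $F=\{f\}$ is the unique index requiring the hypothesis, which follows immediately from the fact that $\coveradj{\F'}f$ differs from $\F'$ only by the inserted set $f$.
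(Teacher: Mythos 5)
Your proposal is correct and is essentially the paper's own argument: the paper likewise applies \th\ref{lem:descentrefinement} to the refinement $\F'\refines\coveradj{\F'}f$, invokes \th\ref{rem:descentrefinementenough} to dispose of every $F\subseteq\coveradj{\F'}f$ except $F=\{f\}$, and handles that remaining case by the hypothesis. The only difference is that you spell out the combinatorial bookkeeping (that any $F\neq\{f\}$ meets $\F'$ and hence yields a degenerate, automatically $X$-local cover) which the paper leaves implicit.
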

\begin{proof}
 Follows by direct application of \th\ref{lem:descentrefinement} since, by
 \th\ref{rem:descentrefinementenough}, we only need to check the case $F=\{f\}$,
 which is true by assumption.
\end{proof}

Let $\localP$ be a set of covers, which we identify with the full subcategory
$\localP\subset \CovD$ which they span. We denote by $\localP_\n$ the set/category of covers
$\F\covers\n$ in $\localP$, i.e.\ the fibre over $\n$ of the composition $\localP\subset\CovD\xra{\bigcup}\Delta$.

\begin{definition}
 We say a cover $\F\covers\n$ is \introduce{$\localP$-local}, if for every
 $X\colon{\Delta^{\op}}\to\C$ we have the implication
 \[{\text{every $\F'\in\localP$ is $X$-local}}\Longrightarrow
   {\text{$\F$ is $X$-local}}\]
 We denote the set of all $\localP$-local covers by $\closure\localP$.
\end{definition}

\begin{definition}\th\label{defi:saturatedcoverclass}
 Let $\localP$ be a set of non-degenerate covers. We say that $\localP$ is
 \introduce{saturated} if it has the following properties:
  \begin{enumerate}
  \item\label{saturated:step} Given a non-degenerate refinement
    $\F'\refines\F$ of covers in $\localP_n$, there is an element $f\in\F$ such
    that
    \begin{itemize}
    \item The cover $\coveradj{\F'}f$ is isomorphic to a cover in $\localP_n$.
    \item The cover $\coverpb{\F'}f\covers f$ is isomorphic to a cover in $\localP_f$.
    \end{itemize}
  \item For each $\n\in\Delta$, the category $\localP_\n$ is connected.\qedhere
  \end{enumerate}
\end{definition}

\begin{lemma}[Refinement Principle]\th\label{lem:refinementprinciple}
  Let $\localP$ be a saturated cover. For each $n\in\BN$ where $P_\n$ is
  non-empty, choose an element $p_n\in\localP_\n$. Then we have
  \[\closure\localP = \closure{\setP{p_n}{n\in\BN, P_\n\neq\emptyset}}.\qedhere\]
\end{lemma}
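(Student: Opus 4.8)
The plan is to treat the operation $\closure{(-)}$ as a closure operator on sets of covers and to reduce the asserted equality to the single containment $\localP\subseteq\closure{Q}$, where I abbreviate $Q\coloneqq\setP{p_n}{n\in\BN,\ \localP_n\neq\emptyset}$. First I would record three purely formal properties, each a one-line diagram chase through the defining implication ``every member is $X$-local $\Rightarrow$ $\F$ is $X$-local'': the operator is monotone, extensive ($\localP\subseteq\closure\localP$), and idempotent ($\closure{\closure\localP}=\closure\localP$). Since $Q\subseteq\localP$, monotonicity already yields $\closure{Q}\subseteq\closure\localP$. For the reverse inclusion it then suffices to establish $\localP\subseteq\closure{Q}$, because applying the (monotone, idempotent) operator gives $\closure\localP\subseteq\closure{\closure Q}=\closure Q$.

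It thus remains to show that every cover $\F\in\localP_n$ is $Q$-local. So I fix a simplicial object $X$ for which every cover in $Q$ is $X$-local and prove, by induction on $n$, that every cover in $\localP_n$ is $X$-local; the primary induction hypothesis is that every cover in $\localP$ lying over a linearly ordered set of cardinality at most $n$ is $X$-local, locality being invariant under the isomorphisms of $\CovD$. If $\localP_n=\emptyset$ there is nothing to do; otherwise the chosen representative $p_n\in\localP_n\cap Q$ is $X$-local by hypothesis. Granting the key sub-claim that \emph{every refinement between covers in $\localP_n$ is $X$-local}, I would finish as follows: by saturation condition (2) the category $\localP_n$ is connected, so any $\F\in\localP_n$ is joined to $p_n$ by a zig-zag of refinements inside $\localP_n$; each such refinement induces an equivalence on membrane objects, and a repeated two-out-of-three argument applied to the compatible structure maps $X_n\to\Xcover{\H}$ propagates $X$-locality from $p_n$ to $\F$.

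The heart of the matter is the sub-claim. For $\F'\refines\F$ with $\F',\F\in\localP_n$, I would argue by a secondary induction on the length of a maximal chain from $\F'$ to $\F$ in $\Coverings_n$. A degenerate refinement is $X$-local by \th\ref{rem:deglocals}, giving the base case. Otherwise saturation condition (1) of \th\ref{defi:saturatedcoverclass} supplies an $f\in\F$ with $\coveradj{\F'}f$ isomorphic to a cover in $\localP_n$ and $\coverpb{\F'}f\covers f$ isomorphic to a cover in $\localP_f$; since $\F$ is non-degenerate we have $f\subsetneq\n$, so $\coverpb{\F'}f$ lives over a set of cardinality at most $n$ and is $X$-local by the primary induction hypothesis. \th\ref{lem:descentsmallrefinement} then shows that $\F'\refines\coveradj{\F'}f$ is $X$-local. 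Crucially, the fact that $\coverpb{\F'}f$ is a \emph{non-degenerate} cover (being isomorphic to a member of $\localP_f$) forces $f\not\subseteq f'$ for every $f'\in\F'$, which is exactly the condition guaranteeing that $\F'\refines\coveradj{\F'}f$ is a strict step in $\Coverings_n$; hence $\coveradj{\F'}f\refines\F$ has strictly shorter maximal chain length and is $X$-local by the secondary induction hypothesis. As $X$-local refinements compose (the membrane maps compose by functoriality of $X$), the refinement $\F'\refines\F$ is $X$-local, completing the sub-claim and hence the whole argument.

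The main obstacle I anticipate lies precisely in this sub-claim: verifying that the second bullet of saturation genuinely rules out the degenerate step $\coveradj{\F'}f\cong\F'$, so that the chain-length measure strictly decreases, and matching the combinatorics of the operations $\coveradj{(-)}{f}$ and $\coverpb{(-)}{f}$ with the induced maps on membrane objects. Concretely this reduces to the identification ``$\F'\refines\coveradj{\F'}f$ is non-degenerate iff $f$ is not contained in any element of $\F'$ iff the simplex spanned by $f$ does not lie in $K_{\F'}$'', together with iso-invariance of $X$-locality; both follow from the description of $\Xcover{\F}$ as a limit over $\minsubsets{\F}$, but they must be checked with care.
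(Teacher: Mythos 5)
Your proposal is correct and takes essentially the same route as the paper's proof: both reduce the statement to showing that every cover in $\localP$ is local with respect to the chosen representatives, induct on $n$, use saturation condition (1) together with \th\ref{lem:descentsmallrefinement} (and \th\ref{rem:deglocals}) to handle each refinement step, and conclude via the connectedness of $\localP_\n$. The only difference is presentational: where the paper invokes ``repeated application'' of the saturation axiom to produce the factorization $\F'\refines(\coveradj{\F'_0}{f_1})\degrefines\F'_1\refines\cdots\degrefines\F$, you organize the same iteration as a secondary induction on maximal chain length, thereby making explicit the strictness/termination point (non-degeneracy of $\coverpb{\F'}{f}$ forces each step to be strict) that the paper leaves implicit.
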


\begin{proof}
  The inclusion ``$\supseteq$'' is trivial. To prove that the converse inclusion
  holds, assume that $X$ satisfies descent with respect to the covers
  $p_n\covers \n$; we prove by induction that for each $n$ all covers in
  $\localP_n$ are $X$-local.
  Let $\F'\refines\F$ be a refinement in $\localP_n$. By repeated application of
  assumption \ref{saturated:step}, we can write this refinement
  as a composition
  \[\F'=\F'_0\refines(\coveradj{\F'_0}{f_1})\degrefines\F'_1
    \refines(\coveradj{\F'_1}{f_2})\degrefines\F'_2\dots\refines(\coveradj{\F'_{l-1}}{f_l})\degrefines\F\]
  where each $\F'_i$ lies in $\localP$ and where all the covers
  $\coverpb{\F'_{i-1}}{f_{i}}\covers f_{i}$ lie in
  $\localP$. Since $f_i$ is a proper subset of $\n$, we know by induction that
  the covers $\coverpb{\F'_{i-1}}{f_{i}}\covers f_{i}$ are $X$-local.
  It follows from \th\ref{lem:descentsmallrefinement}
  that all the refinements $\F'_{i-1}\refines\coveradj{\F'_{i-1}}{f_i}$ are $X$-local; hence
  $\F'\refines\F$ is $X$-local. Since the category
  $\localP_\n$ is connected and every refinement in $\localP_\n$ is $X$-local,
  we conclude that all covers in $\localP_\n$ are
  $X$-local if any one of them (\emph{e.g.}\ $p_n$) is.
\end{proof}

\subsection{Outer horn descent and slice mutations}

We introduce the following classes of covers.

\begin{definition}
  \th\label{def:slicesNhorns}
  Let $m\geq 0$ and consider the following classes of covers:
  \begin{itemize}
  \item The class of \introduce{left horns above dimension $m$} is
\[\lefthorns m\coloneqq \setP{\horn n 0\covers \n} {n > m}.\]
  \item The class of \introduce{right horns above dimension $m$} is
\[\righthorns m\coloneqq \setP{\horn n n\covers \n} {n > m}.\]
  \item The class of \introduce{projective $m$-slices} is
\[\projslices m\coloneqq \setP{\projslice m n\coloneqq\setP{f\subset\n}{0\in f, |f|=m+1}\covers\n}{n> m}.\]
  \item The class of \introduce{injective $m$-slices} is
\[\injslices m\coloneqq \setP{\injslice m n\coloneqq\setP{f\subset\n}{n\in f, |f|=m+1}\covers\n}{n> m}.\]
  \item The class of all \introduce{$m$-dimensional slices} is
\[\allslices m\coloneqq \{S\covers \n\}\]
 where $S$ ranges over all those slices $S\subset\Li(m,n)^\sharp$
    (as in Definition~\ref{defi:slice}) which are contained in $\Delta(m,n)^\sharp$.
    Here we identify elements of $\Delta(m,n)^\sharp$ with their image (a subset of
    $\n$ of cardinality $m+1$), so that we can interpret $S\subset\Delta(m,n)$ as a cover $S\covers\n$.\qedhere
  \end{itemize}
\end{definition}

\begin{notation}\th\label{}
  Consider the class $\leftKcoverings m$ of those non-degenerate covers
  $\F\covers \n$ satisfying:
  \begin{enumerate}
  \item $0\in \bigcap \F$
  \item For every $I\subset \n$ with $|I|=m$ there is a $f\in\F$ with
    $I\subset f$.
  \end{enumerate}
  We define the class $\rightKcoverings m$ analogously with $n\in\bigcap\F$
  instead of $0\in\bigcap\F$.
\end{notation}

\begin{proposition}
  \th\label{prop:outerhornslices}
  Let $\C$ be an $\infty$-category with finite limits and $X$
    a simplicial object in $\C$.
  \begin{enumerate}
  \item The following statements are equivalent:
    \begin{itemize}
    \item The simplicial object $X$ satisfies left horn descent above dimension $m$.
    \item The simplicial object $X$ satisfies descent with respect to all covers in
      $\leftKcoverings m$.
    \item The simplicial object $X$ satisfies $m$-dimensional projective slice descent.
    \end{itemize}
  \item The following statements are equivalent
    \begin{itemize}
    \item The simplicial object $X$ satisfies right horn descent above dimension $m$.
    \item The simplicial object $X$ satisfies descent with respect to all covers in
      $\rightKcoverings m$.
    \item The simplicial object $X$ satisfies $m$-dimensional injective slice descent.\qedhere
    \end{itemize}
  \end{enumerate}
\end{proposition}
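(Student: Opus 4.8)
The plan is to prove the two statements of \th\ref{prop:outerhornslices} in parallel, since the second is obtained from the first by the order-reversing symmetry $i\mapsto n-i$ of $[n]$; thus I would establish statement (1) in detail and remark that (2) follows by duality. The heart of the argument is the Refinement Principle (\th\ref{lem:refinementprinciple}), so the first step is to verify that the relevant classes of covers are \emph{saturated} in the sense of \th\ref{defi:saturatedcoverclass}. I would therefore begin by showing that the class $\leftKcoverings{m}$ of non-degenerate covers $\F\covers\n$ with $0\in\bigcap\F$ and containing, for every $I\subset\n$ of size $m$, some $f\supseteq I$, is saturated: connectivity of each fibre $\leftKcoverings{m}_\n$ should follow because any two such covers refine their common union, which again lies in the class, and the stepwise condition \ref{saturated:step} is checked by, given a non-degenerate refinement $\F'\refines\F$, choosing an $f\in\F$ not already refined inside $\F'$ and verifying that both $\coveradj{\F'}f$ and the restricted cover $\coverpb{\F'}f\covers f$ again satisfy the two defining conditions (the key point being that intersecting with $f$ preserves the ``every $m$-subset is covered'' property because $0\in f$ and $|f|\ge m+1$).

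Once saturation is in hand, the Refinement Principle reduces $\closure{\leftKcoverings{m}}$ to the closure of any single chosen representative $p_n$ in each dimension. The second step is then to identify the left horns $\lefthorns{m}$, the projective slices $\projslices{m}$, and the whole class $\leftKcoverings{m}$ as all lying in this common closure. Concretely, I would observe that both $\horn{n}{0}$ (whose maximal faces are the $f=\n\setminus\{i\}$ for $i\neq 0$, all containing $0$) and the projective slice $\projslice{m}{n}=\setP{f\subset\n}{0\in f,\,|f|=m+1}$ are members of $\leftKcoverings{m}$: the first because any $m$-subset $I$ misses at least one vertex $i\ne0$ and hence sits in $\n\setminus\{i\}$, the second by construction. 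Thus $\lefthorns{m}\cup\projslices{m}\subseteq\leftKcoverings{m}$, giving immediately $\closure{\leftKcoverings m}\subseteq\closure{\lefthorns m}$ and $\closure{\leftKcoverings m}\subseteq\closure{\projslices m}$; for the reverse inclusions I invoke that $\leftKcoverings{m}$ is saturated with these as representatives, so by \th\ref{lem:refinementprinciple} the three closures coincide. This yields the equivalence of the three descent conditions.

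The main obstacle I anticipate is the stepwise refinement condition \ref{saturated:step}: one must check that after replacing a redundant face and after restricting to $f$, the resulting covers genuinely remain in the class (up to isomorphism in $\CovD$) and, crucially, that one can always find the required $f\in\F$ so that the induced cover on $f$ is again of projective-slice type \emph{within $f$}, i.e.\ contains $0$ and covers every $m$-subset of $f$. Here I would lean on \th\ref{rem:descentrefinementenough}, which makes the descent hypothesis of \th\ref{lem:descentrefinement} automatic precisely when some face of $\F$ is already contained in a face of $\F'$, so that only genuinely new faces contribute conditions; combined with \th\ref{lem:descentsmallrefinement} this organises the bookkeeping. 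A secondary subtlety is that the projective slices are indexed only for $n>m$ and the covers are required non-degenerate, so I would take care that throughout the induction the faces $f$ selected are proper subsets of $\n$ (guaranteeing $|f|\le n$ and allowing the inductive hypothesis on lower dimensions to apply to $\coverpb{\F'}f\covers f$). Granting these verifications, statement (1) follows, and statement (2) is its mirror image under the simplicial automorphism reversing the order of $[n]$, which interchanges $\lefthorns{m}\leftrightarrow\righthorns{m}$, $\projslices{m}\leftrightarrow\injslices{m}$, and $\leftKcoverings{m}\leftrightarrow\rightKcoverings{m}$.
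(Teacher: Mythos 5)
Your proposal is correct and follows the paper's own proof essentially step for step: both arguments establish that $\leftKcoverings{m}$ (resp.\ $\rightKcoverings{m}$) is saturated and then apply the refinement principle (\th\ref{lem:refinementprinciple}) twice, with the horns $\horn{n}{0}$ and the projective slices $\projslice{m}{n}$ as the chosen representatives, deducing part (2) by the order-reversing symmetry; the only cosmetic difference is that the paper gets connectivity of the fibres (and membership of the representatives) by observing that $\horn{n}{0}$ is terminal and $\projslice{m}{n}$ is initial in $\leftKcoverings{m}_{\n}$, which is interchangeable with your common-union argument. One harmless slip: the memberships $\lefthorns{m}\cup\projslices{m}\subseteq\leftKcoverings{m}$ give the inclusions $\closure{\lefthorns{m}}\subseteq\closure{\leftKcoverings{m}}$ and $\closure{\projslices{m}}\subseteq\closure{\leftKcoverings{m}}$, not the reversed ones you state, but this does not affect your argument since the refinement principle already yields the equality of the closures outright.
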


\begin{proof}
  For $n\leq m$, the category $\leftKcoverings m_\n$ is empty. Observe that for $n > m$, the covers $\horn n 0$ and $\projslice m n$ are
  terminal and initial in $\leftKcoverings m_\n$, respectively. It is
  straightforward to check condition \ref{saturated:step}
  of \th\ref{defi:saturatedcoverclass} so that
  $\leftKcoverings m$ is a saturated class. Two applications
  of the refinement principle (\th\ref{lem:refinementprinciple}) yield
  $\closure{\lefthorns m} = \closure{\leftKcoverings m}  =\closure{\projslices m}$, hence
  the first claim. The second claim is analogous using the saturated class $\rightKcoverings m$.
\end{proof}

\begin{lemma}\th\label{lem:1dimslices}
  A cover $\F\covers \n$ is a equal to a $1$-dimensional slice $S\subset\Delta(1,n)$ if
  and only if $\F$ is the trivial cover $\{[1]\}\covers[1]$ or it is of the
  form $\F=\F'\cup\{f\}$ where $f=[n-1]$ or $f=\{1,\dots,n\}$.
\end{lemma}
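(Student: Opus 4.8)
The lemma characterizes which covers $\F \covers [n]$ arise as $1$-dimensional slices $S \subset \Delta(1,n)$. Recall that a $1$-dimensional slice is a subset $S \subset \Lambda_\infty(1,n)^\sharp$ contained in $\Delta(1,n)^\sharp$ satisfying the two conditions of Definition~\ref{defi:slice}: it is a complete system of representatives of the $\Phi_1$-orbits of non-degenerate edges, and it is convex (closed under intervals). Here each edge $f \in \Delta(1,n)^\sharp$ is a strictly monotone map $[1] \to [n]$, identified with a two-element subset $\{a < b\} \subset [n]$. So I need to translate the abstract slice conditions into the concrete combinatorial description given in the lemma.

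**Plan of proof.** First I would set up the identification: a $1$-dimensional slice $S$ consists of $n$ edges (one from each $\Phi_1$-orbit; since $\Phi_m$ shifts values by $-1$ and there are exactly $n$ orbits of non-degenerate edges in $\Lambda_\infty(1,n)^\sharp$ lying in $\Delta(1,n)^\sharp$). The key geometric picture, already developed in the paper via the Auslander--Reiten quiver interpretation, is that slices in $\Delta(1,n)$ correspond to all orientations of the $\AA_n$ Dynkin diagram. I would make this precise by describing the Hasse quiver of $\Delta(1,n)^\sharp$ and observing that a convex complete representative system is a ``staircase'' connected path running from the bottom to the top of the poset. The base case $n=1$ is the trivial cover $\{[1]\}$, which I treat separately as stated.

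**Main argument.** For $n \ge 2$, I would argue by induction on $n$, matching the recursive structure in the statement. The forward direction: given a slice $S \subset \Delta(1,n)$, I examine whether the maximal edge $\{n-1,n\}$ or the edge-configuration forces $S$ to contain a distinguished maximal/minimal edge. Convexity together with the representative condition forces $S$ to contain exactly one edge containing the vertex $n$ as its top endpoint and exactly one edge containing $0$ as its bottom endpoint; the structure of the Hasse quiver then shows $S$ must contain either $f = \{0,\dots,n-1\}$-type or $f=\{1,\dots,n\}$-type maximal element—concretely the edge $[n-1] = \{n-1, n\}$ viewed as a subset, or the edge $\{1, n\}$. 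Removing this edge and restricting to the appropriate sub-simplex yields a $1$-dimensional slice in $\Delta(1,n-1)$, giving the set $\F'$; the inductive hypothesis then identifies $\F'$. The reverse direction reverses this: starting from $\F' \cup \{f\}$ with $f$ of the prescribed form, I verify directly that this set is a complete $\Phi_1$-representative system (each orbit is hit exactly once, using that $f$ supplies the orbit not covered by $\F'$) and is convex (the interval condition is preserved because $f$ is extremal in the Hasse quiver).

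**Anticipated obstacle.** The genuinely delicate step is the reverse-direction verification that $\F' \cup \{f\}$ is convex as a subset of $\Lambda_\infty(1,n)^\sharp$ (not merely within $\Delta(1,n)^\sharp$): I must check that no paracyclic edge strictly between two edges of $\F' \cup \{f\}$ escapes the set, which requires carefully tracking how the two allowed forms of $f$ interact with the top/bottom endpoints of the edges in $\F'$. The two cases $f = [n-1]$ (i.e.\ $\{n-1,n\}$) and $f = \{1,\dots,n\}$ correspond to appending an arrow at a sink versus a source in the $\AA_n$ orientation picture, and the asymmetry between them is exactly what makes the ``$\F = \F' \cup \{f\}$'' description non-redundant. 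I expect the cleanest route is to phrase everything through the Hasse-quiver/orientation correspondence so that convexity becomes the statement that the corresponding quiver is connected, reducing the obstacle to a transparent combinatorial check.
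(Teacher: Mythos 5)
There is a genuine gap, and it originates in a misreading of the statement. In the lemma, $[n-1]$ denotes the ordinal $\{0,1,\dots,n-1\}$, so the two allowed values of $f$ are $n$-element subsets of $[n]$: they are the two facets of $\Delta^n$ obtained by deleting the last, respectively the first, vertex, and $\F'$ is a $1$-dimensional slice covering that facet, the adjoined element being the long edge $\{0,n\}$ (this is how the lemma is used in the proposition that follows it, where one finds $\F\cup\{f\}\cong\{\{0,n\},f\}$). You instead read ``$f=[n-1]$'' as the edge $\{n-1,n\}$ and ``$f=\{1,\dots,n\}$'' as the edge $\{1,n\}$, and your forward direction pivots on the claims that a slice has exactly one edge with bottom endpoint $0$ and exactly one with top endpoint $n$, and that it contains $\{n-1,n\}$ or $\{1,n\}$. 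These claims are false: the projective slice $S_P=\{\{0,1\},\{0,2\},\dots,\{0,n\}\}$ is a slice for every $n\geq 2$ (the paper proves this in the example following Definition~\ref{defi:slice}), all $n$ of its edges have bottom endpoint $0$, and it contains neither $\{n-1,n\}$ nor $\{1,n\}$; dually for the injective slice. Your converse direction fails for the same reason: if $\F'$ is a slice of $\Delta(1,n-1)$, then $\F'\cup\{\{n-1,n\}\}$ hits the $\Phi_1$-orbit of difference $1$ twice and misses the orbit of difference $n$ entirely, so it is not a complete system of orbit representatives. Thus under your reading the statement is false in both directions, and no care with the convexity check in $\Li(1,n)^\sharp$ can repair the induction.

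The idea you are missing is the actual pivot of the paper's proof: the long edge $\{0,n\}$ is the \emph{unique} element of its $\Phi_1$-orbit lying in $\Delta(1,n)^\sharp$, hence belongs to every slice. For $n\geq 2$, exactly one of $\{0,n-1\}$ and $\{1,n\}$ lies in $S$, these being the two elements of $\Delta(1,n)^\sharp$ of difference $n-1$. If it is $\{0,n-1\}$, then convexity excludes every edge $\{i,n\}$ with $i>0$, since $\{0,n-1\}\leq\{1,n\}\leq\{i,n\}$ would force $\{1,n\}\in S$, contradicting the orbit condition; hence $S\setminus\{\{0,n\}\}$ lies in $\Delta(1,n-1)^\sharp$, where it is again a slice, and symmetrically in the other case it lies in the facet $\{1,\dots,n\}$. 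This yields exactly the decomposition $S=\F'\cup\{\{0,n\}\}$ with $\F'$ a slice of the facet $f$, and induction on $n$ finishes the argument. Your general instincts --- induction on $n$, the $\AA_n$-orientation picture, attention to convexity being taken in $\Li(1,n)^\sharp$ --- are sound, but the edge to split off is $\{0,n\}$, not an edge incident to the vertex $1$ or $n-1$.
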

\begin{proof}
  The ``if'' direction is clear. To prove the converse, let $S\subset\Delta(m,n)^\sharp$ be a slice. Since $\{0,n\}$ is the unique element in
  its $\Phi_m^{-1}$-orbit, we must have $\{0,n\}\in S$. If $n=1$, then we are
  done; otherwise exactly one of $\{0,n-1\}$ and $\{1,n\}$ must be in $S$,
  without loss of generality the first case. No element of the form $\{i,n\}$ with
  $i>0$ can then be in $S$ because otherwise by convexity also $\{1,n\}\in S$
  which is false. We conclude that $S\setminus\{\{0,n\}\}$ is a subset of
  $\Delta(m,n-1)^\sharp$ where it still satisfies the axioms to be a slice $S'$.
  The result follows by induction.
\end{proof}

\begin{proposition}
  If a simplicial object $X$ satisfies $m$-dimensional slice
  descent then it also satisfies outer horn descent above dimension $m$.
  The converse ``only if'' is true for $m=1$.
\end{proposition}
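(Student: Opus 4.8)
The plan is to establish the two implications separately, in both cases reducing to the equivalence of \th\ref{prop:outerhornslices} between outer horn descent above dimension $m$ and the conjunction of projective and injective $m$-slice descent. The forward implication is then essentially formal, since outer horn descent is a special case of slice descent: by \th\ref{prop:outerhornslices}, outer horn descent above dimension $m$ holds precisely when $X$ satisfies descent with respect to every projective slice $\projslice m n$ and every injective slice $\injslice m n$. As these are exactly the covers attached to the projective slice $S_P$ and the injective slice $S_I$, both of which belong to the class $\allslices m$ of all $m$-dimensional slices, any simplicial object satisfying $m$-dimensional slice descent in particular satisfies descent with respect to all $\projslice m n$ and all $\injslice m n$, whence the claim.

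For the converse I would fix $m=1$, assume $X$ satisfies both projective and injective $1$-slice descent, and prove by induction on $n$ that every $1$-slice cover $S\covers[n]$ is $X$-local. The base case $n=1$ is immediate, the only $1$-slice on $[1]$ being the trivial cover, which is $X$-local by \th\ref{rem:deglocals}. For the inductive step with $n\ge 2$, \th\ref{lem:1dimslices} guarantees that the long edge $\{0,n\}$ lies in $S$ and that $S':=S\setminus\{\{0,n\}\}$ is a $1$-slice supported on one of the facets $[n-1]$ or $\{1,\dots,n\}$; I treat the facet $[n-1]$, the other case being symmetric under exchanging the roles of the projective and injective slices. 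The key device is the auxiliary two-element cover $\mathcal{G}:=\{[n-1],\{0,n\}\}\covers[n]$, which is refined both by $S$ (each edge of $S'$ lies in $[n-1]$, and $\{0,n\}$ lies in itself, so $S\refines\mathcal{G}$) and by the projective slice $P:=\projslice 1 n$ (each edge $\{0,k\}$ lies in $[n-1]$ for $k<n$ and in $\{0,n\}$ for $k=n$, so $P\refines\mathcal{G}$).

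I would then apply \th\ref{lem:descentrefinement} twice. First, to the refinement $P\refines\mathcal{G}$: for the three subsets $F\subseteq\mathcal{G}$ with nonempty intersection $I$, the restricted cover $\coverpb{P}{I}$ is a degenerate refinement of the projective slice $\projslice 1{n-1}$ when $I=[n-1]$, and a degenerate cover when $I=\{0,n\}$ or $I=\{0\}$; all are $X$-local by projective slice descent and \th\ref{rem:deglocals}, so $P\refines\mathcal{G}$ is $X$-local. Combined with the $X$-locality of the cover $P$, the factorisation $X_n\to X_{\mathcal{G}}\to X_P$ of the invertible cover map $X_n\to X_P$, whose second arrow is invertible, forces $X_n\to X_{\mathcal{G}}$ to be an equivalence by two-out-of-three. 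Second, to the refinement $S\refines\mathcal{G}$: here $\coverpb{S}{[n-1]}=S'\cup\{\{0\}\}$ is a degenerate refinement of the facet slice $S'$, hence $X$-local by the inductive hypothesis, while $\coverpb{S}{\{0,n\}}$ and $\coverpb{S}{\{0\}}$ are degenerate; thus $S\refines\mathcal{G}$ is $X$-local and $X_{\mathcal{G}}\to X_S$ is an equivalence. Composing $X_n\xra{\simeq}X_{\mathcal{G}}\xra{\simeq}X_S$ shows that $S$ is $X$-local, completing the induction.

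The step requiring the most care is the repeated checking of the hypotheses of \th\ref{lem:descentrefinement}: for each $F\subseteq\mathcal{G}$ one must identify the pulled-back cover of $\bigcap F$ and verify that, after discarding redundant singletons, it is either degenerate or reduces to a cover already known to be $X$-local---a projective slice one dimension down, or the facet slice $S'$ supplied by the inductive hypothesis. Everything else is bookkeeping with the refinement calculus of \S\ref{subsec:coverings}; the genuine combinatorial input is precisely the recursive structure of $1$-slices recorded in \th\ref{lem:1dimslices}, together with the fact that both outer horns are needed, corresponding to the two possible facets $[n-1]$ and $\{1,\dots,n\}$ carrying $S'$.
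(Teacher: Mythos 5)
Your proof is correct and follows essentially the same route as the paper's: the forward implication by combining the two parts of \th\ref{prop:outerhornslices}, and the converse by induction on $n$ via \th\ref{lem:1dimslices}, passing from the slice $S$ to the two-element cover $\{\{0,n\},f\}$ by means of the refinement calculus of \S\ref{subsec:coverings}. The only difference is a cosmetic one in how the locality of $\{\{0,n\},f\}$ is established: the paper simply observes that this cover lies in the class $\leftKcoverings 1$ (resp.\ $\rightKcoverings 1$) and is therefore $X$-local by \th\ref{prop:outerhornslices}, whereas you rederive this from projective (resp.\ injective) slice descent through an extra application of \th\ref{lem:descentrefinement} and a two-out-of-three argument---marginally longer, but equivalent.
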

\begin{proof}
  The first statement follows by combining the two parts of
  \th\ref{prop:outerhornslices}. To prove the converse, assume that
  $X$ satisfies outer horn descent above dimension $1$ and let
  $S\covers\n$ be a $1$-dimensional slice.
  We write $S=S'\cup \{f\}$ as in \th\ref{lem:1dimslices} and observe that
  by induction the cover $\F\cap f\degrefines \F'\refines f$ is
  $X$-local; hence by \th\ref{lem:descentsmallrefinement} we conclude
  that the refinement $\F\refines \F\cup\{f\}\cong\{\{0,n\},f\}$ is
  $X$-local. The result follows because the covers
  $\{\{0,n\},[n-1]\}\covers\n$ and $\{\{0,n\}, \{1,\dots,n\}\}\covers\n$ are
  $\righthorns 1$-local and $\lefthorns 1$-local, respectively, by
  \th\ref{prop:outerhornslices}.
\end{proof}

\subsection{Higher Segal objects}

We recall from \cite{DK12} and \cite{Pog17} the notion of a higher
Segal object in an $\infty$-category with finite limits. For the sake of brevity
we keep our exposition rather terse.

\begin{definition}\th\label{def:Tnk}
  Let $I\subset\n$ be a subset of $\n$. An index $i\in\n\setminus I$ is called
  an \introduce{odd gap of $I$} if the cardinality of the set $\setP{j\in
    I}{j>i}$ is odd and an \introduce{even gap of $I$} if this cardinality is
  even.
  We define the following covers:
  \begin{itemize}
  \item The \introduce{upper $k$-Segal cover} $\upperScovering nk \covers\n$ consists of all
    even subsets $I\subset\n$ of cardinality $k+1$.
  \item The \introduce{lower $k$-Segal cover} $\lowerScovering nk \covers\n$ consists of all
    even subsets $I\subset\n$ of cardinality $k+1$.\qedhere
  \end{itemize}
\end{definition}

\begin{remark}
  Let $n\geq k\geq 1$ be integers. The sets $\upperScovering nk$ and
  $\lowerScovering nk$ can be
  identified, respectively, with the minimal and the maximal element of the set
  of triangulations of a $k$-dimensional cyclic polytope on the vertex set $[n]$
  with respect to an appropriate partial order, see Section 6.1 in \cite{DLRS10}
  for details.
\end{remark}

\begin{definition}\th\label{def:k-Segal}
  Let $\C$ be an $\infty$-category with finite limits, $X$ a
  simplicial object in $\C$ and $k\geq 1$ an integer.
  \begin{enumerate}
  \item The simplicial object $X$ is called \introduce{lower $k$-Segal} if
    it satisfies descent with respect to all lower $k$-Segal covers
    $\upperScovering nk\covers\n$.
  \item The simplicial object $X$ is called \introduce{upper $k$-Segal} if
    it satisfies descent with respect to all upper $k$-Segal covers
    $\upperScovering nk\covers\n$.
  \item The simplicial object $X$ is called \introduce{$k$-Segal} if
    is both lower and upper $k$-Segal.\qedhere
  \end{enumerate}
\end{definition}

Before moving on, we recall from \cite{Pog17} the following explicit characterization of the lower and upper
$2m$-Segal covers.

\begin{remark}\th\label{rem:Segalexplicit}
  Let $n\geq 2m-1$.
  \begin{enumerate}
  \item The lower $(2m-1)$-Segal cover $\lowerScovering n{2m-1}\covers \n$
    consists precisely of subsets $I\subseteq\n$ of the form
    \[I= \bigcup_{j=1}^m\{i_j,i_j+1\}\]
  where the $i_j$ are such that
  this union is disjoint (i.e.\ $0<i_j<i_j+1<i_{j+1}$ for all $j$).
\end{enumerate}
  Let $n\geq 2m$.
\begin{enumerate}
  \item The lower $2m$-Segal cover $\lowerScovering n{2m}\covers [n]$
    consists precisely of subsets $I\subseteq\n$ of the form
    \[I= \{0\}\cup I'\]
    where $I'$ is an element of the lower $(2m-1)$-Segal cover $\lowerScovering {n-1}{2m-1}\covers\{1,\dots,n\}$.
  \item The upper $2m$-Segal cover $\upperScovering n{2m}\covers [n]$
    consists precisely of subsets $I\subseteq\n$ of the form
    \[I= I'\cup\{n\}\]
    where $I'$ is an element of the lower $(2m-1)$-Segal cover $\lowerScovering {n-1}{2m-1}\covers[n-1]$.\qedhere
\end{enumerate}
\end{remark}

\subsubsection{Outer horn descent and even-dimensional Segal objects}

The following result relates outer horn descent conditions to even Segal
conditions in $\infty$-categories with finite limits.

\begin{theorem}
  \th\label{prop:outer_horns} Let $\C$ be an $\infty$-category with finite
  limits, let $m\geq1$ be an integer, and $X$ a simplicial object in $\C$.
  \begin{enumerate}
  \item\label{inthm:leftlowerevenSegal} Suppose that $X$ satisfies left horn
    descent above dimension $m$. Then $X$ is a lower $2m$-Segal object in $\C$.
  \item\label{inthm:rightupperevenSegal} Suppose that $X$ satisfies right horn
    descent above dimension $m$. Then $X$ is an upper $2m$-Segal object in $\C$.
  \end{enumerate}
  In particular, if $X$ satisfies outer horn descent above dimension $m$, then
  $X$ is a $2m$-Segal object in $\C$.
\end{theorem}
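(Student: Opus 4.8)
The plan is to prove the two numbered statements separately and then combine them; by the reflection symmetry $i\mapsto n-i$ of Notation~\ref{not:Delta-Delta-op} (which interchanges $0$ and $n$, hence left and right horns, and lower and upper Segal covers), it suffices to treat statement~\ref{inthm:leftlowerevenSegal}, the assertion that left horn descent above dimension $m$ forces $X$ to be lower $2m$-Segal. The key observation is that this reduces to a purely combinatorial \emph{membership} claim: I will show that every lower $2m$-Segal cover $\lowerScovering{n}{2m}\covers\n$ already belongs to the class $\leftKcoverings m$ of \th\ref{prop:outerhornslices}. Granting this, the conclusion is immediate, since by \th\ref{prop:outerhornslices} left horn descent above dimension $m$ is equivalent to descent with respect to \emph{all} covers in $\leftKcoverings m$, and hence in particular with respect to every lower $2m$-Segal cover.

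To establish the membership $\lowerScovering{n}{2m}\in\leftKcoverings m$ I would use the explicit description of \th\ref{rem:Segalexplicit}: the elements of $\lowerScovering{n}{2m}$ are exactly the sets $\{0\}\cup D$, where $D\subseteq\{1,\dots,n\}$ is a disjoint union of $m$ dominoes $\{i_j,i_j+1\}$. For $n=2m$ this cover is trivial and there is nothing to prove, so assume $n>2m$. Two of the defining conditions are then immediate: non-degeneracy holds because each element has cardinality $2m+1<n+1$, and $0\in\bigcap\lowerScovering{n}{2m}$ holds because every element contains $0$. The remaining, and essential, point is the covering condition of $\leftKcoverings m$: for every $I\subseteq\n$ with $|I|=m$ there must be a lower $2m$-Segal element $f$ with $I\subseteq f$.

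I would phrase this covering condition as the following domino lemma and identify it as the crux of the argument: \emph{if $n>2m$ and $J\subseteq\{1,\dots,n\}$ with $|J|\le m$, then $J$ is contained in a disjoint union of $m$ dominoes inside $\{1,\dots,n\}$} (the case $0\in I$ reduces to $|J|\le m-1$, which is strictly easier since then one of the $m$ dominoes is free). It is convenient to reformulate a union of $m$ disjoint dominoes as a $2m$-element subset of $\{1,\dots,n\}$ all of whose maximal runs of consecutive integers have even length. From this viewpoint the task is to enlarge $J$ to such an even-run set of size $2m$: one extends each odd maximal run of $J$ by a single cell into an adjacent gap, and then pads with extra dominoes placed in the remaining free positions.

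The main obstacle is precisely the bookkeeping in this last step: one must route the one-cell extensions of the odd runs so that they neither collide with one another nor leave the range $\{1,\dots,n\}$, the delicate cases being odd runs abutting the endpoints $1$ and $n$, or separated only by a length-one gap. I expect to handle this by processing the runs from left to right, extending each odd run to the right whenever a free cell is available and otherwise to the left, and by exploiting the slack $n-|J|\ge n-m>m$ guaranteed by $n>2m$ to ensure that the greedy procedure never gets stuck and that enough room remains for the padding dominoes; a clean formulation would likely proceed by induction on $m$, peeling off a domino covering the extreme element of $J$ from whichever end has available space. Once the domino lemma is in hand, statement~\ref{inthm:rightupperevenSegal} follows by the reflection symmetry, and the final assertion follows by combining the two: $X$ is then both lower and upper $2m$-Segal, hence $2m$-Segal in the sense of \th\ref{def:k-Segal}.
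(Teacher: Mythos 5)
Your overall route is exactly the paper's: the published proof consists, in its entirety, of the observation that by the explicit description in \th\ref{rem:Segalexplicit} the cover $\lowerScovering n{2m}$ lies in the class $\leftKcoverings m$ and $\upperScovering n{2m}$ lies in $\rightKcoverings m$, whence both claims follow from \th\ref{prop:outerhornslices}; the final assertion is then immediate. You identified precisely this reduction (your use of the reflection $i\mapsto n-i$ to deduce the second claim from the first is just a repackaging of the paper's parallel treatment of the two cases). The only difference is that the paper declares the membership ``clear'' and stops, whereas you attempt to prove it. That combinatorial statement (your domino lemma) is indeed true, so your proposal is correct in substance and in approach.

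One concrete warning about your execution of the domino lemma: the greedy rule you state first --- process runs left to right, extend each odd run to the right whenever a free cell is available, otherwise to the left --- genuinely fails. Take $n=9$, $m=4$, $J=\{5,7,8,9\}$: the run $\{5\}$ extends right and occupies cell $6$, after which the run $\{7,8,9\}$ can extend neither right (it ends at $n$) nor left (cell $6$ is taken). The repair is your second suggestion, the peeling induction, which works cleanly as an induction on $n$ for all $m$ simultaneously, provided you strengthen the statement to all $n\geq 2m$: if $n=2m$, the dominoes $\{1,2\},\dots,\{2m-1,2m\}$ tile $\{1,\dots,n\}$ and cover any $J$; if $n>2m$ and $n\notin J$, pass to $\{1,\dots,n-1\}$ with the same $m$; and if $n\in J$, place the domino $\{n-1,n\}$ and recurse on $J\setminus\{n-1,n\}\subseteq\{1,\dots,n-2\}$ with $m-1$ dominoes, which is legitimate because $n-2\geq 2(m-1)$. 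Including the case $n=2m$ in the induction hypothesis is what makes the first reduction step valid. With the lemma proved this way, the rest of your argument goes through verbatim.
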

\begin{proof}
  Using the explicit description of the Segal covers
  (\th\ref{rem:Segalexplicit}) it is clear that $\lowerScovering n{2m}$
  belongs to the class $\leftKcoverings m$ and $\upperScovering n{2m}$ belongs
  to the class $\rightKcoverings m$. Hence claims \ref{inthm:leftlowerevenSegal}
  and \ref{inthm:rightupperevenSegal} follow from \th\ref{prop:outerhornslices}.
  The last statement is an immediate consequence of the first two.
\end{proof}

\subsubsection{Inner horn descent and odd-dimensional Segal objects}

\begin{notation}
  Let $m\geq 0$. Consider the following classes of covers:
  \begin{itemize}
  \item The class of \introduce{almost right horns above dimension $m$} is
    \[\alrighthorns m\coloneqq \setP{\horn n {n-1}\covers \n} {n > m}.\]
  \item The class of \introduce{almost left horns above dimension $m$} is
    \[\allefthorns m\coloneqq \setP{\horn n {1}\covers \n} {n > m}.\]
  \item The class of \introduce{inner horns above dimension $m$} is
    \[\innerhorns m\coloneqq \setP{\horn n i\covers \n} {0<i<n > m}.\qedhere\]
  \end{itemize}
\end{notation}

\begin{theorem}\th\label{thm:alrighthorns} Let $\C$ be an $\infty$-category with
  finite limits, let $m\geq 1$ be an integer and $X$ a simplicial object in $\C$.
  Suppose that $X$ satisfies almost right
    horn descent above dimension $m$ \emph{or} almost left horn descent above dimension $m$. Then $X$ is lower $(2m-1)$-Segal.

  In particular, if $X$ satisfies inner
    horn descent above dimension $m$ then $X$ is lower $(2m-1)$-Segal.
\end{theorem}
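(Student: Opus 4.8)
The goal is to prove \th\ref{thm:alrighthorns}: if $X$ satisfies almost right horn descent above dimension $m$ (or almost left horn descent above dimension $m$), then $X$ is lower $(2m-1)$-Segal; and in particular inner horn descent above dimension $m$ implies lower $(2m-1)$-Segal. The natural strategy, following the template already used in the proof of \th\ref{prop:outerhornslices} and \th\ref{prop:outer_horns}, is to reduce everything to the Refinement Principle (\th\ref{lem:refinementprinciple}) by exhibiting a suitable \emph{saturated} class of covers interpolating between the almost-outer horn covers and the lower $(2m-1)$-Segal covers.

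\bigskip

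First I would concentrate on the almost-right case; the almost-left case will be entirely symmetric (via the poset isomorphism $(-)^*$ of \th Notation~\ref{not:Delta-Delta-op}, or an order-reversing relabelling of $\n$), and the inner-horn statement then follows immediately because for $n>m$ the inner horn cover $\horn n i$ with $0<i<n$ refines (in a degenerate-up-to-the-relevant-subsets sense) the almost-right horn $\horn n {n-1}$; more cleanly, inner horn descent in particular includes descent with respect to $\horn{n}{n-1}$ (take $i=n-1$, which is inner as soon as $n\geq 2$, i.e.\ $n>m\geq 1$), so the ``in particular'' clause is a direct consequence of the main assertion. The heart of the matter is therefore the single implication: almost right horn descent above dimension $m$ $\Longrightarrow$ lower $(2m-1)$-Segal.

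\bigskip

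To prove this, I would introduce a class $\mathcal{K}$ of non-degenerate covers $\F\covers\n$ (for $n>m$) designed so that both the almost-right horn $\horn n {n-1}$ and the lower $(2m-1)$-Segal cover $\lowerScovering n{2m-1}$ lie in $\mathcal{K}_\n$, the former as a terminal and the latter as an initial object (or vice versa). Guided by \th\ref{rem:Segalexplicit}(1), whose explicit description writes the members of $\lowerScovering n{2m-1}$ as disjoint unions $\bigcup_{j=1}^m\{i_j,i_j+1\}$ of consecutive pairs, the defining conditions on $\F\in\mathcal{K}_\n$ should be the combinatorial shadow of these edge-pair constraints: something like requiring that $\{n-1,n\}\subseteq\bigcap\F$ (the analogue of the ``$0\in\bigcap\F$'' condition for $\leftKcoverings m$, shifted to reflect the almost-right horn) together with a covering condition ensuring every $m$-element subset of an appropriate sub-simplex is contained in some $f\in\F$. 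I would then verify the two axioms of \th\ref{defi:saturatedcoverclass}: connectedness of each $\mathcal{K}_\n$ (which follows once both a terminal and an initial object are identified), and the step-refinement property \ref{saturated:step}, which is the routine but delicate part where for a non-degenerate refinement $\F'\refines\F$ one selects an $f\in\F$ so that $\coveradj{\F'}f\in\mathcal{K}_\n$ and $\coverpb{\F'}f\covers f$ lies in $\mathcal{K}_f$. Two applications of the Refinement Principle then give $\closure{\alrighthorns m}=\closure{\mathcal K}=\closure{\text{lower }(2m-1)\text{-Segal covers}}$, exactly as in \th\ref{prop:outerhornslices}.

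\bigskip

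The main obstacle I anticipate is identifying the correct saturated class $\mathcal{K}$ and checking axiom \ref{saturated:step} of \th\ref{defi:saturatedcoverclass} for it. Unlike the outer-horn case, where the slice covers had the clean form ``all $(m+1)$-subsets containing $0$'', the lower $(2m-1)$-Segal covers are governed by the more intricate disjoint-consecutive-pairs description of \th\ref{rem:Segalexplicit}, and the almost-right horn $\horn n{n-1}$ sits between them in a way that must be matched by a covering condition subtle enough to be saturated yet coarse enough to contain both endpoints. The delicate verification is that, given a non-degenerate refinement inside $\mathcal K_\n$, one can peel off a single proper subset $f\subsetneq\n$ realising both bullet points of \ref{saturated:step}; here the shift from ``$0$'' to ``$\{n-1,n\}$'' forces careful bookkeeping of which consecutive-pair constraints survive on the restricted cover $\coverpb{\F'}f$. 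Once this bookkeeping is set up correctly, the remaining steps are formal applications of \th\ref{lem:refinementprinciple} and \th\ref{lem:descentsmallrefinement}, exactly parallel to the proofs of \th\ref{prop:outerhornslices} and \th\ref{prop:outer_horns}.
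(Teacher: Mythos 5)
Your overall template — reduce to the Refinement Principle (\th\ref{lem:refinementprinciple}) via saturated classes of covers, treat the almost-left case by symmetry, and deduce the ``in particular'' clause from the fact that $\horn n{n-1}$ is an inner horn for $n\geq 2$ — is indeed the paper's method, and those peripheral reductions are fine. But there is a genuine gap at the decisive point, and the concrete guess you offer would fail. The condition ``$\{n-1,n\}\subseteq\bigcap\F$'' excludes \emph{both} endpoints: the almost right horn $\horn n{n-1}$ has the member $[n]\setminus\{n\}=[n-1]$, which omits $n$, so $\bigcap\horn n{n-1}=\{n-1\}$; and the members of $\lowerScovering n{2m-1}$ are disjoint unions of consecutive pairs, most of which omit $\{n-1,n\}$. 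This is not a bookkeeping issue one can fix by shifting indices: unlike the outer-horn case, the members of an almost-outer horn have no common pair of vertices, so no class of the form ``common vertices $+$ covering condition'' can contain the horn, let alone contain the Segal cover as an initial object. In particular your plan of one saturated class with the horn terminal and the Segal cover initial, followed by ``two applications of the refinement principle'' exactly as in \th\ref{prop:outerhornslices}, cannot be run; it would moreover prove the two-sided identity $\closure{\alrighthorns m}=\closure{\setP{\lowerScovering n{2m-1}}{n>2m-1}}$, which is stronger than the one-directional statement the paper establishes.

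The missing idea is to single out the face $[n-1]$ as a \emph{distinguished member} of the covers. The paper's class $\localP$ consists of non-degenerate covers $\F\covers\n$ with $[n-1]\in\F$, with every member $f\neq[n-1]$ containing both $n-1$ and $n$, and with every $m$-element subset of $\n$ contained in some member; the almost right horn is terminal in $\localP_\n$, and saturation works because restricting along any $f\in\F$ either stays in the class or produces a degenerate cover. The Segal covers do \emph{not} lie in $\localP$ (they lack the member $[n-1]$), so the paper forms the intermediate covers $U^n\coloneqq\coveradj{\lowerScovering n{2m-1}}{[n-1]}\in\localP$ and then disposes of the remaining refinement $\lowerScovering n{2m-1}\refines U^n$ with a \emph{second} saturated class $\localE$, whose saturation rests precisely on the self-similarity $\coverpb{\lowerScovering n{2m-1}}{[n-1]}\cong\lowerScovering{n-1}{2m-1}$ (inductively, trivial for $n=2m$). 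The conclusion is the chain $\closure{\alrighthorns m}=\closure{\localP}\supseteq\closure{\setP{U^n}{n>2m-1}}=\closure{\localE}=\closure{\setP{\lowerScovering n{2m-1}}{n>2m-1}}$, with an inclusion (not an equality) in the middle. Even if one insists on packaging this as a single class (say $\localP$ together with the Segal covers), verifying its saturation and connectedness requires exactly these two facts about adjoining and deleting $[n-1]$ — so the adjoin-the-face construction, absent from your proposal, is the irreducible content of the proof.
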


\begin{proof}
  We treat the case of almost right horns; the case of almost left horns is analogous.
  We fix the following notation:
  \begin{itemize}
  \item Let $\localP$ be the class consisting of those non-degenerate covers $\F\covers\n$ such
    that
    \begin{itemize}
    \item $[n-1]=\{0,\dots, n-1\}\in\F$.
    \item for all $[n-1]\neq f\in\F$ we have $n-1,n\in f$.
    \item for all $I\subset \n$ with $|I|=m$ there is an $f\in\F$ with
      $I\subseteq f$.
    \end{itemize}
  \item For each $n>2m-1$, put $U^n\coloneqq \coveradj{\upperScovering
      n{2m-1}}{[n-1]}\covers\n$ and
    $\localE_n\coloneqq \{\upperScovering n{2m-1}\refines U^n\}$
  \end{itemize}
  Observe that for each $n>m$ the almost right horn $\horn n {n-1}$ is terminal in
  $\localP_\n$; it is straightforward to check that $\localP$ is saturated.
  The class $\localE\coloneqq\bigcup_{n>2m-1}\localE_n$ is also saturated
  because the cover $\coverpb{\upperScovering n {2m-1}}{[n-1]}\covers[n-1]$ can be
  identified with $\upperScovering {n-1}{2m-1}\covers[n-1]$ (which is the
  trivial cover for $n=2m$).
  Observe, finally, that for all $n>2m-1$, the cover
  $U^n\covers\n$ lies in $\localP_n$. We
  conclude \[\closure{\alrighthorns m} = \closure{\localP}\supset
    \closure{\setP{U^n}{n>2m-1}}=\closure{\localE} =
    \closure{\setP{\upperScovering n{2m-1}}{n>2m-1}}\] by repeated application
  of the refinement principle.
\end{proof}

\section{Classification via the Dold--Kan correspondence}
\label{sec:dk}

We introduce the following terminology.

\begin{definition}
  \th\label{def:outer_m-Kan_cpx} Let $\C$ be an $\infty$-category with finite
  limits and $m\geq0$. An \emph{outer $m$-Kan complex in $\C$} is a simplicial
  object $X$ which satisfies descent with respect to all covers in $\lefthorns m
  \cup \righthorns m$, see \th\ref{def:slicesNhorns}. In other words, $X$
  satisfies both left and right horn descent above dimension $m$.
\end{definition}

Let $m$ be a positive integer. In this section we show that, if $\C$ is an
abelian category or a stable $\infty$-category, then the classes of outer
$m$-Kan complexes and of $2m$-Segal objects in $\C$ coincide, see
\th\ref{thm:abelianSegalch,thm:stableSegalch}. Our proofs rely on a
characterisation of the outer $m$-Kan complexes expressed in terms of
appropriate versions of the Dold--Kan correspondence, see
\th\ref{thm:outer-Kan-DK-abelian,thm:k-groupoids-k-truncted_complex:infty-cats}.

\subsection{Simplicial objects in abelian categories}

\subsubsection{The Dold--Kan correspondence}

Let $\A$ be an abelian category and $X$ a simplicial object in $\A$. Recall that
the Moore chain complex of $X$ is the connective chain complex $(X,\partial)$
where $\partial\colon X_n\to X_{n-1}$ is given by the formula
\begin{equation*}
  \partial \coloneqq  \sum_{i=0}^{n}(-1)^id_i.
\end{equation*}
The \emph{normalised Moore chain complex of $X$} is the subcomplex
$(\overline{X},\partial)$ of $(X,\partial)$ given by
\begin{equation*}
  \overline{X}_n\coloneqq \bigcap_{i=1}^n\ker d_i.
\end{equation*}
In particular $\partial=d_0$ on $\overline{X}_n$. The passage from a simplicial
object to its normalised Moore chain complex yields a functor
\begin{equation*}
  C\colon\A_\Delta\lra\Ch_{\geq0}(\A)
\end{equation*}
from the category $\A_\Delta$ of simplicial objects in $\A$ to the category
$\Ch_{\geq0}(\A)$ of connective chain complexes in $\A$. The functor
$C$ admits a right adjoint
\begin{equation*}
  N\colon\Ch_{\geq0}(\A)\lra\A_\Delta
\end{equation*}
which associates to a connective chain complex in $\A$ its \emph{Dold--Kan
  nerve}. The Dold--Kan correspondence \cite{Dol58,Kan58} is the following
fundamental equivalence of categories.

\begin{theorem}[Dold--Kan correspondence]
  \th\label{thm:dk-abelian} The functors
  \begin{equation*}
    \begin{tikzcd}
      C\colon\A_\Delta\rar[shift left]&\Ch_{\geq0}(\A)\colon N\lar[shift left]
    \end{tikzcd}
  \end{equation*}
  form a pair of adjoint equivalences of categories.
\end{theorem}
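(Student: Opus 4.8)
The plan is to prove the Dold--Kan correspondence, the final stated result (\th\ref{thm:dk-abelian}), by establishing that the normalisation functor $C$ and the Dold--Kan nerve $N$ are mutually inverse equivalences. Since the theorem asserts that $C$ and $N$ form an adjoint equivalence, and the adjunction $C\dashv N$ has already been recorded in the paper, the essential remaining task is to show that the unit and counit of this adjunction are natural isomorphisms.

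First I would analyse the structure of a simplicial object $X$ in $\A$ via the \emph{degeneracy decomposition}. The key algebraic input is that, for each $n$, the object $X_n$ splits naturally as a direct sum
\[
  X_n \cong \bigoplus_{[n]\twoheadrightarrow[k]} \overline{X}_k,
\]
indexed by the surjective monotone maps $[n]\twoheadrightarrow[k]$ in $\Delta$, where $\overline{X}_k=\bigcap_{i=1}^k\ker d_i$ is the normalised piece. This is the heart of the matter: one shows that the degeneracy operators $s_{j}$ embed copies of the lower normalised objects into $X_n$ and that $X_n$ is the internal direct sum of the image of the (injective) sum of degeneracies together with $\overline{X}_n$. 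Concretely, I would prove by induction on $n$ that $X_n = \overline{X}_n \oplus D_n$, where $D_n=\sum_{j} s_j(X_{n-1})$ is the subobject of degenerate simplices, and then iterate this splitting to obtain the full sum over surjections. Establishing that this sum is direct---i.e.\ that the various degeneracy images intersect trivially and jointly span the degenerate part---is the main technical obstacle, and it relies on a careful bookkeeping with the simplicial identities relating the $s_j$ and $d_i$.

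With the degeneracy decomposition in hand, the equivalence follows in two steps. On one side, computing the normalised Moore complex $C(N(V))$ of the Dold--Kan nerve of a chain complex $V\in\Ch_{\ge0}(\A)$ recovers $V$ itself, since the nerve $N(V)_n$ is built precisely as the sum $\bigoplus_{[n]\twoheadrightarrow[k]}V_k$ with differential matching $\partial=d_0$ on the normalised part; this yields a natural isomorphism $C\circ N\cong \id$. On the other side, reconstructing $X$ from its normalised complex $\overline{X}$ via the nerve recovers $X_n\cong\bigoplus_{[n]\twoheadrightarrow[k]}\overline{X}_k$ compatibly with all faces and degeneracies, giving a natural isomorphism $N\circ C\cong\id$. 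I would verify that these isomorphisms are precisely the counit and unit of the adjunction $C\dashv N$, so that an adjoint equivalence results.

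The hard part will be the combinatorial verification that the degeneracy decomposition is natural and compatible with the simplicial operators, so that the reconstructed face and degeneracy maps on $N(C(X))$ agree with those of $X$ under the splitting. Once naturality of the splitting is secured, the remaining identifications are a matter of matching the differential $\partial=d_0|_{\overline{X}}$ with the differential of the Moore complex, which is routine. I expect no difficulty in the abelian-categorical generality: every argument uses only the existence of finite direct sums, kernels, and cokernels, so the classical proof over modules transfers verbatim once the splitting lemma is established idempotent-completely within $\A$.
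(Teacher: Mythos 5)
The paper does not prove this theorem: it states the Dold--Kan correspondence as a classical result and cites \cite{Dol58,Kan58}, so there is no in-paper argument to compare your proposal against. What you have written is, in outline, exactly the classical proof found in those references and in Section III.2 of \cite{GJ99} (which the paper itself invokes for neighbouring facts): the splitting $X_n\cong\overline{X}_n\oplus D_n$ into normalised and degenerate parts, its iteration to the natural decomposition $X_n\cong\bigoplus_{[n]\thra[k]}\overline{X}_k$ indexed by surjections, the matching formula $N(V)_n\cong\bigoplus_{[n]\thra[k]}V_k$ for the nerve, and the resulting natural isomorphisms $C\circ N\cong\mathrm{id}$ and $N\circ C\cong\mathrm{id}$. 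Your placement of the technical weight is accurate: the directness of the degeneracy decomposition and its naturality with respect to the simplicial operators is the real content, and it does follow from careful bookkeeping with the simplicial identities. Your remark on generality is also correct, since the argument needs only finite biproducts, kernels and split idempotents, all of which exist in any abelian category, so no embedding theorem is required. One small simplification: you need not verify explicitly that your isomorphisms are the unit and counit of the adjunction $C\dashv N$ recorded in the paper. Once $C$ is known to be an equivalence, its right adjoint $N$ is canonically isomorphic to any quasi-inverse by uniqueness of adjoints, and the unit and counit of $C\dashv N$ are then automatically isomorphisms; hence the statement in the form ``adjoint equivalences'' follows for free from the existence of \emph{some} pair of natural isomorphisms.
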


\begin{remark}
  \th\label{rmk:d0dn-abelian} Let $\A$ be an abelian category and $X$ a
  simplicial object in $\A$. Up to natural isomorphism, the normalised Moore
  chain complex of $X$ can be alternatively defined by setting
  \begin{equation*}
    \overline{X}_n\coloneqq \bigcap_{i=0}^{n-1}\ker d_i
  \end{equation*}
  with the differential given by $d_n\colon \overline{X}_n\to
  \overline{X}_{n-1}$.
\end{remark}

\subsubsection{Outer $m$-Kan complexes in abelian categories}

\begin{definition}
  \th\label{def:k-truncated_complex} Let $\A$ be an abelian category and $m$ a
  natural number. A connective chain complex $X$ in $\A$ is \emph{(strictly)
    $m$-truncated} if for each $n>m$ the object $X_m$ is a zero object of $\A$.
\end{definition}

Outer $m$-Kan complexes in $\A$ admit the following simple characterisation.

\begin{theorem}
  \th\label{thm:outer-Kan-DK-abelian} Let $\A$ be an abelian category and $m$
  natural number. The Dold--Kan correspondence
  \begin{equation*}
    \begin{tikzcd}
      C\colon\A_\Delta\rar[shift left]&\Ch_{\geq0}(\A)\colon N\lar[shift left]
    \end{tikzcd}
  \end{equation*}
  restricts to an equivalence of categories between the full subcategory of
  $\A_\Delta$ spanned by the outer $m$-Kan complexes and the full subcategory of
  $\Ch_{\geq0}(\A)$ spanned by the $m$-truncated chain complexes.
\end{theorem}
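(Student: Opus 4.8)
The plan is to show that, under the Dold--Kan correspondence of \th\ref{thm:dk-abelian}, the outer $m$-Kan condition on a simplicial object $X$ in $\A$ is equivalent to the single vanishing condition $C(X)_n = \overline{X}_n = 0$ for all $n > m$, which is by definition the $m$-truncation condition on the complex $C(X)$ (\th\ref{def:k-truncated_complex}). Since $C$ and $N$ are mutually inverse equivalences and $C(X)=\overline X$, a characterisation of each subcategory by the \emph{same} condition immediately produces the asserted restricted equivalence of full subcategories. So the entire content is to verify this vanishing characterisation.

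First I would unwind the outer horn descent condition. For $n>m$ it asks that the two restriction maps $X_n \to X_{\horn n 0}$ and $X_n \to X_{\horn n n}$ be isomorphisms. Using the computation of membrane objects as limits over the poset $\minsubsets{\F}$ of minimal subsets (the remark following the cover definition), these membrane objects are the usual matching objects of the horns, and the two maps are given componentwise by $x \mapsto (d_1 x,\dots,d_n x)$ and $x \mapsto (d_0 x,\dots,d_{n-1}x)$. Because a matching object embeds as a subobject into the product $\prod_{j\neq k} X_{n-1}$ of the relevant faces, the kernel of each map is computed componentwise, giving $\ker(X_n\to X_{\horn n 0})=\bigcap_{i=1}^n\ker d_i$ and $\ker(X_n\to X_{\horn n n})=\bigcap_{i=0}^{n-1}\ker d_i$. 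The key observation is that both kernels coincide with $C(X)_n=\overline{X}_n$: the first by the standard definition of the normalised Moore complex, and the second by the alternative description recorded in \th\ref{rmk:d0dn-abelian}. Thus each outer horn map is a monomorphism if and only if $\overline{X}_n=0$.

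To upgrade the kernel computation to an isomorphism criterion I would invoke the fact that a simplicial object in an abelian (indeed any additive) category is a Kan complex, so that the matching maps onto the horns are split epimorphisms, the splitting being the standard section assembled from the degeneracy operators (cf.\ \cite{GJ99}). Combining epimorphism (always) with monomorphism (iff $\overline X_n=0$), each of the maps $X_n\to X_{\horn n 0}$ and $X_n\to X_{\horn n n}$ is an isomorphism precisely when $\overline X_n=0$. Hence $X$ satisfies left horn descent (equivalently right horn descent, and therefore the conjunction defining outer $m$-Kan) above dimension $m$ if and only if $\overline X_n=C(X)_n=0$ for every $n>m$, i.e.\ if and only if $C(X)$ is $m$-truncated; the equivalence of full subcategories then follows formally from \th\ref{thm:dk-abelian}. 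The only non-formal input is the surjectivity of the horn matching maps, so I expect that the Kan property for simplicial objects in abelian categories — precisely what converts the mono-criterion into an iso-criterion and thereby feeds the \emph{if} direction — is the main point to pin down, whereas the \emph{only if} direction uses just the kernel computation and needs no surjectivity.
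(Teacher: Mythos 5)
Your proposal is correct and follows essentially the same route as the paper: the paper's Proposition \th\ref{prop:cofibre_sequence:abelian-cats}(1) establishes precisely your two facts, namely that the Segal maps $X_n\to X_{\Lambda_0^n}$ and $X_n\to X_{\Lambda_n^n}$ have kernel $\overline{X}_n$ (using \th\ref{rmk:d0dn-abelian} for the right horn) and are split epimorphisms, with the section built recursively from degeneracies following the proof of Lemma I.3.4 in \cite{GJ99} --- the very Kan-complex argument you cite rather than rewrite. The deduction of the theorem from this, and the formal passage to the restricted equivalence via \th\ref{thm:dk-abelian}, is then identical to the paper's.
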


In fact, \th\ref{thm:outer-Kan-DK-abelian} is an immediate consequence of the
following general statement.

\begin{proposition}
  \th\label{prop:cofibre_sequence:abelian-cats} Let $\A$ be an abelian category
  and $X$ a simplicial object in $\A$.
  \begin{enumerate}
  \item For each $n\geq 1$ there are split short exact sequences
    \begin{equation*}
      0\to\overline{X}_n\to X_n\to X_{\Lambda_0^n}\to0\qquad\text{and}\qquad0\to\overline{X}_n\to X_n\to X_{\Lambda_n^n}\to0,
    \end{equation*}
    where $(\overline{X},\partial)$ is the normalised Moore chain complex of
    $X$ and $X_n\to X_{\Lambda^n_0}$ (resp.\ $X_n\to X_{\Lambda^n_n}$) is the
    Segal map.
  \item For a non-empty subset $J\subseteq[n]$ we write
    $\overline{X}_{J}\coloneqq\overline{X}_{|J|-1}$.
    \begin{enumerate}
    \item There is a direct sum decomposition
      \begin{equation}
        \label{eq:0DK-sum-ab}
        X_{\Lambda_0^n}\cong\bigoplus_{0\in J\subsetneq[n]}\overline{X}_J,
      \end{equation}
      with the following property: For each $i\neq 0$, the action of the
      $i$-th face map on $X_n\cong\overline{X}_n\oplus X_{\Lambda_0}^n$ corresponds
      to the projection onto those direct summands $\overline{X}_J$ such that
      $J$ does not contain $i$.
    \item There is a direct sum decomposition
      \begin{equation}
        \label{eq:nDK-sum-ab}
        X_{\Lambda_n^n}\cong\bigoplus_{n\in J\subsetneq[n]}\overline{X}_J,
      \end{equation}
      with the following property: For each $i\neq n$, the action of the
      $i$-th face map on $X_n\cong\overline{X}_n\oplus X_{\Lambda_n}^n$ corresponds
      to the projection onto those direct summands $\overline{X}_J$ such that
      $J$ does not contain $i$.\qedhere
    \end{enumerate}
  \end{enumerate}
\end{proposition}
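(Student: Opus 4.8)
The plan is to reduce both statements to the classical direct-sum decomposition underlying the Dold--Kan correspondence (\th\ref{thm:dk-abelian}), after first rewriting the outer horn membrane as a limit indexed by the subsets of $[n]$ containing $0$. First I would use the membrane computation via minimal intersections from \S\ref{subsec:coverings}: realising the horn $\Lambda_0^n$ as the cover of $[n]$ whose elements are the facets $[n]\setminus\set{i}$ with $i\neq 0$, the associated poset of minimal intersections is
\[
  \setP{[n]\setminus S}{\emptyset\neq S\subseteq\set{1,\dots,n}}=\setP{I\subseteq[n]}{0\in I\subsetneq[n]}.
\]
By the cofinality reduction recorded there, $X_{\Lambda_0^n}$ is therefore the limit of $I\mapsto X_{|I|-1}$ over the opposite of the poset of proper subsets of $[n]$ containing $0$, with transition maps the face maps induced by the inclusions $I'\subseteq I$.

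Next I would bring in the Dold--Kan decomposition in its intrinsic form: evaluating $X$ on an arbitrary finite non-empty linearly ordered set $I$, \th\ref{thm:dk-abelian} yields a natural splitting
\[
  X_I\cong\bigoplus_{\min I\in J\subseteq I}\overline{X}_J,\qquad \overline{X}_J\coloneqq\overline{X}_{|J|-1},
\]
indexed by the subsets $J\subseteq I$ containing the minimum of $I$ (equivalently, by the surjections out of $I$, via the image of the minimal section). The crucial structural input is that, for an inclusion $I'\subseteq I$ of subsets with the same minimum $0$, the induced map $X_I\to X_{I'}$ is in these coordinates the projection onto $\bigoplus_{0\in J\subseteq I'}\overline{X}_J$ that annihilates the summands $\overline{X}_J$ with $J\not\subseteq I'$. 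I would establish this projection property first for a single non-initial face, where it is the elementary fact that $d_i$ ($i\neq 0$) kills the normalised summands $\overline{X}_J$ with $i\in J$ and restricts to the identity on those with $i\notin J$, and then deduce the general case by composition together with the naturality of the decomposition (the only bookkeeping being the order-isomorphism $[n-1]\cong[n]\setminus\set{i}$ used to reindex after each face).

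With the projection property available, the limit computation is formal. The diagram $I\mapsto X_I$ over the poset of proper subsets containing $0$ splits, in the functor category, as the direct sum over those $J$ with $0\in J\subsetneq[n]$ of the subfunctors supported on $\setP{I}{J\subseteq I\subsetneq[n]}$ with constant value $\overline{X}_J$ and identity transition maps. Since finite limits commute with finite direct sums in an abelian category and each indexing poset $\setP{I}{J\subseteq I\subsetneq[n]}$ is connected (it has least element $J$), we obtain
\[
  X_{\Lambda_0^n}\cong\bigoplus_{0\in J\subsetneq[n]}\overline{X}_J,
\]
which is the decomposition \eqref{eq:0DK-sum-ab}; transporting the face maps through this identification gives the asserted projection description of $d_i$ for $i\neq 0$, proving (2a). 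Statement (1) then falls out immediately: under the Dold--Kan splitting $X_n\cong\bigoplus_{0\in J\subseteq[n]}\overline{X}_J$, the Segal map $X_n\to X_{\Lambda_0^n}$ is the universal map to the limit induced by the faces, hence exactly the projection discarding the single top summand $\overline{X}_{[n]}=\overline{X}_n$, so it is a split epimorphism with kernel $\overline{X}_n$. The statements for $\Lambda_n^n$ follow by the symmetric argument, either repeating the above with the alternative normalisation of \th\ref{rmk:d0dn-abelian} (using subsets containing $n$ and faces $d_i$ with $i\neq n$) or by precomposing $X$ with the order-reversing automorphism of $\Delta$, which interchanges the two outer horns.

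The step I expect to be the main obstacle is the careful verification of the projection property for the face maps on the Dold--Kan decomposition, and especially its stability under composition once one passes from positional face indices to the intrinsic indexing by subsets $J$. This is the only place where the fine combinatorics of the normalised Moore complex genuinely enter; the membrane reduction and the commutation of finite limits with finite direct sums are purely formal.
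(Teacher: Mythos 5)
Your overall skeleton---reduce the horn membrane to a limit over the proper subsets of $[n]$ containing $0$, produce a direct sum decomposition of each $X_I$ on which the non-initial face maps act as projections, and then compute the limit formally---is the right one, and its formal parts (the minimal-intersections computation, the commutation of finite limits with finite direct sums, the symmetry reducing $\Lambda_n^n$ to $\Lambda_0^n$) are all fine. The gap is in the step you yourself flagged as the main obstacle, and it is not a matter of bookkeeping: the ``elementary fact'' is false. In the classical Dold--Kan decomposition $X_n\cong\bigoplus_{\eta\colon[n]\twoheadrightarrow[k]}X(\eta)(\overline{X}_k)$, indexed by the subsets $0\in J\subseteq[n]$ via minimal sections, a face map $d_i$ with $i\neq 0$ does \emph{not} annihilate the summands with $i\in J$. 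Concretely, take $n=2$ and $J=\{0,1\}$, which corresponds to the surjection $\sigma^1=(0,1,1)$, so that the summand is $s_1(\overline{X}_1)$; the simplicial identity $d_1s_1=\mathrm{id}$ shows that $d_1$ restricts to an isomorphism on this summand rather than to zero. In fact $d_1$ carries both $s_1(\overline{X}_1)$ (the summand $J=\{0,1\}$) and $s_0(\overline{X}_1)$ (the summand $J=\{0,2\}$) isomorphically onto $\overline{X}_1\subseteq X_1$, so in these coordinates $d_1$ is a sum of two identity components and is not a projection. Consequently neither your single-face claim nor anything obtained from it by composition and naturality can hold, and the limit computation downstream has no valid input.

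This is precisely the subtlety addressed by the remark that the paper places immediately after the proposition: the decomposition asserted in the proposition and the classical Dold--Kan decomposition share the same indexing set, but the isomorphism relating them is complicated and does not simply match up summands---exactly because the classical one fails the projection property. For this reason the paper does not quote the classical splitting at all. Instead it constructs a new decomposition by induction on $n$, the engine being an explicitly defined section $f\colon X_{\Lambda_0^n}\to X_n$ of the Segal map, built recursively with correction terms following Goerss--Jardine (namely $f^{(n)}\coloneqq s_{n-1}p_n$ and $f^{(i)}=f^{(i+1)}-s_{i-1}d_if^{(i+1)}+s_{i-1}p_i$), which satisfies $d_if=p_i$ for all $i\neq 0$. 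With that section in hand one gets $X_n\cong\overline{X}_n\oplus X_{\Lambda_0^n}$, and the projection property of the resulting decomposition holds \emph{by construction}; the inductive limit computation (your formal step) then goes through. To salvage your write-up, replace the appeal to the classical Dold--Kan splitting by this inductive construction; the rest of your argument can stay as it is.
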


\begin{remark}
  Let $\A$ be an abelian category, $X$ a simplicial object in $\A$ and
  $(\overline{X},\partial)$ its normalised Moore chain complex. Consider the
  commutative diagram
  \[
    \begin{tikzcd}
      \bigoplus\limits_{[n]\thra[k]}\overline{X}_k\dar{d_i}&X_n\rar{\cong}\lar[swap]{\cong}\dar{d_i}&\bigoplus\limits_{0\in
        J\subseteq[n]}\overline{X}_J\dar{d_i}\\
      \bigoplus\limits_{[n-1]\thra[k]}\overline{X}_k&X_{n-1}\rar{\cong}\lar[swap]{\cong}&\bigoplus\limits_{0\in
        J\subseteq [n]\setminus\{i\}}\overline X_J
    \end{tikzcd}
  \]
  where the right hand square is given in
  \th\ref{prop:cofibre_sequence:abelian-cats} and the left hand square
  corresponds to the standard description of the Dold--Kan nerve, see for
  example Section III.2 in \cite{GJ99}. Although there is a canonical bijection
  between surjective monotone maps $[n]\thra[k]$ and subsets of $[n]$ which
  contain $0$ of cardinality $k+1$ given by
  \[
    (f\colon[n]\thra[k])\mapsto\setP{\min f^{-1}(i)}{i\in[k]},
  \]
  the horizontal composites in the above diagram are complicated to describe; in
  particular, even after identifying the respective labelling sets via the above
  bijection these composites do not simply correspond to permuting summands. The
  advantage of the direct sum decomposition \eqref{eq:0DK-sum-ab} (resp.\
  \eqref{eq:nDK-sum-ab}) is that all but the $0$-th (resp.\ $n$-th) face maps
  correspond to canonical projections onto some direct summands. This is
  well suited for our purposes, since we are mostly interested on the relation
  between the simplicial object and various membrane objects defined by
  simplicial subsets of $\horn n0$ (resp.\ $\horn nn$). Of course, the
  complexity of the simplicial object has not disappeared: the $0$-th (resp.\
  $n$-th) face map and all of the degeneracy maps have non-trivial descriptions
  with respect to this alternative direct sum decomposition.
\end{remark}

\begin{proof}[Proof of \th\ref{prop:cofibre_sequence:abelian-cats}]
  In view of \th\ref{rmk:d0dn-abelian} it is enough to consider the case of
  $0$-th horns.
  By definition, the Segal map $X_n\to X_{\Lambda_0^n}$ fits into a commutative
  diagram
  \[
    \begin{tikzcd}[row sep=small, column sep=large,ampersand replacement=\&]
      0\rar\&\overline{X}_n\rar\&X_n\dar\rar{[d_1\,\cdots\, d_n]^{\top}}\&\bigoplus_{i=1}^n X_{n-1}\dar[equals]\\
      \&0\rar\&X_{\Lambda_0^n}\rar[swap]{[p_1\,\cdots\,p_n]^{\top}}\&\bigoplus_{i=1}^n
      X_{n-1}
    \end{tikzcd}
  \]
  with exact rows. Following the proof of Lemma I.3.4 in \cite{GJ99}, we define
  a map
  \[
    f=f^{(1)}\colon X_{\Lambda_0^n}\lra X_n
  \]
  recursively by setting $f^{(n)}\coloneqq s_{n-1}p_n$ and
  \[
    f^{(i)}=f^{(i+1)}-s_{i-1}d_if^{(i+1)}+s_{i-1}p_i
  \]
  for $1\leq i\leq n$. It is straightforward to verify that $d_if=p_i$ for
  $i\neq 0$ (using the simplicial identities and the fact that for $i<j$ we
  have $d_ip_j=d_{j-1}p_i$). It follows that $f$ is a right inverse to the Segal map $X_n\to
  X_{\Lambda_0^n}$.

  The desired direct sum decomposition of $X_{\Lambda_0^n}$ follows by induction
  on $n$. Indeed, for $n=1$ there is a split short exact sequence
  \[
    \begin{tikzcd}
      0\rar&\overline{X}_1\rar&X_1\rar{d_1}\rar&
      X_0\rar\ar[bend right=50, dashed]{l}[swap]{s_0}&0
    \end{tikzcd}
  \]
  and $X_0=X_{\Lambda_0^1}$; the claims are clear in this case.
  Let $n\geq1$ and suppose that the desired direct sum decomposition has been
  established for all $1\leq k\leq n$. It follows that
  \[
    X_{\Lambda_0^{n+1}}=\lim_{0\in J\subsetneq[n+1]}X_J\cong\lim_{0\in
      J\subsetneq[n+1]}\bigoplus_{0\in I\subseteq
      J}\overline{X}_I\cong\bigoplus_{0\in K\subsetneq [n+1]}\overline{X}_K
  \]
  since, by induction, the limit is taken over the projections onto the
  appropriate direct summands. In particular the maps $p_i\colon
  X_{\Lambda_0^{n+1}}\to X_{n}$, $i\neq0$ can be taken as the projections onto
  the direct sum of those $\overline{X}_J$ such that $J$ does not contain $i$.
  This finishes the proof.
\end{proof}

\begin{remark}
  Following the general spirit of this article, we give a description of the
  splitting map $f\colon \Xcover{\horn n0}\to X_n$
  in terms of cubical diagrams.\footnote{This description is related
  to the projector $\pi\colon X_n\to X_n$ onto the complement of
  $X_{\horn n0}$ defined in Section 2 of \cite{Dyc17}.}

  Given $i<j$, we define two embeddings $z^j_i,u^j_i\colon [1]^{i-1}\to [1]^{j-1}$
  via
  \begin{eqnarray*}
    z^j_i(w_1,\dots,w_{i-1})&\coloneqq&(w_1,\dots,w_{i-1},0,\dots,0)\\
    u^j_i(w_1,\dots,w_{i-1})&\coloneqq&(w_1,\dots,w_{i-1},0,1,\dots,1)
  \end{eqnarray*}
  Fix $n\geq 1$ and define a cube $ q^n\colon [1]^{n-1}\to \Delta(n,n-1)$ by
  \begin{equation*}
  v\mapsto  q^n_v\colon
  \begin{cases}
    0\mapsto 0\\
    i\mapsto i-1+v_i, & \text{for } 1\leq i\leq n-1\\
    n\mapsto n-1\\
  \end{cases}
\end{equation*}
  For every $1\leq i\leq n$ we can then define a map $\iota_i\colon X_{n-1}\to X_n$
  by totalising the action of the $(i-1)$-cube $ q^n\circ z^n_i$; explicitly:
  \begin{equation}
    \label{eq:sumsplittingiotas}
    \iota_i\coloneqq \sum_{w\in [1]^{i-1}}(-1)^{|w|} ( q^n_{z^n_i(w)})^*.
  \end{equation}
  Then we have
  \begin{equation*}
    f = \sum_{i=1}^n(-1)^{i-1}\iota_i p_i.
  \end{equation*}
  The essential relation $d_jf = p_j$ is explained by the following observations:
  \begin{itemize}
  \item For all $i>j$, composing the cube
    $ q\colon[1]^{i-1}\to\Delta(n,n-1)$ with the coface map $d^j\colon
    [n-1]\to\n$ yields a cube $ q\colon[1]^{i-1}\to\Delta(n-1,n-1)$ with two
    identical faces along the $j$-th direction. Therefore $d_j\iota_i$ (and
    hence $d_j\iota_ip_i$) vanishes.
  \item For all $i<j$ we have a commutative diagram
    \begin{equation*}
      \begin{tikzcd}[column sep=large, row sep=small]
        {[1]^{j-1}}\rar{z^n_j} &{[1]^{n-1}}\rar{ q^n}&
        \Delta(n,n-1)\rar{{ - \circ d^j}}&\Delta(n-1,n-1)\\
        {[1]^{i-1}}\uar{u^j_i}\ar[rd, "z^n_i"']\rar{z^{n-1}_i}&{[1]^{n-2}}\rar{ q^{n-1}}&
        \Delta(n-1,n-2)\ar[ru, "{d^{i}\circ - }"']\ar[rd, "{d^{j-1}\circ - }"]\\
        &{[1]^{n-1}}\rar{ q^n} &
        \Delta(n,n-1)\rar{{ - \circ d^j}}&\Delta(n-1,n-1)
      \end{tikzcd}
    \end{equation*}
    so that the relation $d_ip_j=d_{j-1}p_i$ implies that in the sum for $d_jf$
    the summands of
    $d_j\iota_ip_i$ cancel with those summands of $d_j\iota_jp_j$ which are
    indexed by an element of the form $u^j_i(w)\in [1]^{j-1}$ for some $w\in
    [1]^{i-1}$.
  \end{itemize}
   The only summand in the sum for $d_jf$ which does not cancel is the one
   corresponding to $(1,\dots,1)\in [1]^{j-1}$;
   this summand is equal to $d_js_{j-1}p_j=p_j$.
\end{remark}

\begin{proof}[Proof of
  \th\ref{thm:outer-Kan-DK-abelian}]
  Let $\A$ be an abelian category and $m$ a natural number. By definition, a
  simplicial object $X$ in $\A$ is an outer $m$-Kan complex if, for each $n>m$,
  the Segal maps
  \begin{equation*}
    X_n\lra X_{\Lambda_0^n}\qquad\text{and}\qquad X_n\lra X_{\Lambda_n^n}
  \end{equation*}
  are isomorphisms in $\A$. By \th\ref{prop:cofibre_sequence:abelian-cats} the
  latter condition is equivalent to the statement that for each $n>m$ the object
  $\overline{X}_n$ is a zero object of $\A$, where $(\overline{X},\partial)$ is
  the normalised Moore chain complex of $X$. In other words, the simplicial
  object $X$ in $\A$ is an outer $m$-Kan complex if and only if the normalised
  Moore chain complex of $X$ is $m$-truncated, which is what we needed to prove.
\end{proof}

\subsubsection{Classification of $2m$-Segal objects in abelian categories}

The goal of this section is to prove the following characterization of
$2m$-Segal objects in abelian categories.

\begin{theorem}\th\label{thm:abelianSegalch}
  Let $X$ be a simplicial object in an abelian category $\A$ and $m\geq 1$. The
  following are equivalent:
  \begin{enumerate}
  \item\label{inthm:abch2mSegal} The simplicial object $X$ is $2m$-Segal.
  \item\label{inthm:abchoutmKan} The simplicial object $X$ is an outer $m$-Kan
    complex.
  \item\label{inthm:abchmtrunc} The connective chain complex
    $\overline{X}\coloneqq C(X)$ is $m$-truncated.\qedhere
  \end{enumerate}
\end{theorem}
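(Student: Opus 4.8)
The plan is to establish three facts which together give the full equivalence: \ref{inthm:abchoutmKan} $\Leftrightarrow$ \ref{inthm:abchmtrunc}, the implication \ref{inthm:abchoutmKan} $\Rightarrow$ \ref{inthm:abch2mSegal}, and the implication \ref{inthm:abch2mSegal} $\Rightarrow$ \ref{inthm:abchmtrunc}. The first two are already at hand. The equivalence \ref{inthm:abchoutmKan} $\Leftrightarrow$ \ref{inthm:abchmtrunc} is precisely \th\ref{thm:outer-Kan-DK-abelian}: by \th\ref{prop:cofibre_sequence:abelian-cats} the outer Segal maps $X_n\to X_{\Lambda_0^n}$ and $X_n\to X_{\Lambda_n^n}$ are split epimorphisms with kernel $\overline{X}_n$, so $X$ is an outer $m$-Kan complex exactly when $\overline{X}_n=0$ for all $n>m$, i.e.\ when $\overline{X}=C(X)$ is $m$-truncated. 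The implication \ref{inthm:abchoutmKan} $\Rightarrow$ \ref{inthm:abch2mSegal} is the last assertion of \th\ref{prop:outer_horns}, since by \th\ref{def:outer_m-Kan_cpx} an outer $m$-Kan complex is exactly one satisfying outer horn descent above dimension $m$. Thus all the weight falls on proving \ref{inthm:abch2mSegal} $\Rightarrow$ \ref{inthm:abchmtrunc}.

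For this implication I would use only that $X$ is \emph{lower} $2m$-Segal, and the crux is a direct-sum description of the membranes attached to the covers $\lowerScovering n{2m}$. By \th\ref{rem:Segalexplicit} every member of $\lowerScovering n{2m}$ contains the vertex $0$, so the corresponding simplicial subset lies inside the horn $\Lambda_0^n$ and the membrane can be computed via the $0$-based decomposition of \th\ref{prop:cofibre_sequence:abelian-cats}. First I would record that for any $g\subseteq[n]$ with $0\in g$ the restriction $X_n\to X_g$ respects the decomposition $X_\bullet\cong\bigoplus_{0\in J}\overline{X}_J$ and is the projection onto the summands with $J\subseteq g$; this follows by writing the restriction as a composite of face maps $d_i$ with $i\neq 0$ and applying the projection property of \th\ref{prop:cofibre_sequence:abelian-cats}. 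Computing $X_{\lowerScovering n{2m}}$ as the limit over the poset $\minsubsets{\lowerScovering n{2m}}$ of the resulting compatible system of split projections then yields
\[
  X_{\lowerScovering n{2m}} \cong \bigoplus_{\substack{0\in J\subseteq[n] \\ J\subseteq f \text{ for some } f\in\lowerScovering n{2m}}} \overline{X}_J ,
\]
with the lower $2m$-Segal map $X_n\to X_{\lowerScovering n{2m}}$ identified with the projection discarding the uncovered summands. Hence lower $2m$-Segal descent forces $\overline{X}_J=0$ for every $n>2m$ and every $J\ni 0$ that is not contained in a member of $\lowerScovering n{2m}$.

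To conclude I would exploit that $\overline{X}_J$ depends only on $|J|$, so it suffices to produce, for each $k>m$, a single uncovered $J$ with $|J|=k+1$. By \th\ref{rem:Segalexplicit}, such a $J\ni 0$ is covered precisely when $J\setminus\{0\}$ is contained in a disjoint union of $m$ adjacent pairs inside $\{1,\dots,n\}$. Taking $n=2k-1>2m$ and $J=\{0,1,3,5,\dots,2k-1\}$, the $k$ nonzero elements are pairwise non-adjacent, so covering them needs $k>m$ disjoint pairs; thus $J$ is uncovered and $\overline{X}_k=\overline{X}_J=0$. As $k>m$ is arbitrary, $\overline{X}$ is $m$-truncated, which is \ref{inthm:abchmtrunc}.

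The main obstacle is the membrane computation of the second paragraph: one must justify that, in an abelian category, the limit defining $X_{\lowerScovering n{2m}}$ over $\minsubsets{\lowerScovering n{2m}}$ collapses to the stated direct sum and that the Segal map is genuinely the coordinate projection. The essential input is the precise projection behaviour of the non-initial face maps supplied by \th\ref{prop:cofibre_sequence:abelian-cats}; granting this, identifying the limit of a finite system of compatible split projections between finite direct sums is a routine, if slightly tedious, diagram chase. By contrast, the combinatorial selection of uncovered simplices in the final step is entirely elementary, and it is worth noting that only the lower half of the $2m$-Segal hypothesis is needed.
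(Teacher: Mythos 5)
Your proposal is correct, and its overall architecture coincides with the paper's: the equivalence of (2) and (3) is quoted from \th\ref{thm:outer-Kan-DK-abelian}, the implication (2) $\Rightarrow$ (1) from \th\ref{prop:outer_horns}, and all the substance lies in (1) $\Rightarrow$ (3), which both you and the paper prove by using \th\ref{prop:cofibre_sequence:abelian-cats} to identify the relevant Segal maps with coordinate projections and then exhibiting, for each $k>m$, a $(k+1)$-element simplex containing $0$ whose nonzero elements are pairwise non-adjacent, hence not contained in any member of the test cover. The one genuine difference is in your favour. The paper first invokes Proposition 2.10 of \cite{Pog17} to upgrade the $2m$-Segal condition to the $2k$-Segal condition, takes $I=\{0,3,\dots,3k\}\subseteq[3k]$, and asserts that $I$ is not contained in the lower $2k$-Segal cover of $[3k]$; read as ``$I$ is contained in no member of that cover'', this assertion is false, since $I$ lies inside the member $\{0\}\cup\{2,3\}\cup\{5,6\}\cup\dots\cup\{3k-1,3k\}$. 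Indeed the paper's own computation --- the smallest even superset of $I$ has cardinality $2k+1$ --- shows precisely that $I$ \emph{is} contained in a member of the $2k$-cover, and only rules out containment in members of $l$-Segal covers with $l<2k$. The repair is exactly what you do: test against the lower $2m$-Segal covers themselves, whose members have cardinality $2m+1$ and therefore cannot contain a set with $k>m$ pairwise non-adjacent nonzero elements; both your witness ($n=2k-1$, $J=\{0,1,3,5,\dots,2k-1\}$) and the paper's $I$ inside $[3k]$ then work, and the detour through \cite{Pog17} becomes unnecessary. Two further points where your write-up is tighter: you make explicit the membrane computation identifying $X$ evaluated on the lower $2m$-Segal cover with the direct sum of the summands $\overline{X}_J$ over covered $J\ni 0$ (the paper compresses this into ``we conclude immediately''), and you observe that only the lower half of the $2m$-Segal hypothesis is needed, so the paper's parallel argument for the upper covers is redundant for the contradiction.
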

\begin{proof}
  The implications \ref{inthm:abchmtrunc} $\lRa$ \ref{inthm:abchoutmKan} $\lRa$
  \ref{inthm:ch2mSegal} are established in \th\ref{thm:outer-Kan-DK-abelian} and
  \th\ref{prop:outer_horns}. It remains to prove the implication
  $\ref{inthm:abch2mSegal}\lRa\ref{inthm:abchmtrunc}$. Recall from Proposition
  2.10 in \cite{Pog17} that every $2m$-Segal object is $l$-Segal for every
  $l\geq 2m$. In particular, if $X$ is $2m$-Segal then it is also $2k$-Segal for
  all $k\geq m$.

  Suppose that there exists $k>m$ such that $\overline{X}_k$ is non-zero. Let
  $I=\{0,3,\dots,3k\}$; we view $I$ as a subset of $[3k]$. The smallest even
  subset of $[3k]$ that contains $I$ has cardinality $2k+1$. Therefore $I$ is
  not contained in the lower $2k$-Segal cover $\T_{3k,2k}^-$. Now, the object
  $\overline{X}_I$ is a direct summand of
  \[
    X_{3k}\cong \bigoplus_{0\in J\subseteq[3k]}\overline{X}_{J}.
  \]
  From the explicit description of the face maps of $X\cong N(C(X))$ given in
  \th\ref{prop:cofibre_sequence:abelian-cats}, we conclude immediately that
  $\overline{X}_I$ is sent to zero by the Segal maps $X_{3k}\to
  \Xcover{\lowerScovering{3k}{2k}}$. A similar argument shows that $I$ is not
  contained in the upper $2k$-Segal cover $\T_{3k,2k}^+$ and therefore
  $\overline{X}_I$ is sent to zero by the Segal map
  $X_{3k}\to\Xcover{\upperScovering{3k}{2k}}$. In particular these Segal maps
  cannot be equivalences, contradicting the fact that $X$ is $2k$-Segal.
\end{proof}

\subsection{Simplicial objects in stable $\infty$-categories}

\subsubsection{Lurie's $\infty$-categorical Dold--Kan correspondence}

Let $\C$ be a stable $\infty$-category. To extend the classical Dold--Kan
correspondence to the $\infty$-categorical context, one first needs to adapt
the notions of simplicial objects and connective chain complexes. The first
notion is straightforward, as functors $\Delta^{\op}\to\C$ (or
more precisely $\N(\Delta)^{\op}\to\C$) already capture the correct notion; see
Definition 6.1.2.2 in \cite{Lur09}.
In contrast, the naive notion of a connective
chain complex in $\C$, namely a connective chain complex
\begin{equation*}
  \cdots\xrightarrow{d}X_n\xrightarrow{d}\cdots\xrightarrow{d}X_1\xrightarrow{d}X_0\to0\to\cdots
\end{equation*}
in the homotopy category $\hh(\C)$, requires further refinement, see Section
1.2.2 in \cite{Lur17}. This is due to the fact that the space of identifications
$d^2\cong 0$ need not be contractible.

\begin{definition}
  \th\label{def:filtered_object} Let $\C$ be a stable $\infty$-category. A
  \emph{filtered object in $\C$} is a functor $\NNN(\NN)\to\C$, where we view
  $\NN$ as a poset with respect to the usual order.
\end{definition}

\begin{remark}
  \th\label{rmk:filtered_object-complex} Let $\C$ be a stable $\infty$-category.
  Recall that the homotopy category $\hh(\C)$ has the structure of a
  triangulated category whence, in particular, $\hh(\C)$ is an additive
  category, see Section 1.1.2 in \cite{Lur17}. A filtered object $B$ in $\C$ can
  be visualised as a diagram
  \begin{equation*}
    B\colon B_0\xrightarrow{f_1}B_1\xrightarrow{f_2}\cdots\xrightarrow{f_n}B_n\rightarrow\cdots.
  \end{equation*}
  Note that $B$ induces a connective chain complex $\overline{B}$ in $\hh(\C)$
  where $\overline{B_0}\coloneqq B_0$ and
  \begin{equation*}
    \overline{B}_n\coloneqq \Sigma^{-n}\cofib(f_n)
  \end{equation*} for
  $n\geq 1$, see Remark 1.2.2.3 in \cite{Lur17}. The data encoded by
  the filtered object $B=B^{(0)}$ can be visualised as a diagram
  \begin{center}
    \includegraphics{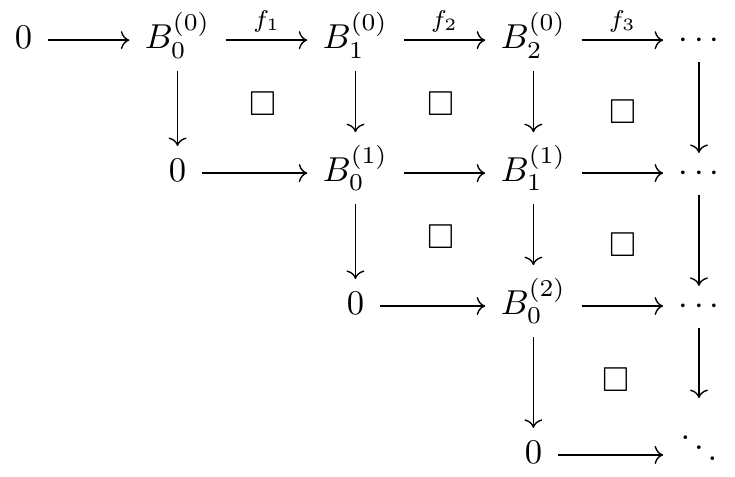}
  \end{center}
  in which all marked squares are biCartesian.
\end{remark}

The introduction of filtered objects in stable $\infty$-categories is justified
by the following $\infty$-categorical version of the Dold--Kan correspondence,
see Theorem 1.2.4.1 in \cite{Lur17}.

\begin{theorem}[Lurie's $\infty$-categorical Dold--Kan correspondence]
  \th\label{thm:Luries_DK} Let $\C$ be a stable $\infty$-category. There is a
  canonical equivalence
  \[
    \Fun(\Delta^{\op},\C)\xra{\simeq}\Fun(\NN,\C)
  \]
  of stable $\infty$-categories.
\end{theorem}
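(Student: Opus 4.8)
The plan is to produce an explicit exact functor $\Fun(\Delta^{\op},\C)\to\Fun(\NN,\C)$ and to prove that it is an equivalence by an induction along the skeletal filtration, thereby upgrading the classical correspondence \th\ref{thm:dk-abelian} to the stable setting. First I would record that both sides are stable $\infty$-categories, with limits and colimits computed pointwise, so that it suffices to exhibit a single exact functor and verify that it is an equivalence; by stability this can be checked layerwise on a suitable filtration rather than by constructing a homotopy-coherent inverse by hand.

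The comparison functor itself I would build from the skeletal filtration of a simplicial object. For $X\colon\Delta^{\op}\to\C$ let $L_nX$ denote the $n$-th latching object, the colimit of $X$ over the degeneracy maps $[n]\twoheadrightarrow[k]$ with $k<n$, and set $\overline{X}_n\coloneqq\cofib(L_nX\to X_n)$ for the stable analogue of the normalised Moore object. The inclusions of skeleta assemble into a tower $\mathrm{sk}_0X\to\mathrm{sk}_1X\to\cdots$ with colimit $X$, in which each square relating $\mathrm{sk}_{n-1}X$, $\mathrm{sk}_nX$ and $\overline{X}_n$ is a pushout. Passing to colimits (geometric realisations) of this tower produces a filtered object $B_X\colon\NN\to\C$ whose $n$-th successive cofibre is $\Sigma^{n}\overline{X}_n$; equivalently, under \th\ref{rmk:filtered_object-complex}, the chain complex in $\hh(\C)$ associated to $B_X$ is the normalised Moore complex of $X$. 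This is precisely the coherent lift of the splitting formula underlying \th\ref{prop:cofibre_sequence:abelian-cats}.

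The heart of the argument, and the step I expect to be the main obstacle, is coherence. In a stable $\infty$-category a filtered object records strictly more than a chain complex in $\hh(\C)$, namely all the higher nullhomotopies witnessing the vanishing of the composite differentials, so one must verify that the skeletal tower genuinely assembles into a functor $\NN\to\C$ and not merely into a diagram in $\hh(\C)$. I would handle this by realising each skeleton $\mathrm{sk}_nX$ as the value of the left Kan extension along the inclusion $\Delta_{\le n}^{\op}\hookrightarrow\Delta^{\op}$ of the restriction of $X$; the whole filtered object then arises functorially from $X$, and the coherence bookkeeping is reduced entirely to the combinatorics of the simplex category, with the required higher homotopies supplied automatically by the universal properties of these Kan extensions.

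Finally, to see that $X\mapsto B_X$ is an equivalence I would argue by induction on the filtration. The functor is exhaustive and compatible with the two filtrations, and at each stage stability reduces the comparison to the pushout squares above: the induced map on the $n$-th layer is an equivalence because both sides compute the same cofibre $\Sigma^{n}\overline{X}_n$. A conservativity argument---the associated-graded functor on $\Fun(\NN,\C)$ is conservative and is matched, under the construction, by the passage to normalised Moore objects on the simplicial side---then promotes these layerwise equivalences to an equivalence of $\infty$-categories, in a manner compatible with the classical correspondence \th\ref{thm:dk-abelian}.
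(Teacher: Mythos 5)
You should first be aware that the paper does not prove this statement at all: it is quoted from Lurie, with the proof deferred to Theorem 1.2.4.1 of \cite{Lur17}, and what the paper adds is only a concrete model for the equivalence (Construction \ref{const:BX}). Your proposal essentially reconstructs that model --- $B_n$ as the realisation of the $n$-skeleton, with layers $\Sigma^n\overline{X}_n$ where $\overline{X}_n=\cofib(L_nX\to X_n)$ --- but two of its steps have genuine gaps.

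The first gap is the existence of the colimits you invoke. A stable $\infty$-category is only guaranteed to admit \emph{finite} limits and colimits, while the geometric realisations $|\mathrm{sk}_nX|$ (equivalently, the colimits of $X$ over $(\Delta_{\leq n})^{\op}$) and even the pointwise left Kan extensions defining $\mathrm{sk}_nX$ are colimits over categories whose nerves are infinite simplicial sets, hence are not finite colimits. The indispensable input here is Lurie's cofinality lemma (Lemma 1.2.4.17 of \cite{Lur17}, recalled in Construction \ref{const:BX}): the poset $\J_n$ of injective maps $[m]\hookrightarrow[n]$, i.e.\ a punctured $(n+1)$-cube, is right cofinal in $\Delta_{\leq n}$, so each $B_n$ can be defined as a \emph{finite} punctured-cube colimit, which every stable $\infty$-category possesses. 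Without this lemma your filtered object $B_X$ is not even defined for general $\C$; with it, your tower, its pushout squares, and the identification of the successive cofibres with $\Sigma^n\overline{X}_n$ all become available. This combinatorial reduction is the actual content of the construction, and it is absent from your sketch.

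The second gap is the final step. Proving that $X\mapsto B_X$ is an equivalence of $\infty$-categories requires full faithfulness and essential surjectivity (or an explicit inverse); your layerwise-plus-conservativity argument delivers neither. Conservativity of the associated-graded functor on $\Fun(\NN,\C)$ shows that a \emph{map} of filtered objects is an equivalence once it is so on layers; it cannot show that a \emph{functor} is an equivalence. Indeed, the associated-graded functor itself is conservative, compatible with the filtrations, and ``an equivalence on layers'' in your sense, yet is far from an equivalence. In particular, nothing in your proposal compares mapping spaces, and nothing addresses essential surjectivity: given an arbitrary filtered object $B$, one must produce a simplicial object realising it --- this is the nerve direction of the correspondence, where the real work lies. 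Lurie's proof supplies exactly this by induction over truncations: one establishes equivalences between $\Fun((\Delta_{\leq n})^{\op},\C)$ and the corresponding finite filtered-object categories, compatibly with the restriction functors, and obtains the theorem by passing to the limit over $n$; it is at that stage, not through a conservativity argument, that the categorical equivalence is actually verified.
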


For our purposes it is necessary to recall a particular description of the
equivalence between $\Fun(\Delta^{\op},\C)$ and $\Fun(\NN,\C)$ which is implicit
in Lurie's proof of \th\ref{thm:Luries_DK}. Recall that, for a given integer
$n\geq0$, the full subcategory of $\Delta$ spanned by the finite ordinals
$\setP{[m]}{0\leq m\leq n}$ is denoted by $\Delta_{\leq n}$.

\begin{notation}
  We denote by $\J_n$ the subcategory of the overcategory $(\Delta_{\leq
    n})_{/[n]}$ spanned by the \emph{injective} monotone maps $[m]\to[n]$. The
  composite of canonical functors
  \begin{equation*}
    \NNN(\J_{n})\lra\NNN(\Delta_{\leq n})_{/[n]}\lra\NNN(\Delta_{\leq n})\lra\NNN(\Delta)
  \end{equation*}
  yields a functor $j\colon\NNN(\J_n)\to\NNN(\Delta)$. Note that there is a
  fully faithful functor $i\colon\J_n\hookrightarrow\J_{n+1}$ induced by the
  canonical inclusion $[n]\subset[n+1]$. In the sequel we identify $\J_n$ with
  the poset of \emph{non-empty} subsets of $[n]$. In particular,
  $\J_n^{\triangleleft}$ can be identified with the poset of \emph{all} subsets
  of $[n]$ and is therefore isomorphic to the $(n+1)$-cube $I^{n+1}$.
\end{notation}

\begin{construction}
  \th\label{const:BX} Let $\C$ be a stable $\infty$-category and $X$ a
  simplicial object in $\C$. We associate to $X$ a filtered object $B$ in $\C$
  as follows. Let $n\geq0$ be an integer. We define $B_n$ to be a colimit of the
  punctured $(n+1)$-cube $X|_{\J_n^{\op}}\colon\NNN(\J_n)^{\op}\to\C$ obtained
  from $X$ by restriction along the composite of canonical functors
  \begin{equation*}
      \NNN(\J_n)^{\op}\lra\NNN(\Delta_{\leq n})^{\op}\lra\NNN(\Delta)^{\op}.
  \end{equation*}
  The required maps $f_{n+1}\colon B_n\to B_{n+1}$ are induced by the canonical
  fully faithful functors $i\colon\J_n\hookrightarrow\J_{n+1}$. The fact that
  $B$ is equivalent to the filtered object corresponding to $X$ under the
  equivalence in \th\ref{thm:Luries_DK} is a consequence of Lemma 1.2.4.17 in
  \cite{Lur17} which states that the functor
  $j\colon\NNN(\J_{n})\to\NNN(\Delta_{\leq n})$ is right cofinal.
\end{construction}

\begin{example}
  Let $\C$ be a stable $\infty$-category and $X$ a simplicial object in $\C$.
  According to \th\ref{const:BX}, the terms $B_0$, $B_1$, and $B_2$ of the
  filtered object $B$ in $\C$ associated to $X$ are determined by the
  biCartesian cubes
  \begin{equation*}
    \begin{tikzcd}
      X_{\set{0}}\rar{\sim}&B_0
    \end{tikzcd}\qquad
    \begin{tikzcd}
      X_{\set{0,1}}\rar\dar\popb&X_{\set{1}}\dar\\
      X_{\set{0}}\rar[swap]{f_1}&B_1
    \end{tikzcd}\qquad
    \begin{tikzcd}[column sep=tiny, row sep=tiny]
      X_{\set{0,1,2}}\ar{rr}\ar{dd}\ar{dr}\mycube&&X_{\set{1,2}}\ar{dd}\ar{dr}\\
      & X_{\set{0,2}}\ar{rr}\ar{dd}&&X_{\set{2}}\ar{dd}\\
      X_{\set{0,1}}\ar{rr}\ar{dr}&&X_{\set{1}}\ar{dr}\\
      &X_{\set{0}}\ar{rr}&&B_2
    \end{tikzcd}
  \end{equation*}
  The required maps $f_{n+1}\colon B_n\to B_{n+1}$ are induced by the universal
  property of these diagrams in the obvious way. For example, the map $f_1\colon
  B_0\to B_1$ is indicated above while the map $f_2\colon B_1\to B_2$ is induced
  by the $2$-cube
  \begin{equation*}
    \begin{tikzcd}
      X_{\set{0,1}}\rar\dar&X_{\set{1}}\dar\\
      X_{\set{0}}\rar&B_2
    \end{tikzcd}
  \end{equation*}
\end{example}

\begin{remark}
  \th\label{rmk:d0dn} Let $\C$ be a stable $\infty$-category and $X$ a
  simplicial object in $\C$. Replacing the canonical inclusions
  $[n]\subset[n+1]$, which are given by the co-face maps
  $d^n\colon[n]\hookrightarrow[n+1]$, by the co-face maps $d^0\colon
  [n]\hookrightarrow[n+1]$ in \th\ref{const:BX} gives an equivalent way to
  construct the filtered object $B$ in $\C$ associated to $X$ by the equivalence
  in \th\ref{thm:Luries_DK}, see \th\ref{rmk:d0dn} for comparison.
\end{remark}

\subsubsection{Outer $m$-Kan complexes in stable $\infty$-categories}

\begin{definition}
  \th\label{def:k-truncated_filtered_object} Let $\C$ be a stable
  $\infty$-category and $m$ a natural number. A filtered object $B$ in $\C$ is
  \emph{(strictly) $m$-truncated} if for each $n>m$ the map $f_n\colon
  B_{n-1}\to B_n$ is an equivalence.
\end{definition}

\begin{remark}
  \th\label{rmk:truncated_filtered_vs_cpx} Let $\C$ be a stable
  $\infty$-category and $m$ a natural number. A filtered object $B$ in $\C$ is
  $m$-truncated if and only if the associated connective chain complex
  $\overline{B}$ in the homotopy category $\hh(\C)$ is $m$-truncated in the
  sense of \th\ref{def:k-truncated_complex}, see
  \th\ref{rmk:filtered_object-complex}.
\end{remark}

Our aim is to prove the following theorem, which is an $\infty$-categorical
version of \th\ref{thm:outer-Kan-DK-abelian}.

\begin{theorem}
  \th\label{thm:k-groupoids-k-truncted_complex:infty-cats} Let $\C$ be a stable
  $\infty$-category and $m$ a natural number. Lurie's $\infty$-categorical Dold--Kan correspondence
  \[
    \Fun(\Delta^{\op},\C)\lra\Fun(\NN,\C)
  \]
  restricts to an
  equivalence of $\infty$-categories between the full subcategory of
  $\Fun(\Delta^{\op},\C)$ spanned by the outer $m$-Kan complexes and the full
  subcategory of $\Fun(\NN,\C)$ spanned by the $m$-truncated filtered objects.
\end{theorem}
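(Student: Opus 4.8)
The plan is to work with the explicit model of Lurie's equivalence recorded in \th\ref{const:BX}, and to reduce the statement to a pointwise comparison between the two full subcategories under this model. First I would unwind the target condition: by \th\ref{def:k-truncated_filtered_object} the filtered object $B$ associated to $X$ is $m$-truncated precisely when $f_n\colon B_{n-1}\to B_n$ is an equivalence for all $n>m$, equivalently (via \th\ref{rmk:filtered_object-complex} and \th\ref{rmk:truncated_filtered_vs_cpx}) when the normalised object $\overline{B}_n\coloneqq\Sigma^{-n}\cofib(f_n)$ vanishes for all $n>m$. Thus the whole theorem reduces to the pointwise claim that, for each $n$, the simplicial object $X$ fails the outer Kan condition in degree $n$ if and only if $\overline{B}_n\not\simeq 0$. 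Since the filtered-object construction, the membrane objects, and the Segal maps are all functorial in $X$, once this pointwise equivalence of conditions is established the stated restriction of \th\ref{thm:Luries_DK} to the two full subcategories follows formally.

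The heart of the argument is a stable analogue of \th\ref{prop:cofibre_sequence:abelian-cats}: I would prove that $\overline{B}_n$ is naturally equivalent to the fibre of the Segal (horn-restriction) map. Concretely, using the variant of \th\ref{const:BX} built from the coface maps $d^n$, the map $f_n\colon B_{n-1}\to B_n$ is the comparison between the colimit over non-empty subsets of $[n]$ not containing $n$ and the colimit over all non-empty subsets; its cofibre is therefore computed as the total cofibre of the $n$-cube $I^n\to\C$, $T\mapsto X_{\{n\}\cup T}$ (indexed by $T\subseteq\{0,\dots,n-1\}$, with top vertex $X_n$). Invoking the total-fibre/total-cofibre comparison for $n$-cubes from Appendix~\ref{sec:n-cubes}, the shift by $\Sigma^{-n}$ exactly cancels, identifying $\overline{B}_n$ with the total fibre of that cube, i.e.\ with $\operatorname{fib}\bigl(X_n\to\lim_{T\subsetneq[n-1]}X_{\{n\}\cup T}\bigr)$. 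A cofinality argument then identifies the inner limit with the membrane object $X_{\Lambda_n^n}$, yielding $\overline{B}_n\simeq\operatorname{fib}(X_n\to X_{\Lambda_n^n})$. Running the same computation through the $d^0$-variant of \th\ref{rmk:d0dn} gives symmetrically $\overline{B}_n\simeq\operatorname{fib}(X_n\to X_{\Lambda_0^n})$; crucially, both variants compute the \emph{same} filtered object by \th\ref{rmk:d0dn}, so both fibres are canonically equivalent to the single normalised object $\overline{B}_n$.

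With the key lemma in hand the conclusion is immediate: for $n>m$ the vanishing of $\overline{B}_n$ is equivalent to either Segal map $X_n\to X_{\Lambda_0^n}$ or $X_n\to X_{\Lambda_n^n}$ being an equivalence, hence to both being equivalences, which by \th\ref{def:outer_m-Kan_cpx} is exactly the outer $m$-Kan condition; and simultaneously it is the $m$-truncation condition on $B$. This also explains why left and right outer horn descent coincide degreewise in the stable setting, matching the abelian phenomenon behind \th\ref{thm:outer-Kan-DK-abelian}. The main obstacle I anticipate is the cube-theoretic step bridging the \emph{colimit} defining the filtered object and the \emph{limit} defining the membrane: one must track the suspension arising from the total-fibre/total-cofibre comparison so that the truncation degree aligns precisely with the horn dimension $m$, and verify the cofinality identification of $\lim_{T\subsetneq[n-1]}X_{\{n\}\cup T}$ with $X_{\Lambda_n^n}$ (namely that the subsets containing the apex vertex are cofinal among the simplices of the horn). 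Both are exactly the kind of elementary cubical bookkeeping collected in Appendix~\ref{sec:n-cubes}, so I expect no conceptual difficulty beyond careful indexing.
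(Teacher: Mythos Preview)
Your approach is essentially the paper's: both deduce the theorem from the identification $\overline{X}_n \simeq \fib(X_n \to X_{\Lambda_0^n}) \simeq \fib(X_n \to X_{\Lambda_n^n})$ (this is \th\ref{prop:fibre_sequence:infty-cats}\ref{it:fibre_sequence}), established via the total-fibre/total-cofibre comparison on the coCartesian $(n{+}1)$-cube $\widetilde{X}$ of \th\ref{const:BX}. One small indexing correction: $\cofib(f_n)$ is directly the total cofibre of the facet indexed by subsets \emph{not} containing $n$ (whose terminal vertex is $B_n$), not of your cube $T\mapsto X_{\{n\}\cup T}$; the latter is the opposite facet, and passing between them requires the coCartesianness of $\widetilde{X}$ via \th\ref{coro:tcofib-tfib} (or \th\ref{coro:tcofib_equivalence})---which is precisely the step the paper performs.
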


In fact, \th\ref{thm:k-groupoids-k-truncted_complex:infty-cats} is a consequence
of the following general statement which is an analogue of
\th\ref{prop:cofibre_sequence:abelian-cats} for stable $\infty$-categories.

\begin{proposition}
  \th\label{prop:fibre_sequence:infty-cats} Let $\C$ be a stable
  $\infty$-category, $X$ a simplicial object in $\C$, and $B=B_X$ the filtered
  object in $\C$ associated to $X$ under the equivalence in
  \th\ref{thm:Luries_DK}.
  \begin{enumerate}
  \item\label{it:fibre_sequence} For each $n\geq 1$ the Segal maps
    $X_n\to\Xcover{\horn n0}$ and $X_n\to\Xcover{\horn nn}$ fit into fibre sequences
    \begin{equation*}
      \begin{tikzcd}
        \overline{X}_n\rar\dar\popb&X_n\dar\\
        0\rar&X_{\Lambda_0^n}
      \end{tikzcd}
      \qquad\text{and}\qquad
      \begin{tikzcd}
        \overline{X}_n\rar\dar\popb&X_n\dar\\
        0\rar&X_{\Lambda_n^n}
      \end{tikzcd}
    \end{equation*}
    in $\C$, where $\overline{X}_n\coloneqq
    \Sigma^{-n}(\cofib({B_{n-1}\xrightarrow{f_n}B_n}))$ is the $n$-th term of
    the connective chain complex $\overline{X}$ in $\hh(\C)$ induced by $B$.
  \item\label{it:dk_detailed} For a non-empty subset $J\subseteq[n]$ we write
    $\overline{X}_{J}\coloneqq\overline{X}_{|J|-1}$.
    \begin{enumerate}
    \item There is a direct sum decomposition
      \begin{equation}
        \label{eq:0DK-sum-st}
        X_{\Lambda_0^n}\simeq\bigoplus_{0\in J\subsetneq[n]}\overline{X}_J,
      \end{equation}
      with the following property: For each $i\neq 0$, the action of the
      $i$-th face map on $X_n\simeq\overline{X}_n\oplus X_{\Lambda_0}^n$ corresponds
      to the projection onto those direct summands $\overline{X}_J$ such that
      $J$ does not contain $i$.
    \item There is a direct sum decomposition
      \begin{equation}
        \label{eq:nDK-sum-st}
        X_{\Lambda_n^n}\simeq\bigoplus_{n\in J\subsetneq[n]}\overline{X}_J,
      \end{equation}
      with the following property: For each $i\neq n$, the action of the
      $i$-th face map on $X_n\simeq\overline{X}_n\oplus X_{\Lambda_n}^n$ corresponds
      to the projection onto those direct summands $\overline{X}_J$ such that
      $J$ does not contain $i$.\qedhere
    \end{enumerate}
  \end{enumerate}
\end{proposition}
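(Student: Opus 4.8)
The plan is to deduce the statement by unwinding Construction \th\ref{const:BX}, paralleling the abelian argument of \th\ref{prop:cofibre_sequence:abelian-cats} but with split short exact sequences replaced by split fibre sequences and direct sums reinterpreted as biproducts in the stable $\infty$-category $\C$. By the symmetry $i\mapsto n-i$ of $[n]$ together with \th\ref{rmk:d0dn}, it suffices to treat the $0$-th horn: the case of $\horn nn$ is obtained by running the same argument with the roles of the coface maps $d^0$ and $d^n$ interchanged.

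For part \ref{it:fibre_sequence}, I would first invoke the membrane description of \S\ref{subsec:coverings}. Since the minimal subsets of the horn cover $\horn n0$ are precisely the proper subsets $0\in J\subsetneq[n]$, we have $X_{\horn n0}\simeq\lim_{0\in J\subsetneq[n]}X_J$. Consequently the fibre of the Segal map $X_n\to X_{\horn n0}$ is the total fibre $\on{tfib}(\Phi)$ of the $n$-cube $\Phi\colon J\mapsto X_J$ indexed by the subsets $0\in J\subseteq[n]$, whose initial vertex is $X_{[n]}=X_n$. Invoking the identity $\on{tfib}(\Phi)\simeq\Sigma^{-n}\on{tcofib}(\Phi)$ from Appendix \ref{sec:n-cubes}, the task reduces to identifying $\on{tcofib}(\Phi)$ with $\cofib(f_n)$. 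This is exactly where Construction \th\ref{const:BX} enters, in its $d^0$-variant (\th\ref{rmk:d0dn}): there $B_n$ is the colimit of $X$ over the punctured cube of all nonempty subsets of $[n]$, while $f_n\colon B_{n-1}\to B_n$ is induced by the full subposet of subsets avoiding $0$, so that the ``new'' simplices contributing to $\cofib(f_n)$ are precisely those $J$ with $0\in J$. A cofinality (sieve--cosieve) comparison of the two colimits then yields $\on{tcofib}(\Phi)\simeq\cofib(f_n)$, and hence the fibre sequence with fibre $\Sigma^{-n}\cofib(f_n)=\overline{X}_n$. The case $n=1$ serves as a reassuring sanity check, where the Segal map is $d_1\colon X_1\to X_0$ and $\cofib(f_1)$ is read off from the defining pushout for $B_1$.

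For part \ref{it:dk_detailed}, I would argue by induction on $n$, mirroring the abelian induction. The base case $n=1$ is immediate, as $X_{\horn 10}=X_{\{0\}}=\overline{X}_{\{0\}}$. For the inductive step, each $X_J$ with $\lvert J\rvert\le n$ splits as $\bigoplus_{0\in I\subseteq J}\overline{X}_I$ by induction, with the transition maps of the punctured cube acting as the canonical projections; the limit computing $X_{\horn n0}$ is therefore a limit of split projections and collapses to the biproduct $\bigoplus_{0\in J\subsetneq[n]}\overline{X}_J$. To upgrade the fibre sequence of part \ref{it:fibre_sequence} to a biproduct decomposition $X_n\simeq\overline{X}_n\oplus X_{\horn n0}$, it suffices to exhibit a section of the Segal map, since in a stable $\infty$-category any split epimorphism determines a biproduct decomposition (compare the two fibre sequences through the section and apply two-out-of-three). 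Such a section is provided by the explicit recursive formula $f=\sum_{i=1}^n(-1)^{i-1}\iota_i p_i$ from the remark following \th\ref{prop:cofibre_sequence:abelian-cats}, which is written purely in terms of the simplicial structure maps and totalisations of rectilinear cubes and therefore transfers verbatim to $\C$; the relations $d_jf\simeq p_j$ for $j\ne 0$ then also furnish the stated description of the face maps as projections.

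The main obstacle I anticipate is the identification $\on{tcofib}(\Phi)\simeq\cofib(f_n)$, that is, matching the cube that computes the membrane $X_{\horn n0}$ with the filtered object of Construction \th\ref{const:BX}; aligning the coface conventions ($d^0$ versus $d^n$) is delicate and must be handled with care, as the $n=1$ computation already signals. The second genuinely $\infty$-categorical point, absent from the abelian case, is the passage from a section to an honest biproduct decomposition; this is harmless in the stable setting but does require the section to be produced as a coherent map in $\C$ rather than merely on homotopy categories.
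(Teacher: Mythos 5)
Your overall strategy is the paper's own: part \ref{it:fibre_sequence} via the cube supplied by \th\ref{const:BX} together with the total-(co)fibre calculus of Appendix \ref{sec:n-cubes}, and part \ref{it:dk_detailed} by induction plus the splitting formula transferred from the abelian proof of \th\ref{prop:cofibre_sequence:abelian-cats}. For part \ref{it:fibre_sequence} your two steps (the suspension identity $\tfib(\Phi)\simeq\Sigma^{-n}\tcofib(\Phi)$ of \th\ref{prop:tcofib_tfib-n-suspension}, then $\tcofib(\Phi)\simeq\cofib(f_n)$) are exactly the two ingredients whose combination is \th\ref{coro:tcofib-tfib}, which the paper applies in one stroke: the $(n+1)$-cube of \th\ref{const:BX} (in its $d^0$-variant, \th\ref{rmk:d0dn}) is coCartesian, its facet $\{J\ni 0\}$ has total fibre $\fib(X_n\to\Xcover{\horn n0})$, and its opposite facet $\{J\not\ni 0\}$ has total cofibre equal to $\cofib(f_n)$ on the nose. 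One caveat: the identification $\tcofib(\Phi)\simeq\cofib(f_n)$ is \emph{not} a cofinality statement (the subposet $\{J\ni 0\}$ is not cofinal in the poset of non-empty subsets computing $B_n$, as the relevant comma posets at subsets avoiding $0$ are empty); what makes it work is the parallel-facet comparison \th\ref{coro:tcofib_equivalence} for the coCartesian cube, or equivalently the characterisation \th\ref{prop:tcofib_characterisation} applied to the extension by zero of $X$ restricted to $\{J\ni 0\}$. So part \ref{it:fibre_sequence} is correct in substance, with the mechanism misnamed.

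The genuine gap is in part \ref{it:dk_detailed}, at the sentence ``the limit computing $X_{\Lambda_0^n}$ is therefore a limit of split projections and collapses to the biproduct.'' In an abelian category this is immediate, but in a stable $\infty$-category a limit is not computable from homotopy-category information about the diagram: the inductive hypothesis only tells you that each transition map $X_J\to X_{J'}$ is \emph{homotopic} to a projection between direct sums, and this does not determine $\lim_{0\in J\subsetneq[n]}X_J$. Supplying the missing coherence is precisely the content of the paper's proof: it adjoins the summands $\overline X_I$ to the index poset (the posets $\QQb{\horn n0}$ and $\QQu{x}$), extends the diagram by choosing the top-summand projections (legitimate because each $\overline I$ is freely attached below the single element $I$, so the extension is a pointwise left Kan extension), and then shows the extended diagram is a right Kan extension of its restriction to the discrete poset of summands; the decomposition of $\Xcover{\horn n0}$ and the identification of the structure maps $p_i$ with projections fall out of the pointwise formula. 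The same analysis establishes a second fact you use tacitly: that the cone computing $\Xcover{\horn n0}$ remains a limit cone in the homotopy category $\hh(\C)$. This is needed because the relations $d_jf\simeq p_j$ ($j\neq 0$) only show that the composite $\Xcover{\horn n0}\xra{f}X_n\to\Xcover{\horn n0}$ agrees with the identity after each projection $p_j$; to conclude it \emph{is} the identity in $\hh(\C)$ --- hence that $f$ is a section and the fibre sequence of part \ref{it:fibre_sequence} splits --- one must know that maps into $\Xcover{\horn n0}$ are detected by the $p_j$ in the homotopy category. Your outline is the right one, but the inductive ``collapse'' step and the passage from $d_jf\simeq p_j$ to an honest section are exactly the $\infty$-categorical content that must be added, and neither follows from the homotopy-level reasoning as stated.
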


Before giving a proof of \th\ref{prop:fibre_sequence:infty-cats} we use it to
prove \th\ref{thm:k-groupoids-k-truncted_complex:infty-cats}.

\begin{proof}[Proof of
  \th\ref{thm:k-groupoids-k-truncted_complex:infty-cats}]
  Let $\C$ be a stable $\infty$-category and $m$ a natural number. A simplicial
  object $X$ of $\C$ is an outer $m$-Kan complex if, by definition, for each
  $n>m$ the Segal maps
  \begin{equation*}
    X_n\lra X_{\Lambda_0^n}\qquad\text{and}\qquad X_n\lra X_{\Lambda_n^n}
  \end{equation*}
  are equivalences in $\C$. By \th\ref{prop:fibre_sequence:infty-cats} the
  latter condition is equivalent to the statement that for each $n>m$ the object
  $\overline{X}_n=\Sigma^{-n}\cofib(B_{n-1}\to B_n)$ is a zero object in $\C$,
  where $B$ is the filtered object of $\C$ associated to $X$ under the
  equivalence in \th\ref{thm:Luries_DK}. In other words, the simplicial object
  $X$ is an outer $m$-Kan complex if and only if for each $n>m$ the map
  $B_{n-1}\to B_n$ is an equivalence in $\C$ if and only if the filtered object
  $B$ of $\C$ is $m$-truncated, see \th\ref{rmk:truncated_filtered_vs_cpx}.
\end{proof}

We now give a proof of statement \ref{it:fibre_sequence} in
\th\ref{prop:fibre_sequence:infty-cats}.

\begin{proof}[Proof of statement \ref{it:fibre_sequence} in
  \th\ref{prop:fibre_sequence:infty-cats}] Let $X$ be a simplicial object in
  $\C$ and $B$ the filtered object in $\C$ corresponding to $X$ under the
  equivalence in \th\ref{thm:Luries_DK}. Fix an integer $n\geq1$. Consider the
  following auxiliary diagrams.
  \begin{itemize}
  \item Let $\widetilde{X}$ be an $(n+1)$-cube obtained from $X|_{\J_n^{\op}}$ by
    left Kan extension to $(\J_n^{\triangleleft})^{\op}$. Thus $\widetilde{X}$ is
    a coCartesian $(n+1)$-cube in $\C$ and $\widetilde{X}_{\emptyset}\simeq B_n$.
  \item Let $\widetilde{X}|_{0\in J}$ be the $n$-cube obtained from
    $\widetilde{X}$ by restricting to the subsets of $[n]$ which contain $0$.
    Note that $\widetilde{X}|_{0\in J}=X|_{0\in J}$.
  \item Similarly, let $\widetilde{X}|_{0\notin J}$ be the $n$-cube obtained
    from $\widetilde{X}$ by restricting to the subsets of $[n]$ which do not
    contain $0$.
  \end{itemize}
  The $n$-cubes $\widetilde{X}|_{0\in J}$ and
  $\widetilde{X}|_{0\notin J}$ are opposite facets of the $(n+1)$-cube
  $\widetilde{X}$. Since $\widetilde{X}$ is coCartesian,
  \th\ref{coro:tcofib-tfib} implies the existence of an equivalence
  \begin{equation*}
    \tfib(\widetilde{X}|_{0\in J})\simeq \Sigma^{-n}\tcofib(\widetilde{X}|_{0\notin J}).
  \end{equation*}
  Moreover, by definition
  \begin{equation*}
    X_{\Lambda_0^n}=\lim_{0\in J\subsetneq[n]}X_ J.
  \end{equation*}
  Therefore
  \begin{equation*}
    \tfib(\widetilde{X}|_{0\in J})=\fib(X_n\to X_{\Lambda_0^n}).
  \end{equation*}
  Furthermore, in view of \th\ref{rmk:d0dn} we can identify the map $f_n\colon
  B_{n-1}\to B_n$ with the canonical map
  \begin{equation*}
    \colim_{0\notin J\neq\emptyset}\widetilde{X}_ J\to \widetilde{X}_{\emptyset}.
  \end{equation*}
  In particular
  \begin{equation*}
    \tcofib(\widetilde{X}|_{0\notin J})=\cofib(B_{n-1}\xrightarrow{f_n}B_n).
  \end{equation*}
  Piecing together the above equivalences, we conclude that there is an
  equivalence
  \begin{equation*}
    \overline{X}_n\coloneqq \Sigma^{-n}\cofib(B_{n-1}\xrightarrow{f_n}B_n)\simeq\fib(X_n\to X_{\Lambda_0^n}),
  \end{equation*}
  which is what we needed to show.
\end{proof}

We delay the proof of \ref{it:dk_detailed} in
\th\ref{prop:fibre_sequence:infty-cats} to establish some notation.

\begin{notation}
  Let $\C$ be a stable $\infty$-category and $X$ a simplicial object in $\C$.
  Let $\n\in\Delta$.
 We introduce the following auxiliary posets.
  \begin{itemize}
  \item We denote by $\QQo{\horn n0}$ the poset of
    all proper subsets $0\in I\subsetneq\n$ that contain $0$.
  \item Let $\QQ{\horn n0}$ be the right cone over $\QQo{\horn n0}$; we denote the cone point
    by $\horn n0$ so that we can write $I<\horn n0$ for all $I\in\QQo{\horn n0}$.
  \item We denote by $\QQb{\horn n0}$ the poset obtained from $\QQ{\horn n0}$ by adjoining new
    elements $\overline {I'}$ for every element $I'\in\QQo{\horn n0}$; we declare
    $\overline {I'}<I'$ (hence also $\overline I'<\horn n0$ and  $\overline I'<I$ for
    every $I'\subseteq I\in\QQo{\horn n0}$).
  \item Let $x\in \QQ{\horn n0}$ (this includes the case $x=\horn n0$ and $x=I$
    for $0\in I\subsetneq \n$). We denote by
    $\QQu{x}\subset\QQb{\horn n0}$ the subposet defined by
    \[\QQu{x}\coloneqq \{x\}\cup \setP{\overline {I'}}{I'<x}.\]
    Note that $\QQu{x}$ is a cone with vertex $x$ on the discrete (po)set
    $\setP{\overline {I'}}{0\in I'\subsetneq \n}$ (for $x=\horn n0$)
    or $\setP{\overline{I'}}{0\in I'\subseteq I}$ (for $0\in
    x=I\subsetneq \n$).\qedhere
\end{itemize}
\end{notation}

\begin{proof}[Proof of statement \ref{it:dk_detailed} in
  \th\ref{prop:fibre_sequence:infty-cats}.]
  Let $X$ be a simplicial object in a stable $\infty$-category $\C$. We argue by induction on $n$. In the base case $n=0$ there is nothing to show.
  For the rest of the proof, fix a value for $n\geq 1$ and assume the result is
  proven for all smaller values.

  The simplicial object $X$
  induces a diagram $X\colon \QQoop{\horn n 0}\to \C$ via the
  obvious functor $\QQo{\horn n 0}\to \Delta$. By right Kan extension we
  extend this to a diagram $\widetilde X\colon \QQop{\horn n0}\to\C$ so that
  \begin{enumerate}[label=(\roman*)]
  \item the diagram $\widetilde X\colon\QQop{\horn n0}\to\C$ is a limit
    cone;\label{inproof:hornlimitdiagram}
  \end{enumerate}
  the value of $\widetilde X$ at the cone point of $\QQ{\horn n 0}$ is
  by definition the membrane space $\Xcover{\horn n 0}$.
  We can extend $\widetilde X$ further to a diagram $\widetilde X\colon \QQbop\n\to\C$ by putting
  $\widetilde X(\overline{I})\coloneqq \overline X_{I}$
  for each $0\in I\subsetneq \n$ and by declaring the map
  \[X_I\simeq \bigoplus_{0\in I'\subseteq I}\overline X_{I'}\lra
    \overline X_I\]
  corresponding to $\overline I<I$ to be the projection onto
  the ``top summand'' of the direct sum decomposition which exists by
  induction. By construction,
  \begin{enumerate}[resume,label=(\roman*)]
  \item for each $0\in I\subsetneq\n$ the restriction of $\widetilde {X}$ to the cone
    $\QQu{I}$ is a product cone in $\C$. \label{inproof:smallproductcones}
  \end{enumerate}
  By the pointwise formula (and using the fact that $\QQ{\horn n0}$ is cofinal
  in $\QQb{\horn n0}$),
  \ref{inproof:hornlimitdiagram} and \ref{inproof:smallproductcones} imply that
  \begin{enumerate}[resume,label=(\roman*)]
  \item \label{inproof:Kanextension} $\widetilde X\colon\QQbop{\horn n0}\to \C$ is a right
    Kan extension of its restriction to the discrete poset
    $\setP{\overline I}{0\in I\subsetneq \n}$.
  \end{enumerate}
  which implies that
  \begin{enumerate}[resume,label=(\roman*)]
  \item the restriction of $\widetilde{X}$ to $\QQu{\horn n0}$ is a product cone
    in $\C$.\label{inproof:bigproductcone}
  \end{enumerate}
  We conclude that $\restr{\widetilde X}{\QQu{\horn n0}}$ exhibits a direct sum decomposition
  \[\Xcover{\horn n0}\xra{\simeq} \bigoplus_{0\in I\subsetneq \n}\overline
    X_I.\]
  Furthermore, we can identify the structure maps $p_i\colon\Xcover{\horn n0}
  \to X_{n-1}$ (which are induced by the relation $(\n\setminus \{i\})<\horn n0$
  in $\QQ{\horn n0}$) with the projection onto those summands $\overline X_I$
  where $i\notin I$.
  Since products agree with homotopy products (see Example 1.2.13.1 in \cite{Lur09}), statements
  \ref{inproof:smallproductcones} and \ref{inproof:bigproductcone} remain true
  in the homotopy category. We have the implication
  \ref{inproof:bigproductcone}\&\ref{inproof:smallproductcones} $\lLRa$
  \ref{inproof:Kanextension} $\lRa$ \ref{inproof:hornlimitdiagram} also in the
  homotopy category; hence we conclude that $\restr{\widetilde X}{\QQ{\horn n0}}$ is still a limit cone
  in the homotopy category $\C$.

  We define a map $f\colon\Xcover{\horn n0}\to X_n$ using the formulas
  from the proof of \th\ref{prop:cofibre_sequence:abelian-cats}.
  The same calculation (which uses only the simplicial identities and the defining
  properties of the projections $p_i\colon\Xcover{\horn n0}\to X_{n-1}$) shows
  that for each $i=1,\dots,n$, the composition
  \[\Xcover{\horn n0} \xra{f} X_n\lra\Xcover{\horn n0}\xra{p_i} X_{n-1}\]
  is equal in the homotopy category to the structure map
  $p_i\colon\Xcover{\horn n0}\to X_{n-1}$ of the cone $\restr{\widetilde X}{\QQ{\horn n0}}$.
  Since we have established this cone to be a limit cone in the homotopy
  category, it follows that the composition
  \[\Xcover{\horn n0} \xra{f} X_n\lra\Xcover{\horn n0}\]
  is equal to the identity in $\hh{C}$. We conclude that the fibre sequences of
  statement \ref{it:fibre_sequence} in \th\ref{prop:fibre_sequence:infty-cats}
  is split and exhibits $X_n$ as the direct sum of $\overline X_n$ and
  $\Xcover{\horn n0}$. The result follows.
\end{proof}

\subsubsection{Classification of $2m$-Segal objects in stable
  $\infty$-categories}

We are ready to establish the following characterisation of $2m$-Segal objects
in stable $\infty$-categories, in analogy to \th\ref{thm:abelianSegalch} which
deals with simplicial objects in abelian categories.

\begin{theorem}\th\label{thm:stableSegalch}
  Let $X$ be a simplicial object in a stable $\infty$-category $\C$ and $m\geq
  1$. The following statements are equivalent:
  \begin{enumerate}
  \item\label{inthm:ch2mSegal} The simplicial object $X$ is $2m$-Segal.
  \item\label{inthm:choutmKan} The simplicial object $X$ is an outer $m$-Kan
    complex.
  \item\label{inthm:chmtrunc} Thee filtered object $B=B_X$ in $\C$ associated to
    $X$ under the equivalence in \th\ref{thm:Luries_DK} is $m$-truncated.\qedhere
  \end{enumerate}
\end{theorem}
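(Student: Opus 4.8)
The plan is to run the same cycle of implications as in the abelian case \th\ref{thm:abelianSegalch}. The equivalence \ref{inthm:choutmKan}~$\lLRa$~\ref{inthm:chmtrunc} is nothing but the refined Dold--Kan statement \th\ref{thm:k-groupoids-k-truncted_complex:infty-cats}, which matches outer $m$-Kan complexes with $m$-truncated filtered objects. The implication \ref{inthm:choutmKan}~$\lRa$~\ref{inthm:ch2mSegal} is the final assertion of \th\ref{prop:outer_horns}, since an outer $m$-Kan complex satisfies outer horn descent above dimension $m$ and is therefore $2m$-Segal. Hence the whole content lies in the remaining implication \ref{inthm:ch2mSegal}~$\lRa$~\ref{inthm:chmtrunc}, which I would prove by contradiction, transporting the abelian argument to the stable setting by replacing the normalised Moore complex and its split exact sequences \th\ref{prop:cofibre_sequence:abelian-cats} with the induced chain complex $\overline{X}$ in $\hh(\C)$ and the homotopy-coherent direct sum decompositions of \th\ref{prop:fibre_sequence:infty-cats}.

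Concretely, suppose $B=B_X$ is not $m$-truncated, so that $\overline{X}_k\coloneqq\Sigma^{-k}\cofib(f_k)$ is non-zero for some $k>m$. I would test $X$ against the \emph{lower} $2m$-Segal condition (or, by monotonicity of the Segal conditions, Proposition 2.10 in \cite{Pog17}, any lower $2l$-Segal condition with $m\le l<k$), whose maximal simplices are, by \th\ref{rem:Segalexplicit}, the subsets $\{0\}\cup\bigcup_{j=1}^{m}\{i_j,i_j+1\}$ and in particular all contain $0$. Working at a suitably large $n$, I would choose a subset $I\subseteq[n]$ with $0\in I$ and $|I|=k+1$ whose non-zero elements are pairwise non-adjacent (for instance $I=\{0,2,4,\dots,2k\}\subseteq[2k]$). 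Then $I$ is contained in no maximal simplex of the lower $2m$-Segal triangulation: covering $k$ separated vertices requires $k>m$ consecutive blocks, whereas each such simplex offers only $m$. Via the decomposition \eqref{eq:0DK-sum-st}, the object $\overline{X}_I\simeq\overline{X}_k$ is a non-zero direct summand of $X_n\simeq\overline{X}_n\oplus X_{\Lambda_0^n}$ in $\hh(\C)$, with all face maps other than $d_0$ acting by the canonical projections; the goal is to conclude that $\overline{X}_I$ is annihilated by the Segal map $X_n\to\Xcover{\lowerScovering{n}{2m}}$, exhibiting it as a non-equivalence and contradicting the lower $2m$-Segal property.

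The step I expect to be the main obstacle is precisely the passage from the summand structure of $X_n$ to the vanishing of $\overline{X}_I$ in the \emph{limit} $\Xcover{\lowerScovering{n}{2m}}$. In the abelian case this is automatic: each leg $X_n\to X_S$ projects $\overline{X}_I$ away once $I\not\subseteq S$, and a map into a $1$-categorical limit dies as soon as every leg dies. In a stable $\infty$-category the vanishing of each leg only yields pointwise nullhomotopies, which need not assemble coherently, so one cannot argue leg-by-leg. To resolve this I would not compute the limit pointwise but instead split the \emph{entire} membrane diagram, mimicking the auxiliary-poset argument in the proof of statement~\ref{it:dk_detailed} of \th\ref{prop:fibre_sequence:infty-cats}: since every maximal simplex of the lower $2m$-Segal triangulation is a proper subset of $[n]$ containing $0$, the diagram defining $\Xcover{\lowerScovering{n}{2m}}$ is a restriction of the one governing $X_{\Lambda_0^n}$, and the same right Kan extension by the summands $\overline{X}_J$ together with the product-cone verification in $\hh(\C)$ identifies $\Xcover{\lowerScovering{n}{2m}}$ with $\bigoplus_{J}\overline{X}_J$, the sum ranging over those $J$ that \emph{are} faces of the triangulation, the Segal map being the corresponding projection. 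As $I$ is not such a face, $\overline{X}_I$ lies in the kernel, so the Segal map is not an isomorphism in $\hh(\C)$ and hence not an equivalence in $\C$. This contradicts \ref{inthm:ch2mSegal} and completes the proof.
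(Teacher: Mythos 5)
Your proposal is correct and takes essentially the same route as the paper's proof: the equivalence of (2) and (3) via \th\ref{thm:k-groupoids-k-truncted_complex:infty-cats}, the implication (2)~$\lRa$~(1) via \th\ref{prop:outer_horns}, and the remaining implication (1)~$\lRa$~(3) by contradiction, using a ``separated'' subset $I$ containing $0$ of cardinality $k+1$ together with the direct sum decomposition in statement (2) of \th\ref{prop:fibre_sequence:infty-cats}. Where you differ is in rigor rather than in route. The paper compresses the last implication into the assertion that it holds ``exactly as in \th\ref{thm:abelianSegalch}'' because that argument never uses the $0$-th face map; you correctly isolate the point this hides --- in a stable $\infty$-category the vanishing of $\overline{X}_I$ along each leg of the limit cone defining the Segal membrane does not imply its vanishing into the limit --- and you repair it by splitting the entire membrane diagram, rerunning the auxiliary-poset/right Kan extension argument from the paper's own proof of \th\ref{prop:fibre_sequence:infty-cats} over the (intersection-closure of the) lower $2m$-Segal cover, all of whose members contain $0$. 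This identifies the membrane object with the direct sum of those $\overline{X}_J$ indexed by faces $J\ni 0$ of the cover and the Segal map with the corresponding projection, which then has non-zero fibre $\overline{X}_I$; that is precisely what is needed to make the paper's one-line reduction watertight, and it is the intended reading of that line. The remaining deviations are immaterial: you take $I=\{0,2,\dots,2k\}\subseteq[2k]$ and test the lower $2m$-Segal cover directly, while the paper takes $I=\{0,3,\dots,3k\}\subseteq[3k]$ and, after invoking Proposition 2.10 of \cite{Pog17}, tests the $2k$-Segal covers; both choices of $I$ are contained in no face of the relevant cover, so both work.
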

\begin{proof}
  The implications \ref{inthm:chmtrunc} $\lRa$ \ref{inthm:choutmKan} $\lRa$
  \ref{inthm:ch2mSegal} follow from
  \th\ref{thm:k-groupoids-k-truncted_complex:infty-cats} and
  \th\ref{prop:outer_horns}. In view of the explicit description of the
  simplicial object $X$ given in statement \ref{it:dk_detailed} in
  \th\ref{prop:fibre_sequence:infty-cats}, the implication \ref{inthm:ch2mSegal}
  $\lRa$ \ref{inthm:chmtrunc} can be shown to hold exactly as in
  \th\ref{thm:abelianSegalch}. Indeed, the proof in \th\ref{thm:abelianSegalch}
  does not use the $0$-th face map of the simplicial object.
\end{proof}

\appendix

\section{$n$-cubes in stable $\infty$-categories}
\label{sec:n-cubes}

The idea of studying (co)Cartesian cubes and the interplay between total and
iterated fibres goes back at least to Goodwillie \cite{Goo92}. A systematic
study of such cubes in the stable context was carried out by Beckert and Groth
in \cite{BG18} in the related framework of stable derivators.

\subsection{coCartesian $n$-cubes in stable $\infty$-categories}

Let $n\geq0$ be an integer. The \emph{$n$-cube} is the poset
\begin{equation}
  \label{eq:n-cube}
  I^n\coloneqq \underbrace{[1]\times\cdots\times[1]}_{n\text{ times}}.
\end{equation}
For $v\in I^n$ we define $|v|\coloneqq v_1+\cdots+v_n$. In particular $I^0$ consists of
a single vertex $\emptyset$ with $|\emptyset|=0$.

\begin{definition}
  \th\label{def:n-cube} Let $\C$ be an $\infty$-category and $n\geq0$ an
  integer. An \emph{$n$-cube in $\C$} is an object of the $\infty$-category
  $\Fun(I^n,\C)$, that is a functor $X\colon\NNN(I^n)\to\C$.
\end{definition}

\begin{remark}
  Let $n\geq0$ be an integer and $J$ a finite set of cardinality $n$. Each
  bijection $j\colon\set{1,\dots,n}\to J$ yields an isomorphism between $I^n$
  and the poset $2^J$ of subsets of $J$ given by associating to $v\in I^n$ the
  subset $j_v$ of $J$ whose characteristic function is $v$. In specific contexts
  it is often more convenient to consider \emph{$J$-cubes}, that is functors
  $\NNN(2^J)\to\C$, instead of $n$-cubes in the sense of \th\ref{def:n-cube}.
\end{remark}

We are interested in the following exactness conditions on $n$-cubes, see for
example Definition 6.1.1.2 in \cite{Lur17}.

\begin{definition}
  \th\label{def:n-cube_exactness_conditions} Let $\C$ be an $\infty$-category
  and $n\geq0$ an integer.
  \begin{enumerate}
  \item An $n$-cube $X$ in $\C$ is \emph{Cartesian} if the canonical map
    \begin{equation}
      X_{0\cdots0}\lra\lim_{0<|v|} X_v
    \end{equation}
    is an equivalence in $\C$.
  \item An $n$-cube $X$ in $\C$ is \emph{coCartesian} if the canonical map
    \begin{equation}
      \colim_{|v|<n} X_v\lra X_{1\cdots1}
    \end{equation}
    is an equivalence in $\C$.
  \item An $n$-cube $X$ in $\C$ is \emph{biCartesian} if it is both Cartesian
    and coCartesian.\qedhere
  \end{enumerate}
\end{definition}

We are chiefly interested in the properties of $n$-cubes in \emph{stable}
$\infty$-categories. The following result, Proposition 1.2.4.13 in \cite{Lur17},
justifies our focus on coCartesian $n$-cubes in what follows.

\begin{proposition}
  \th\label{prop:n-cube_biCartesian} Let $\C$ be a stable $\infty$-category and
  $n\geq0$ an integer. An $n$-cube $X$ in $\C$ is Cartesian if and only if it is
  coCartesian if and only if it is biCartesian.
\end{proposition}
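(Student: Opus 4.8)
The plan is to reduce the three exactness conditions to statements about the total fibre and total cofibre of $X$, and then to exploit the interplay between fibres and cofibres in a stable $\infty$-category. First I would record that, writing
\[
  \tfib(X) = \fib\bigl(X_{0\cdots0}\lra\lim_{0<|v|}X_v\bigr) \qquad\text{and}\qquad \tcofib(X) = \cofib\bigl(\colim_{|v|<n}X_v\lra X_{1\cdots1}\bigr),
\]
the cube $X$ is Cartesian precisely when $\tfib(X)\simeq 0$ and coCartesian precisely when $\tcofib(X)\simeq 0$. This is immediate from the fact that in a stable, hence pointed, $\infty$-category a morphism is an equivalence if and only if its fibre (equivalently its cofibre) is a zero object. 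With this reformulation, the proposition is equivalent to the assertion that $\tfib(X)$ is a zero object if and only if $\tcofib(X)$ is.

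The heart of the argument is the claim that there is a natural equivalence
\[
  \tcofib(X)\simeq\Sigma^n\,\tfib(X),
\]
which I would establish by induction on $n$. In the cases $n=0$ and $n=1$ this reads $\tcofib(X)\simeq\tfib(X)\simeq X_\emptyset$ and $\cofib(f)\simeq\Sigma\fib(f)$ for a morphism $f$, the latter being a basic consequence of the stability of $\C$. For the inductive step I would view the $n$-cube $X$ as a morphism $\phi\colon Y\to Z$ of $(n-1)$-cubes obtained by slicing along the final coordinate. Since limits commute with limits (and colimits with colimits), the total fibre and total cofibre may be computed iteratively, yielding natural equivalences
\[
  \tfib(X)\simeq\fib\bigl(\tfib(Y)\to\tfib(Z)\bigr)\qquad\text{and}\qquad\tcofib(X)\simeq\cofib\bigl(\tcofib(Y)\to\tcofib(Z)\bigr).
\]
Applying the inductive hypothesis to $Y$ and $Z$, using that the exact functor $\Sigma^{n-1}$ commutes with the formation of cofibres, and invoking $\cofib(g)\simeq\Sigma\fib(g)$ once more produces
\[
  \tcofib(X)\simeq\Sigma^{n-1}\cofib\bigl(\tfib(Y)\to\tfib(Z)\bigr)\simeq\Sigma^{n-1}\Sigma\,\fib\bigl(\tfib(Y)\to\tfib(Z)\bigr)\simeq\Sigma^n\,\tfib(X),
\]
completing the induction.

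With the equivalence $\tcofib(X)\simeq\Sigma^n\tfib(X)$ in hand, the proposition follows quickly: the suspension $\Sigma$ is an autoequivalence of $\C$, so $\Sigma^n$ preserves and reflects zero objects, whence $\tfib(X)\simeq 0$ if and only if $\tcofib(X)\simeq 0$. Thus $X$ is Cartesian if and only if it is coCartesian, and since being biCartesian is by definition the conjunction of the two conditions, all three properties coincide.

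I expect the main obstacle to be the careful justification of the iterative computation of $\tfib$ and $\tcofib$, together with the naturality required to feed the inductive hypothesis through the $\fib$ and $\cofib$ constructions; concretely, one must check that the equivalence $\tcofib\simeq\Sigma^{n-1}\tfib$ supplied by induction is natural in the cube, so that it may be applied simultaneously to $Y$ and $Z$ while remaining compatible with the map $\phi$. This is a Fubini-type statement for $\infty$-categorical (co)limits and is where the bookkeeping is heaviest; everything else is formal.
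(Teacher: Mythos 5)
Your argument is correct, but it takes a genuinely different route from the paper, which gives no proof at all: it simply cites Proposition 1.2.4.13 of \cite{Lur17}. Your central lemma, the equivalence $\tcofib(X)\simeq\Sigma^n\tfib(X)$, is in fact the paper's own \th\ref{prop:tcofib_tfib-n-suspension} --- but note that the paper deduces that statement \emph{from} the proposition you are proving (its proof pastes the Cartesian cube supplied by the dual of \th\ref{prop:tcofib_characterisation} against a coCartesian one, which presupposes that Cartesian cubes are coCartesian), so you could not have quoted it without circularity. Your independent induction avoids this: the base case $\cofib(f)\simeq\Sigma\fib(f)$ uses only the definitional axiom of stability (a triangle is a fibre sequence if and only if it is a cofibre sequence) together with pasting of coCartesian squares, and the Fubini step $\tcofib(X)\simeq\cofib\bigl(\tcofib(Y)\to\tcofib(Z)\bigr)$ --- which is exactly the cofibre sequence \eqref{eq:tcofib_cofibre_sequence} of \th\ref{prop:tcofib-cofibn}, and dually for total fibres --- is a purely formal statement about finite (co)limits in a pointed $\infty$-category, provable by decomposing the colimit over the punctured cube as in \th\ref{lemma:decomposition}; no stability, hence no circularity, enters there. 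The one point requiring care is the one you flag yourself: the inductive equivalence must be constructed as an equivalence of functors $\Fun(I^{n-1},\C)\to\C$, so that it can be applied compatibly to the two facets along the morphism $\phi\colon Y\to Z$; since every map in your argument is a canonical comparison map of finite (co)limits, this is routine (if tedious) bookkeeping. What your approach buys is a self-contained, elementary proof that also yields the suspension formula $\tcofib\simeq\Sigma^n\tfib$ as a byproduct, reversing the paper's logical order; what the citation buys the paper is brevity.
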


Let us illustrate \th\ref{prop:n-cube_biCartesian} in low dimensions.

\begin{example}
  Let $\C$ be a stable $\infty$-category, $n\geq0$ an integer, and $X$ an
  $n$-cube in $\C$.
  \begin{enumerate}
  \item If $n=0$, then $X$ can be identified with an object $X_\emptyset$ of
    $\C$. In this case $X$ is coCartesian if and only if $X_\emptyset$ a zero
    object of $\C$.
  \item If $n=1$, then $X$ can be identified with a map $X_0\to X_1$ in $\C$. In
    this case $X$ is coCartesian if and only if the map $X_0\to X_1$ is an
    equivalence in $\C$.
  \item If $n=2$, then $X$ classifies a (coherent) commutative square
    \begin{equation*}
      \begin{tikzcd}
        X_{00}\rar\dar&X_{01}\dar\\
        X_{10}\rar&X_{11}
      \end{tikzcd}
    \end{equation*}
    in $\C$. In this case $X$ is coCartesian if and only if the above diagram is
    a biCartesian square.\qedhere
  \end{enumerate}
\end{example}

\begin{notation}
  Let $\C$ be a stable $\infty$-category and suppose given a $2$-cube in $\C$ of
  the form
  \begin{equation*}
    \begin{tikzcd}
      X_{00}\rar{f}\dar&X_{01}\dar\\
      X_{10}\rar{g}&X_{11}
    \end{tikzcd}
  \end{equation*}
  Functoriality of the cofibre induces a map $\cofib(f)\to\cofib(g)$ which can
  be subsequently identified with a $1$-cube in $\C$. More generally, the
  isomorphism $I^{n+1}\cong [1]\times I^n$ allows us to identify an $(n+1)$-cube
  $X$ in $\C$ with a morphism $X|_{\set{0}\times I^n}\to X|_{\set{1}\times I^n}$
  in the stable $\infty$-category $\Fun(I^n,\C)$. In particular $X$ has an
  associated cofibre
  \begin{equation}
    \label{eq:cofibre-X}
    \cofib(X)\coloneqq \cofib\left(X|_{\set{0}\times I^n}\to X|_{\set{1}\times I^n}\right)
  \end{equation}
  which itself is an object of $\Fun(I^n,\C)$, that is an $n$-cube in $\C$.
\end{notation}

The following statement is a special case of Lemma 1.2.4.15 in \cite{Lur17}. It
provides an inductive characterisation of coCartesian $n$-cubes in stable
$\infty$-categories.

\begin{proposition}
  \th\label{prop:k-cube_cofibre-criterion} Let $\C$ be a stable
  $\infty$-category and $n\geq0$ an integer. Let $X$ be an $(n+1)$-cube in $\C$
  which we identify with a morphism $X|_{\set{0}\times I^n}\to X|_{\set{1}\times
    I^n}$ in the stable $\infty$-category $\Fun(I^n,\C)$. Then, $X$ is a
  coCartesian $(n+1)$-cube if and only if its cofibre $\cofib(X)$ is a
  coCartesian $n$-cube.
\end{proposition}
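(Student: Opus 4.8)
The plan is to realise both coCartesian conditions as the single statement that one auxiliary square is a pushout, exploiting that in a stable $\infty$-category cofibres preserve colimits and that pushout squares are detected by their cofibres. This recovers the special case of [Lur17, Lemma 1.2.4.15] indicated in the statement; here is a direct argument.

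Write $Y\coloneqq X|_{\set{0}\times I^n}$ and $Z\coloneqq X|_{\set{1}\times I^n}$ for the two facets of $X$, viewed as $n$-cubes, so that $X$ is the morphism $\alpha\colon Y\to Z$ in $\Fun(I^n,\C)$ and $W\coloneqq\cofib(X)=\cofib(\alpha)$. Cofibres in $\Fun(I^n,\C)$ are computed pointwise, whence $W_w=\cofib(Y_w\to Z_w)$ for each $w\in I^n$. First I would record that, since the cofibre functor preserves colimits, there is a canonical equivalence
\[
  \colim_{|w|<n} W_w\;\simeq\;\cofib\bigl(\colim_{|w|<n}Y_w\lra\colim_{|w|<n}Z_w\bigr).
\]
Together with $W_{1\cdots1}=\cofib(Y_{1\cdots1}\to Z_{1\cdots1})$, this identifies the comparison map $\colim_{|w|<n}W_w\to W_{1\cdots1}$ that defines coCartesianness of $W$ with the map induced on horizontal cofibres by the commutative square
\[
  (\star)\qquad
  \begin{tikzcd}
    \colim_{|w|<n}Y_w \rar\dar & \colim_{|w|<n}Z_w \dar\\
    Y_{1\cdots1}\rar & Z_{1\cdots1},
  \end{tikzcd}
\]
whose vertical maps are the canonical cocone maps to the terminal vertices. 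Using the standard fact that a square in $\C$ is coCartesian precisely when the induced map on horizontal cofibres is an equivalence (the case $n=1$, \emph{cf.}\ \th\ref{prop:n-cube_biCartesian}), we conclude that $W$ is coCartesian if and only if $(\star)$ is a pushout.

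It remains to show that $X$ is coCartesian if and only if the same square $(\star)$ is a pushout. The key step, which I expect to be the main obstacle, is the colimit-pasting identity
\[
  \colim_{I^{n+1}\setminus\{1\cdots1\}} X\;\simeq\;Y_{1\cdots1}\sqcup_{\colim_{|w|<n}Y_w}\colim_{|w|<n}Z_w.
\]
To prove it I would decompose the punctured $(n+1)$-cube $P=I^{n+1}\setminus\{1\cdots1\}$ into the full bottom facet $B=\set{0}\times I^n$, the punctured top facet $T^{\circ}=\set{1}\times(I^n\setminus\{1\cdots1\})$, and the punctured bottom facet $M=\set{0}\times(I^n\setminus\{1\cdots1\})$. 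Since $B$ has the terminal object $(0,1\cdots1)$ we have $\colim_B X\simeq Y_{1\cdots1}$, while $\colim_M X\simeq\colim_{|w|<n}Y_w$ and $\colim_{T^{\circ}}X\simeq\colim_{|w|<n}Z_w$; the point is that $P$ is obtained by gluing $B$ and $T^{\circ}$ along the common subposet $M$, which includes into $B$ and maps isomorphically onto $T^{\circ}$ by raising the first coordinate, so that computing $\colim_P X$ in stages yields the displayed pushout. Granting this, the coCartesian condition $\colim_P X\xra{\simeq}X_{1\cdots1}=Z_{1\cdots1}$ is exactly the statement that $(\star)$ is a pushout, and combining the two equivalences completes the proof.
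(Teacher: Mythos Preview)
Your argument is correct and genuinely different from the paper's. The paper's proof is a one-line citation of Lurie's Lemma~1.2.4.15 in \cite{Lur17} (with $K=I^n$), whereas you unwind that lemma by hand: you reduce both the coCartesianness of $X$ and that of $\cofib(X)$ to the single condition that the square $(\star)$ is a pushout. This buys a self-contained proof that makes visible the mechanism behind Lurie's statement; the paper's approach buys brevity and avoids the combinatorics of the punctured cube altogether.

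One place deserves tightening. Your justification of the pasting identity
\[
  \colim_{I^{n+1}\setminus\{1\cdots1\}} X\;\simeq\;Y_{1\cdots1}\amalg_{\colim_{|w|<n}Y_w}\colim_{|w|<n}Z_w
\]
says that $P$ is ``obtained by gluing $B$ and $T^{\circ}$ along $M$''. But $M$ and $T^{\circ}$ are disjoint in $P$; the map $M\to T^{\circ}$ raising the first coordinate is a morphism in $P$, not an inclusion, and $P$ is not the poset pushout $B\amalg_M T^{\circ}$. What makes your formula true is rather the combination of two facts: first, applying the decomposition principle of \th\ref{lemma:decomposition} (equivalently, \cite[Corollary~4.2.3.10]{Lur09}) to the cover $\{M,\,B,\,M\cup T^{\circ}\}$ of $P$ indexed by the span $\bullet\leftarrow\bullet\to\bullet$ shows that $\colim_P X$ is the pushout of $\colim_B X\leftarrow\colim_M X\to\colim_{M\cup T^{\circ}}X$; second, the inclusion $T^{\circ}\hookrightarrow M\cup T^{\circ}$ is cofinal (for each $(0,w)\in M$ the poset $\{(1,w')\in T^{\circ}:w\le w'\}$ has initial object $(1,w)$), so $\colim_{M\cup T^{\circ}}X\simeq\colim_{T^{\circ}}X$. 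With this in place your reduction to the square $(\star)$ goes through. A minor remark: the ``standard fact'' you invoke about squares being pushouts iff the map on cofibres is an equivalence is indeed the $n=1$ case of the proposition, but it is established independently in stable $\infty$-categories (e.g.\ via the pasting law for pushouts), so there is no circularity.
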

\begin{proof}
  Since stable $\infty$-categories admit all finite colimits (see Proposition
  1.1.3.4 in \cite{Lur17}), the claim follows by applying Lemma 1.2.4.15 in
  \cite{Lur17} in the case $K=I^n$.
\end{proof}

We illustrate \th\ref{prop:k-cube_cofibre-criterion} with a simple but important
example.

\begin{example}
  \th\label{ex:n-suspension} Let $\C$ be a stable $\infty$-category and $x$ an
  object of $\C$. By definition, the suspension of $x$ is characterised by the
  existence of a coCartesian square
  \begin{equation*}
    \begin{tikzcd}
      x\rar\dar\popb&0\dar\\
      0\rar&\Sigma(x)
    \end{tikzcd}
  \end{equation*}
  Consider now a coCartesian $3$-cube in $\C$ of the form
  \begin{equation*}
    \begin{tikzcd}[column sep=normal, row sep=small]
      x\ar{rr}\drar\ar{dd}\mycube&&0\drar\ar{dd}\\
      &0\ar[crossing over]{rr}&&0\ar{dd}\\
      0\ar{rr}\drar&&0\drar\\
      &0\ar{rr}\ar[crossing over, leftarrow]{uu}&&y
    \end{tikzcd}
  \end{equation*}
  According to \th\ref{prop:k-cube_cofibre-criterion}, taking point-wise
  cofibres of the above $3$-cube yields a coCartesian square
  \begin{equation*}
    \begin{tikzcd}
      \Sigma(x)\rar\dar\popb&0\dar\\
      0\rar&y
    \end{tikzcd}
  \end{equation*}
  We conclude that there is an equivalence $y\simeq\Sigma^2(x)$ in $\C$. More
  generally, if $n\geq1$ and $X$ is a coCartesian $(n+1)$-cube in $\C$ such that
  for each $v\in I^{n+1}$ with $0<|v|<n+1$ the object $X_v$ is a zero object of
  $\C$, then there is an equivalence $X_{1\cdots1}\simeq\Sigma^n(X_{0\cdots0})$
  in $\C$.
\end{example}

As an immediate consequence of \th\ref{prop:k-cube_cofibre-criterion} we obtain
an inductive criterion to verify whether an $n$-cube in a stable
$\infty$-category is coCartesian.

\begin{corollary}
  \th\label{coro:n-cofib} Let $\C$ be a stable $\infty$-category and $n\geq1$ an
  integer. An $n$-cube $X$ in $\C$ is coCartesian if and only if
  \begin{equation}
    \cofib^n(X)\coloneqq \underbrace{\cofib(\cofib(\cdots\cofib}_{n\text{ times}}(X)))
  \end{equation}
  is a zero object of $\C$.
\end{corollary}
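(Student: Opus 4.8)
The plan is to argue by induction on $n$, with the essential content already packaged in \th\ref{prop:k-cube_cofibre-criterion}. The key bookkeeping observation is that, under the identification $I^{n+1}\cong[1]\times I^n$ used to define the cofibre of a cube, one has $\cofib^{n+1}(X)=\cofib^n(\cofib(X))$ for every $(n+1)$-cube $X$; here $\cofib(X)$ is an $n$-cube, and iterating the cofibre operation $n$ further times produces a $0$-cube, that is, an object of $\C$. I would first record this identity as a direct consequence of the definitions, so that the two sides of the asserted equivalence can be matched up dimension by dimension.

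For the base case $n=1$, a $1$-cube $X$ is a morphism $X_0\to X_1$ in $\C$, and $\cofib^1(X)=\cofib(X)$ is the $0$-cube given by the object $\cofib(X_0\to X_1)$. As noted in the low-dimensional examples following \th\ref{prop:n-cube_biCartesian}, such a $1$-cube is coCartesian precisely when $X_0\to X_1$ is an equivalence, which in a stable $\infty$-category holds if and only if its cofibre is a zero object. This settles the claim for $n=1$.

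For the inductive step, suppose the statement holds for $n$ and let $X$ be an $(n+1)$-cube in $\C$. By \th\ref{prop:k-cube_cofibre-criterion}, the cube $X$ is coCartesian if and only if its cofibre $\cofib(X)$ is a coCartesian $n$-cube. Applying the inductive hypothesis to the $n$-cube $\cofib(X)$, the latter condition is equivalent to $\cofib^n(\cofib(X))$ being a zero object of $\C$. Combining this with the identity $\cofib^n(\cofib(X))=\cofib^{n+1}(X)$ shows that $X$ is coCartesian if and only if $\cofib^{n+1}(X)$ is a zero object, which completes the induction.

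I expect no serious obstacle: all of the homotopical content of the corollary is carried by the cofibre criterion of \th\ref{prop:k-cube_cofibre-criterion}, and the only point requiring genuine care is the purely formal verification of the identity $\cofib^{n+1}(X)=\cofib^n(\cofib(X))$, which amounts to checking that the iterated cofibre is compatible with the successive identifications $I^{k+1}\cong[1]\times I^k$. This is a routine unwinding of the definition of $\cofib$ and does not involve any stability-theoretic input beyond what is already available.
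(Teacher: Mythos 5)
Your proof is correct and follows essentially the same route as the paper, which likewise obtains the claim by iterated application of \th\ref{prop:k-cube_cofibre-criterion}; the paper simply runs the iteration all the way down to $0$-cubes (using that a $0$-cube is coCartesian if and only if it is a zero object) rather than stopping the induction at $1$-cubes as you do, and leaves the bookkeeping identity $\cofib^{n+1}(X)=\cofib^{n}(\cofib(X))$ implicit.
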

\begin{proof}
  The claim follows by iterated application of
  \th\ref{prop:k-cube_cofibre-criterion} since a $0$-cube in $\C$, that is an
  object of $\C$, is coCartesian if and only if it is a zero object of $\C$.
\end{proof}

\begin{remark}
  Let $\C$ be a stable $\infty$-category, $n\geq1$ an integer, and $X$ an
  $n$-cube in $\C$. The object $\cofib^n(X)$ appearing in \th\ref{coro:n-cofib}
  is sometimes called the `iterated cofibre of $X$', see for example
  \cite{BG18}.
\end{remark}

We illustrate \th\ref{coro:n-cofib} with a simple observation.

\begin{corollary}
  \th\label{coro:faces} Let $\C$ be a stable $\infty$-category and $n\geq1$ an
  integer. Let $X$ be an $(n+1)$-cube in $\C$. If two out of the three cubical
  diagrams $X$, $X|_{\set{0}\times I^n}$, and $X|_{\set{1}\times I^n}$ are
  coCartesian, then so is the third.
\end{corollary}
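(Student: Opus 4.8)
The plan is to reduce the three-fold statement to the elementary two-out-of-three property for cofibre sequences in $\C$, using the iterated-cofibre criterion for coCartesianness established in \th\ref{coro:n-cofib}. First I would invoke the identification $I^{n+1}\cong[1]\times I^n$ to regard $X$ as a single morphism $A\to B$ in the stable $\infty$-category $\Fun(I^n,\C)$, where $A\coloneqq X|_{\set{0}\times I^n}$ and $B\coloneqq X|_{\set{1}\times I^n}$ denote the two opposite facets. Forming the cofibre of this morphism then produces a cofibre sequence of $n$-cubes
\[
  A\lra B\lra \cofib(X)
\]
in $\Fun(I^n,\C)$, where by construction $\cofib^{n}(\cofib(X))=\cofib^{n+1}(X)$.

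Next I would observe that the cofibre functor $\cofib\colon\Fun([1],\D)\to\D$ is exact for any stable $\infty$-category $\D$: the identification $\cofib(f)\simeq\Sigma\,\fib(f)$ shows that it preserves both finite limits and finite colimits. Consequently the iterated cofibre $\cofib^{n}\colon\Fun(I^n,\C)\to\C$, obtained by contracting the $n$ coordinate directions of an $n$-cube one at a time, is a composite of exact functors and is therefore itself exact. Applying $\cofib^{n}$ to the cofibre sequence above yields a cofibre sequence
\[
  \cofib^{n}(A)\lra\cofib^{n}(B)\lra\cofib^{n+1}(X)
\]
in $\C$.

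Finally I would conclude by combining \th\ref{coro:n-cofib} with the stability of $\C$. By that corollary the $n$-cubes $A$ and $B$ are coCartesian exactly when $\cofib^{n}(A)$ and $\cofib^{n}(B)$ are zero objects, and the $(n+1)$-cube $X$ is coCartesian exactly when $\cofib^{n+1}(X)$ is a zero object (equivalently, by \th\ref{prop:k-cube_cofibre-criterion}, exactly when $\cofib(X)$ is coCartesian). Since every cofibre sequence in a stable $\infty$-category is simultaneously a fibre sequence, its three terms satisfy the two-out-of-three property for being a zero object; hence if two of $\cofib^{n}(A)$, $\cofib^{n}(B)$, $\cofib^{n+1}(X)$ vanish, so does the third. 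Translating back through the stated equivalences gives precisely the assertion that any two of $X$, $A$, $B$ being coCartesian forces the third to be coCartesian as well. I do not expect any genuine obstacle here: the single point meriting care is the exactness of $\cofib^{n}$ (equivalently, the fact that the coCartesian $n$-cubes form the kernel of this exact functor, and kernels of exact functors are closed under such two-out-of-three arguments), which is formal.
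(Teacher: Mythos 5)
Your proof is correct and takes essentially the same route as the paper: the paper's own proof consists of applying \th\ref{coro:n-cofib} to the cofibre sequence $\cofib^n(X|_{\set{0}\times I^n})\lra\cofib^n(X|_{\set{1}\times I^n})\lra\cofib^{n+1}(X)$ in $\C$ and invoking the two-out-of-three property for zero objects in a cofibre sequence. The only difference is that you justify the existence of this cofibre sequence explicitly, via exactness of the iterated cofibre functor $\cofib^n\colon\Fun(I^n,\C)\to\C$, a point the paper leaves implicit.
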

\begin{proof}
  The claim follows by applying \th\ref{coro:n-cofib} to the cofibre sequence
  \begin{equation*}
    \cofib^n(X|_{\set{0}\times I^n})\lra\cofib^n(X|_{\set{1}\times I^n})\lra\cofib^{n+1}(X),
  \end{equation*}
  of objects in $\C$, keeping in mind the elementary fact that if two out of the
  three objects in a cofibre sequence in a stable $\infty$-category are zero
  objects, then so is the third.
\end{proof}

\begin{corollary}
  \th\label{coro:coCartesian_stable_subcategory} Let $\C$ be a stable
  $\infty$-category and $n\geq1$ an integer. The subcategory
  $\Fun^{\Ex}(I^n,\C)$ of the stable $\infty$-category $\Fun(I^n,\C)$ spanned by
  the coCartesian $n$-cubes in $\C$ is closed under fibres and cofibres. In
  particular $\Fun^{\Ex}(I^n,\C)$ is a stable $\infty$-category.
\end{corollary}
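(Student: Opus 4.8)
The plan is to exhibit $\Fun^{\Ex}(I^n,\C)$ as the kernel of a single exact functor and then invoke the standard criterion for stable subcategories. Recall first that, since $\C$ is stable, the functor category $\Fun(I^n,\C)$ is again stable, with fibres and cofibres computed pointwise (see Section 1.1.3 in \cite{Lur17}). Next, for each $k\ge 1$ the identification $I^{k}\cong[1]\times I^{k-1}$ used in the notation preceding \th\ref{prop:k-cube_cofibre-criterion} lets us regard a $k$-cube as a morphism in $\Fun(I^{k-1},\C)$, so that taking its cofibre defines a functor
\[
  \cofib\colon\Fun(I^{k},\C)\lra\Fun(I^{k-1},\C).
\]
I would first record that this cofibre functor is exact: being given by a finite colimit it preserves finite colimits, and a functor between stable $\infty$-categories that preserves finite colimits is automatically exact (Section 1.1.4 in \cite{Lur17}). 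Composing $n$ of these functors produces the iterated cofibre
\[
  \cofib^{n}\colon\Fun(I^{n},\C)\lra\Fun(I^{0},\C)=\C,
\]
which is therefore exact as well.

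The second step is to reinterpret the defining condition. By \th\ref{coro:n-cofib}, an $n$-cube $X$ is coCartesian precisely when $\cofib^{n}(X)$ is a zero object, so that $\Fun^{\Ex}(I^{n},\C)$ is exactly the full subcategory of $\Fun(I^{n},\C)$ spanned by the objects sent to a zero object by $\cofib^{n}$. Since $\cofib^{n}$ is exact we have $\cofib^{n}(0)\simeq 0$, so the zero cube lies in $\Fun^{\Ex}(I^{n},\C)$.

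For closure under fibres and cofibres I would argue directly. Let $f\colon X\to Y$ be a morphism between coCartesian $n$-cubes and form its cofibre $\cofib(f)$ in the stable category $\Fun(I^{n},\C)$. Because $\cofib^{n}$ is exact it commutes with the formation of cofibres, whence
\[
  \cofib^{n}(\cofib(f))\simeq\cofib\bigl(\cofib^{n}(f)\bigr).
\]
By hypothesis $\cofib^{n}(X)$ and $\cofib^{n}(Y)$ are zero objects, so $\cofib^{n}(f)$ is a map between zero objects and its cofibre vanishes; thus $\cofib^{n}(\cofib(f))\simeq 0$ and $\cofib(f)$ is again coCartesian. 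The identical argument with fibres in place of cofibres gives closure under fibres. Finally, a full subcategory of a stable $\infty$-category that contains a zero object and is closed under the formation of fibres and cofibres is itself stable, with exact inclusion (Section 1.1.3 in \cite{Lur17}); this yields the last assertion. The only genuinely substantive point is the exactness of the single cofibre functor, and everything after it is formal; I therefore expect no serious obstacle, the main care being to phrase the iterated-cofibre bookkeeping cleanly.
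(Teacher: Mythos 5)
Your proof is correct, and it takes a genuinely different (though closely related) route from the paper's. The paper argues objectwise: a morphism $f\colon X\to Y$ of coCartesian $n$-cubes is identified with an $(n+1)$-cube, which is coCartesian by the two-out-of-three property of \th\ref{coro:faces}; then \th\ref{prop:k-cube_cofibre-criterion} and its dual (together with \th\ref{prop:n-cube_biCartesian}, to pass between Cartesian and coCartesian) show that the cofibre and the fibre of this $(n+1)$-cube are again coCartesian. You instead exhibit $\Fun^{\Ex}(I^n,\C)$ as the subcategory of objects annihilated by the exact functor $\cofib^n\colon\Fun(I^n,\C)\to\C$ and conclude formally, using that exact functors commute with $\fib$ and $\cofib$. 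At bottom the two arguments perform the same computation: the paper's proof of \th\ref{coro:faces} is precisely the cofibre sequence
\[
\cofib^n(X|_{\set{0}\times I^n})\lra\cofib^n(X|_{\set{1}\times I^n})\lra\cofib^{n+1}(X),
\]
which is your equivalence $\cofib^n(\cofib(f))\simeq\cofib\bigl(\cofib^n(f)\bigr)$ in disguise. The difference lies in what each route must establish: the paper never needs the cofibre construction to be a functor, staying entirely within the cube lemmas of the appendix, whereas your route needs the one-step cofibre functor $\Fun(I^k,\C)\to\Fun(I^{k-1},\C)$ to be exact. Your justification of that point is the only loose spot -- ``being given by a finite colimit'' does not by itself imply preservation of finite colimits -- but it is easily repaired: this functor is left adjoint to the functor sending a $(k-1)$-cube $Y$ to the $k$-cube $(0\to Y)$, hence preserves all colimits, and a functor between stable $\infty$-categories preserving finite colimits is exact. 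In exchange, your packaging isolates a reusable general principle (the full subcategory of objects killed by an exact functor contains the zero object and is closed under fibres and cofibres, hence is stable), while the paper's version stays self-contained within its appendix on cubes.
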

\begin{proof}
  Recall that an $n$-cube in $\C$ is coCartesian if and only if it is
  biCartesian, see \th\ref{prop:n-cube_biCartesian}. Let $X\colon [1]\times
  I^n\to\C$ be an $(n+1)$-cube which we identify with a morphism in
  $\Fun(I^n,\C)$. By \th\ref{coro:faces}, if the $n$-cubes $X|_{\set{0}\times
    I^n}$ and $X|_{\set{1}\times I^n}$ are coCartesian, then $X$ is also
  coCartesian. Finally, \th\ref{prop:k-cube_cofibre-criterion} and its dual show
  that both the cofibre (resp.\ the fibre) of $X$, taken in the stable
  $\infty$-category $\Fun(I^n,\C)$, is coCartesian (resp.\ Cartesian). The claim
  follows.
\end{proof}

As a further application of \th\ref{prop:k-cube_cofibre-criterion} we sketch a
proof of the `pasting lemma' for coCartesian $n$-cubes in stable
$\infty$-categories.

\begin{corollary}
  \th\label{coro:pasting_lemma_cubes} Let $\C$ be a stable $\infty$-category and
  $n\geq1$ an integer. Let $X\colon I^n\times[2]\to\C$ be a diagram in $\C$.
  Suppose that the $(n+1)$-cube $X|_{I^n\times\set{0,1}}$ is coCartesian. Then,
  the $(n+1)$-cube $X|_{I^n\times\set{1,2}}$ is coCartesian if and only if the
  $(n+1)$-cube $X|_{I^n\times\set{0,2}}$ is coCartesian.
\end{corollary}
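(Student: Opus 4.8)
The plan is to reduce the statement, via iterated cofibres, to the trivial fact that composing with an equivalence does not change whether a morphism is an equivalence. Write $A_j \coloneqq X|_{I^n\times\{j\}}$ for $j\in\{0,1,2\}$; these are $n$-cubes in $\C$, i.e.\ objects of the stable $\infty$-category $\Fun(I^n,\C)$, and the functoriality of $X$ in the $[2]$-coordinate exhibits $X$ as a composable pair $A_0 \xra{\alpha} A_1 \xra{\beta} A_2$. Under the isomorphisms $I^n\times\{j,k\}\cong I^{n+1}$, the three $(n+1)$-cubes appearing in the statement are precisely the morphisms $\alpha$, $\beta$, and $\gamma\coloneqq\beta\circ\alpha$.

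First I would invoke \th\ref{coro:n-cofib}, according to which an $(n+1)$-cube is coCartesian if and only if its iterated cofibre $\cofib^{n+1}(-)\in\C$ is a zero object. The key point is that this iterated cofibre may be computed by taking the one-directional cofibres of \th\ref{prop:k-cube_cofibre-criterion} in any order, since finite colimits commute with finite colimits. Taking first the $n$ cofibres in the $I^n$-directions defines an exact functor $\cofib^n\colon\Fun(I^n,\C)\to\C$, and performing the remaining cofibre in the $[2]$-direction afterwards yields
\[
  \cofib^{n+1}(\alpha)\simeq\cofib(a),\quad \cofib^{n+1}(\beta)\simeq\cofib(b),\quad \cofib^{n+1}(\gamma)\simeq\cofib(b\circ a),
\]
where $a\coloneqq\cofib^n(\alpha)$ and $b\coloneqq\cofib^n(\beta)$ are morphisms in $\C$; here I use that $\cofib^n$ is a functor, so that $\cofib^n(\gamma)=b\circ a$.

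It then remains to finish in $\C$. By hypothesis the $(n+1)$-cube $\alpha$ is coCartesian, so $\cofib(a)\simeq 0$; equivalently, since a $1$-cube is coCartesian exactly when it is an equivalence, the map $a$ is an equivalence in $\C$. Consequently $b$ is an equivalence if and only if $b\circ a$ is one, that is $\cofib(b)\simeq 0$ if and only if $\cofib(b\circ a)\simeq 0$. Translating back through \th\ref{coro:n-cofib}, this says exactly that $\beta=X|_{I^n\times\{1,2\}}$ is coCartesian if and only if $\gamma=X|_{I^n\times\{0,2\}}$ is coCartesian, which is the assertion.

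The only genuinely nontrivial step is the reordering of iterated cofibres in the second paragraph, and I expect this to be the main point requiring care; it is, however, a routine consequence of the commutation of finite colimits together with the functoriality and exactness of the one-directional cofibre of \th\ref{prop:k-cube_cofibre-criterion}. Alternatively, one can avoid the reordering and argue entirely with $n$-cubes: the composable pair $\alpha,\beta$ yields, via the $\infty$-categorical octahedral axiom applied in the stable $\infty$-category $\Fun(I^n,\C)$, a cofibre sequence $\cofib(\alpha)\to\cofib(\gamma)\to\cofib(\beta)$ of $n$-cubes, to which \th\ref{coro:faces} applies directly to give the same two-out-of-three conclusion.
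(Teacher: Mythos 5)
Your proof is correct, and it runs on the same engine as the paper's: reduce the dimension of the cubes by taking one-directional cofibres, via \th\ref{prop:k-cube_cofibre-criterion} and \th\ref{coro:n-cofib}. The difference is where the reduction terminates. The paper stops at $n=1$ and invokes Lurie's pasting lemma for pushout squares (Lemma 4.4.2.1 in \cite{Lur09}) as the base case, leaving the reduction itself to the reader; you carry the reduction all the way into $\C$, where the base case is the tautology that for composable morphisms $a,b$ with $a$ an equivalence, $b$ is an equivalence if and only if $b\circ a$ is. What this buys is a self-contained argument with no external citation; what it costs is the reordering of iterated cofibres, which you correctly isolate as the one nontrivial point. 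Your justification (cofibres in $\Fun(I^n,\C)$ are computed pointwise and finite colimits commute) is adequate, but note that the paper already contains a cleaner way to dispose of it: by \th\ref{prop:tcofib-cofibn}, any ordering of the one-directional cofibres computes the total cofibre, which is defined without reference to any ordering of the coordinates, so all orderings agree. (The paper's own sketched reduction needs the same kind of commutation---restriction to $I^{n-1}\times\set{j,k}$ must commute with the cofibres being taken---so your route is not more expensive.) Your octahedral alternative is sound as well, up to one small imprecision: \th\ref{coro:faces} concerns an $(n+1)$-cube and its two opposite facets, not a cofibre sequence of $n$-cubes, so it does not apply literally ``directly''; one first uses \th\ref{prop:k-cube_cofibre-criterion} to replace the sequence $\cofib(\alpha)\to\cofib(\gamma)\to\cofib(\beta)$ by the $(n+1)$-cube underlying the morphism $\cofib(\alpha)\to\cofib(\gamma)$, whose cofibre is $\cofib(\beta)$, and to trade coCartesianness of $\alpha$, $\beta$, $\gamma$ for that of their cofibres; after this translation it is precisely the two-out-of-three of \th\ref{coro:faces}.
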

\begin{proof}
  The case $n=1$ is an $\infty$-categorical version of the classical `pasting
  lemma' for coCartesian squares and is proven in Lemma 4.4.2.1 in \cite{Lur09}.
  The general case can be reduced to the case $n=1$ by iterated application of
  \th\ref{prop:k-cube_cofibre-criterion}. We leave the details to the reader.
\end{proof}

\begin{example}
  Let $\C$ be a stable $\infty$-category. A diagram $X\colon I^2\times[2]\to\C$
  can be visualised as two $3$-cubes
  \begin{equation*}
    \begin{tikzcd}[column sep=tiny, row sep=tiny]
      X_{000}\ar{rr}\drar\ar{dd}&&X_{001}\ar{rr}\drar\ar{dd}&&X_{002}\drar\ar{dd}\\
      &X_{010}\ar[crossing over]{rr}&&X_{011}\ar[crossing over]{rr}&&X_{012}\\
      X_{100}\ar{rr}\drar&&X_{101}\ar{rr}\drar&&X_{102}\drar\\
      &X_{110}\ar{rr}\ar[crossing
      over,leftarrow]{uu}&&X_{111}\ar{rr}\ar[crossing
      over,leftarrow]{uu}&&X_{112}\ar[leftarrow]{uu}
    \end{tikzcd}
  \end{equation*}
  glued at a common $2$-cube. Provided that the leftmost $3$-cube is
  coCartesian, \th\ref{coro:pasting_lemma_cubes} states that the rightmost
  $3$-cube is coCartesian if and only if the composite $3$-cube
  \begin{equation*}
    \begin{tikzcd}[column sep=tiny, row sep=tiny]
      X_{000}\ar{rr}\drar\ar{dd}&&X_{002}\ar{dd}\drar\\
      &X_{010}\ar[crossing over]{rr}&&X_{012}\\
      X_{100}\ar{rr}\drar&&X_{102}\drar\\
      &X_{110}\ar{rr}\ar[crossing over,leftarrow]{uu}&&X_{112}\ar[crossing
      over,leftarrow]{uu}
    \end{tikzcd}
  \end{equation*}
  is coCartesian.
\end{example}

\subsection{The total cofibre of an $n$-cube in a stable $\infty$-category}

Recall the inductive criterion to verify whether an $n$-cube in a stable
$\infty$-category is coCartesian provided by \th\ref{coro:n-cofib}. In view of
\th\ref{def:n-cube_exactness_conditions} and the aforementioned criterion, the
following definition is rather natural.

\begin{definition}
  \th\label{def:total_cofibre} Let $\C$ be a stable $\infty$-category, $n\geq0$
  an integer, and $X$ an $n$-cube in $\C$.
  \begin{enumerate}
  \item The \emph{total cofibre of $X$}, denoted by $\tcofib(X)$, is defined as
    the cofibre of the canonical map $\colim_{|v|<n}X_v\to X_{1\cdots1}$.
  \item Dually, the \emph{total fibre of $X$}, denoted by $\tfib(X)$, is defined
    as the fibre of the canonical map $X_{0\cdots0}\to\lim_{0<|v|}X_v$.
  \end{enumerate}
  Note that if $n=0$, then there are equivalences $\tcofib(X)\simeq X_\emptyset$
  and $\tfib(X)\simeq X_\emptyset$.
\end{definition}

\begin{remark}
  The total cofibre of an $n$-cube is investigated in \cite{BG18} in the related
  framework of (stable) derivators, albeit with a slightly different focus.
\end{remark}

The total cofibre of an $n$-cube in a stable $\infty$-category behaves much like
the cofibre of a morphism. We begin to construct this analogy with an elementary
observation.

\begin{lemma}
  \th\label{lemma:t-cofib} Let $\C$ be a stable $\infty$-category and $n\geq0$
  an integer. An $n$-cube $X$ in $\C$ is coCartesian if and only if $\tcofib(X)$
  is a zero object of $\C$.
\end{lemma}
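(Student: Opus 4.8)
The plan is to observe that the statement is, after unwinding definitions, an instance of the standard fact that a morphism in a stable $\infty$-category is an equivalence precisely when its cofibre vanishes. First I would note that, since $\C$ is stable, it admits all finite colimits (Proposition 1.1.3.4 in \cite{Lur17}), so that the colimit of the punctured $n$-cube exists and the canonical comparison map
\[
  g\colon \colim_{|v|<n} X_v \lra X_{1\cdots1}
\]
is well defined (for $n=0$ the source is the empty colimit, i.e.\ a zero object). By \th\ref{def:n-cube_exactness_conditions}, the $n$-cube $X$ is coCartesian if and only if $g$ is an equivalence, while by \th\ref{def:total_cofibre} we have $\tcofib(X)\simeq\cofib(g)$. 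Thus the lemma reduces to the assertion that $g$ is an equivalence if and only if $\cofib(g)$ is a zero object of $\C$.

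Next I would supply this reduction step by means of the defining pushout square
\[
  \begin{tikzcd}
    \colim_{|v|<n} X_v \ar{r}{g}\ar{d} & X_{1\cdots1}\ar{d}\\
    0 \ar{r} & \tcofib(X).
  \end{tikzcd}
\]
If $g$ is an equivalence, then its cobase change $0\to\tcofib(X)$ is again an equivalence (pushouts preserve equivalences in any $\infty$-category), whence $\tcofib(X)$ is a zero object. Conversely, assume $\tcofib(X)\simeq 0$. Since $\C$ is stable, the square above is not only a pushout but also a pullback (Proposition 1.1.3.4 in \cite{Lur17}); consequently the induced map on vertical fibres is an equivalence, so $\fib(g)\simeq\fib(0\to\tcofib(X))\simeq 0$, and a morphism with vanishing fibre in a stable $\infty$-category is an equivalence. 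This establishes both implications.

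The argument carries essentially no obstacle, the content being purely definitional once stability is invoked: the only points requiring mild care are the existence of the finite colimit defining $g$ and the coincidence of pushout and pullback squares, both guaranteed by the hypotheses. One could alternatively deduce the result by identifying $\tcofib(X)$ with the iterated cofibre $\cofib^n(X)$ of \th\ref{coro:n-cofib} and appealing to that corollary, but the direct argument above is the most economical.
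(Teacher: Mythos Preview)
Your proof is correct and follows essentially the same approach as the paper: both reduce the statement to the fact that a morphism in a stable $\infty$-category is an equivalence if and only if its cofibre vanishes, after unwinding the definitions of ``coCartesian'' and ``total cofibre''. Your version is simply more detailed, spelling out the biCartesian square argument that the paper leaves implicit; note only that the alternative route you mention at the end (via \th\ref{coro:n-cofib}) would require the identification $\tcofib(X)\simeq\cofib^n(X)$, which in the paper is established only later in \th\ref{prop:tcofib-cofibn}, so it is not actually available at this point.
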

\begin{proof}
  The claim follows immediately from \th\ref{def:n-cube_exactness_conditions}
  and the definition of the total cofibre of $X$, keeping in mind that a
  morphism in a stable $\infty$-category is an equivalence if and only if its
  cofibre is a zero object of $\C$.
\end{proof}

Let $\C$ be a stable $\infty$-category. By definition, the cofibre of a morphism
$f\colon x\to y$ in $\C$ is characterised by the existence of a coCartesian
square of the form
\begin{equation*}
  \begin{tikzcd}
    x\rar{f}\dar\popb&y\dar\\
    0\rar&\cofib(f)
  \end{tikzcd}
\end{equation*}
The total cofibre of an $n$-cube in a stable $\infty$-category is characterised
by an analogous universal property. To prove this we need the following
auxiliary lemma which is a special case of Corollary 4.2.3.10 in \cite{Lur09}.

\begin{lemma}
  \th\label{lemma:decomposition} Let $\P$ be a poset and $\setP{\P_I\subset
    \P}{I\in\J}$ a collection of subposets of $\P$ such that
  $\P=\bigcup_{I\in\J} \P_I$. We view $\J$ as a poset with respect to the
  partial order induced by $\P$, that is $I\leq J$ if $\P_I\subseteq\P_J$.
  Suppose that for each finite chain $\sigma=\set{\sigma_0\leq
    \sigma_1\leq\cdots\leq \sigma_m}$ in $\P$ the poset
  \begin{equation*}
    \J_\sigma\coloneqq \setP{I\in\J}{\sigma\subseteq\P_I}
  \end{equation*}
  has a contractible nerve. Let $\C$ be an $\infty$-category which admits
  colimits of shape $\P_I$ for each $I\in\J$. Then, for each functor
  $p\colon\NNN(\P)\to\C$, colimits of $p$ can be identified with colimits of a
  diagram $q\colon\NNN(\J)\to\C$ characterised by the following properties:
  \begin{enumerate}
  \item The object $q_I$ of $\C$ is a colimit of the diagram
    $p|_{\P_I}\colon\P_I\to\C$.
  \item For each $I,J\in\J$ such that $I\leq J$ the map $q_I\to q_J$ is induced
    by the inclusion $\P_I\subseteq\P_J$ via the universal property of $q_I$.\qedhere
  \end{enumerate}

\end{lemma}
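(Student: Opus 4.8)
The plan is to realise both colimits in the statement through a single auxiliary poset fibred over $\J$. Let $\widetilde{\P}$ be the poset of pairs $(I,x)$ with $I\in\J$ and $x\in\P_I$, ordered by declaring $(I,x)\le(J,y)$ precisely when $I\le J$ in $\J$ and $x\le y$ in $\P$; here $I\le J$ forces $\P_I\subseteq\P_J$, so the condition $x\le y$ is meaningful. There are two evident monotone maps: the projection $\pi\colon\widetilde{\P}\to\P$, $(I,x)\mapsto x$, and the map $\rho\colon\widetilde{\P}\to\J$, $(I,x)\mapsto I$, whose fibre over $I$ is isomorphic to $\P_I$. The asserted identification $\colim_{\P}p\simeq\colim_{\J}q$ will be obtained by routing both sides through $\colim_{\widetilde{\P}}(p\circ\pi)$, establishing the two comparisons separately.

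First I would treat the comparison with the $\J$-side, which needs no hypothesis. The claim is that $q$ is the left Kan extension $\rho_{!}(p\circ\pi)$ along $\rho$, so that $\colim_{\J}q\simeq\colim_{\widetilde{\P}}(p\circ\pi)$ by transitivity of colimits (a left Kan extension followed by a colimit computes the colimit over the source). By the pointwise formula for $\rho_{!}$, it suffices to check that for each $I\in\J$ the inclusion of the fibre $\P_I\cong\rho^{-1}(I)$ into the comma poset $\widetilde{\P}\times_{\J}\J_{/I}$ is cofinal: for an object $(J,z)$ of that comma poset (with $J\le I$ and $z\in\P_J\subseteq\P_I$) the relevant comma poset of the inclusion is $\setP{x\in\P_I}{x\ge z}$, which has the initial object $z$ and is therefore weakly contractible. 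Joyal's criterion (Theorem 4.1.3.1 in \cite{Lur09}) then yields $(\rho_{!}(p\circ\pi))(I)\simeq\colim_{\P_I}p|_{\P_I}=q_I$, with the transition maps of the second part of the statement.

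The substantive step, where the hypothesis enters, is the cofinality of $\pi\colon\widetilde{\P}\to\P$; granting it, $\colim_{\widetilde{\P}}(p\circ\pi)\simeq\colim_{\P}p$. By Theorem 4.1.3.1 in \cite{Lur09} this is equivalent to the weak contractibility of the relevant comma posets $\widetilde{\P}\times_{\P}\P_{x/}$. Fibering such a comma poset further over $\J$ and analysing the resulting pieces leads one to the index posets $\J_\sigma=\setP{I\in\J}{\sigma\subseteq\P_I}$ associated to the finite chains $\sigma$ of $\P$. Conceptually, $\NNN(\widetilde{\P})$ is the homotopy colimit over $\J$ of the diagram $I\mapsto\NNN(\P_I)$ (Thomason's identification of the Grothendieck construction), and the assumed contractibility of every $\NNN(\J_\sigma)$ is exactly the simplexwise input which, in the spirit of Quillen's Theorem A and the nerve lemma, forces the comparison $\NNN(\widetilde{\P})\to\NNN(\P)$ to be an equivalence and, more precisely, forces $\pi$ to be cofinal.

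I expect this last cofinality to be the main obstacle, since simplexwise contractibility of the index posets $\J_\sigma$ is genuinely more delicate than contractibility over single elements and is precisely what upgrades a levelwise weak equivalence to a cofinal map. Rather than reproving Joyal's criterion by hand in this case, the cleanest route is to observe that the whole statement is an instance of the colimit-decomposition machinery of \S4.2.3 in \cite{Lur09}: matching the fibration $\rho\colon\widetilde{\P}\to\J$ and the cover $\set{\P_I}$ to the data of Corollary 4.2.3.10 there, the hypothesis on the $\NNN(\J_\sigma)$ is exactly the contractibility condition required, and the conclusion is the asserted computation of $\colim_{\P}p$ as $\colim_{\J}q$.
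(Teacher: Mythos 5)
Your proposal is correct and ultimately takes the same route as the paper: the paper's entire proof is the single citation of Corollary 4.2.3.10 in \cite{Lur09}, applied with $K=\NNN(\P)$ and the functor $I\mapsto\P_I$, which is exactly where your final paragraph lands. The Grothendieck-construction scaffolding in your first three paragraphs---including the cofinality of $\pi$, which you rightly flag as the one genuinely hard step and leave unproven---is in substance a sketch of the internals of that corollary rather than an independent argument.
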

\begin{proof}
  The claim follows by applying Corollary 4.2.3.10 in \cite{Lur09} in the case
  $K\coloneqq \NNN(\P)$ to the functor $F\colon\J\to 2^\J$ given by $I\mapsto\P_I$, see
  also Remark 4.2.3.9 in \cite{Lur09}.
\end{proof}

\begin{proposition}
  \th\label{prop:tcofib_characterisation} Let $\C$ be a stable
  $\infty$-category, $n$ a positive integer, and $X$ an $n$-cube in $\C$. Then,
  there exists an $(n+1)$-cube $\widetilde{X}\colon [1]\times I^n\to\C$ with the
  following properties:
  \begin{enumerate}
  \item\label{prop:tcofib_characterisation:it:X} The $n$-cube
    $\widetilde{X}|_{\set{0}\times I^n}$ agrees with $X$.
  \item\label{prop:tcofib_characterisation:it:zeroes} For each $v\in
    \set{1}\times I^n$ with $|v|<n+1$ the object $\widetilde{X}_v$ is a zero
    object of $\C$.
  \item\label{prop:tcofib_characterisation:it:coCartesian} The $(n+1)$-cube
    $\widetilde{X}$ is coCartesian.
  \end{enumerate}
  Moreover, for every $(n+1)$-cube $Y$ in $\C$ satisfying properties
  \ref{prop:tcofib_characterisation:it:X}--\ref{prop:tcofib_characterisation:it:coCartesian}
  there is an equivalence $Y_{1,1\cdots1}\simeq\tcofib(X)$ in $\C$.
\end{proposition}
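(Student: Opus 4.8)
The plan is to construct $\widetilde{X}$ by a two-step Kan extension and then to pin down the top vertex of \emph{any} cube with the prescribed properties by decomposing the colimit that computes it. Write $P\coloneqq([1]\times I^n)\setminus\set{(1,1\cdots1)}$ for the punctured $(n+1)$-cube and $B\coloneqq\set{0}\times I^n$ for its bottom facet. First I would produce a diagram $W\colon P\to\C$ as the right Kan extension of $X$ along the fully faithful inclusion $B\hookrightarrow P$: since the inclusion is fully faithful, $W$ restricts to $X$ on $B$, and for each $v=(1,w)$ with $w\neq 1\cdots1$ the relevant comma category is empty, so $W_v$ is a limit over the empty diagram, i.e.\ a zero object of $\C$. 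Left Kan extension of $W$ along $P\hookrightarrow[1]\times I^n$ then fills in the single remaining vertex; call the result $\widetilde{X}$. Properties \ref{prop:tcofib_characterisation:it:X} and \ref{prop:tcofib_characterisation:it:zeroes} hold by construction, and property \ref{prop:tcofib_characterisation:it:coCartesian} is automatic: the only new object $(1,1\cdots1)$ dominates all of $P$, so the left Kan extension sets $\widetilde{X}_{1\cdots1}\simeq\colim_P W=\colim_{|v|<n+1}\widetilde{X}_v$ with the canonical comparison map an equivalence, which is exactly the coCartesian condition of \th\ref{def:n-cube_exactness_conditions}.

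For the ``moreover'', let $Y$ be any $(n+1)$-cube satisfying \ref{prop:tcofib_characterisation:it:X}--\ref{prop:tcofib_characterisation:it:coCartesian}. Being coCartesian, $Y_{1\cdots1}\simeq\colim_P Y|_P$, and $Y|_P$ restricts to $X$ on $B$ and to zero objects on the remaining vertices, exactly as $W$ does. I would compute this colimit by applying \th\ref{lemma:decomposition} to the cover of $P$ by the three subposets $\P_a\coloneqq B$, $\P_c\coloneqq[1]\times(I^n\setminus\set{1\cdots1})$ and their intersection $\P_b\coloneqq\set{0}\times(I^n\setminus\set{1\cdots1})$, indexed by the span poset $a\leftarrow b\to c$. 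The constituent colimits are readily identified: $\P_a$ has terminal object $(0,1\cdots1)$, so $\colim_{\P_a}Y|_P\simeq X_{1\cdots1}$; the poset $\P_b$ is the punctured $n$-cube, so $\colim_{\P_b}Y|_P\simeq\colim_{|v|<n}X_v$; and a Fubini computation along the $[1]$-direction shows $\colim_{\P_c}Y|_P\simeq0$. Hence \th\ref{lemma:decomposition} yields
\[
  Y_{1\cdots1}\simeq\colim\bigl(X_{1\cdots1}\leftarrow\colim_{|v|<n}X_v\to 0\bigr)\simeq\cofib\bigl(\colim_{|v|<n}X_v\to X_{1\cdots1}\bigr)=\tcofib(X),
\]
the last identification being \th\ref{def:total_cofibre}. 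Specialising to $\widetilde{X}$ shows in particular that the constructed cube has the asserted top vertex.

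The key point, and the main obstacle, is that the comparison map appearing in the pushout must be the \emph{canonical} map $\colim_{|v|<n}X_v\to X_{1\cdots1}$ defining the total cofibre, and not merely some map with the correct source and target. This is precisely what routing the computation through \th\ref{lemma:decomposition} buys us: there the structure maps of the resulting diagram are induced by the subposet inclusions $\P_b\subseteq\P_a$ and $\P_b\subseteq\P_c$, so that $\colim_{|v|<n}X_v\to X_{1\cdots1}$ is by construction the map induced by $(I^n\setminus\set{1\cdots1})\hookrightarrow I^n$. One could alternatively argue via \th\ref{prop:k-cube_cofibre-criterion}, which after taking cofibres along the new direction produces a cofibre sequence $X_{1\cdots1}\to Y_{1\cdots1}\to\Sigma(\colim_{|v|<n}X_v)$ that one then rotates; but identifying the connecting map there reintroduces exactly the difficulty that the decomposition avoids. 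The remaining work is the routine verification of the hypothesis of \th\ref{lemma:decomposition}: every chain in $P$ lies in $\P_a$ or in $\P_c$ (no chain can meet both $(0,1\cdots1)$ and a vertex of the top facet), and for each such chain the indexing poset $\J_\sigma$ is either a singleton or the entire span, hence has contractible nerve.
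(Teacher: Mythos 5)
Your proposal is correct and follows essentially the same route as the paper: the paper proves the ``moreover'' part by applying \th\ref{lemma:decomposition} to exactly your cover of the punctured cube (its $\P_{\set{0}}$, $\P_{\emptyset}$, $\P_{\set{1}}$ are your $\P_a$, $\P_b$, $\P_c$), identifies the three constituent colimits the same way, and concludes via the same pushout span $X_{1\cdots1}\leftarrow\colim_{|v|<n}X_v\to 0$. Your Kan-extension construction of $\widetilde{X}$ and the Fubini/cofinality computation of $\colim_{\P_c}Y|_{\P_c}\simeq 0$ simply make explicit two steps the paper dismisses as ``immediate'' and ``straightforward''.
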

\begin{proof}
  The existence of an $(n+1)$-cube $\widetilde{X}$ in $\C$ satisfying properties
  \ref{prop:tcofib_characterisation:it:X}--\ref{prop:tcofib_characterisation:it:coCartesian}
  above is immediate from the existence of a zero object and of finite colimits
  in $\C$ (taking into account the obvious fact that there are no morphisms in
  the poset
  $I^{n+1}$ from objects in $\set{1}\times I^n$ to objects in $\set{0}\times
  I^n$).

  We prove the second claim. Let $Y$ be an $(n+1)$-cube in $\C$ satisfying
  properties
  \ref{prop:tcofib_characterisation:it:X}--\ref{prop:tcofib_characterisation:it:coCartesian}.
  Observe that, since the $(n+1)$-cube $Y$ is coCartesian, the object
  $Y_{1,1\cdots1}$ is a colimit of the diagram $Y|_{|v|<n+1}$. We shall use
  \th\ref{lemma:decomposition} to exhibit $Y_{1,1\cdots1}$ as the total cofibre
  of $X$.

  Let $\J$ be the poset of proper subsets of $[1]$ and
  $\P\coloneqq I^{n+1}\setminus\set{(1,\cdots,1)}$. We define an order preserving map
  from $\J$ to the poset of subsets of $\P$ by
  \begin{align*}
    \P_\emptyset&\coloneqq \setP{v\in \set{0}\times I^n}{|v|<n},\\
    \P_{\set{0}}&\coloneqq \set{0}\times I^n,\text{ and}\\
    \P_{\set{1}}&\coloneqq \P_\emptyset\cup\setP{v\in \set{1}\times I^n}{|v|<n+1}.
  \end{align*}
  and note that $\P=\P_{\emptyset}\cup\P_{\set{0}}\cup\P_{\set{1}}$. We claim
  that the collection $\set{\P_{\emptyset},\P_{\set{0}},\P_{\set{1}}}$ satisfies
  the hypothesis of \th\ref{lemma:decomposition}. Indeed, the only subposet of
  $\J$ whose nerve is not contractible is $\J'\coloneqq \set{\set{0},\set{1}}$. But it
  is easy to verify that if a chain $\sigma$ in $\P$ is contained both in
  $\P_{\set{0}}$ and $\P_{\set{1}}$, then $\sigma$ must be contained in
  $\P_{\emptyset}$. We conclude that if $\J'\subseteq\J_\sigma$, then
  $\J_\sigma=\J$ in this case.

  The conclusion of \th\ref{lemma:decomposition} implies that $Y_{1,1\cdots1}$
  is a colimit of the diagram
  \begin{equation*}
    \begin{tikzcd}
      \colim\limits_{v\in \P_\emptyset} Y_v\rar\dar&\colim\limits_{v\in \P_{\set{0}}} Y_v\\
      \colim\limits_{v\in \P_{\set{1}}} Y_v
    \end{tikzcd}
  \end{equation*}
  where the maps are induced by the inclusions $\P_\emptyset\subset\P_{\set{0}}$
  and $\P_\emptyset\subset\P_{\set{0}}$ via the universal property of
  $\colim_{v\in\P_\emptyset}Y_v$. Moreover, by construction $\colim_{v\in
    \P_\emptyset} Y_v=\colim_{|v|<n}X_v$ while
  \begin{equation*}
    \colim_{v\in\P_{\set{0}}}Y_v=Y_{(0,1\cdots1)}=X_{1\cdots1}
  \end{equation*}
  given that $(0,1,\dots,1)$ is the maximal element in $\P_{\set{0}}$. Finally,
  utilising the fact that $Y$ satisfies property
  \ref{prop:tcofib_characterisation:it:X} it is straightforward to verify that
  the colimit of the diagram $Y|_{\P_{\set{1}}}$ is a zero object in $\C$. In
  other words, $Y_{1,1\cdots1}$ is part of a coCartesian square
  \begin{equation*}
    \begin{tikzcd}
      \colim\limits_{|v|<n} X_v\rar\dar\po&X_{1\cdots1}\dar\\
      0\rar&Y_{1,1\cdots1}
    \end{tikzcd}
  \end{equation*}
  in $\C$. This shows that $Y_{1,1\cdots1}$ can be identified with the total
  cofibre of $X$, which is what we needed to prove.
\end{proof}

\begin{example}
  Let $\C$ be a stable $\infty$-category and $X$ a $2$-cube in $\C$.
  \th\ref{prop:tcofib_characterisation} implies that there exists a coCartesian
  $3$-cube of the form
  \begin{equation*}
    \begin{tikzcd}[column sep=small, row sep=tiny,ampersand replacement=\&]
      X_{00}\ar{rr}\ar{dd}\drar\mycube\&\&X_{01}\ar{dd}\drar\\
      \&X_{10}\ar[crossing over]{rr}\&\&X_{11}\ar{dd}\\
      0\ar{rr}\drar\&\&0\drar\\
      \&0\ar{rr}\ar[crossing over, leftarrow]{uu}\&\&\tcofib(X)
    \end{tikzcd}
  \end{equation*}
\end{example}

The total cofibre of an $n$-cube in a stable $\infty$-category admits the
following alternative characterisation.

\begin{proposition}
  \th\label{prop:tcofib-cofibn} Let $\C$ be a stable $\infty$-category, $n\geq1$
  an integer, and $X$ an $n$-cube in $\C$. Then, there exists an equivalence
  $\tcofib(X)\simeq\cofib^n(X)$. In particular, $X$ induces a cofibre sequence
  \begin{equation}
    \label{eq:tcofib_cofibre_sequence}
    \tcofib(X|_{\set{0}\times I^n})\lra\tcofib(X|_{\set{1}\times I^n})\lra\tcofib(X)
  \end{equation}
  in $\C$.
\end{proposition}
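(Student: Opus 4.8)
The plan is to prove the equivalence $\tcofib(X)\simeq\cofib^n(X)$ by induction on $n$ and then to deduce the cofibre sequence by functoriality. For the base case $n=1$, a $1$-cube is a morphism $X_0\to X_1$, and since $\colim_{|v|<1}X_v\simeq X_0$, unwinding \th\ref{def:total_cofibre} gives $\tcofib(X)=\cofib(X_0\to X_1)=\cofib^1(X)$ directly. For the inductive step I would isolate the following key claim: for an $n$-cube $X$ with $n\geq2$, writing $\cofib(X)$ for the $(n-1)$-cube obtained by taking cofibres in the first coordinate direction, one has $\tcofib(X)\simeq\tcofib(\cofib(X))$. Granting this, the inductive hypothesis applied to the $(n-1)$-cube $\cofib(X)$ yields $\tcofib(\cofib(X))\simeq\cofib^{n-1}(\cofib(X))=\cofib^n(X)$, which closes the induction.

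To prove the key claim I would use the universal characterisation of the total cofibre from \th\ref{prop:tcofib_characterisation}. I choose the auxiliary coCartesian $(n+1)$-cube $\widetilde{X}\colon[1]\times I^n\to\C$ (with the new direction placed first) whose restriction to $\{0\}\times I^n$ is $X$, whose values on $\{1\}\times I^n$ away from the top corner are zero, and whose top corner is $\tcofib(X)$. The crucial move is then to take the cofibre of $\widetilde{X}$ not in the new direction, but in one of the original $n$ directions of $X$ — say the first — obtaining an $n$-cube $Z\colon[1]\times I^{n-1}\to\C$ whose first coordinate is again the new direction. Since coCartesianness is invariant under permuting coordinates (the defining colimit in \th\ref{def:n-cube_exactness_conditions} is symmetric in the coordinates), \th\ref{prop:k-cube_cofibre-criterion} applies to the contracted direction and shows that $Z$ is coCartesian.

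A direct inspection of the values of $\widetilde{X}$ then identifies $Z|_{\{0\}\times I^{n-1}}$ with $\cofib(X)$, shows that $Z$ vanishes on $\{1\}\times I^{n-1}$ below the top corner (the zeros of $\widetilde{X}$ survive the cofibre), and computes the top corner as $\cofib(0\to\tcofib(X))\simeq\tcofib(X)$. Thus $Z$ is precisely an auxiliary cube of the type characterising $\tcofib(\cofib(X))$, so the uniqueness clause of \th\ref{prop:tcofib_characterisation} gives $\tcofib(\cofib(X))\simeq Z_{1\cdots1}\simeq\tcofib(X)$. I expect this bookkeeping — carefully distinguishing the new coordinate from the contracted one and verifying that all three defining properties of the auxiliary cube persist after taking the cofibre — to be the only genuine obstacle; it is routine, but the coordinate conventions must be tracked precisely.

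Finally, for the cofibre sequence \eqref{eq:tcofib_cofibre_sequence} I would view an $(n+1)$-cube $X$ as a morphism $X|_{\{0\}\times I^n}\to X|_{\{1\}\times I^n}$ of $n$-cubes, producing the tautological cofibre sequence $X|_{\{0\}\times I^n}\to X|_{\{1\}\times I^n}\to\cofib(X)$ in the stable $\infty$-category $\Fun(I^n,\C)$. The functor $\cofib^n\colon\Fun(I^n,\C)\to\C$ is a composite of single-direction cofibre functors, each of which preserves finite colimits and is therefore exact; hence $\cofib^n$ is exact and sends this to a cofibre sequence $\cofib^n(X|_{\{0\}\times I^n})\to\cofib^n(X|_{\{1\}\times I^n})\to\cofib^n(\cofib(X))$ in $\C$. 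Since $\cofib^n(\cofib(X))=\cofib^{n+1}(X)$, replacing the three terms via the equivalences $\tcofib(X|_{\{0\}\times I^n})\simeq\cofib^n(X|_{\{0\}\times I^n})$, $\tcofib(X|_{\{1\}\times I^n})\simeq\cofib^n(X|_{\{1\}\times I^n})$ and $\tcofib(X)\simeq\cofib^{n+1}(X)$ just established yields exactly the desired sequence.
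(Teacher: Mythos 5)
Your proof is correct, but it is organised genuinely differently from the paper's. The paper argues directly, without induction: it takes the auxiliary coCartesian $(n+1)$-cube $\widetilde{X}$ provided by \th\ref{prop:tcofib_characterisation}, notes that $\cofib^{n+1}(\widetilde{X})$ vanishes because $\widetilde{X}$ is coCartesian (\th\ref{coro:n-cofib}), deduces that $\cofib^n(X)=\cofib^n(\widetilde{X}|_{\set{0}\times I^n})\simeq\cofib^n(\widetilde{X}|_{\set{1}\times I^n})$, and then identifies the latter with the final vertex $\widetilde{X}_{1,1\cdots1}\simeq\tcofib(X)$ via the computation---left to the reader as ``easily established''---that the iterated cofibre of a cube vanishing away from its final vertex is that vertex. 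You instead induct on $n$, isolating the contraction-invariance lemma $\tcofib(X)\simeq\tcofib(\cofib(X))$ and proving it by contracting the same auxiliary cube $\widetilde{X}$ along one of the \emph{original} directions and recognising, via the uniqueness clause of \th\ref{prop:tcofib_characterisation} together with the permutation-invariance of coCartesianness needed to apply \th\ref{prop:k-cube_cofibre-criterion} in that direction, that the contracted cube witnesses the universal property of $\tcofib(\cofib(X))$; your coordinate bookkeeping there is accurate. Both arguments rest on the same two inputs (the auxiliary cube and the directional cofibre criterion), and both obtain the final cofibre sequence the same way, by applying the defining sequence of the iterated cofibre to the $(n+1)$-cube and transporting along the equivalence just proved (your version, via exactness of $\cofib^n$ applied to the tautological sequence, is stated a bit more carefully than the paper's). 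What the paper's route buys is brevity: a single comparison, no induction. What yours buys is that the only computation left implicit is the one-dimensional triviality $\cofib(0\to c)\simeq c$, with everything else forced by universal properties; moreover your intermediate lemma---that the total cofibre is unchanged by contracting a single direction---is a clean, reusable statement that the paper never isolates, and which in effect explains \emph{why} $\tcofib$ agrees with the iterated cofibre one direction at a time.
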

\begin{proof}
  By \th\ref{prop:tcofib_characterisation} there exists a coCartesian
  $(n+1)$-cube $\widetilde{X}$ such that $\widetilde{X}|_{\set{0}\times I^n}$
  and for each $v\in\set{1}\times I^n$ with $|v|<n+1$ the object
  $\widetilde{X}_v$ is a zero object of $\C$. Moreover, there is an equivalence
  $\widetilde{X}_{1,1\cdots1}\simeq\tcofib(X)$ in $\C$. Also, by definition there is a
  cofibre sequence
  \begin{equation}
    \label{eq:n-cofib_in_proof}
    \cofib^n(\widetilde{X}|_{\set{0}\times I^n})\lra\cofib^n(\widetilde{X}|_{\set{1}\times I^n})\lra\cofib^{n+1}(\widetilde{X}).
  \end{equation}
  Since the $(n+1)$-cube $\widetilde{X}$ is coCartesian, the object
  $\cofib^{n+1}(\widetilde{X})$ is a zero object of $\C$, see
  \th\ref{coro:n-cofib}. Consequently, given that $\C$ is stable, the map
  \begin{equation*}
    \cofib^n(\widetilde{X}|_{\set{0}\times I^n})\lra\cofib^n(\widetilde{X}|_{\set{1}\times I^n})
  \end{equation*}
  is an equivalence in $\C$. The claim follows since
  $\widetilde{X}|_{\set{0}\times I^n}=X$ by construction and since there are
  equivalences
  \begin{equation*}
    \cofib^n(\widetilde{X}|_{\set{1}\times I^n})\lra\widetilde{X}|_{1\cdots1}\simeq\tcofib(X),
  \end{equation*}
  where the leftmost equivalence can be easily established using the fact for
  each $v\in\set{1}\times I^n$ with $|v|<n+1$ the object $\widetilde{X}_v$ is a
  zero object of $\C$. Finally, the existence of the required cofibre sequence
  \ref{eq:tcofib_cofibre_sequence} follows immediately from the above
  equivalence and the cofibre sequence \ref{eq:n-cofib_in_proof}.
\end{proof}

\begin{remark}
  A version of \th\ref{prop:tcofib-cofibn} in the related framework of pointed
  derivators is proven in Theorem 8.25 in \cite{BG18}.
\end{remark}

Let $\C$ be a stable $\infty$-category and suppose given a coCartesian square
\begin{equation*}
  \begin{tikzcd}
    X_{00}\rar{f}\dar\popb&X_{01}\dar\\
    X_{10}\rar{g}&X_{11}
  \end{tikzcd}
\end{equation*}
in $\C$. \th\ref{prop:n-cube_biCartesian} implies that the induced map
$\cofib(f)\to\cofib(g)$ is an equivalence in $\C$. The following result shows
that there is an analogous relationship between the total cofibres of parallel
facets of a coCartesian $(n+1)$-cube in a stable $\infty$-category.

\begin{corollary}
  \th\label{coro:tcofib_equivalence} Let $\C$ be a stable $\infty$-category,
  $n\geq0$ an integer, and $X$ a coCartesian $(n+1)$-cube in $\C$. Then, the
  induced map $\tcofib(X|_{\set{0}\times I^n})\to\tcofib(X|_{\set{1}\times
    I^n})$ is an equivalence in $\C$.
\end{corollary}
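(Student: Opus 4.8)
The plan is to reduce this to the two preceding results, \th\ref{lemma:t-cofib} and \th\ref{prop:tcofib-cofibn}, by observing that the coCartesianness hypothesis is precisely what forces the relevant cofibre to vanish. The entire content has in fact already been packaged into the iterated-cofibre cofibre sequence, so the corollary should follow formally.

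First I would apply \th\ref{prop:tcofib-cofibn} to the coCartesian $(n+1)$-cube $X$, viewing it as a morphism $X|_{\set{0}\times I^n}\to X|_{\set{1}\times I^n}$ between its two parallel facets. This yields the cofibre sequence
\[
  \tcofib(X|_{\set{0}\times I^n})\lra\tcofib(X|_{\set{1}\times I^n})\lra\tcofib(X)
\]
in $\C$, in which the third term is the total cofibre of the ambient cube and the first map is exactly the induced map on facet total cofibres whose invertibility we must establish.

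Next I would invoke \th\ref{lemma:t-cofib}: since $X$ is assumed coCartesian, its total cofibre $\tcofib(X)$ is a zero object of $\C$. Recalling the elementary fact that a morphism in a stable $\infty$-category is an equivalence if and only if its cofibre vanishes, the cofibre sequence above immediately forces the first map to be an equivalence, which is the desired assertion.

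I do not expect a genuine obstacle here, as all the substantive work---namely the construction of the iterated-cofibre cofibre sequence via \th\ref{prop:tcofib_characterisation} and \th\ref{coro:n-cofib}---has already been carried out in the proof of \th\ref{prop:tcofib-cofibn}. The only point requiring a moment's care is the identification of the first map in the cofibre sequence with the map on total cofibres induced by the facet inclusions of $X$; this is built into the naturality of the iterated cofibre construction and so is automatic.
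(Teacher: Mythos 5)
Your proposal is correct and follows exactly the paper's own argument: the paper proves this corollary by combining the cofibre sequence \eqref{eq:tcofib_cofibre_sequence} of \th\ref{prop:tcofib-cofibn} with \th\ref{lemma:t-cofib}, which makes $\tcofib(X)$ vanish and hence forces the first map in the sequence to be an equivalence. Nothing is missing; note only that applying \th\ref{prop:tcofib-cofibn} to the $(n+1)$-cube $X$ is legitimate for all $n\geq 0$ since that proposition requires cube dimension at least $1$.
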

\begin{proof}
  The claim follows immediately from \th\ref{lemma:t-cofib} and the cofibre
  sequence \ref{eq:tcofib_cofibre_sequence}.
\end{proof}

Let $\C$ be a stable $\infty$-category and $f\colon x\to y$ a morphism in $\C$.
Recall that there is an equivalence $\cofib(f)\simeq\Sigma(\fib(x))$, witnessed
by the existence of a diagram
\begin{equation*}
  \begin{tikzcd}
    \fib(f)\rar\dar\drar[phantom,"\square"]&x\rar\dar{f}\drar[phantom,"\square"]&0\dar\\
    0\rar&y\rar&\cofib(f)
  \end{tikzcd}
\end{equation*}
in which each square is biCartesian. The following result shows that the total
fibre and the total cofibre of an $n$-cube in $\C$ satisfy an analogous
relationship.

\begin{proposition}
  \th\label{prop:tcofib_tfib-n-suspension} Let $\C$ be a stable
  $\infty$-category, $n\geq1$ an integer and $X$ an $n$-cube in $\C$. Then,
  there is an equivalence $\tcofib(X)\simeq\Sigma^n\tfib(X)$.
\end{proposition}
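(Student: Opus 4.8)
The plan is to argue by induction on $n$, peeling off one coordinate direction at a time and exploiting the interaction between the total (co)fibre of a cube and the ordinary (co)fibre of the morphism of facets obtained by splitting off that direction. For the base case $n=1$ the cube $X$ is just a morphism $f\colon X_0\to X_1$, so by definition $\tcofib(X)\simeq\cofib(f)$ and $\tfib(X)\simeq\fib(f)$; the desired equivalence $\tcofib(X)\simeq\Sigma\tfib(X)$ is then precisely the standard rotation $\cofib(f)\simeq\Sigma\fib(f)$ of a (co)fibre sequence in a stable $\infty$-category, which moreover is natural in $f$.

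For the inductive step I would write an $(n+1)$-cube $X$ as a morphism of $n$-cubes $X_0\to X_1$ in the stable $\infty$-category $\Fun(I^n,\C)$, where $X_0\coloneqq X|_{\set{0}\times I^n}$ and $X_1\coloneqq X|_{\set{1}\times I^n}$. Then \th\ref{prop:tcofib-cofibn} supplies a cofibre sequence $\tcofib(X_0)\to\tcofib(X_1)\to\tcofib(X)$, identifying $\tcofib(X)$ with the cofibre of the induced map on total cofibres; dually, applying the same proposition in $\C^{\op}$, one obtains a fibre sequence $\tfib(X)\to\tfib(X_0)\to\tfib(X_1)$, identifying $\tfib(X)$ with the fibre of the map $g\colon\tfib(X_0)\to\tfib(X_1)$ induced by $X_0\to X_1$. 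By the inductive hypothesis there are equivalences $\tcofib(X_i)\simeq\Sigma^n\tfib(X_i)$ for $i=0,1$, and naturality identifies the map $\tcofib(X_0)\to\tcofib(X_1)$ with $\Sigma^n g$. Since $\Sigma$ is an exact equivalence it commutes with cofibres, so $\tcofib(X)\simeq\cofib(\Sigma^n g)\simeq\Sigma^n\cofib(g)$. Finally, applying the base case to the single morphism $g$ yields $\cofib(g)\simeq\Sigma\fib(g)\simeq\Sigma\tfib(X)$, whence $\tcofib(X)\simeq\Sigma^{n+1}\tfib(X)$, completing the induction.

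The step I expect to require the most care is the naturality bookkeeping in the inductive step: in order to compute the cofibre of $\tcofib(X_0)\to\tcofib(X_1)$ via the cofibre of $\Sigma^n g$, one needs the inductive equivalence $\tcofib(-)\simeq\Sigma^n\tfib(-)$ to be a \emph{natural} equivalence of functors $\Fun(I^n,\C)\to\C$, so that it assembles into a commutative square relating the two morphisms and the vertical maps of this square are equivalences. This is guaranteed by observing that the base-case equivalence $\cofib\simeq\Sigma\fib$ is natural and that every construction entering the inductive step ($\tcofib$, $\tfib$, $\Sigma^n$, and the (co)fibre sequences of \th\ref{prop:tcofib-cofibn}) is functorial; the remaining verifications are the routine coherence compatibilities, which I would leave to the reader.
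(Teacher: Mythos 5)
Your proof is correct, but it takes a genuinely different route from the paper. The paper's argument is a single, non-inductive pasting construction: it glues the Cartesian extension of $X$ (whose initial vertex is $\tfib(X)$, supplied by the dual of \th\ref{prop:tcofib_characterisation}) to the coCartesian extension of $X$ (whose final vertex is $\tcofib(X)$) into one diagram $F\colon I^n\times\set{-1,0,1}\to\C$, and then observes that the composite $(n+1)$-cube $F|_{I^n\times\set{-1,1}}$ is coCartesian with zero objects at all intermediate vertices, so that \th\ref{ex:n-suspension} exhibits $\tcofib(X)$ directly as $\Sigma^n\tfib(X)$. You instead induct on $n$, peeling off one coordinate, using the cofibre sequence of \th\ref{prop:tcofib-cofibn} and its dual together with the basic rotation $\cofib(f)\simeq\Sigma\fib(f)$. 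Both are sound, but they distribute the work differently. The paper's construction never needs any naturality statement: it manipulates one explicit diagram, and the only inputs are the pasting lemma (\th\ref{coro:pasting_lemma_cubes}) and the suspension example. Your induction reduces everything to the familiar one-dimensional rotation, which is conceptually appealing, but the cost is exactly the point you flag: you need $\tcofib(-)\simeq\Sigma^n\tfib(-)$ as a \emph{natural} equivalence of functors $\Fun(I^n,\C)\to\C$, and you also need the equivalence $\tcofib(X)\simeq\cofib\bigl(\tcofib(X_0)\to\tcofib(X_1)\bigr)$ of \th\ref{prop:tcofib-cofibn} to be natural in $X$ — neither of which is stated in that form in the paper, whose proof of \th\ref{prop:tcofib-cofibn} proceeds via the choice-dependent extension of \th\ref{prop:tcofib_characterisation}. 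These upgrades are standard (one can define $\tcofib$ and $\tfib$ functorially via Kan extensions, and the rotation $\cofib\simeq\Sigma\circ\fib$ is a natural equivalence of exact functors on $\Fun(\Delta^1,\C)$), so this is routine rather than a gap, but in the $\infty$-categorical setting it is genuine work that your ``leave to the reader'' absorbs; the paper's pasting argument is precisely engineered to avoid it.
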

\begin{proof}
  Utilising \th\ref{prop:tcofib_characterisation} an its dual we construct an
  auxiliary diagram
  \begin{equation*}
    F\colon I^n\times\set{-1,0,1}\lra\C
  \end{equation*}
  with the following properties:
  \begin{itemize}
  \item The $n$-cube $F|_{I^n\times\set{0}}$ agrees with $X$.
  \item The $(n+1)$-cube $F|_{I^n\times\set{0,1}}$ is obtained from $X$ as in
    \th\ref{prop:tcofib_characterisation}.
  \item The $(n+1)$-cube $F|_{I^n\times\set{-1,0}}$ is obtained from $X$ as in
    the dual of \th\ref{prop:tcofib_characterisation}.
  \end{itemize}
  By construction, there are equivalences
  \begin{equation*}
    F_{(1\cdots1,1)}\simeq\tcofib(X)\qquad\text{and}\qquad F_{(0\cdots0,-1)}\simeq\tfib(X)
  \end{equation*}
  in $\C$. Finally, the composite $(n+1)$-cube $F|_{I^n\times\set{-1,1}}$
  exhibits $F_{(1\cdots1,1)}$ as the $n$-fold suspension of $F_{(0\cdots0,-1)}$,
  see \th\ref{ex:n-suspension}. We deduce the existence of an equivalence
  $\tcofib(X)\simeq\Sigma^n\tfib(X)$ in $\C$, which is what we needed to prove.
\end{proof}

We obtain the following consequence of
\th\ref{coro:tcofib_equivalence,prop:tcofib_tfib-n-suspension}.

\begin{corollary}
  \th\label{coro:tcofib-tfib} Let $\C$ be a stable $\infty$-category, $n\geq1$
  an integer, and $X$ a coCartesian $(n+1)$-cube in $\C$. Then, there exists an
  equivalence $\tcofib(X|_{\set{1}\times
    I^n})\simeq\Sigma^n(\tfib(X|_{\set{0}\times I^n}))$ in $\C$.
\end{corollary}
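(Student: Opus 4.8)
The plan is to derive the equivalence by chaining together the two results cited immediately before the statement, applied to the two opposite facets of $X$. The cube $X$ is an $(n+1)$-cube, which via the isomorphism $I^{n+1}\cong[1]\times I^n$ we regard as a morphism $X|_{\set{0}\times I^n}\to X|_{\set{1}\times I^n}$ of $n$-cubes; both $X|_{\set{0}\times I^n}$ and $X|_{\set{1}\times I^n}$ are honest $n$-cubes in $\C$ obtained by restriction, so the notions $\tcofib$ and $\tfib$ apply to each of them.

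First I would invoke \th\ref{coro:tcofib_equivalence}. Since $X$ is assumed coCartesian, that corollary produces an equivalence
\[
  \tcofib(X|_{\set{0}\times I^n})\xra{\simeq}\tcofib(X|_{\set{1}\times I^n})
\]
in $\C$, identifying the total cofibres of the two parallel facets.

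Next I would apply \th\ref{prop:tcofib_tfib-n-suspension}, which holds for any $n$-cube once $n\geq1$, to the facet $X|_{\set{0}\times I^n}$. This yields an equivalence
\[
  \tcofib(X|_{\set{0}\times I^n})\simeq\Sigma^n\,\tfib(X|_{\set{0}\times I^n}).
\]
Composing this with the equivalence from the previous step then gives the desired equivalence
\[
  \tcofib(X|_{\set{1}\times I^n})\simeq\Sigma^n\bigl(\tfib(X|_{\set{0}\times I^n})\bigr),
\]
completing the argument.

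There is no genuine obstacle here: the statement is a formal consequence of the two established results, and the only points requiring a moment's care are bookkeeping ones, namely checking that the hypotheses of both cited results are met (coCartesianness of $X$ for \th\ref{coro:tcofib_equivalence}, and the condition $n\geq1$ for \th\ref{prop:tcofib_tfib-n-suspension}) and that the facets over $\set{0}\times I^n$ and $\set{1}\times I^n$ are the correct $n$-cubes to feed into each result.
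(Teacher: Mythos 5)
Your proof is correct and takes essentially the same approach as the paper: both arguments simply chain \th\ref{coro:tcofib_equivalence} with \th\ref{prop:tcofib_tfib-n-suspension}. The only immaterial difference is the order of composition --- the paper applies the suspension result to the facet $X|_{\set{1}\times I^n}$ and then the \emph{dual} (total-fibre) form of \th\ref{coro:tcofib_equivalence}, whereas you apply \th\ref{coro:tcofib_equivalence} exactly as stated and then the suspension result to the facet $X|_{\set{0}\times I^n}$; both yield the desired equivalence $\tcofib(X|_{\set{1}\times I^n})\simeq\Sigma^n(\tfib(X|_{\set{0}\times I^n}))$.
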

\begin{proof}
  By \th\ref{prop:tcofib_tfib-n-suspension} applied to the $n$-cube
  $X|_{\set{1}\times I^n}$ and the dual of \th\ref{coro:tcofib_equivalence}
  applied to the $n$-cube $X|_{\set{0}\times I^n}$ there are equivalences
  \begin{equation*}
    \tcofib(X|_{\set{1}\times I^n})\simeq\Sigma^n(\tfib(X|_{\set{1}\times I^n}))\simeq\Sigma^n(\tfib(X|_{\set{0}\times I^n})).
  \end{equation*}
  The claim follows.
\end{proof}

\addcontentsline{toc}{section}{References}
\bibliographystyle{alpha}
\bibliography{library}

\end{document}